\documentclass[a4paper,reqno]{amsart}
\usepackage[width=170mm,height=250mm]{geometry}
\usepackage{url}
\usepackage{amssymb}
\usepackage{mathtools}
\usepackage{paralist}
\numberwithin{equation}{section}
\usepackage{etoolbox}
\patchcmd{\section}{\scshape}{\bfseries}{}{}
\makeatletter
\renewcommand{\@secnumfont}{\bfseries}
\makeatother
\usepackage[usenames,dvipsnames,x11names]{xcolor}

\usepackage{autonum}
\usepackage[notref,notcite,color,final]{showkeys}      
\definecolor{refkey}{named}{blue}
\definecolor{labelkey}{named}{blue}
\usepackage[format=hang,font=small,labelfont={rm,sc},textfont=sl,width=0.9\textwidth]{caption}
\usepackage{subcaption}
\usepackage[bottom]{footmisc}
\usepackage{graphicx}
\graphicspath{{./figures/},{./figures/propagation/D4/},{./figures/propagation/D6/},{./figures/propagation/Z3/}}
\makeatletter
\def\input@path{{./files/}}
\makeatother
\newtheoremstyle{mythm}{2ex}{2ex}{\itshape}{}{\normalfont\normalsize}{}{.5em}{\textsc{\thmname{#1}}\thmnumber{ #2}\thmnote{ (#3)}}
\newtheoremstyle{bfnote}%
{}{}%
{\itshape}{}%
{\bfseries}{.}%
{ }%
{\thmname{#1}\thmnumber{ #2}\thmnote{ (#3)}}
\theoremstyle{bfnote}
\newtheorem{lemma}{Lemma}
\newtheorem{prop}{Proposition}

\newtheorem{remark}{Remark}
\newcommand{\thmtitre}{}
\newtheorem*{boxthm}{\thmtitre}
\usepackage[backgroundcolor=blue!10]{todonotes}
\usepackage{lipsum}
\usepackage{upgreek}
\usepackage{bbold}

\newcommand{\Newcommand}[2]{\providecommand{#1}{}\renewcommand{#1}{#2}}

\makeatletter
\newcommand{\proofstep}[1]{%
  \par
  \addvspace{\smallskipamount}
  \textit{#1\@addpunct{.}}\enspace\ignorespaces
}
\newlength{\parindentsave}
\newcommand{\Nproofstep}[1]{%
\setlength{\parindent}{0pt}
  \par
  \addvspace{\smallskipamount}
  \textit{\noindent #1\@addpunct{.}}\enspace\ignorespaces
\setlength{\parindent}{\parindentsave}
}
\makeatother

\newlength{\kaka}
\newlength{\LSpace}

\providecommand{\ahref}[2]{}

\newcommand{\suite}[1][0ex]{\notag \\[#1] & \mbox{}\hspace{15pt}}

\newcommand{\Div}{\mbox{div}\,}

\newcommand{\bfna}{\boldsymbol{\nabla}}

\newcommand{\shtimes}{\hspace*{-0.1em}\times\hspace*{-0.1em}}

\newcommand{\shm}{\hspace*{-0.1em}-\hspace*{-0.1em}}
\newcommand{\shp}{\hspace*{-0.1em}+\hspace*{-0.1em}}
\newcommand{\sheq}{\hspace*{-0.1em}=\hspace*{-0.1em}}
\newcommand{\shdeq}{\hspace*{-0.1em}:=\hspace*{-0.1em}}

\newcommand{\shneq}{\hspace*{-0.1em}\not=\hspace*{-0.1em}}

\newcommand{\shg}{\hspace*{-0.1em}>\hspace*{-0.1em}}
\newcommand{\shleq}{\hspace*{-0.1em}\leq\hspace*{-0.1em}}
\newcommand{\shgeq}{\hspace*{-0.1em}\geq\hspace*{-0.1em}}

\newcommand{\shgg}{\hspace*{-0.1em}\gg\hspace*{-0.1em}}
\newcommand{\shin}{\hspace*{-0.1em}\in\hspace*{-0.1em}}

\newcommand{\del}[1][]{\partial_{#1}}

\newcommand{\tens}{\hspace*{-1pt}\otimes\hspace*{-1pt}}

\newcommand{\pinv}[1]{\tfrac{1}{#1}}
\newcommand{\inv}[1]{\dfrac{1}{#1}}

\newcommand{\lbra}{\big\langle\hspace*{0.1em}}
\newcommand{\rbra}{\hspace*{0.1em}\big\rangle}

\newcommand{\lsqb}{\big[\hspace*{0.1em}}
\newcommand{\rsqb}{\hspace*{0.1em}\big]}
\newcommand{\lcb}{\big\{\hspace*{0.1em}}
\newcommand{\rcb}{\hspace*{0.1em}\big\}}
\newcommand{\Lcb}{\Big\{\hspace*{0.1em}}
\newcommand{\Rcb}{\hspace*{0.1em}\Big\}}

\newcommand{\lpar}{\big(\hspace*{0.1em}}
\newcommand{\rpar}{\hspace*{0.1em}\big)}

\newcommand{\labs}{\big|\hspace*{0.1em}}
\newcommand{\rabs}{\hspace*{0.1em}\big|}

\newcommand{\Lpar}{\Big(\,}
\newcommand{\Rpar}{\,\Big)}

\newcommand{\bfze}{\mathbf{0}}

\newcommand{\OO}{\Omega}

\Newcommand{\Sp}{S_{p}}

\newcommand{\Oe}{\OO_{\varepsilon}}

\newcommand{\dtau}{\,\text{d}\tau}

\newcommand{\dt}{\,\text{d}t}

\newcommand{\dV}{\;\text{d}V}

\newcommand{\dxi}{\,\text{d}\xi}

\newcommand{\dx}{\,\text{d}x}
\newcommand{\dy}{\,\text{d}y}

\newcommand{\iT}{\int_{0}^{T}}

 \newlength{\meno}
 \newlength{\integrale}
 \newlength{\trattino}
 \newlength{\integralegrande}
 \newlength{\uguale}
 \newlength{\ugualegrande}

\newcommand{\Acal}{\mathcal{A}}

\newcommand{\Dcal}{\mathcal{D}}
\newcommand{\Ecal}{\mathcal{E}}
\newcommand{\Fcal}{\mathcal{F}}

\newcommand{\Hcal}{\mathcal{H}}
\newcommand{\Ical}{\mathcal{I}}

\newcommand{\Kcal}{\mathcal{K}}
\newcommand{\Lcal}{\mathcal{L}}

\newcommand{\Ocal}{\mathcal{O}}

\newcommand{\Qcal}{\mathcal{Q}}
\newcommand{\Rcal}{\mathcal{R}}
\newcommand{\Scal}{\mathcal{S}}

\newcommand{\Xcal}{\mathcal{X}}
\newcommand{\Ycal}{\mathcal{Y}}
\newcommand{\Zcal}{\mathcal{Z}}

\Newcommand{\PS}{\mbox{\boldmath $\mathcal{P}$}}

\renewcommand{\SS}{\ensuremath{\mbox{\boldmath $\mathcal{S}$}}}

\Newcommand{\NC} {{\rm NC}}

\Newcommand{\NF}{\rmN\Fsub}

\Newcommand{\NN}{\rmN\Nsub}

\newcommand{\dip} {\! :\!}
\newcommand{\sip} {\! \cdot\!}

\newcommand{\qip} {\!::\!}
\newcommand{\dotp}{\raisebox{1pt}{\hspace*{1pt}\scalebox{0.45}{$\bullet$}}\hspace*{1pt}}
\newcommand{\Cdot}{\raisebox{1pt}{\hspace*{1pt}\scalebox{0.6}{$\bullet$}}\hspace*{1pt}}

\newcommand{\tdemi} {\tfrac{1}{2}}

\newcommand{\tquart} {\tfrac{1}{4}}

\Newcommand{\br} {b_{\incsize}}

\newcommand{\incsize}{\eps}

\newcommand{\eps}{\varepsilon}

\newcommand{\oo}{\omega}

\newcommand{\esup}{^{\text{\scriptsize e}}}

\newcommand{\gsup}{^{\text{\scriptsize g}}}

\newcommand{\Hsup}{^{\text{\tiny H}}}

\newcommand{\ssup}{^{\text{s}}}
\newcommand{\Tsup}{^{\text{\tiny T}}}

\newcommand{\Esub}{_{\text{\tiny E}}}
\newcommand{\Fsub}{_{\text{\tiny F}}}

\newcommand{\isub}{_{\text{\scriptsize i}}}

\newcommand{\Nsub}{_{\text{\tiny N}}}

\newcommand{\Osub}{_{\text{\tiny O}}}

\newcommand{\ssub}{_{\text{s}}}

\newcommand{\msub}{_{\text{\scriptsize m}}}

\newcommand{\Max}{^{\text{max}}}

\Newcommand{\Ref}{^{\text{ref}}}

\Newcommand{\skew}{_{\text{skew}}}

\Newcommand{\Tr}{\text{Tr}}

\renewcommand{\Re}{\text{Re}}

\newcommand{\rmb} {\mathrm{b}}
\newcommand{\rmc} {\mathrm{c}}
\newcommand{\rmd} {\mathrm{d}}
\newcommand{\rme} {\mathrm{e}}

\newcommand{\rmi} {\mathrm{i}}
\newcommand{\rmk} {\mathrm{k}}

\newcommand{\rmx} {\mathrm{x}}

\newcommand{\rmA} {\mathrm{A}}
\newcommand{\rmB} {\mathrm{B}}
\newcommand{\rmC} {\mathrm{C}}
\newcommand{\rmD} {\mathrm{D}}

\newcommand{\rmI} {\mathrm{I}}

\newcommand{\rmK} {\mathrm{K}}

\newcommand{\rmM} {\mathrm{M}}
\newcommand{\rmN} {\mathrm{N}}
\newcommand{\rmO} {\mathrm{O}}

\newcommand{\rmV} {\mathrm{V}}

\newcommand{\rmX} {\mathrm{X}}

\newcommand{\rmZ} {\mathrm{Z}}

\newcommand{\sfB} {\mathsf{B}}
\newcommand{\sfC} {\mathsf{C}}

\newcommand{\sfQ} {\mathsf{Q}}
\newcommand{\sfR} {\mathsf{R}}

\newcommand{\bfuT} {\widetilde{\boldsymbol{u}}{}}

\newcommand{\bfvH} {\widehat{\boldsymbol{v}}{}}

\newcommand{\brb} {\mathbf{b}}
\newcommand{\brc} {\mathbf{c}}

\newcommand{\brf} {\mathbf{f}}
\newcommand{\brg} {\mathbf{g}}
\newcommand{\brh} {\mathbf{h}}

\newcommand{\brp} {\mathbf{p}}
\newcommand{\brq} {\mathbf{q}}

\newcommand{\brs} {\mathbf{s}}

\newcommand{\bru} {\mathbf{u}}
\newcommand{\brv} {\mathbf{v}}
\newcommand{\brw} {\mathbf{w}}
\newcommand{\brx} {\mathbf{x}}

\newcommand{\brz} {\mathbf{z}}

\newcommand{\brA} {\mathbf{A}}
\newcommand{\brB} {\mathbf{B}}
\newcommand{\brC} {\mathbf{C}}
\newcommand{\brD} {\mathbf{D}}

\newcommand{\brF} {\mathbf{F}}
\newcommand{\brG} {\mathbf{G}}
\newcommand{\brH} {\mathbf{H}}
\newcommand{\brI} {\mathbf{I}}
\newcommand{\brJ} {\mathbf{J}}
\newcommand{\brK} {\mathbf{K}}
\newcommand{\brL} {\mathbf{L}}
\newcommand{\brM} {\mathbf{M}}

\newcommand{\brO} {\mathbf{O}}

\newcommand{\brQ} {\mathbf{Q}}
\newcommand{\brR} {\mathbf{R}}

\newcommand{\brT} {\mathbf{T}}
\newcommand{\brU} {\mathbf{U}}
\newcommand{\brV} {\mathbf{V}}
\newcommand{\brW} {\mathbf{W}}
\newcommand{\brX} {\mathbf{X}}

\newcommand{\bsfF} {\boldsymbol{\mathsf{F}}}

\newcommand{\bsfU} {\boldsymbol{\mathsf{U}}}

\newcommand{\bfe} {\boldsymbol{e}}

\newcommand{\bfg} {\boldsymbol{g}}
\newcommand{\bfh} {\boldsymbol{h}}

\newcommand{\bfn} {\boldsymbol{n}}

\newcommand{\bfv} {\boldsymbol{v}}
\newcommand{\bfw} {\boldsymbol{w}}
\newcommand{\bfx} {\boldsymbol{x}}
\newcommand{\bfy} {\boldsymbol{y}}

\newcommand{\bfC} {\boldsymbol{C}}

\newcommand{\bfH} {\boldsymbol{H}}

\newcommand{\bfL} {\boldsymbol{L}}

\newcommand{\bfP} {\boldsymbol{P}}

\newcommand{\bfV} {\boldsymbol{V}}

\newcommand{\bruh} {\hat{\mathbf{u}}{}}

\newcommand{\bruH} {\widehat{\mathbf{u}}{}}

\newcommand{\bruT} {\widetilde{\mathbf{u}}{}}

\newcommand{\brwT} {\widetilde{\mathbf{w}}{}}

\newcommand{\FHat}{\widehat{F}}

\newcommand{\brzB} {\overline{\mathbf{z}}{}}

\newcommand{\Lbar}{\Lbar{}{}}

\newcommand{\Nbar}{\Nbar{}{}}
\newcommand{\Obar}{\Obar{}{}}

\newcommand{\Rbar}{\Rbar{}{}}

\newcommand{\Tbar}{\Tbar{}{}}

\Newcommand{\hbar}{\bar{h}{}}

\Newcommand{\Bbb} {\mathbb{B}}
\newcommand{\Cbb} {\mathbb{C}}

\newcommand{\Rbb} {\mathbb{R}}

\newcommand{\bfepsb}{\bar{\bfeps}}

\newcommand{\bfetaB}{\overline{\bfeta}}

\newcommand{\bfchih}{\hat{\bfchi}{}}

\newcommand{\bfDel}{\boldsymbol{\Delta}}

\newcommand{\bfpi}{\boldsymbol{\pi}}

\newcommand{\bfchi}{\boldsymbol{\chi}}

\newcommand{\bfeps}{\boldsymbol{\eps}}
\Newcommand{\bfeta}{\boldsymbol{\eta}}
\newcommand{\bfga}{\boldsymbol{\gamma}}

\newcommand{\bfka}{\boldsymbol{\kappa}}

\newcommand{\bfphc}{\boldsymbol{\phi}}

\newcommand{\bfrhv}{\boldsymbol{\varrho}}
\newcommand{\bfsig}{\boldsymbol{\sigma}}
\newcommand{\bftau}{\boldsymbol{\tau}}

\newcommand{\bfxi}{\boldsymbol{\xi}}
\newcommand{\bfzet}{\boldsymbol{\zeta}}

\Newcommand{\therefore}{\hspace*{0pt}\smash{\raisebox{-0.7ex}{\scalebox{1.4}{$\begin{gathered}\cdot\\[-3.1ex]\cdot\hspace*{-0.6pt}\cdot\end{gathered}$}}}\hspace*{1pt}}
\Newcommand{\dx}{\,\text{dx}}
\newcommand{\ie}{\emph{i.e.}~}
\newcommand{\eg}{\emph{e.g.}~}

\renewcommand{\eps}{\ell}
\renewcommand{\bfeps}{\boldsymbol{\varepsilon}} 

\newcommand{\Lii}{\macrolength^2}

\newcommand{\thZ} {\mathring{\vartheta}}
\newcommand{\eZ} {\mathring{e}}
\newcommand{\muZ} {\mathring{\mu}{}}
\newcommand{\Ath} {\rmD^2}
\newcommand{\brAth} {\brD^2}
\newcommand{\brAZii} {\mathring{\brA}\!^2}

\Newcommand{\Oe} {\rmO^{\epsilon}}
\newcommand{\iY}{\int_{\Ycal}}
\newcommand{\iRd}{\int_{\Rbb^d}}

\newcommand{\CZ} {\rmC^0}
\newcommand{\bfCz} {\brC^0}
\newcommand{\brCz} {\brC^0}
\newcommand{\Ci} {\rmC^1}

\newcommand{\brCi} {\brC^1}
\newcommand{\rmcii} {\rmc^2}

\newcommand{\Cii} {\rmC^2}
\newcommand{\Bii} {\rmB^2}
\newcommand{\cii} {\upbeta^2}
\newcommand{\bfcii} {\brc^2}
\newcommand{\brcii} {\boldsymbol{\upbeta}^{\hspace*{-1pt}2}}
\newcommand{\bfCii} {\brC^2}
\newcommand{\brCii} {\brC^2}
\newcommand{\brAii} {\brA\!^2}
\newcommand{\brJii} {\brJ^2}
\newcommand{\bsh} {\rmB^1}
\newcommand{\bfbsh} {\brB^1}
\newcommand{\bii} {\rmb^2}
\newcommand{\bfbii} {\brb^2}
\newcommand{\bfBii} {\brB^2}
\newcommand{\rhoz} {\rhoSG^0}

\newcommand{\brMsh}{\brM^\sharp}
\newcommand{\brKsh}{\brK^\sharp}

\newcommand{\Hper} {H^1_{\text{per}}}
\newcommand{\bHzm} {\bfH^1_{\#}}
\newcommand{\bHper} {\bfH^1_{\text{per}}}

\newcommand{\bSz}{\SS^0{}}
\newcommand{\bSi}{\SS^1{}}
\newcommand{\bSii}{\SS^2{}}

\newcommand{\brFth} {\brF_{\vartheta}}
\newcommand{\rmFth} {\Fcal_{\vartheta}}
\newcommand{\brFstrth} {\brFstr_{\vartheta}}

\newcommand{\rmQZth} {\rmQZ_{\vartheta}}
\newcommand{\rmQZ} {\mathring{\Qcal}}

\newcommand{\rmQth} {\Qcal_{\vartheta}}
\newcommand{\rmRth} {\Rcal_{\vartheta}}

\newcommand{\sfQZ} {\mathring{\sfQ}{}}
\newcommand{\sfQZth} {\sfQZ_{\vartheta}}
\newcommand{\sfQth} {\sfQ_{\vartheta}}
\newcommand{\sfRth} {\sfR_{\vartheta}}

\newcommand{\stens}{\tens\ssup}
\newcommand{\bfnax}{\bfna_{\!\mathrm{x}}}
\newcommand{\bfnay}{\bfna_{\!y}}
\renewcommand{\Div}{\mathrm{div}\,} 
\newcommand{\Divx}{\Div_{\!\mathrm{x}}}
\newcommand{\Divy}{\Div_{\!y}}
\newcommand{\bfupph}{\boldsymbol{\upvarphi}}

\newcommand{\macrolength}{L} 

\newcommand{\rhoSG}{\uprho}
\newcommand{\oostr}{\breve{\oo}{}}
\newcommand{\brFstr}{\breve{\brF{}}}

\newcommand{\bfnaX}{\bfna} 
\Newcommand{\bfnax}{\bfna_\mathrm{x}} 
\Newcommand{\bfnay}{\bfna_y} 

\newcommand{\bfepsx}{\bfeps_\mathrm{x}} 
\newcommand{\bfepsy}{\bfeps_y} 

\newcommand{\bfetax}{\bfeta_\mathrm{x}} 

\newcommand{\bfkax}{\bfka_\mathrm{x}} 

\newcommand{\bfDelx}{\bfDel_\mathrm{x}} 

\newcommand{\brze}{\mathbf{0}}

\newcommand{\SGvt}{SG($\vartheta$) }
\newcommand{\SGz}{SG($0$) }

\begin{document}
\title[Well-posed homogenized strain-gradient models for linear elastodynamics]{Well-posed homogenized strain-gradient models for linear elastodynamics and elastostatics in arbitrary periodic media}
\date{Draft, \today}
\author{R\'emi Cornaggia}
\address{Institut Jean Le Rond $\partial $'Alembert, Sorbonne universit\'e, CNRS UMR 7190, 75252 Paris, France}
\email{remi.cornaggia@sorbonne-universite.fr}
\author{Marc Bonnet}
\address{POEMS (CNRS-INRIA-ENSTA), Dept. of Applied Mathematics, ENSTA Paris, Palaiseau, France}
\email{mbonnet@ensta.fr}
\author{Giuseppe Rosi}
\address{Laboratoire MSME, Univ Paris Est Creteil, CNRS UMR 8208, F-94010 Creteil, France}
\email{giuseppe.rosi@u-pec.fr}
\thanks{See last page for author affiliation information}
\author{Saad El Ouafa}
\address{Laboratoire MSME, Univ Gustave Eiffel, CNRS UMR 8208, F-77474 Marne-la-Vall\'ee, France}
\email{saad.elouafa@u-pem.fr}
\author{Nicolas Auffray}
\address{Institut Jean Le Rond $\partial $'Alembert, Sorbonne universit\'e, CNRS UMR 7190, 75252 Paris, France}
\email{nicolas.auffray@sorbonne-universite.fr}

\maketitle

\begin{abstract}
This work develops well-posed homogenized strain-gradient models for linear elastostatics and elastodynamics in periodic media, with a primary focus on elastic wave propagation. Using the classical two-scale asymptotic expansion method, we carry out second-order periodic homogenization for media in $\Rbb^d$ ($d = 2, 3$), with no restriction on the periodicity cell geometry or material distribution. Reciprocity identities applied to suitably chosen pairs of cell solutions provide alternative expressions for the effective stiffness and inertia tensors arising at the leading, first and second orders, substantially reducing the number of cell problems that must actually be solved. Since direct two-scale homogenization beyond leading order generically yields ill-posed effective operators, a Boussinesq-trick procedure is introduced, involving a tunable scalar weight, to recast the resulting fourth-order partial differential equation as a valid strain-gradient elasticity (SGE) model possessing the requisite symmetry, sign-definiteness and coercivity properties. These properties are then used, via the Hille-Yosida theorem, to establish the well-posedness of the corresponding transient initial-value and forced-response problems. Several practically relevant special cases are examined, including centrosymmetric cells, homogeneous mass density and homogeneous elasticity, each yielding simplified model structures. Numerical illustrations on three two-dimensional periodicity cells (square, hexagonal and a non-centrosymmetric chiral lattice) compare the resulting dispersion relations against reference Floquet-Bloch computations and assess transient wave propagation, demonstrating the model's capacity to capture anisotropic and dispersive effects beyond classical elasticity while preserving mathematical well-posedness.
\end{abstract}

\small
\tableofcontents
\normalsize

\section{Introduction}

All materials intrinsically possess structures at multiple length scales, from atomic arrangements to macroscopic geometries. In classical materials, these scales are typically well separated: microscale heterogeneities average out, rendering the material macroscopically homogeneous under typical loading conditions and sample geometries. In contrast, architectured materials are built to reduce this separation of scales, enabling interactions across them that can produce exotic emergent properties, such as size effects, tunable anisotropy \cite{RA16}, wave-beaming \cite{rosi:19} or elastic-wave polarisation \cite{rosi:24}. Nature frequently exploits this multiscale coupling as an optimization strategy, designing hierarchical architectures that enhance mechanical performance, damage tolerance, or multifunctionality beyond what homogeneous materials can achieve \cite{Fra07}. If these effects are present in both static and dynamic behavior, their consequences are significantly more pronounced in the latter case. When the mesostructure is non-centrosymmetric, new mechanical couplings arise — such as tension–bending coupling in the static regime \cite{poncelet:18} and the emergence of circularly polarized elastic waves \cite{toupin:62,rosi:24}, among others.
These couplings have been studied in physics \cite{Por68,divincenzo1986dispersive,maranganti:07}, but remain relatively unexplored within the mechanics community. When present, these effects play a predominant role in the effective response of the architectured material.
The scientific field that primarily exploits these properties is traditionally known as the field of elastic metamaterials \cite{craster:23}.

However, as the unique properties of architectured materials arise from their complex inner geometries, accurately simulating large specimens becomes particularly challenging. Capturing fine-scale structural features requires significant mesh refinement in numerical models, leading to high computational costs and making direct simulations of large-scale components overwhelming. One possible strategy to mitigate this challenge is to replace the explicitly heterogeneous architectured material with an effective homogeneous medium that somehow accounts for the microscopic behavior while significantly reducing computational costs \cite{geers:10}.
An effective homogeneous medium can  be interpreted as a filter that preserves only the dominant features of the mesostructure’s behavior. By distinguishing essential information from non-essential details, such an approach helps elucidate the relationship between the geometry of the mesostructure and the resulting macroscopic response. Consequently, it provides valuable insight into the underlying physics of the problem and serves as a powerful tool for guiding the optimal design of mesostructures\footnote{This idea is, in essence, already present in Pierre Curie’s seminal works, which established a connection between crystal symmetries and the presence or absence of piezoelectric coupling \cite{curie:1894}}.
By leveraging this concept, the use of an effective homogeneous continuum paves the way for an efficient computational framework for the optimal design of mesostructures. Through topological optimization techniques, it becomes possible to de-homogenize an effective behavior in order to identify the microstructure(s) responsible for it \cite{amstutz:10,corn:bell:20,mou:desmorat:23}. 

When the characteristic scales are weakly separated, the effective behavior is strongly influenced by the mesostructure organization. Consequently, the equivalent homogeneous medium to be considered must have an enriched kinematics in order to describe mesostructure effects at the continuous level. Based on phenomenological considerations, several generalized continuum models have been proposed in the literature to capture the specific effects arising from microstructural organization. These models can be broadly categorized into two main families: \textit{higher-order models} and \emph{higher-grade continua}. 
The higher-order models introduce additional degrees of freedom; they are commonly referred to as micromorphic continua and are characterized by the nature of the supplementary kinematical variables \cite{germain:73, eringen:12}. The classical Cosserat model \cite{cosserat1909} belongs to this category, as it accounts for independent material rotations. Such media are particularly well suited for the effective description of band-gap phenomena. These models are mentioned here for completeness and will not be used in this work, as we elect to adopt the framework of higher-grade continua. Models in this second family retain the displacement field as the fundamental degree of freedom while incorporating its higher-order gradients into the constitutive relations. It is worth noting that these models can be obtained from higher-order models by imposing suitable kinematical constraints. Higher-grade continua include the strain-gradient models, first introduced by Mindlin \cite{mindlin:64} and extensively investigated since then.   
Such formulations typically introduce internal length scales through higher-order constitutive tensors—namely, higher-order stiffness and inertia tensors (hereafter referred to as effective tensors). It is worth noting, however, that while the properties of higher-order stiffness tensors have been investigated \cite{auffray:15,auffray:19}, those of higher-order inertia tensors have received comparatively little attention. 

The main practical issue with either generalized-continuum modeling approach is the determination of the parameters of the resulting effective model. Two main classes of approaches can be distinguished, depending on whether one proceeds by:
\begin{compactitem}
    \item \textbf{identification}: the coefficients of the apparent model are inferred through an inverse approach, relying on physically relevant observations of the full behavior. This constitutes a phenomenological approach;
    \item \textbf{homogenization}: the coefficients of the effective model are derived from the description of the mechanical behavior of the elementary cell at the mesoscopic scale. This constitutes a multiscale approach.
\end{compactitem}
In the above classification, distinct "apparent" and "effective" qualifiers are deliberately used for the resulting media, to emphasize the differences between the two approaches and the nature of the obtained model.

In the case of wave propagation in anisotropic architectured media, some of our previous work focused on modeling the phenomenon using apparent strain-gradient continua \cite{RA16,rosi:18,rosi:19}. This approach has qualitatively highlighted the relevance of such a framework. While it provides a quantitatively improved description of macroscopic wave propagation, the determination of the model coefficients is typically performed heuristically—by fitting to dispersion diagrams—rather than through a rigorous homogenization procedure. In some cases, the identification of the second-order operator remains partial, leading to an incomplete gradient model \cite{rosi:24b}.
Phenomenological approaches based on parameter identification nevertheless offer the major advantages of simplicity in their implementation and practical efficiency. The resulting corrections significantly enhance the effective description compared to classical homogenized models \cite{rosi:18}. On the other hand, these approaches are essentially ad hoc, and no clear relationship exists between the geometry of the mesostructure and the values of the identified effective tensors. Moreover, as already mentioned, the identification of higher-order operators is often very limited. From a fundamental standpoint, such approaches are therefore not fully satisfactory, and a more comprehensive, systematic, and rigorous framework is desirable.

Mathematical approaches based on homogenization provide a rigorous framework to establish the transition between scales. They are well established and theoretically sound in the case of classical leading-order elastic homogenization. By contrast, their extension to generalized continua remains a partially open problem and an active area of research. Indeed, as will be discussed in more detail below, a straightforward extension of these methods often leads to effective operators that result in ill-posed boundary- or initial-value problems.



The vocable "homogenization" encompasses a wide range of distinct mathematical approaches, including formal asymptotic expansions, variational–asymptotic methods, and $\Gamma$-convergence; this work focuses on the first type of approach.
The ratio $\epsilon$ between the periodicity size and a larger macroscopic scale of interest (typically, the characteristic variation length of wavelengths in dynamics or external loads in statics) is used as a small asymptotic parameter. The most classical approach then relies on two-scale formal expansions of the fields quantities (displacement, strain, stress) and balance equations in terms of $\epsilon$, see
\cite{blp:78,SanchezPalenciaBook1987,BakhvalovBook1989,ciora:donato:99}. These expansions lead to a
decomposition of the fields at each order in $\epsilon$ into sums of (i) slowly varying ``macroscopic'' components and (ii)
oscillating \emph{correctors} written in terms of \emph{cell functions} solving elementary problems on a representative
cell of the microstructure. Moreover, the procedure provides \emph{effective equations} satisfied by the macroscopic
fields, involving effective tensors given explicitly in terms of the cell functions.

Pushing the expansion to higher orders in $\epsilon$ enable to reach ``weak'' scale separation regimes and to describe relevant
effects that appear at these regimes \eg non-negligible second-order ``gradient'' effects in statics (notably in the
case of a degenerated leading-order mode  \cite{durand:22}) or mode
conversion, dispersion and Rayleigh effects for elastic waves \cite{boutin:96} (at first, second and third order
respectively). Some authors even push the expansion to (very) high orders to include band-gaps and optic branches of the
dispersion diagram in the effective model \cite{hu:oskay:19}.
In some cases, this formal process can be rigorously justified by providing \emph{a posteriori} asymptotic
error estimates. This is classical for expansions truncated at the leading-order \cite{ciora:donato:99}, but much
more involved for higher-order models \cite{Allaire2022,allaire:24}. In these cases, the effective balance equation must also be supplemented by non-trivial
boundary correctors to address scattering or boundary-value problems, see \cite{cakoni:19} for antiplane shear waves or \cite{lin:meng:19} for elastodynamics.

Alternatives to this classical two-scale expansion approach include the variational-asymptotic approach introduced by
\cite{smysh:00}. The formal asymptotic expansions of fields are introduced in the energy of the system \ie the
space of kinematically admissible displacements is reduced to the subspace of two-scale expansions up to a given order in the variational formulation. It has then be applied to elastodynamics \cite{bacigalupo:14} and to
the design of material exhibiting strong strain-gradient effects \cite{durand:22,calisti:23}. Recently, a general
energy approach was proposed in \cite{Audoly:23,thbaut:25}, that relies on the variational formulation of static,
microstructured equilibrium problems from the very beginning of the asymptotic process.

All these approaches provide a leading-order homogenized model that features a uniquely defined effective elasticity
tensor (``the'' effective elasticity tensor) satisfying all the expected properties \ie minor and major symmetries and
positivity. By contrast, when pushing formal expansions to higher-order, one obtains a \emph{family} of models with the
same asymptotic accuracy (rather than a unique model), that \emph{may include ill-posed members}; in particular, higher-order effective tensors may have the wrong sign, see \eg the discussion of \cite{durand:22} for the second-order strain-gradient tensor in elastostatics. This issue is addressed in depth for spatially one-dimensional settings, see \cite{waut:guz:14,corn:lomb:23}, and for scalar waves (modeling \eg acoustics or antiplane 2D elasticity), see \cite{Allaire2016,abdulle:20,Allaire2022}. To choose a well-posed member
of such a family, \emph{reciprocity relations} between cell problems at various orders enable to exhibit relations
between effective tensors, and so-called \emph{Boussinesq tricks} are used to build well-posed models
\cite{Allaire2016,abdulle:20}. Recent alternatives were also proposed by \cite{Allaire2022,allaire:24} who (i) systematically remove all higher-order time derivatives from effective models and (ii) filter the source term to remove potentially problematic higher frequencies.

For full (2D or 3D) elasticity, reciprocity relationships are derived for the first-order elasticity tensor in
\cite{boutin:93,lin:meng:19}, and for the second-order in \cite{fish:02}. To the best of our knowledge they are not used to address the
aforementioned issues, and no similar relationship is available for the inertial tensors. Concerning the ``wrong'' sign of the full strain-gradient tensor in elastostatic, original remedies were recently proposed: \cite{durand:22} decomposes the second-order elasticity tensor onto its eigenspaces and retains only the contributions associated with eigenvalues with the ``good'' sign, while \cite{thbaut:25} presents a systematic energy-based, Cholesky-inspired procedure to obtain a positive homogenized energy defining a well-posed variational problem. However, elastodynamics is much less explored, and available analyses are often restricted to \emph{centrosymmetric} periodicity cells \cite{durand:22,calisti:23}, which considerably simplifies the homogenized models by canceling all odd-order contributions in the effective equation \cite{bacigalupo:14,rosi:19}.
For non-centrosymmetric cells, however, these odd-order terms are a distinctive feature of 2D or 3D elasticity
\cite{lin:meng:19,allaire:24} compared to \eg acoustics or antiplane 2D elasticity where they vanish for any microstructure \cite{Allaire2016,Allaire2022}, and they enable the description of the unusual mechanical coupling quoted above.

In this context, the present work revisits the ``classical'' homogenization methodology for both elastostatics and elastodynamics, with a primary focus on the latter case. Following \cite{gambin:89,boutin:93,abdulle:20}, we construct a physically-valid and mathematically well-behaved strain-gradient effective model that is completely determined by, and computable from, the microstructure. The  intended contributions of this work are as follows:
\begin{compactenum}[\hspace*{0.5em} 1.]
\item We carry out the two-scale periodic homogenization methodology for both elastostatics and elastodynamics beyond the leading order, for periodic elastic media in $\Rbb^d$ ($d=2,3$). To maximize generality, we assume only geometric and material periodicity in space (with a spatially-bounded periodicity cell), with the minimal conditions on the material parameters that ensure physical consistency and well-defined variational formulations on the periodicity cell. We then derive by means of a Boussinesq-trick approach a family of enriched fourth-order effective partial differential equations (PDEs) involving a tunable scalar weight $\vartheta$, and show that suitable values of $\vartheta$ exist for which the obtained PDE constitutes a valid strain gradient elasticity (SGE) model having all the requisite symmetry, sign and coercivity properties.
\item The above established properties in turn (i) produce dispersion equations with matrices having the requisite sign and symmetry properties, and (ii) allow to prove, by verifying that the Hille-Yosida theorem applies, the well-posedness of transient initial-value or forced-response elastodynamic problems governed by the second-order in time effective SGE-type PDE with suitable $\vartheta$.
\item The foregoing results rest to a significant extent on alternative expressions of the effective stiffness and inertial tensors arising at the leading, first and second orders, obtained through systematic application of a reciprocity identity to a number of suitably chosen pairs of cell solutions. From a theoretical standpoint, those alternative expressions make the symmetry or skew-symmetry properties of the effective tensors explicit and help in assessing the sign properties of the various terms in the effective PDE. They moreover bring significant practical advantage by allowing a substantial reduction of the number of cell problems whose solution is actually needed for setting up the complete strain-gradient PDE.
\item The proof method for the requisite coercivity and sign properties provides explicit sufficient conditions on the weight $\vartheta$ for the homogenized SGE model to be associated with positive strain and kinetic energy densities, which are easily implementable (by solving low-dimensional eigenvalue problems involving raw effective tensors), making the practical calibration of $\vartheta$ easy and effective;
\item Among its outcomes, our approach sheds a new light on the dependence of the strain-gradient tensors to the elastic and inertial properties of the microstructure for elastodynamics, in particular on those arising at the first order for non-centrosymmetric cells.
\item The main aspects of the resulting homogenized elastodynamic SGE model are implemented and demonstrated on numerical examples under 2D plane-strain conditions.
\end{compactenum}

While, to repeat, this work is primarily aimed at modeling elastic waves in periodic media, we begin by fully addressing the derivation of a well-posed SGE elastostatic homogenized model, and then develop the elastodynamic version of this analysis and the resulting SGE model. We chose this two-stage development for several reasons. Firstly, the elastostatic high-order homogenization and its interpretation as a valid SGE model constitutes on its own a useful contribution to the literature. For instance, we propose a procedure to address the issue of positive definiteness of higher-order operators, raised in numerous other works \cite{maranganti:07,durand:22,thbaut:25}. Secondly, the elastostatic ingredients of the homogenization process are in any case involved in the elastodynamic analysis, and the chosen exposition is easier to follow without being unduly longer than directly treating the elastodynamic case. Thirdly, comparisons between the elastodynamic and elastostatic contexts are made easier.

Numerical experiments in 2D plane-strain elasticity are included to illustrate some of the key features of the methodology, in particular the (expected) second-order asymptotic accuracy of the
effective dispersion relations (through comparisons with reference Floquet-Bloch computations). A more-thorough numerical study,
fully exploiting the results of this work on 3D periodicity cells and the numerous architectures classified in \cite{elouafa}, is deferred to an upcoming separate paper.

This article is organized as follows. Section~\ref{sec:prelim} is devoted to specifying the class of microstructures addressed, summarizing the classical two-scale homogenization approach upon which this work is built and collecting available mathematical material for PDEs on periodicity cells. Then, Section~\ref{sec:homog:zf} addresses the second-order homogenization for elastostatics, the derivation of reciprocity-based expressions of the resulting homogenized stiffness tensors, and the subsequent derivation of a $\vartheta$-dependent family of effective equilibrium PDEs. Building on this foundation, Section~\ref{sec:homog} carries out elastodynamic second-order homogenization, yielding reciprocity-based expressions of the homogenized inertial tensors and a $\vartheta$-dependent family of effective PDEs. In Section~\ref{sec:homo:SGE}, we review the target strain-gradient elasticity (SGE) framework and  show that the foregoing effective PDEs define valid SGE models for both static and dynamic cases, its stiffness and inertial components being in particular proved to have the requisite sign definiteness and symmetry properties for suitably chosen $\vartheta$; the latter properties are in turn used to establish the well-posedness of transient problems in free space governed by the homogenized SGE model. Section~\ref{sec:special} next outlines the resulting practical homogenization procedure, discusses several particular situations as well as a fourth-order-in-time version of the effective PDE and presents the generalized Christoffel equation arising from the homogenized SGE model. Then, numerical experiments on dispersion analysis and transient propagation simulation under two-dimensional plane-strain conditions are presented in Section~\ref{sec:numerics}. Section \ref{sec:proofs} provides the proofs of several key results. Section~\ref{sec:conclusion_perspectives} finally discusses possible directions for future work.



\section{Problem setting and preliminaries}
\label{sec:prelim}

\subsection{Periodic propagation media}
\label{sec:periodic}

We consider $d$-dimensional unbounded periodic elastic media $\OO$, characterized by
\begin{enumerate}[\hspace{0.5em}1.]
    \item a chosen open periodicity cell $\Ycal_\ell = \ell \Ycal$, where $\ell$ is a characteristic length and $\Ycal$ a reference cell in non-dimensional coordinates (assumed for definiteness to be centered at the coordinate origin),
    \item a $d$-uple of non-dimensional basis vectors $\brv_1,\ldots,\brv_d$,
    \item given spatially-varying material parameters in $\Ycal$,
\end{enumerate}
and constituted by the union of all translated copies of the periodicity cell:
\begin{equation}
  \OO = \overline{\cup_{\bfV\in\Zcal} \ell(\Ycal\shp\bfV)}, \qquad
  \brV\shin\Zcal:=\lcb n_1\brv_1\shp\ldots\shp n_d\brv_d,\  (n_1,\ldots,n_d)\shin\mathbb{Z}^d \rcb.
\end{equation}
The countable set $\Zcal$ thus collects all lattice vectors generated by the basis $(\brv_1,\ldots,\brv_d)$, which themselves are chosen such that the translated copies $\Ycal+\brV,\ \brV\shin\Zcal$ of $\Ycal$ are disjoint while their closures cover the entire space $\Rbb^d$.
The constitutive material of $\Ycal$ is assumed to be linearly elastic and modeled as a Cauchy-type continuum, and the material parameters are therefore the mass density $\rho$ and elasticity tensor $\bfC$. The resulting periodic medium $\OO$ is full (i.e. $\OO=\Rbb^d$) if the whole cell $\Ycal$ is filled by material. Perforated periodic media $\OO\ssub:=\overline{\cup_{\bfV\in\Zcal} \ell(\Ycal\ssub\shp\bfV)}$, where the material occupies only a subset $\Ycal\ssub$ of $\Ycal$ and the hole boundaries $\del\OO\ssub$ are traction-free, also fall within the scope of this work; for such media, the non-void part $\Ycal\ssub$ of $\Ycal$ must be such that $\OO\ssub$ is connected. We illustrate the foregoing definitions with the example of a 2D periodic medium whose periodicity cell $\Ycal$ is the square $[-0.5, 0.5]^2$, as depicted in Figure~\ref{fig:wave:unit:cell}.\enlargethispage*{1ex}

\begin{figure}[b]
\centering
	\includegraphics[width=0.4\textwidth]{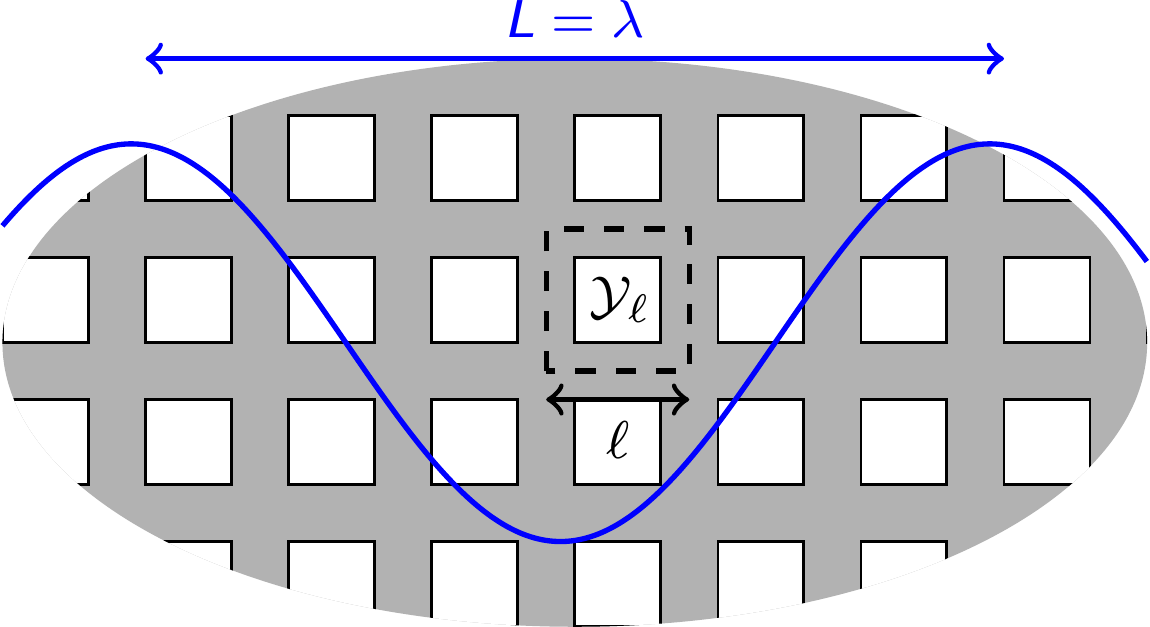}\hspace{0.1\textwidth}
    \includegraphics[width=0.22\textwidth]{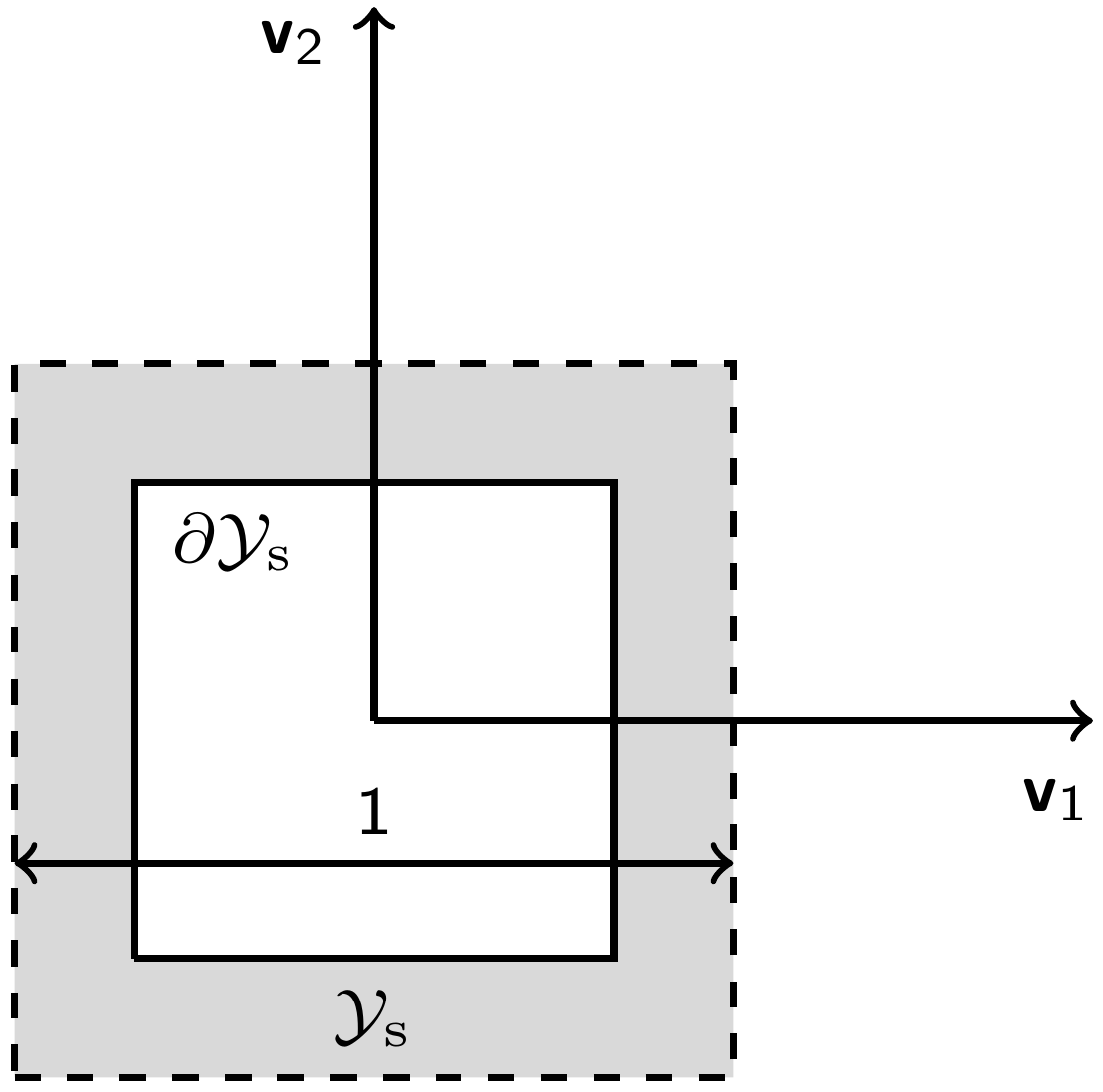}
	\caption{Sketch of an unbounded heterogeneous periodic medium and its periodicity cell $\Ycal$. Left: unbounded (perforated) domain $\Omega$, made of replica of the cell $\Ycal_\ell = \ell\Ycal$ and supporting the propagation of waves with large wavelength $\lambda$. Right: unit cell $\Ycal$ and associated lattice vectors $(\brv_1,\brv_2)$}
    \label{fig:wave:unit:cell}
\end{figure}

The position $\brX$ of a material point of the periodic medium can be expressed as
\begin{equation}
  \brX = \ell(\brV \shp \bfy), \label{slow:fast}
\end{equation}
where $\brV\in\Zcal$ is the lattice vector defining the center of the cell containing $\brX$ and $\bfy \in \Ycal$ is the non-dimensional position relative to that cell center. The material parameters $(\rho,\bfC)$ in $\Ycal$ are only assumed to satisfy the minimal requirements usually made for the variational formulation of elastodynamic problems, namely to be (respectively scalar and tensor-valued) $L^{\infty}(\Ycal)$ functions (i.e. to be essentially bounded in $\Ycal$) that in addition verify for some $\rho\msub,\kappa\msub\shg0$ the sign and ellipticity conditions
\begin{equation}
  \rho(\bfy) \geq \rho\msub, \qquad \bfeps\dip\bfC(\bfy)\dip\bfeps \geq \kappa\msub|\bfeps|^2 \quad \forall \bfeps\in S^{2}(\Rbb^d),\ \bfy\shin\Ycal. \label{sign:rho:C}
\end{equation}
where $S^{2}(\mathbb{V})$ indicates the space generated by the symmetrized tensor product of $\mathbb{V}$ with itself.

The medium $\OO$ is then made materially $\Ycal_{\ell}$-periodic by defining its elasticity tensor $\bfC_\ell$ and mass density $\rho_\ell$ by translating copies of $\rho$ and $\bfC$:
\begin{equation}
  \bfC_\ell(\brX) = \bfC_\ell\lpar \ell(\brV \shp \bfy) \rpar = \bfC(\bfy), \qquad
   \rho_\ell(\brX) = \rho_\ell\lpar \ell(\brV \shp \bfy) \rpar = \rho(\bfy) \qquad (\brV\shin\Zcal,\,\bfy\shin\Ycal) \label{coeff:xy}
\end{equation}


The equations governing the transient elastodynamic displacement field $\bru$ at time $t$ read
\begin{equation}
\begin{aligned}
    &\text{(a) \ }& \bfeps(\brX,t)
    &:= \pinv{2}\lsqb \bfnaX\bru(\brX,t) + \bfnaX\bru(\brX,t)\Tsup \rsqb =: \bfnaX\ssup\bru(\brX,t), \\
	&\text{(b) \ }& \bfsig(\brX,t) &= \bfC_\ell(\brX) \dip \bfeps(\brX,t), \\
	&\text{(c) \ }& \Div\bfsig(\brX,t) &+ \brf(\brX,t) = \rho_\ell(\brX) \bru''(\brX,t)
\end{aligned} \qquad\qquad \brX\shin\Rbb^d,\, t\shgeq0, \label{ch5}
\end{equation}
in which $\brf$ is a prescribed body force density.  In~\eqref{ch5} and hereafter, the prime symbol is used to denote time derivatives, and the gradient operator $\bfnaX$ conventionally adds an extra tensorial order ``to the right'' (so that e.g. the derivative of some tensor $\brT$ along a direction $\brb$ is given by $(\bfna\brT)\sip\brb$). Likewise, iterated gradients $\bfnaX^k$ add $k$ extra tensorial orders ``to the right'' and the divergence operator $\Div(\dotp)$ also acts ``to the right'' of tensors (so that $\Div\brT:=(\bfnaX\brT)\dip\brI$ in the present Cartesian orthonormal coordinate setting). On incorporating the constitutive equation~(\ref{ch5}b) and the compatibility equation~(\ref{ch5}a), the linear momentum balance equation~(\ref{ch5}c) becomes
\begin{equation}
  - \Div \lcb\bfC_\ell(\brX)\dip\bfnaX\ssup\bru(\brX,t) \rcb + \rho_\ell(\brX) \bru''(\brX,t) = \brf(\brX,t). \label{motion}
\end{equation}
Accordingly, applying to~\eqref{motion} the Fourier transform with respect to time yields the motion field equation in the frequency domain:
\begin{equation}
  - \Div \lcb\bfC_\ell(\brX) \dip \bfna\ssup\bruH(\brX,\oo) \rcb - \oo^2 \rho_\ell(\brX) \bruH(\brX,\oo) = \widehat{\brf}(\brX,\oo). \label{equation of motion}
\end{equation}
In the sequel, we drop hat symbols and use the same notation, e.g. $\bru$, for (real-valued) time-dependent fields and their (complex-valued) frequency-domain counterparts.

The corresponding elastostatic equations are then obtained by removing the time dependency in the time-domain equations, i.e. setting $\oo = 0$ in the time-harmonic equations (see Sec.~\ref{sec:mean}).\enlargethispage*{3ex}


\subsection{Two-scale formal asymptotic expansions}
\label{sec:twoscale}

We focus on problems featuring a characteristic length $\macrolength$ assumed to be significantly larger than the linear size $\ell$ of the periodicity cell; see Remark~\ref{rem:Lspec} about the specification of $L$. Scale separation then occurs as the ratio of those characteristic lengths is small, i.e.
\begin{equation}
  \epsilon := \ell / \macrolength \ll 1. \label{scale:sep}
\end{equation}
The scale separation assumption~\eqref{scale:sep} makes it natural, and customary, to apply periodic homogenization and express all fields in the form of two-scale asymptotic expansions in powers of the scale parameter $\epsilon$. For instance, the displacement field is sought in the form
\begin{equation}
\bru_{\epsilon}(\brX) = \sum_{n\geq0} \epsilon^n \bruT^n(\brx,\bfy) = \bruT^0(\brx,\bfy) + \epsilon\bruT^1(\brx,\bfy) +  \epsilon^2\bruT^2(\brx,\bfy) + \epsilon^{3}\ldots, \qquad \brx:=\macrolength^{-1}\brX\in\Rbb^d,\ \bfy\in\Ycal
\label{expansion}
\end{equation}
where the above-defined \emph{slow} variable $\brx$ is the material position measured in macroscopic length scale units, the \emph{fast} variable $\bfy$ (defined as in~\eqref{slow:fast}) is the local position in a cell and is measured in units of linear cell size, and each coefficient $\bruT^n(\brx,\bfy)$ is a function assumed to be $\Ycal$-periodic with respect to the fast variable $\bfy$.

In this study, we focus on the truncated ansatz
\begin{equation}
  \bru_{\epsilon}(\brX)
 = \bruT^0(\brx,\bfy) + \epsilon\bruT^1(\brx,\bfy) +  \epsilon^2\bruT^2(\brx,\bfy) + \epsilon^{3}\bruT^3(\brx,\bfy) + o(\epsilon^3).
 \label{expansion:4term}
\end{equation}
Introducing the mean displacement fields $\brU^n(\brx)$ at each order, defined by
\begin{equation}
   \brU^n(\brx) := \lbra \bruT^n(\brx,\Cdot) \rbra_{\Ycal} \label{meandisp:def}
\end{equation}
with $\lbra \dotp \rbra_{\Ycal}$ denoting the cell averaging operator for (scalar or tensor) functions of the fast coordinate defined by
\begin{equation}
    \lbra f \rbra_{\Ycal} := \inv{|\Ycal|}\iY f(\bfy) \dy, \label{mean:def}
\end{equation}
our main goals are then to formulate effective partial differential equations (PDEs) obeyed (up to a $o(\epsilon^2)$ residual error) by the second-order approximation
\begin{equation}\label{eq:Ueps:def}
	 \brU^{\epsilon}(\brx) := \brU^0(\brx) + \epsilon \brU^1(\brx)  + \epsilon^2 \brU^2(\brx)
\end{equation}
of the macroscopic displacement field, to determine their properties and to study their implementation. The body force density in~\eqref{equation of motion} is assumed in this work to depend only of the slow coordinate, i.e. to have the form
\begin{equation}
  \brf(\brX,t) = \brF(\brx,t). \label{F:slow}
\end{equation}
\begin{remark}\label{rem:Lspec} 
The macroscopic length $\macrolength$ introduced in~\eqref{scale:sep} may in particular be (i) the characteristic fluctuation length of a macroscopic load (such as the size of a sample subjected to smooth boundary loads), for elastostatics, or (ii) a characteristic wavelength of waves propagating in the medium, for elastodynamics. The asymptotic regime $\epsilon \ll 1$ may then reflect at least two physical viewpoints: (i) fixed macroscopic length scale $\macrolength$ and much smaller cell scale $\ell$ (\eg microstructures of vanishingly small characteristic size filling the same fixed macroscopic domain) or (ii) fixed cell scale $\ell$ (\ie given microstructured material) and much larger macroscopic length scale $\macrolength$ (typically wave propagation at frequencies low enough to induce wavelengths $\macrolength\shgg\ell$). We mainly follow viewpoint (ii) in this work, which is however readily adaptable to viewpoint (i).

The macroscopic length $\macrolength$ may (and in this work will) remain largely unspecified while deriving an approximate model valid for $\epsilon$ small. The process to follow, performed for an unbounded medium, uses $L$ to express scale separation and to write expansions in terms of the dimensionless parameter $\epsilon$; the resulting approximate models will be found to only involve $\ell$. As argued in \cite{boutin:93,boutin:96}, $\macrolength$ may then be \emph{a posteriori} given as the characteristic fluctuation length of the macroscopic field $\brU$ (not known a priori), \ie $\macrolength \sheq \Ocal(\min (\|\brU \|/ \|\bfna \brU \|))$, and ranges of $\epsilon \sheq \ell/L$ making the approximate model physically valid discussed on that basis. For harmonic waves, the fluctuation length is $\macrolength \sheq \Ocal(\lambda)$ with $\lambda$ a macroscopic wavelength, so that $\epsilon \sheq \ell/\lambda$.
\end{remark}

\subsection{Outline of homogenization procedure}
\label{sec:homproc}

Expressing $\bfy$ from~\eqref{slow:fast},  any field quantity $\brw(\brX)$ treated as a function $\brwT(\brx,\bfy)$ of the dimensionless slow and fast coordinates verifies $\brw(\brX) = \brwT\lpar \macrolength^{-1}\brX,\, \epsilon^{-1}\macrolength^{-1}\brX-\brV \rpar$, so that its spatial differentiation w.r.t. $\brX$, a generic operation involved in the governing field equations, becomes
\begin{equation}\label{grad:twoscale}
  \bfnaX \brw(\brX)  = \macrolength^{-1} \lpar \bfnax + \epsilon^{-1}\bfnay \rpar \,\brwT(\brx,\bfy).
\end{equation}
On substituting the representation~\eqref{coeff:xy} of the medium parameters together with expansion~\eqref{expansion}, using the differentiation rule \eqref{grad:twoscale},
the equation of motion~\eqref{equation of motion} becomes a dimensionless power series in $\epsilon$. Setting each coefficient of that expansion to zero yields the following cascade of PDE:
\begin{equation}
	\begin{aligned}
	&\Ocal(\epsilon^{-2}): \qquad&   \Lcal_{yy}\{ \bruT^0 \}  &= 0  \vspace{0.2cm} \\
	&\Ocal(\epsilon^{-1}): \qquad& \Lcal_{yy}\{ \bruT^1 \} + \Lcal_{xy}\{ \bruT^0 \} &= 0   \vspace{0.2cm}\\
	&\Ocal(\epsilon^{0 }): \qquad& \Lcal_{yy}\{ \bruT^2 \} + \Lcal_{xy}\{ \bruT^1 \} + \Lcal_{xx}\{ \bruT^0 \} &= \oostr^2 \rho\bruT^0 +  \brFstr    \vspace{0.2cm}\\
	&\Ocal(\epsilon^{1 }): \qquad& \Lcal_{yy}\{ \bruT^3 \} + \Lcal_{xy}\{ \bruT^2 \} + \Lcal_{xx}\{ \bruT^1 \} &= \oostr^2 \rho\bruT^1  \vspace{0.2cm} \\
	&\Ocal(\epsilon^{n }): \qquad& \Lcal_{yy}\{ \bruT^{n+2} \} + \Lcal_{xy}\{ \bruT^{n+1} \} + \Lcal_{xx}\{ \bruT^{n} \} &= \oostr^2 \rho  \bruT^n \quad,\quad ~n\geq 2,
	\end{aligned} \label{cascade epsilon}
\end{equation}
where we use the following operators:
\begin{equation}
\begin{aligned}
  \Lcal_{yy} (\Cdot)
 &:= -\Divy \lcb \bfC\dip\bfepsy(\Cdot) \rcb, \\
  \Lcal_{xy} (\Cdot)
 &:= -\Divx \lcb \bfC\dip\bfepsy(\Cdot) \rcb - \Divy \lcb \bfC\dip\bfepsx(\Cdot) \rcb, \\
  \Lcal_{xx} (\Cdot)
 &:= -\Divx \lcb \bfC\dip\bfepsx(\Cdot) \rcb
\end{aligned}
\end{equation}
in terms of the "fast" and "slow" strain operators \cite{gambin:89}
\begin{equation} \label{diffops:def}
  \bfepsy(\Cdot) := \bfnay\ssup (\Cdot), \quad  \bfepsx(\Cdot) := \bfnax\ssup (\Cdot),
\end{equation}
while the modified versions $\oostr$ and $\brFstr$ of the angular frequency and body force density, which appear as a result of using nondimensional coordinates scaled by $L$ in the field equation of motion (see Remark~\ref{rem:stretch}), are given by
\begin{equation}
  \oostr := L\oo, \qquad \brFstr := \Lii\brF.  \label{oo:F:stretch}
\end{equation}
For future reference, we also introduce the first and second-order ``slow'' strain gradients
\begin{equation} \label{diffops:def:eta:kappa}
  \bfetax(\Cdot) := \bfnax\bfepsx (\Cdot), \qquad \bfkax(\Cdot) := \bfnax\bfetax(\Cdot).
\end{equation}
The index symmetries of those tensors are determined by the structure of the tensor spaces they inhabit\footnote{For complete rigor, 
$\bfkax$ belongs to a subspace of 
$S^{2}(\Cbb^d)\otimes S^{2}(\Cbb^d)$
that is isomorphic to $\Cbb^d\otimes S^{3}(\Cbb^d)$. In the case $d=3$ the dimension of the former is 36, whereas that of the latter is 30.
The distinction between these two subspaces arises from whether the integrability conditions of  $\bfepsx$ are automatically enforced.
Although this remark is not essential to the subsequent developments, it is stated here for the sake of completeness and precision.}, i.e.
\begin{equation}
   \bfepsx\in S^{2}(\Cbb^d),\quad \bfetax\in S^{2}(\Cbb^d)\tens\Cbb^d,\quad \bfkax\in S^{2}(\Cbb^d)\tens S^{2}(\Cbb^d).
\end{equation}

In each equation of order $\Ocal(\epsilon^{n-2})$, the main unknown is the displacement field $\bruT^n$, the fields $\bruT^{n-1}$ and $\bruT^{n-2}$ also featured being determined by lower-order equations. Sequentially solving the above system therefore allows to determine the coefficients of the two-scale expansion \eqref{expansion}, and in particular to define the cell solutions (also known as correctors) featured in~\eqref{sep:var}, as explained in the next sections. Specifically, determining the truncated expansion~\eqref{expansion:4term} will entail solving the first four equations of~\eqref{cascade epsilon} and using the solvability condition for the fifth.
The coefficients $\bruT^n$ will as a result be found to have the separated-variable form
\begin{equation}
\begin{aligned}
  \bruT^0(\brx,\bfy) &= \brU^0(\brx) \\
  \bruT^1(\brx,\bfy) &= \brU^1(\brx) + \bfchi^1(\bfy)\dip\bfepsx\lpar\brU^0(\brx)\rpar \\
  \bruT^2(\brx,\bfy) &= \brU^2(\brx) + \bfchi^1(\bfy)\dip\bfepsx\lpar\brU^1(\brx)\rpar + \bfchi^2(\bfy)\therefore \bfetax\lpar\brU^0(\brx)\rpar + \oostr^2\rhoz\bfzet^2(\bfy)\sip\brU^0(\brx) \\
  \bruT^3(\brx,\bfy) &= \brU^3(\brx) + \bfchi^1(\bfy)\dip\bfepsx\lpar\brU^2(\brx)\rpar
  + \bfchi^2(\bfy)\therefore \bfetax\lpar\brU^1(\brx)\rpar
  + \bfchi^3(\bfy)\qip\bfkax\lpar\brU^0(\brx)\rpar \\
  & \quad + \oostr^2\rhoz \left[\bfzet^2(\bfy)\sip\brU^1(\brx) +  \bfzet^3(\bfy)\dip\bfnax\brU^0(\brx)\right]
\end{aligned} \label{sep:var}
\end{equation}
where $\rhoz = \lbra \rho \rbra_\Ycal$ is the effective mass density (as justified later); the three \emph{cell functions} $\bfchi^1, \bfchi^2, \bfchi^3$ and the two \emph{inertial cell functions} $\bfzet^2$, $\bfzet^3$, periodic and zero-mean over $\Ycal$, solve \emph{cell problems} (to be formulated); and the mean displacements $\brU^n$ verify macroscopic balance equations (also to be formulated).\enlargethispage*{5ex}

\begin{remark}
In anticipation of one of our main results, we will show that only two cell functions $\bfchi^1, \bfchi^2$ and one inertial function $\bfzet^2$ are ultimately needed to build an effective SGE governing model for the second-order approximation $\brU^{\epsilon}$ defined by~ \eqref{eq:Ueps:def}. Accordingly, $\bruT^3(\brx,\bfy)$ will not be explicitly involved in that final model, as suggested by definition~\eqref{eq:Ueps:def}, although the equations satisfied by $\bruT^3(\brx,\bfy)$, $\bfchi^3$ and $\bfzet^3$ are needed to establish it. 
\end{remark}

\begin{remark}
The chosen zero-mean requirement for the cell solutions is not mandatory, albeit classical in the literature; a notable exception is~\cite{abdulle:20} where additional tunable averages are introduced to build families of well-posed models. In this study, we retain the zero-mean convention as it will be useful to highlight simplifications arising in special cases.
\end{remark}

\begin{remark}\label{rem:stretch}
The homogenization approach involves, of necessity, a small dimensionless parameter $\epsilon$, which makes it natural to use the PDEs~\eqref{cascade epsilon} acting on dimensionless coordinates as a basis for the homogenization process. The factor $\Lii$ on $\oo^2$ and $\brF$ induced by coordinate stretching is then absorbed for convenience in their stretched counterparts defined by~\eqref{oo:F:stretch} (which carry modified units) in the interest of limiting notational clutter in the forthcoming derivations of Sections~\ref{sec:homog:zf} and~\ref{sec:homog}, planning to revert to the original $\oo,\brF$ when interpreting the effective PDEs as SGE models in Section~\ref{sec:homo:SGE}. A valid alternative would consist in writing a fully dimensionless version of the equation of motion (using e.g. $L,\kappa\msub,\rho\msub$ as characteristic quantities), at the notational expense of carrying scaling material factors $\kappa\msub,\rho\msub$ when reverting to dimensional effective models.
%
%
%
\end{remark}

Combining the macroscopic balance equations satisfied by the fields $\brU^n$, we will derive effective PDEs obeyed (up to a $\Ocal(\epsilon^3)$ residual) by the second-order approximation~\eqref{eq:Ueps:def}, that will be shown (i) to have the form:
\begin{equation}
  \sfQ\brU^{\epsilon} + \sfR\brU^{\epsilon}{}'' = \brF \label{eq:PDE:general}
\end{equation}
(or its time-harmonic counterpart), (ii) to satisfy well-posedness requirements, and (iii) to be interpretable as PDEs modeling continua endowed with strain gradient elasticity (SGE) properties, whose format is also~\eqref{eq:PDE:general}. The elastic and inertial partial differential operators (PDOs) $\sfQ,\sfR$ acting on the relevant spatial coordinates will be found to be respectively of fourth and second order and to have, for a generic vector field $\brU$, the generic form
\begin{equation}
\begin{aligned}
  \sfQ\brU &= \sfQ[\brQ^0,\brQ^1,\brQ^2]\brU \hspace{-1em}
 && :=- \Div\lcb \brQ^0\dip\bfeps(\brU) + \brQ^1 \therefore \bfeta(\brU) - \Div\lpar \brQ^2 \therefore \bfeta(\brU) \rpar \rcb, \\
  \sfR\brU &= \sfR[\varrho,\brR^1,\brR^2]\brU 
 &&:= \varrho\brU + \brR^1\dip\bfna\brU - \Div(\brR^2\dip\bfna\brU)
\end{aligned}. \label{PDO:generic}
\end{equation}
The scalar parameter $\varrho>0$ and the tensor parameters $\brQ^0,\brQ^1,\brQ^2$ and $\brR^1,\brR^2$ are to be specified, and their properties investigated, as part of the forthcoming developments, and the signs in~\eqref{PDO:generic} are chosen so that $\sfQ,\sfR$ are positive when those parameters have the desired sign properties.
The generic PDOs~\eqref{PDO:generic} verify for any scalar and tensor parameters the coordinate-scaling identities
\begin{equation}
  \sfQ_{\rmx}[\brQ^0,\epsilon\brQ^1,\epsilon^2\brQ^2] = L^2\sfQ_{\rmX}[\brQ^0,\ell\brQ^1,\ell^2\brQ^2], \qquad
  \sfR_{\rmx}[\varrho,\epsilon\brR^1,\epsilon^2\brR^2] = \sfR_{\rmX}[\varrho,\ell\brR^1,\ell^2\brR^2] \label{PDO:scaling}
\end{equation}
(where the subscripts $\rmx$ and $\rmX$ specify differentiation with respect to dimensionless and dimensional slow coordinates, respectively) which will play an major role for interpreting as SGE models the effective PDEs yielded by homogenization.


\subsection{Mathematical framework for PDEs on the periodicity cell}

In preparation for this solution approach, we collect well-known generic concepts and results for PDEs on the periodicity cell for later reference, and in particular introduce the Sobolev space in which the coefficients of expansion~\eqref{expansion:4term} will be sought. We first recall that the $L^2(\Xcal)$ and $H^1(\Xcal)$ norms of some (possibly vector- or tensor-valued) function $\bfw$ on some open domain $\Xcal\subseteq\Rbb^d$ are respectively defined as
\begin{equation}
  \|\bfw\|^2_{L^2(\Xcal)} = \int_{\Xcal} |\bfw|^2 \dy, \qquad
  \|\bfw\|^2_{H^1(\Xcal)} = \|\bfw\|^2_{L^2(\Xcal)} + \|\bfnay\bfw\|^2_{L^2(\Xcal)},
\end{equation}
where $\bfw\shin L^2(\Xcal)$ needs only to have a weak derivative $\bfnay\bfw\shin L^2(\Xcal)$ (i.e. $\bfw$ is not necessarily differentiable, or even continuous, in $\Xcal$). In the sequel, boldface notation will indicate spaces of vector-valued functions, e.g. $\bfL^2(\Ycal):=L^2(\Ycal;\Rbb^d)$ or $\bfH^1(\Rbb^d):=H^1(\Rbb^d;\Rbb^d)$. 
Then, $\bHper(\Ycal)$ denotes the closure for the $H^1(\Ycal)$ norm of the space $\bfC^{\infty}_{\text{per}}(\Ycal)$ of all vector-valued $C^{\infty}(\Rbb^d)$ functions that are $\Ycal$-periodic (with $\Hper(\Ycal;\Rbb^d\tens\Rbb^d)$ defined similarly), and $\bHzm(\Ycal):= \lcb \bfw\in \bHper(\Ycal),\ \langle \bfw \rangle_{\Ycal} = \bfze \rcb$ is the subspace of all zero-mean functions in $\bHper(\Ycal)$.
%

The solution process will use the fact that each $\Ocal(\epsilon^{n-2})$ equation ($n\shgeq0$) in the cascade~\eqref{cascade epsilon} has the form
\begin{equation}
    \text{Find }\bruT^n \text{ such that} \qquad \bruT^n(\brx,\Cdot) \in\bHper(\Ycal), \quad
  \Lcal_{yy}\{ \bruT^n \} = \brg^n + \Divy\brh^n, \label{PDE:generic}
\end{equation}
where the right-hand sides $\brg^n(\brx,\Cdot)$ and $\brh^n(\brx,\Cdot)$ of~\eqref{PDE:generic} depend on $\bruT^{n-1},\bruT^{n-2}$. Equations \eqref{PDE:generic} can thus be solved sequentially, each $\bruT^n$ being sought as a $\Ycal$-periodic function of the fast variable $\bfy$ that is parameterized (through the right-hand side) by the slow variable $\brx$. The unique solvability result~\cite{blp:78} for the generic cell problem~\eqref{PDE:generic}, and the associated variational problem on $\Ycal$, are given in the following lemma:
\begin{lemma}[PDE problems on $\Ycal$: solvability, variational formulation]\label{lem:fredholm}
The problem
\begin{equation}
	\Lcal_{yy} \{\bfw\} = \bfg + \Divy\bfh \quad\text{in }\Ycal, \qquad \bfw \ \Ycal\text{-periodic}, \label{cell:generic}
\end{equation}
with given $\bfg\in \bfL^2(\Ycal)$ and $\bfh\in \Hper(\Ycal,\Rbb^d\tens\Rbb^d)$, has a unique solution $\bfw\in \bHzm(\Ycal)$ if and only if the right hand side satisfies the compatibility condition
\begin{equation}
  \lbra \bfg + \Divy\bfh \rbra_{\Ycal} = 
    \lbra \bfg \rbra_{\Ycal} = \bfze \label{g:zeromean}
\end{equation}
(since any $\bfh\in \Hper(\Ycal,\Rbb^d\tens\Rbb^d)$ verifies $\lbra \Divy\bfh \rbra_{\Ycal}=\bfze$), by virtue of the Fredholm alternative. In that case, the zero-mean solution of~\eqref{cell:generic} subject to~\eqref{g:zeromean} is also the unique solution of the well-posed variational problem
\begin{equation}
  \text{Find }\bfw\in\bHzm(\Ycal), \qquad A(\bfw,\bfv) = F(\bfv) \quad \text{for all }\bfv\in\bHzm(\Ycal), \label{cell:generic:weak}
\end{equation}
with the (continuous and symmetric) bilinear form $A$ and the (continuous) linear functional $F$ defined by
\begin{equation}
  A(\bfw,\bfv) := \iY \bfepsy(\bfw)\dip\bfC\dip\bfepsy(\bfv) \dy, \qquad F(\bfv) := \iY \lpar \bfg\cdot\bfv - \bfh\dip\bfnay\bfv \rpar \dy.
  \label{F:generic}
\end{equation}
All solutions $\bfw\in\bHper(\Ycal)$ of~\eqref{cell:generic} subject to~\eqref{g:zeromean} are then of the form $\bfw=\bfw_{\sharp}+\boldsymbol{c}$, where $\bfw_{\sharp}$ is the unique solution of~\eqref{cell:generic:weak} and $\boldsymbol{c} = \lbra\bfw\rbra_\Ycal\in\Rbb^d$ is arbitrary.
\end{lemma}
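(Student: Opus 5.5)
The plan is to argue variationally on $\bHzm(\Ycal)$ via Lax--Milgram and then recover the strong-form statements by density and periodicity.

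\textbf{Necessity of the compatibility condition.} Suppose $\bfw\in\bHper(\Ycal)$ solves~\eqref{cell:generic}. Averaging~\eqref{cell:generic} over $\Ycal$, the left-hand side $\Lcal_{yy}\{\bfw\}=-\Divy\{\bfC\dip\bfepsy(\bfw)\}$ is the divergence of a $\Ycal$-periodic flux, so its mean vanishes. This is seen weakly by testing against constant vectors, which belong to $\bHper(\Ycal)$ and have zero strain. The same argument gives $\lbra\Divy\bfh\rbra_\Ycal=\bfze$, since the boundary contributions on opposite faces of $\Ycal$ cancel by periodicity. Hence $\lbra\bfg\rbra_\Ycal=\bfze$ is necessary.

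\textbf{Existence and uniqueness.} Multiply by a test field $\bfv\in\bHper(\Ycal)$ and integrate by parts, using periodicity to discard boundary terms and the symmetry of $\bfC$ to replace $\bfnay\bfv$ by $\bfepsy(\bfv)$. This yields $A(\bfw,\bfv)=F(\bfv)$.

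Continuity of $A$ and $F$ on $\bHper(\Ycal)$ follows from $\bfC\in L^\infty$, $\bfg\in\bfL^2$, $\bfh\in L^2$ and Cauchy--Schwarz. Symmetry of $A$ follows from the major symmetry of $\bfC$.

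The key step, and the main obstacle, is coercivity of $A$ on $\bHzm(\Ycal)$. By~\eqref{sign:rho:C}, $A(\bfv,\bfv)\ge\kappa\msub\|\bfepsy(\bfv)\|^2_{L^2}$. We then need a periodic Korn inequality, $\|\bfnay\bfv\|_{L^2}\le C\|\bfepsy(\bfv)\|_{L^2}$ for periodic $\bfv$, combined with a Poincar\'e--Wirtinger inequality, $\|\bfv\|_{L^2}\le C\|\bfnay\bfv\|_{L^2}$ for zero-mean $\bfv$.

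For the periodic Korn inequality I would use Fourier series. For smooth periodic $\bfv=\sum_{\brk}\hat\bfv_{\brk}e^{2\pi\rmi\brk\cdot\bfy}$, with $\brk$ ranging over the dual lattice, one has $|\brk\otimes\hat\bfv+\hat\bfv\otimes\brk|^2/4=\tfrac12(|\brk|^2|\hat\bfv|^2+|\brk\cdot\hat\bfv|^2)\ge\tfrac12|\brk|^2|\hat\bfv|^2$. Summing over $\brk$ gives $\|\bfepsy(\bfv)\|^2\ge\tfrac12\|\bfnay\bfv\|^2$, and the estimate extends to $\bHper$ by density. The zero-mean condition removes the $\brk=\bfze$ mode, and rigid rotations are excluded because they are not periodic. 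This yields the Poincar\'e bound as well.

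In the perforated case $\Ycal\ssub$, I would instead invoke Korn's inequality on the connected periodic domain. This is the delicate point, and I would rely on the standard references.

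With coercivity in hand, Lax--Milgram gives a unique $\bfw\in\bHzm(\Ycal)$ solving~\eqref{cell:generic:weak}.

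\textbf{From the zero-mean problem back to~\eqref{cell:generic}.} Any $\bfv\in\bHper$ decomposes as $\bfv=\bfv_0+\boldsymbol{c}$ with $\bfv_0$ zero-mean and $\boldsymbol{c}$ constant. We have $A(\bfw,\boldsymbol{c})=0$, and $F(\boldsymbol{c})=|\Ycal|\,\lbra\bfg\rbra\cdot\boldsymbol{c}=0$ exactly under~\eqref{g:zeromean}. Hence the variational identity holds for all periodic test fields, i.e.~\eqref{cell:generic} holds in the weak periodic sense. This closes the equivalence between~\eqref{cell:generic} subject to~\eqref{g:zeromean} and~\eqref{cell:generic:weak}.

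\textbf{Characterization of all solutions.} Let $\bfw\in\bHper$ be any solution. The difference $\bfw-\lbra\bfw\rbra_\Ycal$ is zero-mean and solves~\eqref{cell:generic:weak}, so by uniqueness it equals $\bfw_\sharp$. Conversely, adding a constant $\boldsymbol{c}$ to $\bfw_\sharp$ does not change $\bfepsy$, so $\bfw_\sharp+\boldsymbol{c}$ is again a solution. This gives $\bfw=\bfw_\sharp+\boldsymbol{c}$ with $\boldsymbol{c}=\lbra\bfw\rbra_\Ycal$ arbitrary.
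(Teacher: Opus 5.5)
Your proof is correct, but it takes a different route from the paper. The paper gives no argument of its own and simply appeals to the Fredholm alternative, citing \cite{blp:78}. Along that route one treats $\Lcal_{yy}$ on $\Ycal$-periodic fields as a self-adjoint Fredholm operator of index zero. This requires a G{\aa}rding inequality (Korn's inequality plus an $\bfL^2$ shift) together with the compact embedding of $\bHper(\Ycal)$ into $\bfL^2(\Ycal)$. One then identifies the kernel as the constant vectors (a periodic rigid displacement is constant) and reads the compatibility condition as orthogonality of the data to that kernel.

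You bypass this machinery entirely. Restricting to the closed subspace $\bHzm(\Ycal)$, you prove coercivity directly: Korn with the explicit constant $\tfrac12$, and Poincar\'e--Wirtinger with a constant set by the shortest nonzero dual-lattice vector. Both come from Fourier series, which also covers non-rectangular cells such as the hexagonal ones, since only the dual lattice enters. You then invoke Lax--Milgram and recover the equation against all periodic test fields from $A(\bfw,\boldsymbol{c})=0$ and $F(\boldsymbol{c})=|\Ycal|\lbra\bfg\rbra_{\Ycal}\cdot\boldsymbol{c}$.

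Your route buys three things. It is self-contained, with no compactness argument. It gives an explicit a priori bound $\|\bfw\|_{H^1(\Ycal)}\leq C(\|\bfg\|_{L^2(\Ycal)}+\|\bfh\|_{L^2(\Ycal)})$, with $C$ depending only on $\kappa\msub$ and the lattice. And the variational characterization asserted in the lemma falls out of the existence proof itself, using only $\bfh\in L^2(\Ycal)$.

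The Fredholm route buys robustness with respect to geometry. Your Fourier proof of Korn's inequality is tied to the full cell. For the perforated cells of Section~\ref{sec:special:perforated}, Korn's inequality on the connected periodic material domain is normally proved by exactly the compactness-plus-kernel argument underlying the Fredholm alternative. Those cells lie outside the lemma as stated, since there $\bfC$ is not elliptic on all of $\Ycal$, so deferring to the literature at that point is the right call.
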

As usual, the symmetry of the bilinear form $A$ gives rise to a reciprocity theorem, which plays a major role in this study. We hence state it here in a generic form suitable for this work:
\begin{lemma}[Reciprocity identity]\label{lem:recip}
Let $\bfw_1,\bfw_2\in \bHzm(\Ycal)$ be zero-mean solutions of variational cell problems of the form~\eqref{cell:generic:weak}, with right-hand sides $F_1,F_2$ of the form~\eqref{F:generic} associated with respective data $(\bfg_1,\bfh_1),\,(\bfg_2,\bfh_2)$. Then, $\bfw_1,\bfw_2$ satisfy the reciprocity identity
\begin{equation}
  F_1(\bfw_2) - F_2(\bfw_1) = 0. \label{cell:generic:recip}
\end{equation}
\end{lemma}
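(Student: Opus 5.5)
The identity follows from the symmetry of the bilinear form $A$ in~\eqref{F:generic} together with the variational characterization in Lemma~\ref{lem:fredholm}. By hypothesis, $\bfw_1$ and $\bfw_2$ both lie in $\bHzm(\Ycal)$ and are the unique solutions of~\eqref{cell:generic:weak} with right-hand sides $F_1$ and $F_2$. Hence
\begin{equation}
  A(\bfw_1,\bfv) = F_1(\bfv), \qquad A(\bfw_2,\bfv) = F_2(\bfv) \qquad \text{for all } \bfv\in\bHzm(\Ycal).
\end{equation}
Both solutions are admissible test functions in each other's problem. Taking $\bfv=\bfw_2$ in the first equation and $\bfv=\bfw_1$ in the second gives $F_1(\bfw_2)=A(\bfw_1,\bfw_2)$ and $F_2(\bfw_1)=A(\bfw_2,\bfw_1)$.

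It remains to check that $A(\bfw_1,\bfw_2)=A(\bfw_2,\bfw_1)$. Since $\bfepsy(\bfw_1)$ and $\bfepsy(\bfw_2)$ are symmetric second-order tensors, this follows pointwise in $\Ycal$ from the major symmetry $C_{ijkl}=C_{klij}$ of the elasticity tensor. This symmetry is implicit in the Cauchy linear-elastic (hyperelastic) constitutive assumption of Section~\ref{sec:periodic}, and it is exactly what underlies the statement ``continuous and symmetric'' in Lemma~\ref{lem:fredholm}. Because $\bfC\in L^\infty(\Ycal)$ and $\bfw_1,\bfw_2\in\bfH^1$, the integrand $\bfepsy(\bfw_1)\dip\bfC\dip\bfepsy(\bfw_2)$ is integrable, so all quantities are well defined. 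Subtracting the two relations then yields $F_1(\bfw_2)-F_2(\bfw_1)=0$, which is~\eqref{cell:generic:recip}.

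The only point needing care is that the test space in~\eqref{cell:generic:weak} is $\bHzm(\Ycal)$ rather than $\bHper(\Ycal)$. This is harmless here because $\bfw_1$ and $\bfw_2$ are assumed to be zero-mean. If one instead wanted the identity for non-zero-mean solutions $\bfw_\alpha+\boldsymbol{c}_\alpha$, the extra terms would be $\boldsymbol{c}_2\cdot\int_{\Ycal}\bfg_1\dy$ and $\boldsymbol{c}_1\cdot\int_{\Ycal}\bfg_2\dy$. Both vanish by the compatibility condition~\eqref{g:zeromean}, while the $\bfh$ terms do not see constants. This remark may be added for later use, but it is not required for the statement as posed. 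There is no real obstacle; the main thing to make explicit is the major symmetry of $\bfC$.
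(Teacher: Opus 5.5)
Your proof is correct and is the same argument the paper relies on. The paper gives no written proof and simply attributes the identity to the symmetry of $A$; your cross-testing of each variational problem with the other solution, giving $F_1(\bfw_2)=A(\bfw_1,\bfw_2)=A(\bfw_2,\bfw_1)=F_2(\bfw_1)$, makes that argument explicit, and your remarks on the major symmetry of $\bfC$ and on non-zero-mean solutions are accurate.
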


\section{Elastostatic second-order homogenization}
\label{sec:homog:zf}

In this section, we carry out the formal homogenization process outlined in Section~\ref{sec:homproc} on the elastostatic case. We effect in Section~\ref{sec:cascade:zf} the sequential PDE solution process for the time-independent case, thereby establishing expressions~\eqref{sep:var} with $\oostr\sheq0$, macroscopic balance equations obeyed by $\brU^0$, $\brU^1$ and $\brU^2$ and effective tensors featured in them. We then derive reciprocity identities for the latter (Sec.~\ref{sec:recip:zf}) and obtain a one-parameter family of $\Ocal(\epsilon^2)$ effective PDEs satisfied by the second-order approximation~\eqref{eq:Ueps:def} of the mean field (Sec.~\ref{sec:mean:zf}). 

\subsection{Homogenization procedure}
\label{sec:cascade:zf}

The elastostatic homogenization procedure rests on sequentially solving equation~\eqref{PDE:generic} for $n=0,1,2\ldots$ and with $\oostr=0$. The generic form of the elastostatic $\Ocal(\epsilon^{n-2})$ problem is
\begin{equation}
    \text{Find }\bruT^n \text{ such that} \qquad \bruT^n(\brx,\Cdot) \in\bHper(\Ycal), \quad
  \Lcal_{yy}\{ \bruT^n \} = \brg^n + \Divy\brh^n, \label{pde:generic}
\end{equation}
the components $\brg^n$ and $\brh^n$ of the right-hand side being given by
\begin{equation}
  \brg^n = \Divx \lcb \bfC\dip\lsqb \bfepsy(\bruT^{n-1}) + \bfepsx(\bruT^{n-2}) \rsqb \rcb + \delta_{n2}\brFstr, \quad
  \brh^n = \bfC\dip\bfepsx(\bruT^{n-1})  \label{zf:rhs:generic}
\end{equation}
(with the convention $\bruT^{-1}=\bruT^{-2}=0$). The solution procedure at each order then consists in (i) enforcing the solvability condition~\eqref{g:zeromean}, i.e. $\lbra \brg^n(\brx,\Cdot) \rbra_{\Ycal}=0$, which produces the $\Ocal(\epsilon^{n-2})$ macroscopic balance equation, (ii) recasting $\brg^n$ as $\brg^n-\lbra\brg^n\rbra_{\Ycal}$ in~\eqref{zf:rhs:generic} and (iii) seeking the general solution $\bruT^n$ of problem~\eqref{pde:generic}-\eqref{zf:rhs:generic}. Once $\bruT^n$ in hand, the sequential process can move to the next order. Step (ii) allows to rewrite the right-hand side of problems~\eqref{pde:generic}-\eqref{zf:rhs:generic} in a form that automatically satisfies the solvability condition~\eqref{g:zeromean} while confining the explicit appearance of the macroscopic-scale body force $\brFstr$ to the macroscopic balance equations.\enlargethispage*{3ex}

\subsubsection{Determination of $\bruT^0$}
\label{sec:cascade1:zf}

The sequential solution process starts by solving problem~\eqref{pde:generic}-\eqref{zf:rhs:generic} with $n=0$ governing the leading-order term $\bruT^0$ of expansion~\eqref{expansion:4term}. Since $\brg^0=\brh^0=\bfze$, condition~\eqref{g:zeromean} is trivially satisfied (so that no $\Ocal(\epsilon^{-2})$ balance equation is produced) and Lemma~\ref{lem:fredholm} readily provides that all $\Ycal$-periodic solutions $\bruT^0(\brx,\bfy)$ are constant with respect to $\bfy$,
and hence have the form
\begin{equation}
    \bruT^0(\brx,\Cdot) = \brU^0(\brx) \label{U0}
\end{equation}
for some vector function $\brU^0$ to be determined. From a physical point of view, the macroscopic field $\brU^0$ can be seen as large-scale modulations of local translational rigid-body motions for each periodicity cell.

\subsubsection{Determination of $\bruT^1$ and first cell problem}
\label{sec:cascade2:zf}

The next term $\bruT^1$ of expansion~\eqref{expansion:4term} solves problem~\eqref{pde:generic}-\eqref{zf:rhs:generic} with $n=1$, whose right-hand side is given, using expression~\eqref{U0} of $\bruT^0$, by
\begin{equation}
  \brg^1 = \bfze, \qquad \brh^1 = \bfC \dip \bfepsx(\brU^0) \label{pb2:rhs}
\end{equation}
and thus trivially satisfies the solvability condition~\eqref{g:zeromean}. Thus no $\Ocal(\epsilon^{-1})$ balance equation is produced, and problem~\eqref{pde:generic}-\eqref{pb2:rhs} is
solvable for $\bruT^1$ by virtue of Lemma~\ref{lem:fredholm}. Its general solution has, by linearity, the form
\begin{equation}
	\bruT^1(\brx,\bfy)   = \brU^1(\brx) + \bfchi^1(\bfy)\dip\bfepsx\lpar\brU^0(\brx)\rpar. \label{ansatz1}
\end{equation}
This ansatz is inserted in $\Lcal_{yy}\{ \bruT^1 \} = \Divy\brh^1$, which must then hold for any value of the constant (in $\bfy$) tensor $\bfepsx\lpar\brU^0(\brx)\rpar$; this requires the tensor field $\bfchi^1$ (of order 3) to solve the following problem:
\renewcommand{\thmtitre}{First cell problem}
\begin{boxthm}
We call ``first cell problem'' the set of $d(d\shp1)/2$ PDEs on the periodicity cell defined by
\begin{equation}
  \text{Find }\bfchi^{1k\ell} \in \bHzm(\Ycal), \qquad
  \Lcal_{yy}\{\bfchi^{1k\ell}\} = \Divy  \lcb \bfC \dip (\bfe_k\tens\bfe_{\ell}) \rcb \qquad \text{in }\Ycal
  \qquad (1\leq k\leq \ell \leq d). \label{cell_pb 1}
\end{equation}
They have the format~\eqref{cell:generic} with $\bfg=\bfze$ and $\bfh=\bfh^{k\ell}=\bfC\dip(\bfe_k\tens\bfe_{\ell})$. Each $\bfw=\bfchi^{1k\ell}$ solves a variational problem of the form~\eqref{cell:generic:weak}, its linear functional $F = F^{k\ell}$ being defined by
\begin{equation}
  F^{k\ell}(\bfv) := -\iY (\bfe_k\tens\bfe_{\ell})\dip\bfC\dip\bfepsy(\bfv) \dy =  - \iY  C_{k\ell rs} v_{r,s} \dy.
\label{weak form chi1 index}
\end{equation}
Since $\bfh^{k\ell}=\bfh^{\ell k}$, we then have $\bfchi^{1k\ell}=\bfchi^{1\ell k}$ for $k\shg \ell$.
\end{boxthm}
The (symmetric) loading in equation~\eqref{cell_pb 1} corresponds to a prescribed constant strain $\bfepsb^{k\ell} = \bfe_k \stens \bfe_{\ell}$. In addition, let the cell stress tensor $\bSz=\bSz(\bfy)$ (of order 4) be defined in such a way that
\begin{equation}
  \bfC\dip\lsqb \bfepsy\lpar\bfchi^1\dip\bfeps \rpar+ \bfeps \rsqb = \bSz\dip\bfeps \label{S0:prop}
\end{equation}
for any constant tensor $\bfeps\in S^2(\Rbb^d)$. For any pair $(k,\ell)$ of loading indices, $\bSz$ is then defined by
\begin{equation}
  \bSz^{k\ell}(\bfy) = \bfC(\bfy)\dip\big[ \bfepsy\lpar\bfchi^{1k\ell}(\bfy) \rpar + \bfe_k\stens\bfe_{\ell} \big],\qquad\text{i.e.}\quad
	\Scal_{ij}^{0k\ell} =  C_{ijpq}\chi^{1k\ell}_{p,q} + C_{ijk\ell}, \label{S0:def}
\end{equation}
Equation~\eqref{cell_pb 1} therefore expresses that the cell stress $\bSz^{k\ell}$ satisfies $\Divy\bSz^{k\ell}(\bfy) = \bfze$, i.e. is self-equilibrated in $\Ycal$. Consequently, so is the stress $\bfC\dip[\bfepsy (\bruT^1) \shp \bfepsx(\bruT^0)]=\bSz\dip\bfepsx(\bruT^0)$ induced in $\Ycal$ by $\bruT^1$ given by~\eqref{ansatz1}.

\subsubsection{Leading-order balance equation and homogenized tensor}

Then, the subsequent term $\bruT^2$ of expansion~\eqref{expansion:4term} solves problem~\eqref{pde:generic}-\eqref{zf:rhs:generic} with $n=2$. Using the expressions~\eqref{U0} of $\bruT^0$ and~\eqref{ansatz1} of $\bruT^1$ and the defining property~\eqref{S0:prop} of the cell stress tensor $\bSz$ in~\eqref{zf:rhs:generic}, its right-hand side is given by
\begin{equation}
  \brg^2 = \Divx \lcb\bSz\dip\bfepsx(\brU^0) \rcb + \brFstr, \qquad 
  \brh^2 = \bfC \dip \big[ \bfepsx(\brU^1) +  \bfchi^1\dip\bfetax(\brU^0) \big]. \label{zf:pbesp1}
\end{equation}
Problem~\eqref{pde:generic}-\eqref{zf:rhs:generic} with~\eqref{zf:pbesp1} is solvable for $\bruT^2$ provided the compatibility condition $\lbra \brg^2 \rbra_{\Ycal}=\bfze$, is satisfied; this provides the $\Ocal(\epsilon^0)$ macroscopic balance equation
\begin{equation}
	\Divx \lcb \brCz \dip \bfepsx(\brU^0) \rcb + \brFstr = \bfze,
	 \label{zf:macroscopic balance u0}
\end{equation}
where $\brCz$ is the effective elasticity tensor given by
\begin{equation}
  \brCz = \lbra \bSz \rbra_{\Ycal},
  \qquad\text{i.e.}\quad
   \CZ_{ijk\ell} = \lbra C_{ijpq} \chi^{1k\ell}_{p,q} + C_{ijk\ell} \rbra_{\Ycal} \label{C0:comp}
\end{equation}
in component form. $\brCz$ 
has the following ellipticity property: there exists $\kappa^0>0$ such that
\begin{equation}
  \bfeps\dip\brCz\dip\bfeps \geq \kappa^0 |\bfeps|^2 \qquad \text{for all } \bfeps\in S^{2}(\Rbb^d), \label{aux15z}
\end{equation}
see e.g. \cite[Thm.~10.11]{ciora:donato:99} for a proof.

\subsubsection{Determination of $\bruT^2$ and second cell problem}

We now return to the governing problem for $\bruT^2$, and take advantage of the solvability condition $\lbra \brg^2 \rbra_{\Ycal}=0$ by rewriting the governing PDE in~\eqref{pde:generic} as $\Lcal_{yy}\{ \bruT^2 \}=\brg^2-\lbra \brg^2 \rbra_{\Ycal} + \Divy\brh^2$. Recalling~\eqref{zf:pbesp1} and~\eqref{zf:macroscopic balance u0},
$\bruT^2$ is thus found to obey
\begin{equation}
  \Lcal_{yy}\{ \bruT^2 \}
 = \Divx\lcb (\bSz\shm\brCz) \dip \bfepsx(\brU^0) \rcb
  + \Divy \lcb \bfC \dip \big[ \bfepsx(\brU^1) +  \bfchi^1\dip\bfetax(\brU^0) \big] \rcb. \label{zf:moch}
\end{equation}
In view of the form of its right-hand side, the general $\Ycal$-periodic solution of problem~\eqref{zf:moch} has, by Lemma~\ref{lem:fredholm} and linear superposition, the form
\begin{equation}
 \bruT^2(\brx,\bfy) = \brU^2(\brx) + \bfchi^1(\bfy) \dip \bfepsx\lpar \brU^1(\brx) \rpar + \bfchi^2(\bfy) \therefore \bfetax\lpar\brU^0(\brx)\rpar,
 \label{zf:ansatz2}
\end{equation}
where $\brU^2(\brx):=\lbra \bruT^2(\brx,\Cdot) \rbra_{\Ycal}$ is the cell average of $\bruT^2$ and $\bfchi^1$ solves the first cell problem~\eqref{cell_pb 1}. The latter fact in particular ensures that the cofactors of $\bfepsx(\brU^1)$ in both sides of~\eqref{zf:moch} cancel. Equation~\eqref{zf:moch} hence becomes
\begin{equation}
  \Lcal_{yy}\lcb \bfchi^2\therefore \bfetax(\brU^0) \rcb
  = \Divx\lcb (\bSz\shm\brCz) \dip \bfepsx(\brU^0) \rcb + \Divy  \lcb  \bfC\dip \lpar \bfchi^1\dip\bfetax(\brU^0) \rpar \rcb,
\end{equation}
which must hold for any (constant in $\bfy$) tensor $\bfetax(\brU^0)$; this leads to define $\bfchi^2$ by the following cell problem:\enlargethispage*{5ex}

\renewcommand{\thmtitre}{Second cell problem}
\begin{boxthm}
We call ``second cell problem'' the set of $d^2(d+1)/2$ PDEs on $\Ycal$ defined by
\begin{multline}
  \text{Find }\bfchi^{2k\ell m} \in \bHzm(\Ycal), \qquad
  \Lcal_{yy}\{\bfchi^{2k\ell m}\}
 = \bfg^{k\ell m} + \Divy\bfh^{k\ell m} \quad \text{in }\Ycal \qquad (1\shleq k\shleq \ell \shleq d,\ 1\shleq m\shleq d) \\
  \text{with} \qquad
  \bfg^{k\ell m} = (\bSz-\brCz)\therefore\lpar \bfe_m\tens(\bfe_k\stens\bfe_{\ell}) \rpar, \qquad
  \bfh^{k\ell m} = \bfC\dip(\bfchi^{1k\ell}\tens\bfe_m). \label{zf:load:second}
\end{multline}
They have the format~\eqref{cell:generic}.
The loadings $\bfg^{k\ell m},\bfh^{k\ell m}$ are defined in terms of the cell solution $\bfchi^1$ and the material field $\bfC$. Each $\bfw=\bfchi^{2k\ell m}$ solves a variational problem of the form~\eqref{cell:generic:weak}, its linear functional $F = F^{k\ell m}$ being defined by
\begin{equation}
  F^{k\ell m}(\bfv) :=
  \iY \Lcb \lpar \Scal_{mr}^{0k\ell} - \CZ_{k\ell mr} \rpar v_r - \chi^{1k\ell}_p C_{pmrs} v_{r,s} \Rcb \dy \label{zf:weak form chi2 index}
\end{equation}
Since $\bfg^{k\ell m}=\bfg^{\ell km}$ and $\bfh^{k\ell m}=\bfh^{\ell km}$, we have $\bfchi^{2k\ell m}=\bfchi^{2\ell km}$ for $k\shg \ell$.
\end{boxthm}

In addition, similarly to the first cell problem, we define the cell stress tensor $\bSi$ (of order 5) so that
\begin{equation}
  \bfC(\bfy)\dip\lsqb \bfepsy\lpar\bfchi^2(\bfy)\therefore\bfeta \rpar + \bfchi^1(\bfy)\dip\bfeta \rsqb
 = \bSi(\bfy)\therefore\bfeta \label{S1:prop}
\end{equation}
holds for any constant tensor $\bfeta\in S^2(\Rbb^d)\tens\Rbb^d$. For any set $(k,\ell,m)$ of loading indices, $\bSi$ is then given by
\begin{equation}
  \bSi^{k\ell m} = \bfC\dip\lcb \bfepsy\lpar\bfchi^{2k\ell m}\rpar + \bfchi^{1k\ell}\tens\bfe_m \rcb, \qquad\text{i.e.}\quad
  \Scal_{ij}^{1k\ell m} = C_{ijpq} \chi^{2k\ell m}_{p,q} + C_{ijpm} \chi_{p}^{1k\ell} \label{S1:def:zf}
\end{equation}
With this definition, equation~\eqref{zf:load:second} can be recast as the equilibrium equation $\Div_y \bSi^{k\ell m}+\bfg^{k\ell m}=\bfze$ for $\bSi$.

\subsubsection{First-order balance equation and effective tensor}

Next, the term $\bruT^3$ of expansion~\eqref{expansion:4term} solves problem~\eqref{pde:generic}-\eqref{zf:rhs:generic} with $n=3$. Recalling the expressions~\eqref{ansatz1} of $\bruT^1$ and~\eqref{zf:ansatz2} of $\bruT^2$ and using the defining properties~\eqref{S0:prop} and~\eqref{S1:prop} of the cell stress tensors $\bSz$ and $\bSi$ in~\eqref{zf:rhs:generic}, its right-hand side is given by
\begin{equation}
  \brg^3 = \Divx \lcb \bSz \dip \bfepsx(\brU^1) + \bSi \therefore \bfetax(\brU^0) \rcb, \qquad 
  \brh^3 = \bfC \dip \big[ \bfepsx(\brU^2)
  + \bfchi^1 \dip \bfetax(\brU^1) + \bfchi^2 \therefore \bfkax(\brU^0) \big]. \label{zf:g3:expr}
\end{equation}
The solvability condition $\lbra \brg^3 \rbra_{\Ycal}=\bfze$ on $\brg^3$ then provides the $\Ocal(\epsilon)$ macroscopic balance equation
\begin{equation}
	\Divx \lcb \brCz \dip\bfepsx(\brU^1) + \brCi \therefore \bfetax(\brU^0)\rcb = \bfze, \label{zf:macroscopic balance u1}
\end{equation}
where the first-order effective elastic tensor $\brCi$ is given, using explicit component notation, by
\begin{equation}
  \brCi = \lbra \bSi \rbra_{\Ycal}, \qquad \text{i.e.} \quad
  \Ci_{ijk\ell m}
 = \lbra C_{ijpq} \chi^{2k\ell m}_{p,q} + C_{ijpm} \chi_{p}^{1k\ell} \rbra_{\Ycal}. \label{zf:C1:comp}
\end{equation}

\subsubsection{Determination of $\bruT^3$ and third cell problem}

We now come back to the governing problem for $\bruT^3$. Proceeding as before, we take advantage of the solvability condition $\lbra \brg^3 \rbra_{\Ycal}=0$ by rewriting the governing PDE in~\eqref{pde:generic} as $\Lcal_{yy}\{ \bruT^3 \}=\brg^3-\lbra \brg^3 \rbra_{\Ycal} + \Divy\brh^3$. Recalling~\eqref{zf:g3:expr} and~\eqref{zf:macroscopic balance u1},
$\bruT^3$ is thus found to obey
\begin{align}
  \Lcal_{yy}\{ \bruT^3 \}
  &= \Divx\lcb (\bSz\shm\brCz) \dip \bfepsx(\brU^1) + (\bSi\shm\brCi)\therefore \bfetax(\brU^0) \rcb \suite\qquad
  + \Divy \lcb \bfC \dip \big[ \bfepsx(\brU^2)
  + \bfchi^1 \dip \bfetax(\brU^1) + \bfchi^2 \therefore \bfkax(\brU^0) \big] \rcb,\label{zf:aux 03}
\end{align}
whose solvability is assured. By Lemma~\ref{lem:fredholm} and linearity, the $\Ycal$-periodic solutions of~\eqref{zf:aux 03} have the form
\begin{equation}
	 \bruT^3(\brx,\bfy) = \brU^3(\brx) + \bfchi^1(\bfy) \dip \bfepsx\lpar\brU^2(\brx)\rpar + \bfchi^2(\bfy) \therefore \bfetax\lpar\brU^1(\brx)\rpar +  \bfchi^3(\bfy) \qip \bfkax\lpar\brU^0(\brx)\rpar,
 \label{zf:ansatz3}
\end{equation}
where $\brU^3(\brx):=\lbra \bruT^3(\brx,\Cdot) \rbra_{\Ycal}$ is the cell average of $\bruT^3$, $\bfchi^1$ and $\bfchi^2$ solve the respective cell problems~\eqref{cell_pb 1} and~\eqref{zf:load:second}, and the cell tensor function $\bfchi^3$ (of order 5) is to be determined. Indeed, upon substituting the expression~\eqref{zf:ansatz3} of $\bruT^3$ and cancelling the contributions of the first two cell problems, \eqref{zf:aux 03} simplifies to
\begin{equation}
  \Lcal_{yy}\lcb \bfchi^3\qip \bfkax(\brU^0) \rcb
 = \Divx\lcb (\bSi\shm\brCi) \therefore \bfetax(\brU^0) \rcb + \Divy \bfC \dip \lpar \bfchi^2\therefore\bfkax(\brU^0) \rpar \rcb,
\end{equation}
which must hold for any value of $\bfkax(\brU^0)$. This leads to the following definition of the cell solution $\bfchi^3$:

\renewcommand{\thmtitre}{Third cell problem}
\begin{boxthm}
We call ``third cell problem'' the set of $d^3(d+1)/2$ PDEs on $\Ycal$ defined by
\begin{multline}
  \text{Find }\bfchi^{3k\ell mn} \in \bHzm(\Ycal), \qquad
  \Lcal_{yy}\{\bfchi^{3k\ell mn}\}
 = \bfg^{k\ell mn} + \Divy\bfh^{k\ell mn} \quad \text{in }\Ycal \qquad (1\shleq k\shleq \ell \shleq d,\ 1\shleq m,n\shleq d) \\
  \text{with} \qquad
  \bfg^{k\ell mn} = \lpar \bSi - \brCi \rpar \qip \lpar \bfe_n\tens(\bfe_k\stens\bfe_{\ell})\tens\bfe_m \rpar, \quad
  \bfh^{k\ell mn} = \bfC\dip(\bfchi^{2k\ell m}\tens\bfe_n). \label{zf:load:third}
\end{multline}
They have the format~\eqref{cell:generic}.
The loadings $\bfg^{k\ell mn},\bfh^{k\ell mn}$ are defined in terms of the cell functions $\bfchi^1,\bfchi^2$ and the material field $\bfC$. Each $\bfw=\bfchi^{3k\ell mn}$ solves a variational problem of the form~\eqref{cell:generic:weak}, its linear functional $F = F^{k\ell mn}$ being defined by
\begin{equation}
  F^{k\ell mn}(\bfv) :=
  \iY \Lcb \lpar \Scal_{rn}^{1k\ell m} - \Ci_{rnk\ell m} \rpar v_r - \chi^{2k\ell m}_{p} C_{pnrs}  v_{r,s} \Rcb \dy
\label{zf:weak form chi3 index}
\end{equation}
Since $\bfg^{k\ell mn}=\bfg^{\ell kmn}$ and $\bfh^{k\ell mn}=\bfh^{\ell kmn}$, we have $\bfchi^{3k\ell mn}=\bfchi^{3\ell kmn}$ for $k\shg \ell$.
\end{boxthm}

Similarly to the first two cell problems, the cell stress tensor $\bSii$ (of order 6) is defined so that
\begin{equation}
  \bfC(\bfy)\dip\lsqb \bfepsy\lpar\bfchi^3(\bfy)\qip\bfka \rpar + \bfchi^2(\bfy)\therefore\bfka \rsqb = \bSii(\bfy)\qip\bfka
\label{S2:prop}
\end{equation}
holds for any constant tensor $\bfka\in S^2(\Rbb^d)\tens S^2(\Rbb^d)$. 
For any set $(k,\ell,m,n)$ of loading indices, $\bSii$ is then given by
\begin{equation}
  \bSii^{k\ell mn} = \bfC\dip\lcb \bfepsy\lpar\bfchi^{3k\ell mn}\rpar + \bfchi^{2k\ell m}\tens\bfe_n \rcb, \qquad\text{i.e.}\quad
  \Scal_{ij}^{2k\ell mn} = C_{ijpq} \chi^{3k\ell mn}_{p,q} + C_{ijpn} \chi_{p}^{2k\ell m} \label{S2:def:zf}
\end{equation}
This definition allows to recast equation~\eqref{zf:load:third} as the equilibrium equation $\Div\bSii^{k\ell mn}+\bfg^{k\ell mn}=\bfze$ for $\bSii$.

\subsubsection{Second-order balance equation and effective tensor}

Finally, $\bruT^4$ solves problem~\eqref{pde:generic}-\eqref{zf:rhs:generic} with $n=4$. 
Finding $\bruT^4$ is not necessary, as
we only need for this last step of the homogenization process to formulate the solvability condition~\eqref{g:zeromean} on $\brg^4$. On recalling the expressions~\eqref{zf:ansatz2} of $\bruT^2$ and~\eqref{zf:ansatz3} of $\bruT^3$ and using the defining properties~\eqref{S0:prop}, \eqref{S1:prop} and~\eqref{S2:prop} of the three cell stress tensors in~\eqref{zf:rhs:generic}, we find that $\brg^4$ is given by
\begin{equation}
  \brg^4
 = \Divx \lcb \bSz\dip\bfepsx(\brU^2) + \bSi\therefore\bfetax(\brU^1) + \bSii\qip\bfkax(\brU^0) \rcb.
\end{equation}
Enforcing the solvability condition~\eqref{g:zeromean} on $\brg^4$ then results in the $\Ocal(\epsilon^2)$ macroscopic balance equation
\begin{equation}
    \Divx \lcb \brCz \dip\bfepsx(\brU^2) + \brCi \therefore \bfetax(\brU^1) + \Divx \lsqb\brCii\therefore\bfetax(\brU^0)\rsqb \rcb = \bfze, \label{zf:macroscopic balance u2}
\end{equation}
where $\brCii$ is the second-order homogenized elastic tensor defined, for any field $\brU(\brx)$, by
\begin{equation}
  \Divx \lsqb \brCii\therefore\bfetax(\brU) \rsqb = \lbra \mbox{$\bSii$} \rbra_{\Ycal}\qip\bfkax(\brU), \qquad \text{i.e.} \quad
   \Cii_{ijnk\ell m}
 = \lbra C_{ijpq} \lpar \chi^{3k\ell mn}_{p,q}+ \chi_{p}^{2k\ell m}\delta_{qn} \rpar \rbra_{\Ycal}. \label{zf:C2:comp}
\end{equation}

\subsection{Reciprocity identities}
\label{sec:recip:zf}

The following proposition provides alternative expressions for the effective tensors $\brCz$, $\brCi$ and $\brCii$, respectively defined by~\eqref{C0:comp}, \eqref{zf:C1:comp} and~\eqref{zf:C2:comp}, which are established by applying the reciprocity identity of Lemma~\ref{lem:recip} to cell functions. The result for $\brCii$ is, to our best knowledge, original; the others are known (see below) and provided for later use. A proof for all three expressions is given in Section~\ref{sec:proof:recip:zf}, for completeness (and to further emphasize the systematic use of reciprocity) in the case of  $\brCz$ and $\brCi$.\enlargethispage*{1ex}





\begin{prop}\label{prop:recip:zf}
Let $\bfchi^1$ and $\bfchi^2$ be the first and second cell functions, respectively defined by~\eqref{cell_pb 1} and~\eqref{zf:load:second}. Using component notation, the elastostatic leading-order effective tensor $\brCz$ is given in terms of $\bfchi^1$ by
\begin{equation}
  \CZ_{ijk\ell} = \inv{|\Ycal|}A\lpar \bfchi^{1ij}\shp\bfP^{ij},\, \bfchi^{1k\ell}\shp\bfP^{k\ell} \rpar
\label{zf:reciprocity 0}
\end{equation}
(with $\bfP^{ij}(\bfx):=\tdemi(x_i\bfe_j\shp x_j\bfe_i)$ and the bilinear form $A$ given by~\eqref{F:generic}), the elastostatic first-order effective tensor $\brCi$ is given in terms of $\bfchi^1$ by
\begin{equation}
   \Ci_{ijk\ell m}
 = \inv{|\Ycal|} \iY \lpar \Scal_{mp}^{0ij}\chi^{1k\ell}_p - \Scal_{mp}^{0k\ell}\chi^{1ij}_p \rpar \dy,
\label{zf:reciprocity 1}
\end{equation}
(with the cell stress $\bSz$ given by~\eqref{S0:def}), while the elastostatic second-order effective tensor $\brCii$ is given in terms of $\bfchi^1$ and $\bfchi^2$ by
\begin{equation}
  \Cii_{ijnk\ell m}
 = \inv{|\Ycal|} \iY \chi^{2ij n}_{p,q}  C_{pqab} \chi^{2k\ell m}_{a,b} \dy
  - \inv{|\Ycal|} \iY \chi^{1ij}_p C_{pnam} \chi^{1k\ell}_a \dy. \label{zf:reciprocity 2}
\end{equation}
\end{prop}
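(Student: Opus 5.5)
The plan is to apply the reciprocity identity of Lemma~\ref{lem:recip} to suitable pairs of cell functions, using the explicit linear functionals $F^{k\ell}$, $F^{k\ell m}$, $F^{k\ell mn}$ of~\eqref{weak form chi1 index}, \eqref{zf:weak form chi2 index}, \eqref{zf:weak form chi3 index}. The basic observation, used repeatedly, is that the ``$\bfC\dip\bfepsy(\cdot)$'' part of each averaged cell stress can be written as $-F^{ij}(\cdot)/|\Ycal|$. Indeed, $\iY C_{ijpq}w_{p,q}\dy = -F^{ij}(\bfw)$ for any $\bfw$, by the major symmetry of $\bfC$. Reciprocity then trades $F^{ij}(\bfchi^{\text{higher}})$ for a functional of the higher problem evaluated at $\bfchi^{1ij}$. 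The zero-mean property of all cell functions kills every constant effective-tensor term (e.g.\ $\CZ_{k\ell mr}$ or $\Ci_{rnk\ell m}$ multiplying $\chi_r$).

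\emph{Step 1: $\brCz$.} Since $\bfepsy(\bfP^{k\ell})=\bfe_k\stens\bfe_\ell$, the weak form of~\eqref{cell_pb 1} reads $A(\bfchi^{1k\ell}+\bfP^{k\ell},\bfv)=0$ for all periodic $\bfv$, with $A$ understood as the integral in~\eqref{F:generic}, which makes sense for the non-periodic $\bfP$. Taking $\bfv=\bfchi^{1ij}$ gives $A(\bfchi^{1ij}+\bfP^{ij},\bfchi^{1k\ell}+\bfP^{k\ell}) = A(\bfP^{ij},\bfchi^{1k\ell}+\bfP^{k\ell})$ after using the symmetry of $A$. The right-hand side equals $\iY (\bfe_i\stens\bfe_j)\dip\bSz^{k\ell}\dy=|\Ycal|\CZ_{ijk\ell}$ by~\eqref{S0:def}--\eqref{C0:comp}, which is~\eqref{zf:reciprocity 0}.

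\emph{Step 2: $\brCi$.} Starting from~\eqref{zf:C1:comp}, write its first term as $-F^{ij}(\bfchi^{2k\ell m})$. By Lemma~\ref{lem:recip} this equals $-F^{k\ell m}(\bfchi^{1ij})$. Expanding with~\eqref{zf:weak form chi2 index}, the $\CZ$ term vanishes by zero mean, leaving $-\iY \Scal^{0k\ell}_{mr}\chi^{1ij}_r\dy+\iY \chi^{1k\ell}_pC_{pmrs}\chi^{1ij}_{r,s}\dy$. Adding the second term $\iY C_{ijpm}\chi^{1k\ell}_p\dy$ of~\eqref{zf:C1:comp}, and using the major and minor symmetries of $\bfC$, the last two integrals combine into $\iY \chi^{1k\ell}_p\Scal^{0ij}_{pm}\dy$. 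The symmetry $\Scal^{0ij}_{pm}=\Scal^{0ij}_{mp}$ of the stress then yields~\eqref{zf:reciprocity 1}.

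\emph{Step 3: $\brCii$.} Proceed identically from~\eqref{zf:C2:comp}. First, $\iY C_{ijpq}\chi^{3k\ell mn}_{p,q}\dy=-F^{ij}(\bfchi^{3k\ell mn})=-F^{k\ell mn}(\bfchi^{1ij})$. With~\eqref{zf:weak form chi3 index}, after dropping the $\brCi$ term by zero mean and adding $\iY C_{ijpn}\chi^{2k\ell m}_p\dy$, this gives
\[
|\Ycal|\Cii_{ijnk\ell m}=\iY \chi^{2k\ell m}_p\Scal^{0ij}_{pn}\dy-\iY \Scal^{1k\ell m}_{rn}\chi^{1ij}_r\dy .
\]
Next, recognise the first integral through the second cell problem for $\bfchi^{2ijn}$ tested with $\bfchi^{2k\ell m}$. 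Its weak form gives
\[
A(\bfchi^{2ijn},\bfchi^{2k\ell m})=F^{ijn}(\bfchi^{2k\ell m})=\iY \Scal^{0ij}_{nr}\chi^{2k\ell m}_r\dy-\iY \chi^{1ij}_pC_{pnrs}\chi^{2k\ell m}_{r,s}\dy,
\]
again dropping $\CZ$ by zero mean. Finally, expand $\Scal^{1k\ell m}_{rn}$ by~\eqref{S1:def:zf}. Its $\bfchi^{2}$ part cancels the last integral above, and its $\bfchi^1$ part leaves exactly $-\iY \chi^{1ij}_pC_{pnam}\chi^{1k\ell}_a\dy$, giving~\eqref{zf:reciprocity 2}.

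The main obstacle is Step 3. The index bookkeeping there is delicate: one must track which free index ($n$ versus $m$) attaches to which cell function, and repeatedly use minor and major symmetries of $\bfC$ and $\bSz$. A further subtlety is that $\brCii$ is only defined through the divergence relation~\eqref{zf:C2:comp}. I would therefore state that~\eqref{zf:reciprocity 2} holds for the component representative given there, and note that it already exhibits the major symmetry $(ijn)\leftrightarrow(k\ell m)$.
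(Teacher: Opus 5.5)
Your proposal is correct and follows essentially the same route as the paper's proof. You use the same $\bfP^{ij}$ argument for $\brCz$, and the same reciprocity pairings $(\bfchi^{1ij},\bfchi^{2k\ell m})$ and $(\bfchi^{3k\ell mn},\bfchi^{1ij})$, combined with testing the $\bfchi^{2ijn}$ problem against $\bfchi^{2k\ell m}$ and zero-mean cancellation of the constant effective-tensor terms; your index bookkeeping in Steps 2 and 3 checks out.
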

The above expressions make the symmetry properties of $\brCz$, $\brCi$ and $\brCii$ explicit while reducing the number of cell solutions needed for the practical evaluation of $\brCi$ and $\brCii$. Formula~\eqref{zf:reciprocity 0} is a well-known result of elastic homogenization, see e.g.~\cite[Prop.~10.12]{ciora:donato:99}, and $\bfCz$ is clearly seen to possess the minor and major index symmetries
\begin{equation}
  \CZ_{ijk\ell} = \CZ_{jik\ell} = \CZ_{ij\ell k} = \CZ_{k\ell ij}, \label{C0:sym}
\end{equation}
also well-known. Formula~\eqref{zf:reciprocity 1} was previously established in~\cite[App.~A]{boutin:96}; it shows that $\brCi$ has the minor symmetry and major skew-symmetry properties
\begin{equation}
  \Ci_{ijk\ell m} = \Ci_{jik\ell m} = \Ci_{ij\ell km}, \qquad \Ci_{ijk\ell m} = - \Ci_{k\ell ijm}, \label{K:skew}
\end{equation}
and can be computed from the first cell solution $\bfchi^1$ only, instead of $(\bfchi^1,\bfchi^2)$ if using~\eqref{zf:C1:comp}. Likewise, formula~\eqref{zf:reciprocity 2} shows that $\brCii$ has the minor and major index symmetries
\begin{equation}
  \Cii_{ijnk\ell m} = \Cii_{jink\ell m} = \Cii_{ijn\ell km} = \Cii_{k\ell m ijn}. \label{C2:sym}
\end{equation}
and can be computed from the cell solutions $\bfchi^1,\bfchi^2$, instead of $\bfchi^2,\bfchi^3$ if using~\eqref{zf:C2:comp}. This shows that the third cell problem needs to be formulated for the formal homogenization process, in particular to establish the results of Proposition~\ref{prop:recip:zf}, but its solution $\bfchi^3$ is ultimately  not involved in the practical evaluation of effective tensors.



\begin{remark}
The expression~\eqref{zf:reciprocity 2} of $\brCii$ reveals its symmetry properties but leaves its sign unresolved. Indeed, a similar tensor for scalar conductivity problems, studied by \cite{Allaire2016}, is shown to have no clear sign in general; on an explicit example provided therein involving laminates, that tensor is shown to be sign-indefinite (by having both positive and negative eigenvalues). For elasticity, sign-indefiniteness of the corresponding tensor is observed in various situations, \eg \cite{durand:22}, without supporting proofs to the best of our knowledge. 
\end{remark}

\subsection{Fourth-order effective elastostatic PDE}
\label{sec:mean:zf}

We define the second-order approximation $\brU^{\epsilon}(\brx)$ of the macroscopic displacement field as
\begin{equation}
	 \brU^{\epsilon}(\brx) =   \brU^0(\brx) + \epsilon \brU^1(\brx)  + \epsilon^2 \brU^2(\brx).
	 \label{Ueps:def:zf}
\end{equation}
Its coefficients $\brU^0(\brx),\,\brU^1(\brx),\,\brU^2(\brx)$ satisfy the macroscopic balance equations~\eqref{zf:macroscopic balance u0}, \eqref{zf:macroscopic balance u1} and~\eqref{zf:macroscopic balance u2},
recalled for convenience:
\begin{equation}
\begin{aligned}
 \text{(a) \ }\bfze &= \Divx \lcb \brCz \dip \bfepsx(\brU^0) \rcb + \brFstr \\
 \text{(b) \ }\bfze &= \Divx \lcb \brCz \dip\bfepsx(\brU^1) + \brCi \therefore \bfetax(\brU^0)\rcb = \bfze \\
 \text{(c) \ }\bfze &= \Divx \lcb \brCz \dip\bfepsx(\brU^2) + \brCi \therefore \bfetax(\brU^1) + \Divx \lsqb\brCii\therefore\bfetax(\brU^0)\rsqb \rcb.
\end{aligned} \label{zf:mean:eqs:init}
\end{equation}
An effective PDE governing $\brU^{\epsilon}(\brx)$ can \emph{a priori} be found from the combination (\ref{zf:mean:eqs:init}a)+$\epsilon$(\ref{zf:mean:eqs:init}b)+$\epsilon^2$(\ref{zf:mean:eqs:init}c).  However, the tensor $\brCii$ is potentially sign-indefinite while, to possibly define an enriched linear elastic model, the resulting effective PDE would need (transposing the later Remark~\ref{ke:SGE:rem} to $\brC\sheq\brCz$, $\brMsh\sheq\epsilon\brCi$ and $\brA\sheq-\epsilon^2\brCii$) the tensor $-\brCii-\frac{1}{4}\brCi{}\Tsup\dip(\brCz)^{-1}\dip\brCi\in S^2\lpar S^2(\Rbb)\tens\Rbb \rpar$ to be positive definite.
For this reason, before effecting the above combination, we modify~(\ref{zf:mean:eqs:init}c) into a form that is also verified by the coefficients of any approximation of the form~\eqref{Ueps:def:zf}. Indeed, taking the Laplacian of~(\ref{zf:mean:eqs:init}a) as one of the possible ways (suggested in~\cite{Allaire2016}) to set up such auxiliary identities, $\brU^0$ satisfies
\begin{equation}
  -\Divx\lcb \Divx\lsqb \brAth\therefore\bfeta(\brU^0) \rsqb\rcb - \bfDelx\brFstr = \bfze \label{zf:aux05}
\end{equation}
with the (positive) 6th-order tensor $\brAth$ defined, using component notation, by
\begin{equation}
    \Ath_{ijnl\ell m} = \CZ_{ijk\ell}\delta_{nm} \label{Ath:def}
\end{equation}
in order to satisfy $\bfDelx\Divx \lcb \brCz \dip \bfepsx(\brU^0)\rcb=\Divx\lcb \Divx\lsqb\brCz\dip\bfetax(\brU^0)\rsqb \rcb=\Divx\lcb \Divx\lsqb \brAth\therefore\bfetax(\brU^0) \rsqb \rcb$ for any field $\brU(\brx)$.
We then add to~(\ref{zf:mean:eqs:init}c) a scalar multiple of~\eqref{zf:aux05} featuring an adjustable positive weight $\vartheta$, to obtain the modified form
\begin{equation}
  \Div \lcb \brCz \dip\bfeps(\brU^2) + \brCi \therefore \bfeta(\brU^1) - \Div \lsqb\brAZii(\vartheta)\therefore\bfeta(\brU^0)\rsqb \rcb - \vartheta\bfDelx\brFstr = \bfze \label{zf:mean:eq:order2}
\end{equation}
of the $\Ocal(\epsilon^2)$ macroscopic balance equation~(\ref{zf:mean:eqs:init}c), where the $\vartheta$-dependent 6th-order tensor $\brAZii(\vartheta)$ is defined by
\begin{equation}
    \brAZii(\vartheta) = \vartheta\brAth - \brCii. \label{zf:A:def}
\end{equation}
We observe that $\brAZii(\vartheta)\in S^2\lpar S^2(\Rbb^d)\tens\Rbb^d \rpar$ since $\brAth$ has the same minor and major symmetries~\eqref{C2:sym} as $\brCii$.
We finally formulate the combination (\ref{zf:mean:eqs:init}a)+$\epsilon$(\ref{zf:mean:eqs:init}b)+$\epsilon^2$\eqref{zf:mean:eq:order2}, to obtain
\begin{equation}
   \sfQ_{\rmx}\lsqb \brCz,\epsilon\brCi,\epsilon^2\brAZii(\vartheta) \rsqb\brU^{\epsilon}
 = (1-\vartheta\epsilon^2\Delta_\brx) \brFstr + \epsilon^3\brO^{\epsilon}, \label{zf:eff0}
\end{equation}
using the generic PDO notation introduced in~\eqref{PDO:generic} and with the residual $\brO^{\epsilon}$ given by
\begin{equation}
  \brO^{\epsilon} := -\Div \lcb \brCi\therefore\bfeta(\brU^2) + \brAZii(\vartheta)\therefore\lpar \bfeta(\brU^1) \shp \epsilon\bfeta(\brU^2) \rpar \rcb,
\end{equation}
so that the mean field second-order approximation $\brU^{\epsilon}$ satisfies the effective PDE (of fourth order)
\begin{equation}
  \sfQ_{\rmx}\lsqb \brCz,\epsilon\brCi,\epsilon^2\brAZii(\vartheta) \rsqb\brU^{\epsilon} = (1-\vartheta\epsilon^2\bfDelx) \brFstr 
\label{zf:eff}
\end{equation}
up to a $\Ocal(\epsilon^3)$ perturbation in view of the residual in the right-hand side of~\eqref{zf:eff0}.


\begin{remark}\label{rem:correction}
The  sign-restoring procedure implemented here, known as a "Boussinesq trick" in the literature (see \cite[Remark 5.3]{Allaire2016}), can be seen as correcting internal lengths associated with the tensor $\brCii$ obtained by homogenization.
It may also be noted that constitutive tensors of the form $\rmA_{ijnk\ell m}=\vartheta \CZ_{ijk\ell}\delta_{nm}$, used here as sign-corrective additive terms, appear in the literature on second-gradient elasticity under the name Helmholtz elasticity, see~\cite{lazar:05,lazar:15}, with extensions of the form  $\rmA_{ijnk\ell m}=\CZ_{ijk\ell}\rmd_{nm}$ examined in \cite{polizzotto:18} or approximations $\rmA_{ijnk\ell m} \approx \CZ_{ijnp}\widehat{C}_{k\ell mp}$, with $\widehat{\bfC}$ optimized to obtain the "best approximation" of $\brA$ in terms of $\brCz$, by \cite{fish:02b}. These forms might also be used as sign-restoring corrective terms instead of the simple one adopted here in~\eqref{zf:A:def}, for instance to introduce other parameters in the effective model; we intend to investigate such issues in the future.
\end{remark}

\section{Second-order homogenization of the elastic wave equation}
\label{sec:homog}

We now address the second-order homogenization for the elastodynamic case, following the same steps as in Section~\ref{sec:homog:zf}:
solve the first four equations of~\eqref{cascade epsilon} to establish separated-variable displacement expressions, macroscopic balance equations and their effective tensor coefficients (Sec.~\ref{sec:cascade}), derive reciprocity identities for those effective tensors (Sec.~\ref{sec:recip}), and obtain a one-parameter family of $\Ocal(\epsilon^2)$, second-order in time, effective PDEs satisfied by the second-order mean field approximation (Sec.~\ref{sec:mean}) that are shown to be consistent with their elastostatic counterpart in the zero-frequency limit.

\subsection{Homogenization procedure}
\label{sec:cascade}

We now treat the elastodynamic counterpart of the second-order homogenization process developed in Sec.~\ref{sec:homog:zf}, which as before relies on sequentially solving equation~\eqref{PDE:generic} for $n=0,1,2\ldots$,
with the components $\brg^n$ and $\brh^n$ of the right-hand side now given by
\begin{equation}
  \brg^n = \Divx \lcb \bfC\dip\lsqb \bfepsy(\bruT^{n-1}) + \bfepsx(\bruT^{n-2}) \rsqb \rcb + \rho\oostr^2\bruT^{n-2}, \qquad
  \brh^n = \bfC\dip\bfepsx(\bruT^{n-1})  \label{rhs:generic}
\end{equation}
(with the convention $\bruT^{-1}=\bruT^{-2}=0$) instead of~\eqref{zf:rhs:generic}.  The initial steps described in Secs.~\ref{sec:cascade1:zf} and~\ref{sec:cascade2:zf} apply without change to the elastodynamic case, and thus are not repeated here; in particular the first two coefficients $\bruT^0,\bruT^1$ in expansion~\eqref{expansion:4term}, and the first cell solution $\bfchi^1$, are the same.\enlargethispage*{1ex}

\subsubsection{Leading-order balance equation and homogenized tensors}

Then, the subsequent term $\bruT^2$ of expansion~\eqref{expansion:4term} solves problem~\eqref{pde:generic}-\eqref{rhs:generic} with $n=2$. Using the expressions~\eqref{U0} of $\bruT^0$ and~\eqref{ansatz1} of $\bruT^1$ and the defining property~\eqref{S0:prop} of the cell stress tensor $\bSz$ in~\eqref{rhs:generic}, its right-hand side is given by
\begin{equation}
  \brg^2 = \Divx \lcb\bSz\dip\bfepsx(\brU^0) \rcb + \oostr^2 \rho\brU^0 + \brFstr, \qquad 
  \brh^2 = \bfC \dip \big[ \bfepsx(\brU^1) +  \bfchi^1\dip\bfetax(\brU^0) \big]. \label{pbesp1}
\end{equation}
The solvability condition $\lbra \brg^2 \rbra_{\Ycal}=\bfze$ provides the $\Ocal(\epsilon^0)$ macroscopic balance equation
\begin{equation}
	\Divx \lcb \brCz \dip \bfepsx(\brU^0) \rcb + \oostr^2\rhoz \brU^0 + \brFstr = 0,
	 \label{macroscopic balance u0}
\end{equation}
with the effective elasticity tensor $\brCz$ again given by~\eqref{C0:comp}, while the effective mass density $\rhoz$ is defined by
\begin{equation}
  \rhoz = \lbra \rho \rbra_{\Ycal}. \label{rhoz:def}
\end{equation}
In what follows, it will prove convenient to define subsequent inertial effective parameters in terms of the relative mass density
\begin{equation}
  \beta(\bfy) := \rho(\bfy) / \rhoz \qquad \bfy\in\Ycal. \label{beta:def}
\end{equation}

\subsubsection{Determination of $\bruT^2$ and second inertial cell problem}

We return to the governing problem for $\bruT^2$, and as before use the solvability condition $\lbra \brg^2 \rbra_{\Ycal}=0$ to rewrite the governing PDE in~\eqref{pde:generic} as $\Lcal_{yy}\{ \bruT^2 \}=\brg^2-\lbra \brg^2 \rbra_{\Ycal} + \Divy\brh^2$. Recalling~\eqref{pbesp1} and~\eqref{macroscopic balance u0}, $\bruT^2$ is found to obey
\begin{equation}
    \Lcal_{yy}\{ \bruT^2 \}
 = \Divx\lcb (\bSz\shm\brCz)\dip\bfepsx(\brU^0) \rcb
 + \Divy \lcb \bfC \dip \big[ \bfepsx(\brU^1) +  \bfchi^1\dip\bfetax(\brU^0) \big] \rcb
  + \oostr^2\rhoz(\beta\shm1)\brU^0(\brx). \label{moch}
\end{equation}
Comparing the right-hand side of the above equation to that of its elastostatic counterpart~\eqref{zf:moch} and invoking Lemma~\ref{lem:fredholm} and linear superposition, the general $\Ycal$-periodic solution of problem~\eqref{moch} has the form
\begin{equation}
 \bruT^2(\brx,\bfy) = \brU^2(\brx) + \bfchi^1(\bfy) \dip \bfepsx\lpar\brU^1(\brx)\rpar
 + \bfchi^2(\bfy) \therefore \bfetax\lpar\brU^0(\brx)\rpar
 + \oostr^2\rhoz\bfzet^2(\bfy)\sip\brU^0(\brx),
 \label{ansatz2}
\end{equation}
where $\brU^2(\brx):=\lbra \bruT^2(\brx,\Cdot) \rbra_{\Ycal}$ is the cell average of $\bruT^2$, $\bfchi^1$ and $\bfchi^2$ respectively solve the first and second elastostatic cell problems~\eqref{cell_pb 1} and~\eqref{zf:load:second}, and the additional tensor field $\bfzet^2$ (of order 2) is readily found by identification to be defined by the following cell problem:
\renewcommand{\thmtitre}{Second inertial cell problem}
\begin{boxthm}
We call ``second inertial cell problem'' the set of $d$ PDEs on $\Ycal$ defined by
\begin{equation}
  \text{Find }\bfzet^{2n} \in \bHzm(\Ycal), \qquad \Lcal_{yy}\{\bfzet^{2n}\} = (\beta\shm1)\bfe_n\qquad \text{in }\Ycal. \label{load:second}
\end{equation}
and $1\leq n\leq d$; they have the format~\eqref{cell:generic}. The loadings $\bfg^n:=(\beta\shm1)\bfe_n$ depend on the material field $\rho$. Each $\bfw=\bfzet^{2n}$ solves a variational problem of the form~\eqref{cell:generic:weak}, its linear functional $F = \FHat^n$ being defined by
\begin{equation}
  \FHat^n(\bfv) := \iY (\beta\shm1) v_n \dy. \label{weak form chi2 index}
\end{equation}
\end{boxthm}
\begin{remark}\label{rem:zeta2}
We call problem~\eqref{load:second} the second inertial cell problem (even though no "first" such problem arises) because it occurs in the course of determining the second-order displacement correction $\bfuT^2$. The cell solutions $\bfzet^{2n}$ are nontrivial and linearly independent except when the mass density is homogeneous (i.e. $\beta\sheq1$ a.e.).
\end{remark}

\subsubsection{First-order balance equation and homogenized tensors}

The next term $\bruT^3$ in expansion~\eqref{expansion:4term} solves problem~\eqref{pde:generic}-\eqref{zf:rhs:generic} with $n=3$. Recalling the expressions~\eqref{ansatz1} of $\bruT^1$ and~\eqref{ansatz2} of $\bruT^2$ and using the defining properties~\eqref{S0:prop} and~\eqref{S1:prop} of the cell stress tensors $\bSz$ and $\bSi$ in~\eqref{rhs:generic}, its right-hand side is given by
\begin{equation}
\begin{aligned}
  \brg^3 &= \Divx \lcb \bSz \dip \bfepsx(\brU^1) + \bSi\therefore\bfetax(\brU^0) \rcb
    + \oostr^2\rhoz \lsqb \beta\brU^1 + \bfphc^1\dip\bfnax\brU^0 \rsqb, \\ 
  \brh^3 &= \bfC \dip \big[ \bfepsx(\brU^2)
  + \bfchi^1 \dip \bfetax(\brU^1) + \bfchi^2 \therefore \bfkax(\brU^0) \big], 
\end{aligned} \label{g3:expr}
\end{equation}
with the tensor field $\bfphc^1$ (of order 3) defined in terms of $\bfchi^1$ and $\bfzet^2$ by
\begin{equation}
  \phi^1_{ik\ell} = C_{i\ell pq}\zeta^{2k}_{p,q} + \beta\chi^{1k\ell}_i \label{phi1:def}
\end{equation}
Enforcing the solvability condition~\eqref{g:zeromean} on $\brg^3$ then provides the $\Ocal(\epsilon)$ macroscopic balance equation:
\begin{equation}
  \Divx \lcb \brCz \dip\bfepsx(\brU^1) + \brCi\therefore \bfetax(\brU^0) \rcb
   + \oostr^2\rhoz \lsqb \brU^1 + \bfbsh\dip\bfnax\brU^0 \rsqb = \bfze,
\label{macroscopic balance u1}
\end{equation}
where the homogenized inertial tensor $\bfbsh$ is given by
\begin{equation}
  \bfbsh = \lbra \bfphc^1 \rbra_{\Ycal}, \qquad \text{i.e.} \qquad
  \bsh_{ik\ell} = \lbra \phi^1_{ik\ell} \rbra_{\Ycal} \quad(1\shleq i,k,\ell \shleq d). \label{bfbsh:def}
\end{equation}

\subsubsection{Determination of $\bruT^3$ and third inertial cell problem}

We now come back to the governing problem for $\bruT^3$. As before, we exploit the solvability condition $\lbra \brg^3 \rbra_{\Ycal}=\bfze$ by rewriting the governing PDE in~\eqref{pde:generic} as $\Lcal_{yy}\{ \bruT^3 \}=\brg^3-\lbra \brg^3 \rbra_{\Ycal} + \Divy\brh^3$. Recalling~\eqref{g3:expr} and~\eqref{macroscopic balance u1}, $\bruT^3$ is hence found to obey
\begin{align}
  \Lcal_{yy}\{ \bruT^3 \}
  &= \Divx\lcb (\bSz-\brCz)\dip\bfepsx(\brU^1) + (\bSi-\brCi)\therefore\bfetax(\brU^0) \rcb\suite
  + \Divy \lcb \bfC \dip \lsqb \bfepsx(\brU^2) + \bfchi^1\dip\bfetax(\brU^1) + \bfchi^2\therefore\bfkax(\brU^0) \rsqb\rcb \suite
  + \oostr^2\rhoz \Lpar (\beta\shm1) \brU^1 + (\bfphc^1\shm\bfbsh)\dip\bfnax\brU^0
    + \Divy\lcb \bfC\dip\lsqb \bfzet^2\sip\bfnax\brU^0 \rsqb \rcb \Rpar,
\label{aux 03}
\end{align}
whose solvability is guaranteed by the construction of its right-hand side. Moreover, its first two lines of the latter coincide with the right-hand side of the corresponding elastostatic equation~\eqref{zf:aux 03}, and the term $\oostr^2\rhoz (\beta\shm1) \brU^1$ is the right-hand side of the second inertial cell problem~\eqref{load:second} contracted by a constant vector. Those observations, combined with Lemma~\ref{lem:fredholm} and linear superposition, imply that all $\Ycal$-periodic solutions of~\eqref{aux 03} have the form
\begin{align}
  \bruT^3(\brx,\bfy)
 &= \brU^3(\brx) + \bfchi^1(\bfy) \dip \bfepsx\lpar\brU^2(\brx)\rpar + \bfchi^2(\bfy) \therefore \bfetax(\brU^1(\brx))
 + \bfchi^{3}(\bfy) \qip \bfkax\lpar\brU^0(\brx)\rpar \suite\quad
  + \oostr^2\rhoz \lsqb\bfzet^2(\bfy)\sip\brU^1(\brx) + \bfzet^3(\bfy)\dip\bfnax\brU^0(\brx) \rsqb, \label{ansatz3}
\end{align}
where $\brU^3(\brx):=\lbra \bruT^3(\brx,\Cdot) \rbra_{\Ycal}$ is the cell average of $\bruT^3$, $\bfchi^1,\bfchi^2,\bfchi^3$ respectively solve the cell problems~\eqref{cell_pb 1}, \eqref{zf:load:second} and \eqref{zf:load:third}, and identification shows the new tensor field $\bfzet^3$ (of order 3) to be defined by the following cell problem:

\renewcommand{\thmtitre}{Third inertial cell problem}
\begin{boxthm}
We call ``third inertial cell problem'' the set of $d^2$ PDEs on $\Ycal$ defined by
\begin{equation}
  \text{Find }\bfzet^{3k\ell} \in \bHzm(\Ycal), \qquad
  \Lcal_{yy}\{\bfzet^{3k\ell}\}
 = (\bfphc^1\shm\bfbsh) \dip (\bfe_k\tens\bfe_{\ell})
   + \Divy\lcb \bfC\dip\lpar\bfzet^{2k}\tens\bfe_{\ell}\rpar \rcb \quad \text{in }\Ycal \label{load:third}
\end{equation}
and $1\shleq k,\ell \shleq d$; they have the format~\eqref{cell:generic}. The right-hand side is defined, through $\bfphc^1$ given by~\eqref{phi1:def}, in terms of the cell solutions $\bfchi^1,\,\bfzet^2$ and the material fields $\bfC,\rho$. Each $\bfw=\bfzet^{3k\ell}$ solves a variational problem of the form~\eqref{cell:generic:weak}, whose linear functional $F = \FHat^{k\ell}$ is defined by
\begin{equation}
  \FHat^{k\ell}(\bfv)
 := \iY \lpar \phi^{1}_{rk\ell} - \bsh_{rk\ell} \rpar v_r \dy
   -\iY \zeta^{2k}_a C_{a\ell rs} v_{r,s}\dy. \label{weak form chi3 index}
\end{equation}
\end{boxthm}

\subsubsection{Second-order balance equation and homogenized tensors}

Finally, $\bruT^4$ solves problem~\eqref{pde:generic}-\eqref{rhs:generic} with $n=4$.  As in the elastostatic case, finding $\bruT^4$ is not necessary, and this last step of the homogenization process rests only on the solvability condition $\lbra\brg^4\rbra_{\Ycal}=\bfze$. On recalling the expressions~\eqref{ansatz2} of $\bruT^2$ and~\eqref{ansatz3} of $\bruT^3$ and using the defining properties~\eqref{S0:prop}, \eqref{S1:prop} and~\eqref{S2:prop} of the elastostatic cell stress tensors $\bSz,\bSi,\bSii$ in~\eqref{zf:rhs:generic}, we find after some algebra and rearrangement that $\brg^4$ is given by
\begin{align}
  \brg^4
 &= \Divx \lcb \bSz\dip\bfepsx(\brU^2) + \bSi\therefore\bfetax(\brU^1)
    + \bSii\qip\bfkax(\brU^0) \rcb \suite\qquad
    + \oostr^2\rhoz \lsqb \beta \brU^2 + \bfphc^1\dip\bfnax\brU^1 + \bfphc^2 \therefore\bfnax^2\brU^0 \rsqb
	+ (\oostr^2\rhoz)^2 \beta \bfzet^2\sip\brU^0
\end{align}
with the tensor function $\bfphc^2$ (of order 4) defined in terms of $\bfchi^1,\bfchi^2$ and $\bfzet^2,\bfzet^3$ by
\begin{equation}
  \phi^2_{i\ell km}
 = C_{i\ell ab}\zeta^{3km}_{a,b} + C_{i\ell am}\zeta^{2k}_a + \beta\chi^{2mk\ell}_i \label{phi2:def}
\end{equation}
Enforcing the solvability condition~\eqref{g:zeromean} for $\brg^4$ then results in the $\Ocal(\epsilon^2)$ macroscopic balance equation
\begin{align}
\MoveEqLeft[3]{
  \Divx \lcb \brCz\dip\dip\bfepsx(\brU^2) + \brCi\therefore\bfetax(\brU^1)
   + \Divx\lsqb \brCii\therefore\bfetax(\brU^0)\rsqb \rcb } \suite
    + \oostr^2\rhoz \lsqb \brU^2 + \bfbsh\dip\bfnax\brU^1 + \Divx\lcb \bfBii\dip\bfnax\brU^0\rcb \rsqb + (\oostr^2\rhoz)^2\brcii\sip\brU^0 = \bfze
\label{macroscopic balance u2}
\end{align}
where the homogenized second-order inertial tensors $\bfBii$ and $\brcii$ are given by
\begin{equation}
   \bfBii = \lbra \bfphc^2 \rbra_{\Ycal}, \quad\text{i.e.}\quad \Bii_{i\ell km} = \lbra \phi^2_{i\ell km} \rbra_{\Ycal}; \qquad
  \brcii = \lbra \bfzet^2 \rbra_{\Ycal}, \quad\text{i.e.}\quad \cii_{ik} = \lbra \beta\zeta^{2k}_i \rbra_{\Ycal}.
  \label{bfBii:def}
\end{equation}


\subsection{Reciprocity relationships}
\label{sec:recip}
Like in the elastostatic homogenization case, the reciprocity identity of Lemma~\ref{lem:recip} allows to derive alternative expressions for the inertial effective tensors given by~\eqref{bfbsh:def} and~\eqref{bfBii:def} arising in the time-harmonic homogenization procedure. They are given in the following proposition, whose proof is provided in Section~\ref{sec:proof:recip}:\enlargethispage*{1ex}

\begin{prop}\label{prop:recip}
The first-order inertial tensor $\bfbsh$ is given in terms of $\bfchi^1$ by
\begin{equation}
  \bsh_{ik\ell}
  = \inv{|\Ycal|} \iY \beta \lpar \chi^{1k\ell}_i - \chi^{1i\ell}_k \rpar \dy
  = \lbra \beta \chi^{1k\ell}_i \rbra_{\Ycal} - \lbra \beta \chi^{1i\ell}_k \rbra_{\Ycal}. \label{bsh:expr}
\end{equation}
The second-order inertial tensor $\bfBii$ is given in terms of $\bfchi^1$ and $\bfchi^2$ by
\begin{equation}
  \Bii_{i\ell km}
 = \inv{|\Ycal|} \iY \beta
     \lsqb \chi^{2mk\ell}_i + \chi^{2i\ell m}_k - \chi^{1i\ell}_r \chi^{1km}_r \rsqb \dy. \label{bii:expr}
\end{equation}
Finally, the tensor $\brcii$ is given in terms of $\bfzet^2$ by
\begin{equation}
  |\Ycal|\cii_{ik} = A(\bfzet^{2i},\bfzet^{2k}) \qquad (1\shleq n,p\shleq d). \label{c2:recipr}
\end{equation}
and is therefore positive definite unless the mass density is homogeneous, in which case $\brcii\sheq\bfze$ (see Remark~\ref{rem:zeta2}).
\end{prop}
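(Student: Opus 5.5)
The plan is to apply Lemma~\ref{lem:recip} to well-chosen pairs of cell solutions. The aim is to eliminate the higher-order inertial correctors $\bfzet^2,\bfzet^3$ from the definitions~\eqref{bfbsh:def} and~\eqref{bfBii:def} in favour of $\bfchi^1,\bfchi^2$, exactly as in the proof of Proposition~\ref{prop:recip:zf}. Throughout I will use two facts: the zero-mean property of all cell functions, which lets constants such as $\bsh_{rk\ell}$ or $\CZ$ be dropped from integrals against them; and the identity $\lbra\beta\rbra_\Ycal=1$, which gives $\lbra(\beta-1)\,\cdot\,\rbra_\Ycal=\lbra\beta\,\cdot\,\rbra_\Ycal$ on zero-mean functions.

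\textbf{Tensor $\bfbsh$.} By~\eqref{phi1:def}, $|\Ycal|\bsh_{ik\ell}=\iY C_{i\ell pq}\zeta^{2k}_{p,q}\dy+\iY\beta\chi^{1k\ell}_i\dy$. The first integral equals $-F^{i\ell}(\bfzet^{2k})$ by~\eqref{weak form chi1 index}, with the symmetry of $\bfC$ used to make the indices match. Lemma~\ref{lem:recip} applied to the pair $(\bfchi^{1i\ell},\bfzet^{2k})$ gives $F^{i\ell}(\bfzet^{2k})=\FHat^k(\bfchi^{1i\ell})=\iY(\beta-1)\chi^{1i\ell}_k\dy=\iY\beta\chi^{1i\ell}_k\dy$. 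This yields~\eqref{bsh:expr}.

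\textbf{Tensor $\brcii$.} Take $\bfv=\bfzet^{2i}$ in the variational problem for $\bfzet^{2k}$. This gives $A(\bfzet^{2k},\bfzet^{2i})=\FHat^k(\bfzet^{2i})=\iY(\beta-1)\zeta^{2i}_k\dy=|\Ycal|\lbra\beta\zeta^{2i}_k\rbra_\Ycal$, which is~\eqref{c2:recipr} (up to the index symmetry that $A$ itself provides). Positive definiteness follows from the coercivity of $A$ on $\bHzm(\Ycal)$ (Korn's inequality together with~\eqref{sign:rho:C}). For any $\brb\neq\bfze$, the combination $\brb\sip\bfzet^2$ is nonzero unless $\beta=1$ a.e., by Remark~\ref{rem:zeta2}.

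\textbf{Tensor $\bfBii$.} This is the main obstacle, since $\bfphc^2$ in~\eqref{phi2:def} involves $\bfzet^3$, $\bfzet^2$ and $\beta\bfchi^2$. I would proceed in three steps.
\begin{enumerate}[\hspace{0.5em}1.]
\item The term $\iY C_{i\ell ab}\zeta^{3km}_{a,b}\dy=-F^{i\ell}(\bfzet^{3km})$ is rewritten by reciprocity with $\bfchi^{1i\ell}$ as $-\FHat^{km}(\bfchi^{1i\ell})$. By~\eqref{weak form chi3 index}, this produces $-\iY\phi^1_{rkm}\chi^{1i\ell}_r\dy+\iY\zeta^{2k}_aC_{amrs}\chi^{1i\ell}_{r,s}\dy$.
\item Expanding $\phi^1$ via~\eqref{phi1:def}, the first integral splits into $-\iY\beta\chi^{1km}_r\chi^{1i\ell}_r\dy$, which is exactly the quadratic term of~\eqref{bii:expr}, and $-\iY C_{rmpq}\zeta^{2k}_{p,q}\chi^{1i\ell}_r\dy$.
\item The remaining $\bfzet^2$ terms are this last integral, the integral $\iY\zeta^{2k}_aC_{amrs}\chi^{1i\ell}_{r,s}\dy$ from step~1, and $\iY C_{i\ell am}\zeta^{2k}_a\dy$. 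By~\eqref{S0:def}, the sum $C_{i\ell am}+C_{amrs}\chi^{1i\ell}_{r,s}$ equals $\Scal^{0i\ell}_{am}$. The remaining combination should then collapse, after subtracting the constant $\CZ$ (allowed since $\bfzet^2$ has zero mean), to $F^{i\ell m}(\bfzet^{2k})$ in the form~\eqref{zf:weak form chi2 index}. A second application of reciprocity, to the pair $(\bfchi^{2i\ell m},\bfzet^{2k})$, turns this into $\FHat^k(\bfchi^{2i\ell m})=\iY\beta\chi^{2i\ell m}_k\dy$.
\end{enumerate}
Adding the untouched term $\iY\beta\chi^{2mk\ell}_i\dy$ then gives~\eqref{bii:expr}.

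The delicate point is the index bookkeeping in step~3. I must check that the $\bfzet^2$ terms assemble precisely into $F^{i\ell m}$ with consistent sign and index order (stress indices $m$ versus $r$ and the placement of $\ell$). If they do not, I would fall back on integrating the term $\iY C_{rmpq}\zeta^{2k}_{p,q}\chi^{1i\ell}_r\dy$ by parts to transfer the derivative, relying on periodicity to discard boundary contributions.
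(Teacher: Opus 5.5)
Your proposal is correct and follows the paper's proof. It uses the same reciprocity pair $(\bfchi^{1i\ell},\bfzet^{2k})$ for $\bfbsh$, the same choice of testing the $\bfzet^{2}$ problem with $\bfzet^{2}$ for $\brcii$, and the same two pairs $(\bfchi^{1i\ell},\bfzet^{3km})$ and $(\bfzet^{2k},\bfchi^{2i\ell m})$ for $\bfBii$; the paper sums the two identities before simplifying, whereas you substitute them one after the other. The index bookkeeping you flagged in step~3 does close: by the major symmetry of $\bfC$ and the symmetry $\Scal^{0i\ell}_{am}=\Scal^{0i\ell}_{ma}$, your three $\bfzet^{2}$ integrals are exactly $F^{i\ell m}(\bfzet^{2k})$ (the $\CZ$ term vanishes by zero mean), so the integration-by-parts fallback is not needed.
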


From the above results, we see in particular that $\bfbsh$ has the skew-symmetry property
\begin{equation}
  \bsh_{ik\ell} = -\bsh_{ki\ell} \label{bsh:sym}
\end{equation}
and can be computed from the first cell solution $\bfchi^1$ instead of $(\bfchi^1,\bfzet^2$), $\bfBii$ has the major symmetry
\begin{equation} \label{eq:B2:c2:sym}
  \Bii_{i\ell km}= \Bii_{kmi\ell}, 
\end{equation}
and can be computed from the two cell solutions $(\bfchi^1,\bfchi^2)$ instead of $(\bfchi^1,\bfzet^2,\bfzet^3$), and $\brcii$ is symmetric:
\begin{equation} \label{eq:c2:sym}
  \cii_{ik} = \cii_{ki}.
\end{equation}
Similarly to the elastostatic case, the third inertial cell problem is seen to be needed in the homogenization process but the practical evaluation of the effective tensors does not use its solution $\bfzet^3$. The inertial solution $\bfzet^2$ is therefore the only additional cell function to be computed in practice. Moreover, we remark for future use that $\bfzet^2$ is actually used only in the evaluation of the effective tensor $\brcii$, since $\bfbsh$ and $\bfBii$ can be computed from the relative mass density $\beta$ and the elastostatic cell solutions $\bfchi^1,\bfchi^2$.


\subsection{Effective wave equation}
\label{sec:mean}

We define the second-order approximation $\brU^{\epsilon}(\brx)$ of the macroscopic displacement field as:
\begin{equation}
  \brU^{\epsilon}(\brx) = \brU^0(\brx) + \epsilon \brU^1(\brx)  + \epsilon^2 \brU^2(\brx) + o(\epsilon^2), \label{Ueps:def}
\end{equation}
whose coefficients $\brU^0(\brx),\,\brU^1(\brx),\,\brU^2(\brx)$ satisfy the macroscopic balance equations~\eqref{macroscopic balance u0} (leading order), \eqref{macroscopic balance u1} (first order) and~\eqref{macroscopic balance u2} (second order) derived in Section~\ref{sec:cascade}, recalled here for convenience:
\begin{equation} 
\begin{aligned} 
 \text{(a) \ }\bfze &= \Divx \lcb \brCz \dip \bfepsx(\brU^0) \rcb + \oostr^2\rhoz \brU^0 + \brFstr, \\
 \text{(b) \ }\bfze &= \Divx \lcb \brCz \dip\bfepsx(\brU^1) + \brCi\therefore \bfetax(\brU^0) \rcb
   + \oostr^2\rhoz \lsqb \brU^1 + \bfbsh\dip\bfnax\brU^0 \rsqb, \\
  \text{(c) \ }\bfze &= \Divx \lcb \brCz\dip\dip\bfepsx(\brU^2) + \brCi\therefore\bfetax(\brU^1)
   + \Divx\lsqb \brCii\therefore\bfetax(\brU^0)\rsqb \rcb \\ & \qquad
    + \oostr^2\rhoz \lsqb \brU^2 + \bfbsh\dip\bfnax\brU^1 + \Divx\lcb \bfBii\dip\bfnax\brU^0\rcb \rsqb + (\oostr^2\rhoz)^2\brcii\sip\brU^0
\end{aligned} \label{mean:eqs:init}
\end{equation}
Like in the elastostatic case, and keeping in mind that any resulting effective PDE needs only be satisfied at order $\Ocal(\epsilon^2)$, effective wave equations for $\brU^{\epsilon}(\brx)$ would \emph{a priori} follow from the combination~(\ref{mean:eqs:init}a)+$\epsilon$(\ref{mean:eqs:init}b)+$\epsilon^2$(\ref{mean:eqs:init}c) of the macroscopic balance equations. The foregoing combination however raises two issues. Firstly, the tensor coefficients of the PDE emerging from the above-defined combination may, like in Section~\ref{sec:mean:zf}, fail to possess satisfactory sign and ellipticity properties, a shortcoming that as before is to be remedied by recasting~(\ref{mean:eqs:init}c) into an equivalent alternative form prior to deriving an effective PDE. Secondly, the presence of the $(\oostr^2\rhoz)^2$ factor in~(\ref{mean:eqs:init}c) indicates that the obtained effective PDE would \emph{a priori} be of fourth order in time, a form that do not conform to the SGE format and also lacks solvability guarantees even with suitably-behaved tensor coefficients (an issue also raised by \cite{Allaire2022,allaire:24}). For those reasons, we next pursue a reformulation of~(\ref{mean:eqs:init}c) that leads to an effective PDE that is of second order in time and endows tensor coefficients with satisfactory sign properties. This version will then be shown in Section~\ref{sec:homo:SGE} to define a valid strain-gradient homogenized model and to lead to well-posed transient evolution problems. The  discussion of a fourth-order-in-time effective PDE is deferred to Section~\ref{sec:fourth:order:in:time}.\enlargethispage*{3ex}




Proceeding along lines similar to Sec.~\ref{sec:mean:zf}, we take the Laplacian of~(\ref{mean:eqs:init}a), to obtain
\begin{equation}
  -\Divx\lcb \Divx\lsqb \brAth\therefore\bfeta(\brU^0) \rsqb\rcb - \oostr^2\rhoz\bfDelx\brU^0 - \bfDelx\brFstr = \bfze. \label{aux23}
\end{equation}
We then derive an additional identity by applying the operator $-\Divx \lcb \brCz \dip \bfepsx\lpar \brcii\sip(\dotp)\rpar \rcb - \oostr^2\rhoz\lpar \brcii\sip(\dotp)\rpar$ to~(\ref{mean:eqs:init}a) and obtain
\begin{equation}
  - \Divx\lcb \Divx\lsqb \bfcii\therefore\bfetax(\brU^0) \rsqb\rcb
  - \oostr^2\rhoz \Divx\lcb \bfbii\dip\bfna\brU^0 \rcb - (\oostr^2\rhoz)^2 \brcii\sip\brU^0
  - \brcii\sip\Divx\lcb \brCz \dip \bfepsx(\brFstr) \rcb - \oostr^2\rhoz\brcii\sip\brFstr = \bfze,
	 \label{aux25}
\end{equation}
where the 6th order tensor $\bfcii$ and the 4th order tensor $\bfbii$, given using component notation by
\begin{equation}
  \rmcii_{ijnk\ell m} = \cii_{ba}\CZ_{anij} \CZ_{bmk\ell} , \qquad
  \bii_{i\ell km} = \cii_{ia} \CZ_{a\ell km} + \cii_{ka} \CZ_{ami\ell}.
   \label{b2:c2:def}
\end{equation}
are defined in order to satisfy
\begin{equation}
\begin{aligned}
  \Divx \lcb \brCz\dip\bfepsx\lsqb\brcii\sip\Divx\lpar\brCz \dip \bfepsx(\brU)\rpar\rsqb \rcb
 &= \Divx\lcb \Divx\lsqb \bfcii\therefore\bfetax(\brU) \rsqb\rcb \\
  \Divx\lcb \brCz \dip \bfepsx(\brcii\sip\brU^0) \rcb + \brcii\sip\Divx\lcb \brCz \dip \bfepsx(\brU^0) \rcb
 &= \Divx\lcb \bfbii\dip\bfna\brU^0 \rcb
\end{aligned}
\end{equation}
for any field $\brU(\brx)$.
We then add to~(\ref{mean:eqs:init}c) the combination $\vartheta$\eqref{aux23}+\eqref{aux25}, where $\vartheta$ is like in Section~\ref{sec:mean:zf} an adjustable weight parameter. This leads to the alternative form
\begin{multline}
  \bfze = \Divx \lcb \brCz\dip\dip\bfepsx(\brU^2) + \brCi\therefore\bfetax(\brU^1)
   - \Divx\lsqb \brAii(\vartheta)\therefore\bfetax(\brU^0)\rsqb \rcb \\
    + \oostr^2\rhoz \lsqb \brU^2 + \bfbsh\dip\bfnax\brU^1 - \Divx\lcb \brJii(\vartheta)\dip\bfnax\brU^0\rcb \rsqb
    - \vartheta\bfDelx\brFstr - \brcii\sip\Divx\lcb \brCz \dip \bfepsx(\brFstr) \rcb - \oostr^2\rhoz\brcii\sip\brFstr
    \label{mean:eq:order2:var}
\end{multline}
of the $\Ocal(\epsilon^2)$ balance equation~(\ref{mean:eqs:init}c) where the $\vartheta$-dependent tensors $\brAii(\vartheta)\in S^2\lpar S^2(\Rbb^d)\tens\Rbb^d \rpar$ and $\brJii(\vartheta)\in S^2(\Rbb^d\tens\Rbb^d)$ are defined by
\begin{equation}
  \brAii(\vartheta) = \vartheta\brAth - \bfCii + \bfcii = \brAZii(\vartheta) + \bfcii , \qquad
  \brJii(\vartheta) = \vartheta\brI^2 - \bfBii + \bfbii \label{A2:def},
\end{equation}
where the elastostatic correction $\brAth$ and tensor $\brAZii(\vartheta)$ are given by \eqref{Ath:def} and \eqref{zf:A:def}, $\brI^2$ is the identity on second-order tensors (i.e. $\rmI^2_{i\ell km}=\delta_{ik} \delta_{\ell m}$), and the tensors $\bfcii$ and $\bfbii$ are given by~\eqref{b2:c2:def}.
The foregoing manipulation is thus seen to convert~(\ref{mean:eqs:init}c) into an equivalent form~\eqref{mean:eq:order2:var} that involves only second-order time derivatives and features tensors possessing suitable symmetries.





We then proceed to derive the sought effective PDE. Writing the relevant combination~(\ref{mean:eqs:init}a)+$\epsilon$(\ref{mean:eqs:init}b)+$\epsilon^2$\eqref{mean:eq:order2:var} of the macroscopic balance equations, the second-order approximation~\eqref{Ueps:def} of the macroscopic displacement is found to satisfy the generalized wave equation
\begin{equation}
  \sfQ_{\rmx}\lsqb \brCz,\epsilon\brCi,\epsilon^2\brAii(\vartheta) \rsqb\brU
  - \oostr^2 \rhoz \sfR_{\rmx}\lsqb 1,\epsilon \bfbsh,\epsilon^2 \brJii(\vartheta) \rsqb\brU = \brFstrth \label{SGE:freq}
\end{equation}
up to a $\Ocal(\epsilon^3)$ residual,
using again the generic PDOs introduced in~\eqref{PDO:generic} and with the 
loading $\brFstrth$ defined by
\begin{equation}
  \brFstrth
 := \brFstr - \epsilon^2 \lsqb \vartheta\bfDelx\brFstr + \brcii\sip\Divx\lcb \brCz \dip \bfepsx(\brFstr) \rcb + \oostr^2\rhoz\brcii\sip\brFstr \rsqb
\label{F:def}
\end{equation}
Then, applying the inverse Fourier transform to~\eqref{SGE:freq} yields the transient, second-order in time, effective equation
\begin{equation}
  \sfQ_{\rmx}\lsqb \brCz,\epsilon\brCi,\epsilon^2\brAii(\vartheta) \rsqb\brU
  + \rhoz \sfR_{\rmx}\lsqb 1,\epsilon\bfbsh,\epsilon^2\brJii(\vartheta) \rsqb\brU'' = \brFstrth, \label{SGE:time}
\end{equation}
whose right-hand side features second-order derivatives in both space and time of the body force density $\brFstr$.

\begin{remark}\label{rem:c2:pos:def}
The sixth-order "corrective" tensor $\brc^2$ has the same major and minor symmetries as $\brCii$ and $\brD^2$ (which are therefore shared by the effective tensor $\brA^2(\vartheta)$), and is positive as a consequence of $\brcii$ being positive (see Prop. \ref{prop:recip}). Indeed one has:
\begin{equation}
\bfeta \therefore \brc^2 \therefore \bfeta = \bfv\cdot\brcii\cdot\bfv \geq 0 \quad \text{ with \ } v_a = \CZ_{anij} \eta_{nij} \qquad \text{for any \ }\bfeta\in S(\Rbb^d)\tens\Rbb^d.    
\end{equation}
The fourth-order corrective tensor $\brb^2$ has the same major symmetry as $\brB^2$, and therefore so does the effective tensor $\brJ^2(\vartheta)$. Unlike $\brc^2$, however, $\brb^2$ is in general sign-indefinite. Indeed, let $\bfw\shin \Lambda^2(\Rbb^d)$ and let $\bfga(q)=\bfw\sip\brcii\shm\brcii\sip\bfw\shp 2q\bfw$ for $q\shin\Rbb$, noting that $\bfga\Esub\shdeq\bfw\sip\brcii\shm\brcii\sip\bfw$ and $\bfga\Osub(q)\shdeq 2q\bfw$ are the symmetric and skew-symmetric parts of $\bfga(q)$, with $\bfga\Esub\not=\bfze$ except if $\brcii$ is isotropic. Since $\brCz\dip\bfw=\bfze$, we then obtain
\[
  g(q) := 
  \bfga(q)\dip\brb^2\dip\bfga(q)
  = (\bfga\Esub\sip\brcii)\dip\brCz\dip\bfga\Esub + q(2\bfw\sip\brcii)\dip\brCz\dip\bfga\Esub
  = (\bfga\Esub\sip\brcii)\dip\brCz\dip\bfga\Esub + q\bfga\Esub\dip\brCz\dip\bfga\Esub,
\]
since $2\bfw\sip\brcii\shm\bfga\Esub=\bfw\sip\brcii\shp\brcii\sip\bfw$ is skew-symmetric. Hence $g(\Rbb)\sheq\Rbb$, implying that $\brb^2$ is sign-indefinite. The foregoing argument fails only if $\brcii$ is isotropic, in which case $\brb^2$ is a positive multiple of $\brCz$, hence is positive.
\end{remark}


\subsection{Consistency with the elastostatic effective equation}
\label{sec:consist:zf}

Since $\brAii(\vartheta)\not=\brAZii(\vartheta)$ (a fact further discussed in Section \ref{sec:statics:dynamics}), the stiffness PDO featured in the dynamic PDEs~\eqref{SGE:freq} and~\eqref{SGE:time} differs from that used in the static PDE~\eqref{zf:eff}. The consistency between the dynamic and static effective PDEs thus needs to be ascertained.


To this aim, we first apply the operator $-\Divx \lcb \brCz \dip \bfepsx\lpar \brcii\sip(\dotp)\rpar \rcb$ to~(\ref{mean:eqs:init}a), to obtain
\begin{equation}
  - \Divx\lcb \Divx\lsqb \bfcii\therefore\bfetax(\brU^0) \rsqb\rcb
  - \oostr^2\rhoz\Divx\lcb \brCz \dip \bfepsx(\brcii\sip\brU^0) \rcb - \brcii\sip\Divx\lcb \brCz \dip \bfepsx(\brFstr) \rcb = \bfze, \label{aux27}
\end{equation}
Then, since $\bfcii=\brAii(\vartheta)\shm\brAZii(\vartheta)$ by~\eqref{A2:def}, we find
\begin{multline}
  \Divx\lcb \Divx\lsqb\brAii(\vartheta)\therefore\bfetax(\brU^0) \rsqb\rcb \\
 = \Divx\lcb \Divx\lsqb\brAZii(\vartheta)\therefore\bfetax(\brU^0) \rsqb\rcb
  - \oostr^2\rhoz\Divx\lcb \brCz \dip \bfepsx(\brcii\sip\brU^0) \rcb - \brcii\sip\Divx\lcb \brCz \dip \bfepsx(\brFstr) \rcb,
\end{multline}
which, used in~\eqref{mean:eq:order2:var}, produces the equality
\begin{multline}
  \bfze = \Divx \lcb \brCz\dip\dip\bfepsx(\brU^2) + \brCi\therefore\bfetax(\brU^1)
   - \Divx\lsqb \brAZii(\vartheta)\therefore\bfetax(\brU^0)\rsqb \rcb \\
    + \oostr^2\rhoz \lsqb \brU^2 + \bfbsh\dip\bfnax\brU^1
    - \Divx\lcb \brJii(\vartheta)\dip\bfnax\brU^0 - \brCz \dip \bfepsx(\brcii\sip\brU^0) \rcb \rsqb
    - \vartheta\bfDelx\brFstr + \oostr^2\rhoz\brcii\sip\brFstr
    \label{mean:eq:order2:mod}
\end{multline}
Setting $\oostr=0$ in the $\Ocal(\epsilon^2)$ effective equation produced by the combination~(\ref{mean:eqs:init}a)+$\epsilon$(\ref{mean:eqs:init}b)+$\epsilon^2$\eqref{mean:eq:order2:mod} thus restores the elastostatic effective equation~\eqref{zf:eff}.\enlargethispage*{1ex}

\section{Homogenized strain gradient elasticity models}
\label{sec:homo:SGE}

With the static and dynamic high-order effective PDEs and all associated effective tensors now in hand, we address in this section their interpretation as SGE model components. After summarizing in Section~\ref{sec:SGE} the SGE framework, we show that the foregoing effective PDEs define valid SGE models in both the static case (Section~\ref{sec:compar:zf}) and the dynamic case (Section~\ref{sec:compar}), their stiffness and inertial components being in particular proved therein to have the requisite sign definiteness and symmetry properties for suitably chosen values of the weight $\vartheta$. In particular, those properties crucially allow to establish the well-posedness of evolution problems in unbounded media based on the obtained transient effective PDE (Section~\ref{sec:transient_equation}).

\subsection{Strain-gradient elasticity}
\label{sec:SGE}

We summarize in this section the SGE framework, and introduce for later reference its associated variational version.  
In the case of SGE theory (of Mindlin's Type II used in this work) the strain and kinetic energy densities $\Ecal$ and $\Kcal$ are taken to be functions of the displacement and its first and second derivatives given by
\begin{equation}
  \Kcal := \tdemi (\brp\sip\bru') + \tdemi(\brq\dip\bfna\bru'), \qquad
  \Ecal := \tdemi (\bfsig\dip\bfeps) + \tdemi(\bftau\therefore\bfeta), \label{ek:SGE}
\end{equation}
where the strain $\bfeps=\bfeps(\bru)$ and strain gradient $\bfeta=\bfeta(\bru)$ are defined as in~\eqref{diffops:def} and ~\eqref{diffops:def:eta:kappa}, $\bfsig$ is the Cauchy stress tensor, $\brp$ is the momentum (vector) field, $\brq$ is the hyper-momentum tensor field (of order 2) and $\bftau$ is the hyper-stress tensor field (of order 3). The total stress $\brs$ and the total momentum $\bfpi$ are then defined as
\begin{equation}
  \brs = \bfsig - \Div\bftau - \bfupph, \qquad \bfpi = \brp - \Div\brq, \label{sge 2}
\end{equation}
where $\bfupph$ is the tensor field (of order 2) of double body forces to which the medium may be subjected in addition to the usual (vector) body force density $\brf$~\cite{germain:73}.
The least action principle~\cite{mindlin:68} then
yields the balance field equations:
\begin{equation}
	\Div\brs + \brf = \bfpi', \qquad\text{i.e.} \quad
	\Div(\bfsig - \Div\bftau) + \brf - \Div\bfupph = \brp' - \Div\brq'.
\end{equation}
In addition, the constitutive relations for this model are
\begin{equation}
\begin{Bmatrix} \brp \\ \brq \\ \bfsig \\ \bftau \end{Bmatrix}
= \begin{bmatrix}
	\rhoSG \brI & \frac{1}{2}\brK & \bfze &\bfze \\
	\frac{1}{2} \brK^{\text{T}} & \brJ & \bfze & \bfze\\
	\bfze & \bfze &\brC & \frac{1}{2} \brM \\
	\bfze & \bfze & \frac{1}{2} \brM^{\text{T}} & \brA
  \end{bmatrix}
  \begin{Bmatrix} \bru' \\ \bfna\bru' \\ \bfeps \\ \bfeta \end{Bmatrix}, \label{SGE:const}
\end{equation}
in which $\brC$ is the usual elasticity tensor, $\brM$ and $\brA$ are the coupling and second-order elasticity tensors (of tensorial order 5 and 6, respectively), $\rhoSG\brI$ is the inertia tensor (of order 2), and $\brK$ and $\brJ$ are the coupling and second-order inertia tensors (of tensorial order 3 and 4, respectively).
The index symmetries of the constitutive tensors are determined by the structure of the tensor spaces they inhabit, i.e.
\begin{equation}
\begin{aligned}
    \brI &\in S^{2}(\Rbb^d), &\quad \brK &\in \Rbb^d\otimes\Rbb^d\otimes\Rbb^d, &\quad
    \brJ &\in S^{2}(\Rbb^d\otimes\Rbb^d) \\
    \brC &\in S^{2}(S^{2}(\Rbb^d)), &\quad
    \brM &\in S^{2}(\Rbb^d)\otimes S^{2}(\Rbb^d)\otimes\Rbb^d, &\quad
    \brA &\in  S^{2}(S^{2}(\Rbb^d)\otimes\Rbb^d).
\end{aligned}
\end{equation}
On substituting the constitutive equations~\eqref{SGE:const} into the field equations~\eqref{sge 2} and expressing all quantities in terms of the displacement and its derivatives, $\bru$ is finally found to obey the (linear, fourth-order in space) generalized wave equation\enlargethispage*{1ex}
\begin{equation}
  \sfQ\bru + \sfR\bru'' = \brf - \Div\bfupph \qquad \text{with} \quad
  \sfQ := \sfQ[\brC,\brMsh,\brA], \quad \sfR := \sfR[\rhoSG,\brKsh,\brJ], \label{SGE:PDE}
\end{equation}
having used once again the generic PDOs~\eqref{PDO:generic} and with the coupling tensors $\brKsh,\brMsh$ defined in component form by
\begin{equation}
  \rmK_{ijk}^{\sharp} := \pinv{2} \left( \rmK_{ijk}-\rmK_{jik} \right), \qquad
  \rmM_{ijk\ell m}^{\sharp} := \pinv{2}  \left( \rmM_{ijk\ell m}-\rmM_{k\ell ijm} \right). \label{SGE:sharp}
\end{equation}
Letting $\Lambda^{2}(\mathbb{V})$ denote the tensor space generated by the antisymmetric tensor product of $\mathbb{V}$ with itself, the (partially skew-symmetric) coupling tensors $\brKsh,\brMsh$ thus verify $\brKsh\in \Lambda^{2}(\Rbb^d)\otimes\Rbb^d$ and $\brMsh\in \Lambda^{2}(S^2(\Rbb^d))\otimes\Rbb^d$.


\begin{remark}
Compared to previous works, \eg \cite{auffray:15,rosi:24}, the constitutive relation \eqref{SGE:const} features additional $\pinv{2}$ factors in front of the heuristic coupling tensors $\brK,\,\brM$, so that the tensors $\brK^\sharp,~\brM^\sharp$ featured in the SGE balance equation and analysis to follow are the skew-symmetric parts of $\brK,\,\brM$ as per~\eqref{SGE:sharp}.
\end{remark}

\subsubsection*{Weak formulation}

Well-posedness results and numerical approximation methods such as the FEM for initial-value problems based on the evolution PDE~\eqref{SGE:PDE}, and similarly for boundary-value problems involving time-harmonic or time-independent versions of~\eqref{SGE:PDE}, rest on their variational formulation~\cite{germain:73}. To this end, in keeping with the present unbounded-domain setting, we take the scalar product of~\eqref{SGE:PDE} with a vector-valued test function $\brw$ belonging to the space $\textbf{C}^{\infty}_0(\Rbb^d)$ of vector-valued smooth functions which vanish, together with all their derivatives, outside some bounded subset of $\Rbb^d$ (i.e. are test functions in the sense of the theory of distributions).
The weak form of~\eqref{SGE:PDE} at any time instant is found as
\begin{equation}
  \Qcal(\bru,\brw) + \Rcal(\bru'',\brw) = \Fcal(\brw) \qquad \text{for all }\brw\in\textbf{C}^{\infty}_0(\Rbb^d), \label{SGE:weak}
\end{equation}
where $(\mathbf{a},\mathbf{b})$ denotes the $\bfL^2(\Rbb^d)$ scalar product of tensor fields $\mathbf{a},\mathbf{b}$ of the same order, and the bilinear forms $\Qcal,\Rcal$ and the linear functional $\Fcal$ are defined by
\begin{equation}
\begin{aligned}
  \Qcal(\bru,\brw)
 &= \iRd \lpar \bfeps(\brw)\dip\brC\dip\bfeps(\bru) + \bfeps(\brw)\dip\brMsh\therefore\bfeta(\bru)
 + \bfeta(\brw)\therefore \brA\therefore\bfeta(\bru) \rpar \dx, \\
  \Rcal(\bru,\brw)
 &= \iRd \rhoSG \lpar \brw\sip\bru + \brw\sip\brKsh\dip\bfna\bru + \bfna\brw\dip\brJ\dip\bfna\bru \rpar \dx \\
  \Fcal(\brw)
 &= \iRd \lpar \brf\sip\brw + \bfupph\dip\bfna\brw \rpar \dx
\end{aligned} \label{PQ:CGE:bil:def}
\end{equation}
Obtaining the foregoing formulation relies on the fact that, for any $\brw\in \textbf{C}^{\infty}_0(\Rbb^d)$, integrations by parts produce no boundary terms and thus yield the Green identities
\begin{equation}
  \text{(a) }\lpar \sfQ\bru,\brw \rpar = \Qcal(\bru,\brw), \qquad
  \text{(b) }\lpar \sfR\bru,\brw \rpar = \Rcal(\bru,\brw) \label{green:AB}
\end{equation}
and (by the skew-symmetry properties~\eqref{SGE:sharp} of $\brKsh$ and $\brMsh$) the symmetry properties
\[
  \lpar \brKsh\dip\bfna\bru, \brw \rpar
  = \lpar \brKsh\dip\bfna\brw, \bru \rpar, \qquad
  \lpar \brMsh\therefore \bfeta(\bru), \bfna\brw \rpar
  = \lpar \brMsh\therefore \bfeta(\brw), \bfna\bru \rpar.
\]
The above equalities ensure that the bilinear forms $\Qcal$ and $\Rcal$ are symmetric. In addition, the defining expressions~\eqref{PQ:CGE:bil:def} imply that $\Qcal$ and $\Rcal$ extend to bilinear forms that are  well-defined and continuous on $\bfH^2(\Rbb^d)$. The latter property follows from applying the Cauchy-Schwarz inequality to the integrals in~\eqref{PQ:CGE:bil:def} and using the definition of $\|\bru\|^2_{\brH^2(\Rbb^d)}$ to majorize the resulting upper bounds, from which the inequalities
\begin{equation}
  \Qcal(\bru,\brw) \leq C\|\bru\|_{\brH^2(\Rbb^d)}\,\|\brw\|_{\brH^2(\Rbb^d)}, \qquad
  \Rcal(\bru,\brw) \leq C\|\bru\|_{\brH^1(\Rbb^d)}\,\|\brw\|_{\brH^1(\Rbb^d)} \leq C\|\bru\|_{\brH^2(\Rbb^d)}\,\|\brw\|_{\brH^2(\Rbb^d)} \label{AB:cont}
\end{equation}
hold for some constant $C>0$. As a result, the weak form~\eqref{SGE:weak} of~\eqref{SGE:PDE} extends, by the known density of $\textbf{C}^{\infty}_0(\Rbb^d)$ in $\bfH^2(\Rbb^d)$, to test functions $\brw\in\bfH^2(\Rbb^d)$.

The variational form~\eqref{SGE:weak} of the generalized transient wave equation~\eqref{SGE:PDE} expresses the stationarity of the functional
\begin{equation}
    \Lcal(\bru,\bru')
 := \iT \Lcb \int_{\Rbb^d} \rmk(\bru',\bfna\bru') \dx - \int_{\Rbb^d} \rme\lpar \bfeps(\bru),\bfeta(\bru) \rpar \dx
 + \Fcal(\bru) \Rcb \dt,
\end{equation}
with the strain and kinetic energy
densities $\rmk$ and $\rme$ given by
\begin{equation}
\text{(a) \ }
  2\rme(\bfeps,\bfeta)
 := \bfeps\dip\brC\dip\bfeps + \bfeps\dip\brMsh\therefore\bfeta + \bfeta\therefore \brA\therefore\bfeta, \qquad
\text{(b) \ }
  2\rmk(\bru,\bfga)
 := \rhoSG \bru\sip\bru + \bru\sip\brKsh\dip\bfga + \bfga\dip\brJ\dip\bfga.
\label{ke:SGE:def}
\end{equation}
\begin{remark}\label{ke:SGE:rem}
To define positive definite forms of their arguments, the higher-order tensors entering the above energy densities require the tensors $\brA-\tquart\brMsh{}\Tsup\dip\brC^{-1}\dip\brMsh\in S^2\lpar S^2(\Rbb^d)\tens\Rbb^d \rpar$ and $\brJ-\tquart\rhoSG^{-1}\brKsh{}\Tsup\sip\brKsh\in S^2\lpar \Rbb^d\tens\Rbb^d \rpar$ to be positive definite, in addition to the usual sign conditions on $\brC$ and $\rhoSG$. The first condition was used in \cite{nazarenko:21} to establish positive definiteness conditions for the elastic energy density of hemitropic SG materials; the second is its inertial-tensor analogue.
\end{remark}

\begin{remark}
\label{rem:energy:vs:PDE}
The kinetic and strain energy densities $\rmk,\rme$ associated with the PDE~\eqref{SGE:PDE}, given by~\eqref{ke:SGE:def}, generally differ from the original densities~\eqref{ek:SGE} which, using the constitutive relations \eqref{SGE:const}, are found to be given by
\begin{equation}
  2\Ecal
 = \bfeps\dip\brC\dip\bfeps + \bfeps\dip\brM\therefore\bfeta + \bfeta\therefore \brA\therefore\bfeta. \qquad   2\Kcal
 = \rhoSG \bru'\sip\bru' + \bru'\sip\brK\dip \bfna\bru' + \bfna\bru'\dip \brJ  \dip \bfna\bru',
 \label{ek:SGE:full}
\end{equation}
In particular, $\rmk$ and $\rme$ involve only the skew-symmetric part of $\brK$ and $\brM$, respectively, the contributions of their symmetric part to the total energies $\Rcal$ and $\Qcal$ vanishing under the foregoing integration-by-parts process based on compactly-supported test functions. For bounded domains $\OO$, the variational approach must be modified by carefully accounting for boundary contributions (except when homogeneous kinematic boundary conditions make $\bfH^2_0(\OO)$ the relevant variational solution space), with terms missing in the "PDE energy" expected to re-appear as boundary terms. Similar reconsideration involving the proper definition of radiation conditions, not yet available for SGE to our best knowledge, is likewise in order to address time-harmonic SGE in unbounded media.

\end{remark}


\subsection{Homogenized elastostatic strain-gradient model}
\label{sec:compar:zf}

We now examine the interpretation of the elastostatic effective PDE~\eqref{zf:eff} as a SGE model. To this aim, \eqref{zf:eff} is set to fully-dimensional form by reversing~\eqref{oo:F:stretch} and invoking the scaling identity~\eqref{PDO:scaling} verified by the PDO (together with $\epsilon^2\Delta_\brx=\ell^2\Delta_{\brX}$). The dimensional effective PDE
is obtained as
\begin{equation}
   \sfQZth\brU^{\epsilon} = (1-\vartheta\ell^2\Delta_\brX) \brF, \qquad\text{with}\quad
   \sfQZth := \sfQ_\brX\lsqb \brCz,\ell\brCi,\ell^2\brAZii(\vartheta) \rsqb\brU^{\epsilon}, \label{zf:SGE:eff}
\end{equation}
and is readily seen to be of the same form as the time-independent version of the strain-gradient PDE~\eqref{SGE:PDE} if the SGE tensors and the body force and double force densities are chosen as
\begin{equation}
  \brC = \brCz, \qquad \brMsh := \ell \brCi, \qquad \brA = \brA(\vartheta) := \ell^2 \brAZii(\vartheta), \qquad 
  \brf := \brF, \qquad \bfupph = \bfupph(\vartheta) := \vartheta\ell^2\bfna\brF \label{zf:corr:tens}
\end{equation}

As expected, (i) the tensor-valued coefficients of~\eqref{zf:SGE:eff} possess the symmetries expected of those featured in~\eqref{SGE:PDE}, (ii) the first-order and second-order tensors scale with $\ell$ and $\ell^2$ compared to their homogenized counterparts computed on the reference unit cell $\Ycal$. We emphasize that, unlike $\brC$ and $\brMsh$, the resulting SG elasticity tensor $\brA=\brA(\vartheta)$ is not uniquely determined by the homogenization process since it also depends on the adjustable parameter $\vartheta$, and the same comment applies to the double force $\bfupph$.



As a consequence, the strain-gradient effective PDE can be set in variational form as
\begin{equation}
  \rmQZth(\brU,\brW)
 = \rmFth(\brW) \qquad \text{for all }\brW\in\textbf{C}^{\infty}_0(\Rbb^d), \label{zf:weak}
\end{equation}
where the bilinear form $\rmQZth$ and the linear functional $\rmFth(\brW)$ are defined by the first and last of~\eqref{PQ:CGE:bil:def} with the replacements~\eqref{zf:corr:tens} (hence their dependence in $\vartheta$, emphasized by the notation).
%
The variational equation~\eqref{zf:weak} expresses the stationarity of the functional
\begin{equation}
    \Lcal_{\vartheta}(\brU)
 := \int_{\Rbb^d} \eZ_{\vartheta}\lpar \bfeps(\brU),\bfeta(\brU) \rpar \dV - \rmFth(\brU),
\end{equation}
with 
the strain energy density function 
$\eZ_{\vartheta}(\bfeps,\bfeta)$ given for arbitrary tensors $\bfeps\shin S^{2}(\Rbb^d)$ and $\bfeta\shin S^{2}(\Rbb^d)\tens\Rbb^d$ by~(\ref{ke:SGE:def}a) with replacements~\eqref{zf:corr:tens}, i.e.
\begin{equation}
  2\eZ_{\vartheta}(\bfeps,\bfeta) := \bfeps\dip\brCz\dip\bfeps + \eps\bfeps\dip\brCi\therefore\bfeta + \eps^2\bfeta\therefore\brAZii(\vartheta)\therefore\bfeta. \label{eZ:def}
\end{equation}
For the effective PDE~\eqref{zf:SGE:eff} to define a physically valid homogenized strain-gradient model, its coefficients must be such that the strain energy density~\eqref{eZ:def} is positive definite. The following result on the density function $\eZ_{\vartheta}$ and the bilinear form $\rmQZth$ (whose proof is given in Section~\ref{sec:proof:coercivity:zf:fourier}) states that this is indeed the case provided $\vartheta$ is properly adjusted:
\begin{prop}\label{zf:coercivity}
There exists a finite positive value $\thZ_e$ of $\vartheta$, which does not depend on $\eps$ and is such that the following properties hold for any $\vartheta>\thZ_e$:
\begin{compactenum}[(a)]
\item The quadratic form $(\bfeps,\bfeta)\mapsto \eZ_{\vartheta}(\bfeps,\bfeta)$ defined by~\eqref{eZ:def} is positive definite on $S^{2}(\Rbb^d) \times \lpar S^{2}(\Rbb^d)\tens\Rbb^d \rpar$;
\item The bilinear form $\rmQZth$ of~\eqref{zf:weak} is positive and $\brH^2(\Rbb^d)$-coercive (in the terminology of e.g.~\cite{aubin}), as it verifies
\begin{equation}
  \rmQZth(\brU,\brU) \geq C(\vartheta) \lpar \|\brU\|^2_{\brH^2(\Rbb^d)} - \|\brU\|^2_{\brL^2(\Rbb^d)} \rpar \geq 0
  \qquad \text{for all } \brU\in\brH^2(\Rbb^d) \text{ and some }C=C(\vartheta)>0.
\end{equation}
\end{compactenum}
\end{prop}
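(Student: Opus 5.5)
Part (a) comes first, via a Schur-complement / completing-the-square argument that eliminates the coupling term. For fixed $\bfeta$, minimizing $2\eZ_{\vartheta}$ over $\bfeps$ (using the ellipticity~\eqref{aux15z} of $\brCz$, so that $(\brCz)^{-1}$ exists on $S^2(\Rbb^d)$) gives
\[
2\eZ_{\vartheta}(\bfeps,\bfeta) = (\bfeps+\tfrac{\eps}{2}(\brCz)^{-1}\dip\brCi\therefore\bfeta)\dip\brCz\dip(\bfeps+\tfrac{\eps}{2}(\brCz)^{-1}\dip\brCi\therefore\bfeta) + \eps^2\,\bfeta\therefore\brD(\vartheta)\therefore\bfeta,
\]
with $\brD(\vartheta):=\vartheta\brAth-\brCii-\tquart\brCi{}\Tsup\dip(\brCz)^{-1}\dip\brCi$, as anticipated in Remark~\ref{ke:SGE:rem}. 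The factor $\eps^2$ factors out, which is why the threshold does not depend on $\eps$. Positive definiteness of $\eZ_\vartheta$ is then equivalent to positive definiteness of $\brD(\vartheta)$ on $S^2(\Rbb^d)\tens\Rbb^d$ (taking $\eps>0$).

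The key point is that $\brAth$ is positive definite on that space. From~\eqref{Ath:def}, $\bfeta\therefore\brAth\therefore\bfeta=\sum_n \bfeta_{\cdot\cdot n}\dip\brCz\dip\bfeta_{\cdot\cdot n}\geq\kappa^0|\bfeta|^2$, since each slice $\bfeta_{\cdot\cdot n}$ lies in $S^2(\Rbb^d)$. The remaining part $\brCii+\tquart\brCi{}\Tsup\dip(\brCz)^{-1}\dip\brCi$ is a fixed symmetric tensor (by~\eqref{C2:sym}), so it has a largest eigenvalue $\lambda\Max$ on this finite-dimensional space. I then set $\thZ_e:=\max(\lambda\Max,0)/\kappa^0$, or more sharply the largest generalized eigenvalue of the pencil formed by the two tensors. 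For $\vartheta>\thZ_e$, $\brD(\vartheta)\geq(\vartheta\kappa^0-\lambda\Max)\,\mathrm{Id}>0$, which gives (a) with the explicit bound $2\eZ_\vartheta\geq c(\vartheta)\,(|\bfeps|^2+\eps^2|\bfeta|^2)$ for some $c(\vartheta)>0$ after a routine step. That routine step controls the cross term by Young's inequality with a small fraction of $\brCz$.

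For (b), I integrate this pointwise bound. This gives $\rmQZth(\brU,\brU)\geq c\,(\|\bfeps(\brU)\|^2_{L^2}+\eps^2\|\bfeta(\brU)\|^2_{L^2})$, so it remains to dominate the $\brH^2$ seminorms. Here I would use Plancherel rather than a Korn inequality. With $\widehat{\brU}(\bfxi)$, one has $|\widehat{\bfeta}|^2=|\bfxi|^2|\widehat{\bfeps}|^2$, and $|\widehat{\bfeps}|^2=\tfrac12(|\bfxi|^2|\widehat{\brU}|^2+|\bfxi\sip\widehat{\brU}|^2)\geq\tfrac12|\bfxi|^2|\widehat{\brU}|^2$. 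Hence $\|\bfeps\|^2+\eps^2\|\bfeta\|^2\geq\tfrac12\int(|\bfxi|^2+\eps^2|\bfxi|^4)|\widehat\brU|^2\,\mathrm d\bfxi\geq C\int(|\bfxi|^2+|\bfxi|^4)|\widehat\brU|^2\,\mathrm d\bfxi$. The latter is comparable to $\|\brU\|^2_{\brH^2}-\|\brU\|^2_{\brL^2}$, which is the claimed Gårding-type coercivity with nonnegativity.

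The main obstacle is the full-space Korn-type step and keeping the constants honest. The Fourier identity resolves the former cleanly. For the latter, I would note that $C(\vartheta)$ may depend on $\eps$ through $\min(1,\eps^2)$, whereas the statement only requires $\thZ_e$, not $C$, to be independent of $\eps$.
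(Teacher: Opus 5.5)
Your proposal is correct and follows essentially the same route as the paper: you complete the square in $\bfeps$ so that positivity of $\eZ_{\vartheta}$ reduces to that of $\vartheta\brAth-\brCii-\tquart\brCi{}\Tsup\dip(\brCz)^{-1}\dip\brCi$ (with $\eps^2$ factoring out, so the threshold is $\eps$-independent), and you then use Parseval with $|\widehat{\bfeps}|^2\geq\tdemi|\bfxi|^2|\widehat{\brU}|^2$ to get the $\brH^2$ coercivity. Your extra details are sound refinements of points the paper leaves implicit: the slicewise proof that $\brAth$ is positive definite, integrating the real pointwise bound before applying Plancherel only to $\|\bfeps(\brU)\|$ and $\|\bfeta(\brU)\|$, and making the $\min(1,\eps^2)$ dependence of $C$ explicit.
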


While boundary-value problems (BVPs) based on effective PDE~\eqref{zf:eff} are not studied in this work, Proposition~\ref{zf:coercivity}, or variations thereof dictated by the boundary condition setting, potentially plays a major role in assessing their well-posedness. Some results of this kind are available for SGE BVPs unrelated to homogenization, see e.g.~\cite{eremeyev:23}. Exterior BVPs for unbounded media would require versions of Prop.~\ref{zf:coercivity} using weighted (at infinity) forms of the $\brH^2(\Rbb^d)$ norm, as done in~\cite{amrouche:04} for exterior problems involving the (fourth-order) biharmonic equation. Here, Prop.~\ref{zf:coercivity} is mainly intended for completeness and as a precursor of corresponding results for the elastodynamic case that will directly benefit this work.

\subsection{Homogenized elastodynamic strain-gradient model}
\label{sec:compar}

Proceeding similarly to the elastostatic case, we now show that the dynamic effective PDE~\eqref{SGE:time} yields for suitable values of $\vartheta$ an elastodynamic SGE model. Reversing the conventions~\eqref{oo:F:stretch} and invoking the scaling identities~\eqref{PDO:scaling} verified by the stiffness and inertial PDOs (together with $\epsilon^2\Delta_\brx=\ell^2\Delta_{\brX}$), the transient homogenized strain-gradient PDE is obtained as
\begin{equation}
  \sfQth\brU^{\epsilon} + \sfRth\brU^{\epsilon}{}'' = \brFth, \label{SGE:eff}
\end{equation}
with the partial differential operators $\sfQth,\sfRth$ and the loading $\brFth$ defined by
\begin{equation}
\begin{gathered}
  \sfQth := \sfQ_{\rmX}\lsqb \brCz,\ell\brCi,\ell^2\brAii(\vartheta) \rsqb, \qquad
  \sfRth := \sfR_{\rmX}\lsqb \rhoz ,\ell \rhoz \bfbsh,\ell^2 \rhoz \brJii(\vartheta) \rsqb, \\
  \brFth := \brF + \eps^2\rhoz\brcii\sip\brF''
  - \ell^2\Div\lpar \vartheta\,\bfna\brF + \brCz\dip(\brcii\sip\bfna\brF) \rpar.
\end{gathered} \label{ABF:def}
\end{equation}
Equation~\eqref{SGE:eff:freq} has the same form as the strain-gradient PDE~\eqref{SGE:PDE} if the SGE tensors in~\eqref{SGE:PDE} are chosen as
\begin{equation}
  \brC := \brCz, \quad \brMsh := \ell\brCi, \quad \brA = \brA(\vartheta) := \ell^2\brAii(\vartheta), \qquad \rhoSG = \rhoz, \quad \brKsh := \ell \rhoz \bfbsh, \quad \brJ = \brJ(\vartheta) := \ell^2 \rhoz \brJii(\vartheta) \label{corr:tens}
\end{equation}
while the force and double force densities are set to
\begin{equation}
  \brf := \brF + \eps^2\rhoz\brcii\sip\brF'', \qquad
  \bfupph(\vartheta) := \eps^2\lsqb \vartheta\,\bfna\brF + \brCz\dip\lpar \brcii\sip\bfna\brF \rpar \rsqb
\end{equation}
In particular, and as in the previously-examined elastostatic case, all tensor-valued coefficients of~\eqref{SGE:eff:freq} have the symmetry properties expected of those featured in~\eqref{SGE:PDE}. Corresponding identifications apply between the effective PDE~\eqref{SGE:freq} and the time-harmonic version of the strain-gradient PDE~\eqref{SGE:PDE}, to obtain the time-harmonic homogenized strain-gradient PDE
\begin{equation}
  \sfQth\brU^{\epsilon} - \oo^2\sfRth\brU^{\epsilon} = \brFth. \label{SGE:eff:freq}
\end{equation}

As in Section~\ref{sec:compar:zf}, the PDE~\eqref{SGE:eff} can be set in variational form as
\begin{equation}
  \rmQth(\brU,\brW) + \rmRth(\brU'',\brW) = \rmFth(\brW) \qquad
  \text{for all }\brW\in\textbf{C}^{\infty}_0(\Rbb^d), \label{SGE:time:weak}
\end{equation}
with the $\vartheta$-dependent bilinear forms $\rmQth,\rmRth$ and the linear functional $\rmFth(\brW)$ defined by~\eqref{PQ:CGE:bil:def} with the replacements~\eqref{corr:tens}.
The variational problem~\eqref{SGE:time:weak} therefore expresses the stationarity of the functional
\begin{equation}
    \Lcal_{\vartheta}(\brU,\brU')
 := \iT \Lcb \int_{\Rbb^d} k_{\vartheta}(\brU',\bfna\brU') \dx
   - \int_{\Rbb^d} e_{\vartheta}\lpar \bfeps(\brU),\bfeta(\brU) \rpar \dx + \rmFth(\brU) \Rcb \dt,
\end{equation}
with 
the strain and kinetic energy density functions $e_{\vartheta}(\bfeps,\bfeta),\ k_{\vartheta}(\bru,\bfga)$ given for arbitrary vectors $\bru\shin\Rbb^d$ and tensors $\bfeps\shin S^2(\Rbb^d),\ \bfga\shin\Rbb^d\tens\Rbb^d,\ \bfeta\in S^{2}(\Rbb^d)\tens\Rbb^d$ by~\eqref{ke:SGE:def} with replacements~\eqref{corr:tens}, i.e.
\begin{equation}
\begin{aligned}
  2e_{\vartheta}(\bfeps,\bfeta) &:= \bfeps\dip\brCz\dip\bfeps + \eps\lpar \bfeps\dip\brCi\therefore\bfeta 
  + \eps^2\ \bfeta\therefore\brAii(\vartheta)\therefore\bfeta, \\
  2k_{\vartheta}(\bru,\bfga)
 &:= \rhoz \lcb |\bru|^2 + \eps\bru\sip\bfbsh\dip\bfga + \eps^2 \bfga\dip\brJii(\vartheta)\dip\bfga \rcb. 
 \end{aligned} \label{ek:def}
\end{equation}
Comparing the above definition of $e_{\vartheta}$ with its elastostatic counterpart $\eZ_{\vartheta}$ given by~\eqref{eZ:def}, we observe that the positive sign of $\brc^2$ (see Remark~\ref{rem:c2:pos:def}) implies
\begin{equation}
  2e_{\vartheta}(\bfeps,\bfeta) = 2\eZ_{\vartheta}(\bfeps,\bfeta)
  + \eps^2 \ \bfetaB\therefore\brc^2 \therefore\bfeta \geq 2\eZ_{\vartheta}(\bfeps,\bfeta). \label{eZ:e:compar}
\end{equation}

Here again, the main benefit of having introduced the one-parameter families $\brJii(\vartheta),\brAii(\vartheta)$ of tensors given by~\eqref{A2:def} consists in the following essential properties of the energy density functions $e_{\vartheta},k_{\vartheta}$, the matrix-valued symbols $\sfQth(\rmi\bfxi)$, $\sfRth(\rmi\bfxi)$ and the bilinear forms $\rmQth$, $\rmRth$, whose proof is given in Section~\ref{sec:proof:coercivity:fourier}:
\begin{prop}\label{prop:coercivity}
There exist finite positive values $\vartheta_e,\vartheta_k$ of the weight $\vartheta$, which do not depend on $\eps$, such that:
\begin{compactenum}[(a)]
\item The quadratic form $(\bfeps,\bfeta)\mapsto e_{\vartheta}(\bfeps,\bfeta)$ defined by~\eqref{ek:def} is positive definite on $S^{2}(\Rbb^d)\shtimes \lpar S^{2}(\Rbb^d)\tens\Rbb^d\rpar$, and the bilinear form $\rmQth$ of~\eqref{SGE:time} is positive, for any $\vartheta\shg\vartheta_e$.
\item The quadratic form $(\bru,\bfga)\mapsto k_{\vartheta}(\bru,\bfga)$ defined by~\eqref{ek:def} is positive definite on $\Rbb^d\shtimes(\Rbb^d\tens\Rbb^d)$, and the bilinear form $\rmRth$ of~\eqref{SGE:time} is positive, for any $\vartheta\shg\vartheta_k$.
\end{compactenum}
Then, letting $\vartheta_0 := \max(\vartheta_e,\vartheta_k)$, the following properties hold for any $\vartheta>\vartheta_0$:
\begin{compactenum}[(a)]
\setcounter{enumi}{2}
\item Let the PDOs $\sfQth,\sfRth$, given by~\eqref{ABF:def}, be written as $d$-variate (matrix-valued) polynomials $\sfQth[\del],\sfRth[\del]$ evaluated on the "partial derivative vector" $\del=(\del[1],\ldots,\del[d])$. Then, both Fourier symbols $\sfQth(\rmi\bfxi)$, $\sfRth(\rmi\bfxi)\in\Cbb\tens\Cbb$ of $\sfQth,\sfRth$ are positive definite Hermitian matrices for any $\bfxi\shin\Rbb^d,\,\bfxi\shneq\bfze$;
\item The bilinear form $\rmQth$ is $\brH^2(\Rbb^d)$-coercive, i.e. has property (b) of Proposition~\ref{zf:coercivity};
\item The bilinear form $\rmRth$ is $\brH^1(\Rbb^d)$-elliptic, i.e. verifies
\begin{equation}
  \rmRth(\brW,\brW) \geq C(\vartheta) \|\brW\|^2_{\brH^1(\Rbb^d)} \quad \text{for all }\brW\in\brH^1(\Rbb^d) \text{ and some } C=C(\vartheta)>0 \qquad (\vartheta>\vartheta_0)
\end{equation}
\end{compactenum}
\end{prop}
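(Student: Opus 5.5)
We will work pointwise on the energy densities first, then transfer to the symbols and bilinear forms via Fourier–Plancherel. The key structural fact is that $\vartheta$ enters linearly in the second-order tensors, through $\vartheta\brAth$ in $\brAii(\vartheta)$ and $\vartheta\brI^2$ in $\brJii(\vartheta)$, and both additive terms are positive on the relevant spaces.

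\emph{Step 1: the densities, items (a)–(b).} For (a), we complete the square in $\bfeps$. Since $\brCz$ is positive definite by \eqref{aux15z}, we have
\[
2e_\vartheta(\bfeps,\bfeta) = (\bfeps+\tfrac{\eps}{2}\brCz{}^{-1}\dip\brCi\therefore\bfeta)\dip\brCz\dip(\cdots) + \eps^2\,\bfeta\therefore\brD^2_\vartheta\therefore\bfeta,
\qquad
\brD^2_\vartheta := \vartheta\brAth - \brCii + \bfcii - \tquart\brCi{}\Tsup\dip\brCz{}^{-1}\dip\brCi ,
\]
which matches Remark~\ref{ke:SGE:rem}. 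The factor $\eps$ cancels, so the threshold on $\vartheta$ will be $\eps$-independent. Next we show that $\brAth$ is positive definite on $S^2(\Rbb^d)\tens\Rbb^d$: indeed $\bfeta\therefore\brAth\therefore\bfeta=\sum_n \bfeta_{\cdot\cdot n}\dip\brCz\dip\bfeta_{\cdot\cdot n}\ge\kappa^0|\bfeta|^2$. Using $\bfcii\ge0$ (Remark~\ref{rem:c2:pos:def}), it then suffices to take $\vartheta_e$ equal to the largest eigenvalue of the generalized eigenproblem pairing $\brCii+\tquart\brCi{}\Tsup\dip\brCz{}^{-1}\dip\brCi$ with $\brAth$, clipped at $0$. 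This is a finite-dimensional quantity. The same $\vartheta_e$ also yields $\thZ_e$ by \eqref{eZ:e:compar}. Item (b) is identical: complete the square in $\bru$ using $\rhoz>0$, then require
\[
\vartheta\brI^2 - \bfBii + \bfbii - \tquart\bfbsh{}\Tsup\sip\bfbsh
\]
to be positive definite on $\Rbb^d\tens\Rbb^d$. Since $\brI^2$ is the identity, $\vartheta_k$ is simply the largest eigenvalue of the $\vartheta$-independent symmetric remainder, clipped at $0$; the sign-indefiniteness of $\bfbii$ causes no difficulty here. Positivity of $\rmQth$ and $\rmRth$ then follows by integrating the pointwise densities, since by Remark~\ref{rem:energy:vs:PDE} the forms equal $\int 2e_\vartheta$ and $\int 2k_\vartheta$.

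\emph{Step 2: the symbols, item (c).} For $\brU=\hat\bru\,e^{\rmi\bfxi\cdot\brX}$ we have $\bfeps=\rmi(\hat\bru\stens\bfxi)$ and $\bfeta=-(\hat\bru\stens\bfxi)\tens\bfxi$. The skew-symmetry of $\brMsh$ in \eqref{K:skew}, and likewise of $\brKsh$ in \eqref{bsh:sym}, makes the odd-order terms purely imaginary, so the symbols are Hermitian. We then evaluate $\bar{\hat\bru}\cdot\sfQth(\rmi\bfxi)\hat\bru$ as the Hermitian extension of $2e_\vartheta$ at $(\bfeps,\bfeta)$ above. The density is a real quadratic form, positive definite, and hence positive definite as a Hermitian form on the complexification. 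This quantity is therefore at least $c\,(|\hat\bru\stens\bfxi|^2+|\bfxi|^2|\hat\bru\stens\bfxi|^2)$. Because $|\hat\bru\stens\bfxi|^2\ge\tdemi|\hat\bru|^2|\bfxi|^2$, the bound is $>0$ whenever $\bfxi\ne\bfze$ and $\hat\bru\ne\bfze$. The symbol of $\sfRth$ is handled the same way with $(\hat\bru,\rmi\hat\bru\tens\bfxi)$, giving a bound $\ge c(1+|\bfxi|^2)|\hat\bru|^2$.

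\emph{Step 3: coercivity, items (d)–(e), via Plancherel.} We write $\rmQth(\brU,\brU)=(2\pi)^{-d}\int \bar{\hat\brU}\cdot\sfQth(\rmi\bfxi)\hat\brU\,\dxi$ and apply the lower bound from Step 2:
\[
\bar{\hat\brU}\cdot\sfQth(\rmi\bfxi)\hat\brU \;\ge\; c\,(|\bfxi|^2+|\bfxi|^4)\,|\hat\brU|^2 .
\]
Since $|\bfxi|^2+|\bfxi|^4\ge c'\bigl((1+|\bfxi|^2)^2-1\bigr)$, this gives $\rmQth(\brU,\brU)\ge C(\|\brU\|^2_{\brH^2}-\|\brU\|^2_{\brL^2})$. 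Item (e) follows similarly from $1+|\bfxi|^2$. The main obstacle is Step 2, specifically justifying the Korn-type lower bound in Fourier variables, $|\hat\bru\stens\bfxi|^2\ge\tdemi|\hat\bru|^2|\bfxi|^2$, for complex $\hat\bru$, together with handling the Hermitian extension carefully. We would verify the bound directly:
\[
|\hat\bru\stens\bfxi|^2=\tdemi|\hat\bru|^2|\bfxi|^2+\tdemi|\hat\bru\cdot\bfxi|^2 .
\]
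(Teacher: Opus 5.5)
Your proof is correct and follows essentially the same route as the paper: completing the square in $\bfeps$ and in $\bru$, taking the thresholds from generalized eigenvalue problems against $\brAth$ and $\brI^2$, obtaining Hermitian symbols from the skew-symmetries \eqref{K:skew} and \eqref{bsh:sym}, and concluding with Plancherel and $|\hat\bru\stens\bfxi|^2\geq\tdemi|\hat\bru|^2|\bfxi|^2$. The only deviation is in (a), where you discard $\bfcii\geq 0$ and so effectively take $\vartheta_e=\max(\thZ_e,0)$; this suffices for the existence claim, but the paper keeps $\bfcii$ in the eigenproblem \eqref{e:eigenvalue:problem} to get the sharper threshold $\vartheta_e\leq\thZ_e$ used in Remark~\ref{vartheta:eval} and in the numerics (your explicit check that $\brAth$ is positive definite is a useful detail the paper leaves implicit).
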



The above sign and coercivity properties (for suitable choices of $\vartheta$) in particular mean that $\brU\mapsto\rmQth(\brU,\brU)$ and $\brU\mapsto\rmRth(\brU,\brU)$ have the sign properties expected of strain and kinetic energy functionals, see~\eqref{ke:SGE:def}. They will shortly be seen to play a key role in the well-posedness of evolution problems involving the transient effective PDE~\eqref{SGE:time}. They also ensure, as discussed in Section~\ref{sec:christoffel}, that the Christoffel equation based on the homogeneous effective PDE~\eqref{SGE:eff:freq} and used for dispersion analysis behaves as expected.
\begin{remark}\label{vartheta:eval}
Importantly from the computational standpoint, the proofs of Propositions~\ref{zf:coercivity} and~\ref{prop:coercivity} also provide a practical method that yields the optimal value of the threshold $\vartheta_0$ ensuring the positivity of both energy densities $e_{\vartheta}$ and $k_{\vartheta}$, based on solving the low-dimensional eigenvalue problems~\eqref{e:eigenvalue:problem} and \eqref{k:eigenvalue:problem} that provide the respective dynamic thresholds $\vartheta_e$ and $\vartheta_k$. This method is used for the results shown in Section~\ref{sec:numerics}. Moreover, as shown in the proof of Proposition~\ref{prop:coercivity}, we have $\vartheta_e \shleq \thZ_e$, with $\thZ_e$ the static stiffness threshold yielded by the eigenvalue problem~\eqref{e:eigenvalue:problem:zf}. When $\vartheta_k \leq \vartheta_e$ (which is the case in the numerical examples of Sec.~\ref{sec:numerics}), one obtains $\vartheta_0 \leq \thZ_e$, \ie the corrective weight $\vartheta$ may be taken smaller for dynamics than for statics.
\end{remark}


\subsection{Well-posedness of transient effective PDE}
\label{sec:transient_equation}

In this section, we establish the well-posedness of evolution problems in $\Rbb^d$ based on the second-order transient effective PDE~\eqref{SGE:eff}, of the form\footnote{for which we adopt the usual function-analytic viewpoint whereby space-time functions (e.g. the displacement $\brU$) are treated as functions of time with values in a Hilbert space $\Xcal$ (e.g. $t\mapsto \brU(t)\in \brH^2(\Rbb^d)$). We recall that $C^m([0,T];\Xcal)$ denotes the space of all $C^m([0,T])$ functions $\brU:[0,T]\to\Xcal$.}
\begin{equation}
  \sfQth\brU + \sfRth\brU'' = \brG \ \ \text{in } [0,T]\times\Rbb^d, \qquad \brU(0)=\brU\isub,\,\brU'(0)=\brV\isub, \label{IVP}
\end{equation}
which governs transient effective motions triggered by given initial values $\brU(0)=\brU\isub,\,\brU'(0)=\brV\isub$ and a given body force density $\brG$ (where for example $\brG=\brFth$ with $\brFth$ given by~\eqref{ABF:def} for the non-homogeneous effective PDE~\eqref{SGE:eff}).

In preparation to establishing the well-posedness of problem~\eqref{IVP} by application of the Hille-Yosida theorem, we set $\brV:=\brU'$, so that problem~\eqref{IVP} becomes a first-order differential system for $\bsfU:=(\brU,\brV)^{\text{T}}$:
\begin{equation}
\left\{ \begin{aligned} \brU' - \brV &= 0 \\ \sfRth\brV' + \sfQth\brU &= \brG \end{aligned} \right. \quad \text{in } [0,T]\times\Rbb^d, \qquad
  \left\{ \begin{aligned} \brU(0) &= \brU\isub \\ \brV(0) &= \brV\isub \end{aligned}. \right.
\end{equation}
To fit the standard function-analytic framework for linear evolution PDEs, the above system is rewritten as
\begin{multline}
  \bsfU'(t) + \Acal \bsfU(t) = \bsfF, \qquad \bsfU(0) = \bsfU_0, \\[-1ex] \text{with} \quad 
  \bsfU := \begin{pmatrix} \brU \\ \brV \end{pmatrix}, \quad
  \bsfU_0 = \begin{pmatrix} \brU\isub \\ \brV\isub \end{pmatrix}, \quad  
  \bsfF = \begin{pmatrix} \bfze \\ \sfRth^{-1}\brG \end{pmatrix}, \quad  
  \Acal\bsfU = 
  \begin{pmatrix} -\brV \\ \sfRth^{-1}\sfQth \brU \end{pmatrix} \label{IVP:order1}
\end{multline}
where, taking advantage of the $\brH^1(\Rbb^d)$-ellipticity of $\rmRth$ (Proposition~\ref{prop:coercivity}), the operator $\sfRth^{-1}:\textbf{L}^2(\Rbb^d)\to \brH^1(\Rbb^d)$ is defined variationally, for any $\textbf{f}\in \mathbf{L}^2(\Rbb^d)$, by
\begin{equation}
  \textbf{z}=\sfRth^{-1}\textbf{f} \ \ \Leftrightarrow \ \
  \rmRth(\textbf{z},\brW) = (\textbf{f},\brW) \quad \text{for all} \ \brW\in \brH^1(\Rbb^d) \label{Binv:def}
\end{equation}
The variational form of problem~\eqref{IVP} suggests to seek $\bsfU(t)$ in the space $\Hcal := \brH^2(\Rbb^d) \times \brH^1(\Rbb^d)$
for each $t$. The operator $\Acal$ being unbounded\footnote{since $\Acal$ essentially acts as a second-order PDO in the spatial coordinates.}, its domain $D(\Acal)$ is chosen here as
$D(\Acal) = \brH^2(\Rbb^d,\sfQth)\times \brH^2(\Rbb^d)$ (with $\brH^2(\Rbb^d,\sfQth):=\lcb \brW\in\brH^2(\Rbb^d),\,\sfQth\brW\in\brH^{-1}(\Rbb^d) \rcb$), which ensures that $\bsfU(t) \in D(\Acal) \ \implies \ \Acal \bsfU(t) \in \Hcal$. The above framework allows to establish the following well-posedness result for  evolution problems in $\Rbb^d$ based on the second-order transient effective PDE~\eqref{SGE:time}, whose proof is deferred to Section~\ref{sec:transient_equation:proof}.

\begin{prop}\label{prop:evol}
Let $\bsfU_0\in D(\Acal)$, and let $\brG$ be such that $\sfRth^{-1}\brG\in C^1([0,T];\brH^1(\Rbb^d))$ or $\sfRth^{-1}\brG\in C^0([0,T];\brH^2(\Rbb^d))$. Let $\vartheta\shg\vartheta_0$, so that the operators $\sfQth,\sfRth$ enjoy properties (a)-(e) of Prop.~\ref{prop:coercivity}. Then:
\begin{compactenum}[(a)]
\item The initial-value problem~\eqref{IVP} has a unique solution $\bsfU\in C^1([0,T];\Hcal) \cap C^0([0,T];D(\Acal))$;
\item If $\brG\sheq\bfze$, the solution $\bsfU$ has the energy conservation property
\[
  E(t) = E(0), \qquad  E(t):= \tdemi\rmQth(\brU(t),\brU(t)) + \tdemi\rmRth(\brU'(t),\brU'(t)).
\]
Otherwise, the solution $\bsfU$ verifies the estimate
\[
  \|\bsfU\|_{\Hcal}^2(t) \leq C(T) \lpar \|\bsfU_0\|_{\Hcal}^2 + \| \bsfF \|^2_{L^2([0,T];\Hcal)} \rpar \quad 
  \text{with}\quad C(T)=\exp(2T), \qquad t\in[0,T].
\]
\end{compactenum}
\end{prop}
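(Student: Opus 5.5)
The plan is to apply the Hille--Yosida theorem (in its Lumer--Phillips form for maximal dissipative operators on a Hilbert space) to the first-order system~\eqref{IVP:order1}. We equip $\Hcal=\brH^2(\Rbb^d)\times\brH^1(\Rbb^d)$ with the energy-type inner product
\[
  \lbra \bsfU,\bsfU^{*} \rbra_{\Hcal}
  := \rmQth(\brU,\brU^{*}) + (\brU,\brU^{*}) + \rmRth(\brV,\brV^{*}).
\]
By Proposition~\ref{prop:coercivity}(d), the coercivity estimate $\rmQth(\brU,\brU)\geq C(\|\brU\|^2_{\brH^2}-\|\brU\|^2_{\brL^2})$ gives $\rmQth(\brU,\brU)+\|\brU\|^2_{\brL^2}\geq \min(C,1)\|\brU\|^2_{\brH^2}$. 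Together with the $\brH^1$-ellipticity of $\rmRth$ (Prop.~\ref{prop:coercivity}(e)) and the continuity bounds~\eqref{AB:cont}, this shows that the inner product defines a norm equivalent to the natural norm of $\Hcal$. We also check that $\sfRth^{-1}$, defined by~\eqref{Binv:def}, is well defined by Lax--Milgram, and that $D(\Acal)$ is dense in $\Hcal$, since it contains $\textbf{C}^\infty_0\times\textbf{C}^\infty_0$.

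\textbf{Monotonicity.} We show that $\Acal+\mathrm{I}$ is monotone. For $\bsfU\in D(\Acal)$, Green's identity~\eqref{green:AB} and the definition~\eqref{Binv:def} of $\sfRth^{-1}$ give
\[
  \rmRth(\sfRth^{-1}\sfQth\brU,\brV)=\lpar \sfQth\brU,\brV \rpar=\rmQth(\brU,\brV),
\]
understood as an $\brH^{-1}$--$\brH^1$ duality pairing and extended by density. Hence
\[
  \lbra \Acal\bsfU,\bsfU\rbra_{\Hcal}=-\rmQth(\brV,\brU)-(\brV,\brU)+\rmQth(\brU,\brV)=-(\brV,\brU),
\]
using the symmetry of $\rmQth$. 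This quantity is bounded below by $-\tdemi\|\bsfU\|^2_{\Hcal}$, up to the norm-equivalence constants, which can be absorbed by a shift. So $\Acal+\lambda_0\mathrm{I}$ is monotone for some $\lambda_0$, and a shifted semigroup argument applies. This lower-order shift is what produces the growth factor $\exp(2T)$ in the estimate of part (b).

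\textbf{Maximality.} This is the main obstacle. We must show that $\Acal+\lambda\mathrm{I}$ is onto $\Hcal$ for some $\lambda>\lambda_0$. Given $(\brf_1,\brf_2)\in\Hcal$, we solve $\lambda\brU-\brV=\brf_1$ and $\lambda\brV+\sfRth^{-1}\sfQth\brU=\brf_2$. Eliminating $\brV=\lambda\brU-\brf_1$ leads to the variational problem of finding $\brU\in\brH^2(\Rbb^d)$ such that
\[
  \rmQth(\brU,\brW)+\lambda^2\rmRth(\brU,\brW)=\rmRth(\brf_2+\lambda\brf_1,\brW)\qquad\text{for all }\brW\in\brH^2(\Rbb^d).
\]
Once $\lambda$ is large enough, this bilinear form is $\brH^2$-elliptic: the $\lambda^2\rmRth$ term controls the $\brL^2$ deficit in the coercivity of $\rmQth$. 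Lax--Milgram therefore yields $\brU$, and we set $\brV=\lambda\brU-\brf_1\in\brH^2$. The equation read in distributions gives $\sfQth\brU=\sfRth(\brf_2-\lambda\brV)\in\brH^{-1}$, so $\bsfU\in D(\Acal)$. Carrying out this regularity/domain check carefully is the delicate point; alternatively, one can argue on Fourier symbols using Prop.~\ref{prop:coercivity}(c).

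\textbf{Conclusion.} Hille--Yosida then gives existence and uniqueness in $C^1([0,T];\Hcal)\cap C^0([0,T];D(\Acal))$ for $\bsfU_0\in D(\Acal)$. For the forced problem we use the standard inhomogeneous version (Duhamel's formula), under either the hypothesis $\bsfF\in C^1([0,T];\Hcal)$ or $\bsfF\in C^0([0,T];D(\Acal))$; these correspond to the two stated assumptions on $\sfRth^{-1}\brG$.

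For part (b) with $\brG=\bfze$, we differentiate $E(t)$ and use the symmetry of $\rmQth$ and $\rmRth$ together with the weak form~\eqref{SGE:time:weak} tested by $\brW=\brU'$:
\[
  E'(t)=\rmQth(\brU,\brU')+\rmRth(\brU'',\brU')=0.
\]
In the forced case, we take the $\Hcal$-inner product of~\eqref{IVP:order1} with $\bsfU$. Using the monotonicity bound and Cauchy--Schwarz/Young,
\[
  \tfrac{d}{dt}\|\bsfU\|^2_{\Hcal}\leq 2\|\bsfU\|^2_{\Hcal}+\|\bsfF\|^2_{\Hcal},
\]
and Gronwall's lemma gives the stated estimate with $C(T)=\exp(2T)$.
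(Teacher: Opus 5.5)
Your proposal follows the paper's proof essentially step for step. The shared ingredients are:
\begin{itemize}
\item Hille--Yosida on $\Hcal=\brH^2(\Rbb^d)\times\brH^1(\Rbb^d)$ with an energy inner product;
\item the Green-identity computation that leaves only a lower-order cross term in $(\Acal\bsfU,\bsfU)_{\Hcal}$, absorbed by a shift;
\item surjectivity by eliminating $\brV$, applying Lax--Milgram to $\rmQth+\mu^2\rmRth$, and checking the domain through $\sfQth\brU=\sfRth(\brg-\mu\brV)\in\brH^{-1}$;
\item Duhamel for the forced problem, then Young plus Gronwall.
\end{itemize}
Differentiating $E(t)$ directly in the unforced case, instead of integrating the $\Hcal$-inner-product identity as the paper does, is a harmless variant.

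The one step that does not hold as written is the constant $C(T)=\exp(2T)$. You put $(\brU,\brU^*)$ rather than $\rmRth(\brU,\brU^*)$ into the inner product, so you get $\tfrac{d}{dt}\|\bsfU\|^2_{\Hcal}=2(\brV,\brU)+2(\bsfF,\bsfU)_{\Hcal}$. Bounding $2(\brV,\brU)\le\|\brU\|^2_{\brL^2}+\|\brV\|^2_{\brL^2}$ by $\|\bsfU\|^2_{\Hcal}$ would require $\|\brV\|^2_{\brL^2}\le\rmRth(\brV,\brV)$, and this fails in general. For slowly varying $\brV$ one has $\rmRth(\brV,\brV)\approx\rhoz\|\brV\|^2_{\brL^2}$, and $\rhoz$ is dimensional and can be smaller than $1$. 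Your argument therefore gives $\exp\big((2\lambda_0+1)T\big)$, with $\lambda_0$ depending on the ellipticity constant in Proposition~\ref{prop:coercivity}(e). It also gives it in a norm different from the one fixed in~\eqref{H:scalarp}. Your remark that the shift ``produces'' $\exp(2T)$ is true only when $\lambda_0=\tdemi$.

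The paper's choice of inner product fixes this at no cost. With $\rmRth(\brU,\brU^*)$ in place of $(\brU,\brU^*)$, and using $\rmQth\ge0$, one gets $(\Acal\bsfU,\bsfU)_{\Hcal}=-\rmRth(\brV,\brU)\ge-\tdemi\big[\rmRth(\brU,\brU)+\rmRth(\brV,\brV)\big]\ge-\tdemi\|\bsfU\|^2_{\Hcal}$ with no hidden constant. The shift is then exactly $\tdemi$, and Young plus Gronwall give exactly $\exp(2T)$. With that change, or with a $\vartheta$-dependent constant in place of $\exp(2T)$, your proof is complete.
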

\begin{remark}
Proposition~\ref{prop:evol} applies in fact to evolution problems of the form~\eqref{IVP} for any SGE material, for which the PDOs must verify properties (a)-(e) of Prop.~\ref{prop:coercivity}.\enlargethispage*{1ex}
\end{remark}
\begin{remark}\label{rem:RHS:reg}
When $\brG=\brFth$, with $\brFth$ the body force distribution~\eqref{ABF:def} resulting from the homogenization process, the assumption $\sfRth^{-1}\brG\in C^0([0,T];\brH^2(\Rbb^d))$ in Prop.~\ref{prop:evol} is met if the original body force distribution $\brF$ entering $\brFth$, introduced in~\eqref{F:slow}, satisfies $\brF\in C^0([0,T];\brH^2(\Rbb^d))\cap C^1([0,T];\brH^1(\Rbb^d)) \cap C^2([0,T];\brL^2(\Rbb^d))$.
\end{remark}
\begin{remark}
Part (b) of Proposition~\ref{prop:evol} shows that the strong (in time) solution $\bsfU(t)$ has its (weaker) energy norm controlled by the energy norm of the initial datum (which is weaker than its $D(\Acal)$ norm) for the initial-value problem, and by the weaker norm $\|\cdot\|_{L^2([0,T];\Hcal)}$ of the forcing term for the forced-response problem. This may be used to establish solvability results for the evolution problems in variational form under regularity assumptions on the data that are weaker and consistent with spatial discretization by the finite element method.
\end{remark}

\section{Practical aspects: homogenization procedure, special situations, dispersion analysis}
\label{sec:special}

In this section, we survey the results obtained in Sections~\ref{sec:homog:zf} to~\ref{sec:homo:SGE} by providing a practical homogenization procedure yielding well-posed homogenized SG models for statics and dynamics from a given microstructure and discuss some characteristics of these models. We begin in Section~\ref{sec:special:general} with the general case, \ie without assumptions beyond the minimal requirements on elasticity $\bfC$ and mass density $\rho$ stated in Section~\ref{sec:periodic}. Then, several particular situations are considered in Sections~\ref{sec:special:perforated} to~\ref{sec:special:homogeneous:C}, showing how additional assumptions on the constitutive material or the symmetry of the unit cell may simplify the computations and the final model. We also discuss in Section~\ref{sec:fourth:order:in:time} an alternative form of effective PDE that is of fourth order in time. All those aspects largely rely on the alternative reciprocity-based expressions of effective tensors given in Propositions~\ref{prop:recip:zf} and~\ref{prop:recip}. Finally, the generalized Christoffel equation arising from the time-harmonic effective PDE and its asymptotic properties are presented in Sec.~\ref{sec:christoffel}.

\subsection{General case}\label{sec:special:general} Given a periodicity cell $\Ycal$ and material properties $(\bfC,\rho)$ satisfying the requirements of Section \ref{sec:periodic}, we provide the steps to derive homogenized SG models, and then discuss the differences between tensors featured in the static and dynamic models.

\subsubsection{Solving cell problems.} Cell functions $\bfchi^1$ and $\bfchi^2$ are computed by solving the $d(d\shp1)/2$ problems~\eqref{cell_pb 1} and the $d(d\shp1)^2/2$ problems~\eqref{zf:load:second}. For dynamics, the second inertial cell functions $\bfzet^2$ is also computed, by solving the $d$ problems~\eqref{load:second}. All these problems are standard linear elastostatic problems of the form \eqref{cell:generic}, posed on the unit cell $\Ycal$. While few exact solutions exist (except for laminates), they can be solved using standard numerical methods, typically finite elements or FFT-based methods, see \cite{tran:12}. Solving the cell problems may entail significant cumulative computational work, especially for complex 3D cells. By removing the need to solve the $d(d\shp1)^3/2$ problems~\eqref{zf:load:third} (to computute $\bfchi^3$) and the $d^2$ problems~\eqref{load:third} (to compute $\bfzet^2$), the reciprocity properties translate into substantial computational advantage, e.g. by requiring 27 cell solutions instead of 90 for 3D dynamics.

\subsubsection{Computing effective tensors.} Elasticity tensors $\brCz$, $\brCi$ and $\brCii$ are evaluated using~\eqref{C0:comp} and the "reciprocity" expressions~\eqref{zf:reciprocity 1} and~\eqref{zf:reciprocity 2}, respectively, while the inertial tensors $\bfbsh$, $\bfBii$ and $\brcii$ are evaluated using the "reciprocity" expressions~\eqref{bsh:expr}, \eqref{bii:expr} and \eqref{c2:recipr}, respectively.
\subsubsection{Definition of strain-gradient tensors.} The leading and first-order effective tensors $\brCz$, $\brCi$ and, for dynamics, $\rhoz$ and $\bfbsh$, directly provide strain-gradient tensors $\brC,\brMsh,\rhoSG,\brKsh$ up to rescaling by the periodicity length $\ell$ as given by \eqref{zf:corr:tens} and \eqref{corr:tens}: 
\begin{equation}\label{eq:C:rho:M:K:general}
\brC = \brCz,  \quad \brMsh = \ell\brCi, \quad \rhoSG = \rhoz, \quad \brKsh = \ell \rhoz \bfbsh.
\end{equation}
On the other-hand, we showed that the second-order tensors $\brCii$ and $\bfBii$ cannot be used "as they are" to define SGE higher-order tensors with correct sign properties, and instead proposed for the latter the expressions
\begin{equation}
  \brA(\vartheta) = \ell^2 \lsqb \vartheta\brAth - \bfCii + \bfcii \rsqb, \qquad
  \brJ(\vartheta) = \ell^2 \rhoz \lsqb \vartheta\brI^2 - \bfBii + \bfbii \rsqb \label{eq:A:J:general},
\end{equation}
with:
\begin{equation}
  \Ath_{ijnk\ell m} = \CZ_{ijk\ell}\delta_{nm}, \quad 
  \rmc^2_{ijnk\ell m} = \cii_{ba}\CZ_{anij} \CZ_{bmk\ell}, \quad
  \rmI^2_{i\ell km} = \delta_{ik}\delta_{\ell m}, \quad
  \rmb^2_{i\ell km} = \cii_{ia} \CZ_{a\ell km} + \cii_{ka} \CZ_{ami\ell},
\end{equation}
and where (i) terms $\bfcii$ and $\bfbii$, which involve the inertial tensor $\brcii$, arose from eliminating the fourth-order-in-time derivative initially appearing in the homogenization process as discussed in Section \ref{sec:mean}, and (ii) $\vartheta$-dependent terms are added thanks to ensure the positivity of the elastic and inertial energy densities $e_\vartheta$, $k_\vartheta$ associated with the SG model. To this end, the crucial weight parameter $\vartheta$ must satisfy $\vartheta\shg\max(\vartheta_e,\vartheta_k)$. The threshold $\vartheta_e$ is computed by solving the eigenvalue problem \eqref{e:eigenvalue:problem:zf} (for statics) or \eqref{e:eigenvalue:problem} (for dynamics), involving the elasticity tensors $\brCz$, $\brCi$, $\brCii$ and also, for dynamics, the tensor $\brcii$. For dynamics, the second threshold $\vartheta_k$ is computed by solving the eigenvalue problem \eqref{k:eigenvalue:problem}, involving the inertial tensors $\bfbsh$, $\bfBii$, $\brcii$ together with $\brCz$. We emphasize again that the tensors $\brAth$ and $\brI^2$ in~\eqref{eq:A:J:general} are not prescribed by the raw homogenization process and result from our choice of subsequent sign restoration method (or "Boussinesq trick"), other choices being possible as discussed in Remark~\ref{rem:correction}.

\subsubsection{Differences between statics and dynamics.}\label{sec:statics:dynamics}

As is well-known in this classical homogenization framework, the effective elasticity tensor $\brCz$ depends on the elastic properties only and is therefore the same in dynamics. A less-known result, to our knowledge, is that this is also found here to be true for the coupling tensor $\brMsh$, which is the same for statics and dynamics.

On the other hand, the dynamic SG elasticity tensor $\brA(\vartheta)$ differs from its static counterpart due to the role of the inertial tensor $\brcii$ which (i) enters the defining formula~\eqref{eq:A:J:general} and (ii) influences the value of the threshold $\vartheta_e$ by being featured in the eigenvalue problem~\eqref{e:eigenvalue:problem}, see Remark~\ref{vartheta:eval}. Except for configurations where $\brcii$ vanishes (such as cells with homogeneous mass density, see Sec.~\ref{sec:special:homogeneous:rho}), this seemingly inconsistent feature comes from our choice to avoid the fourth-order time derivative in the effective PDE (which also modifies $\brJ(\vartheta)$) and leads to a PDE consistent with the low-frequency limit, see Sec.~\ref{sec:fourth:order:in:time}.
Using results obtained from statics (typically a SG tensor determined by elastostatic homogenization or other procedures) to model dynamics thus warrants caution: if the mass density if heterogeneous, the transient effective PDE has to feature either a fourth-order time derivative or a modified SG elasticity tensor.

\subsection{Periodically-perforated media} \label{sec:special:perforated}

Studies on the leading-order homogenization of periodically perforated media $\OO$ with traction-free holes (introduced in Section~\ref{sec:periodic}) show that the homogenization methodology and results for full media ($\OO=\Rbb^d$) carry over, with all cell averages still defined as integrals over $\Ycal$, see e.g.~\cite{cioranescu:reticulated:99}. With the defining variational formulations for all cell solutions expressed as integrals over $\Ycal\ssub$ instead of $\Ycal$, all the definitions and results of this work for cell problems, mean displacements, effective tensors... apply as given for full as well as perforated media.

\subsection{Centrosymmetric periodicity cell}
\label{sec:special:centrosym}


Consider now cases where the periodicity cell (by assumption centered at the origin) is centrosymmetric, i.e. verifies $\Ycal=-\Ycal$, with the material parameters satisfying the symmetry assumptions $\bfC(-\bfy) = \bfC(\bfy)$ and $\rho(-\bfy) = \rho(\bfy)$ for any $\bfy\shin\Ycal$. Then the cell solution $\bfchi^1$ is odd:
\begin{lemma}\label{centrosym}
Let the cell $\Ycal$ be centrosymmetric. The cell functions $\bfchi^{1k\ell}$ verify $\bfchi^{1k\ell}(-\bfy)=-\bfchi^{1k\ell}(\bfy)$ (and therefore $\bfna\bfchi^{1k\ell}(-\bfy)=\bfna\bfchi^{1k\ell}(\bfy)$), for any $\bfy\shin\Ycal$ and any $1\shleq k,\ell\shleq d$. 
\end{lemma}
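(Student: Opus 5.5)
The plan is to use uniqueness of the zero-mean solution given by Lemma~\ref{lem:fredholm}. We show that the reflected field
\[
  \bfw(\bfy) := -\bfchi^{1k\ell}(-\bfy)
\]
solves the same first cell problem~\eqref{cell_pb 1}, in the weak form~\eqref{cell:generic:weak} with functional $F^{k\ell}$ given by~\eqref{weak form chi1 index}. Uniqueness in $\bHzm(\Ycal)$ then forces $\bfw=\bfchi^{1k\ell}$, which is the claimed oddness.

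First we check that $\bfw\in\bHzm(\Ycal)$. Since $\Ycal=-\Ycal$, the map $\bfy\mapsto-\bfy$ preserves $\Ycal$ and its Lebesgue measure. It also maps $\Ycal$-periodic smooth functions to $\Ycal$-periodic smooth functions, because the lattice $\Zcal$ is symmetric under negation. Hence $\bfw\in\bHper(\Ycal)$, and its mean is $-\lbra\bfchi^{1k\ell}\rbra_{\Ycal}=\bfze$.

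Next we compute the strain. By the chain rule, $\bfnay\bfw(\bfy)=(\bfnay\bfchi^{1k\ell})(-\bfy)$, since the two sign changes cancel, and therefore $\bfepsy(\bfw)(\bfy)=\bfepsy(\bfchi^{1k\ell})(-\bfy)$. This already gives the parenthetical statement about gradients once $\bfw=\bfchi^{1k\ell}$ is established.

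For an arbitrary test function $\bfv\in\bHzm(\Ycal)$, set $\tilde\bfv(\bfy):=-\bfv(-\bfy)$. This is again in $\bHzm(\Ycal)$, and $\bfepsy(\tilde\bfv)(\bfy)=\bfepsy(\bfv)(-\bfy)$. Changing variables $\bfy\to-\bfy$ and using $\bfC(-\bfy)=\bfC(\bfy)$ gives
\[
  A(\bfw,\bfv)=\iY \bfepsy(\bfchi^{1k\ell})(\bfy)\dip\bfC(\bfy)\dip\bfepsy(\bfv)(-\bfy)\dy = A(\bfchi^{1k\ell},\tilde\bfv).
\]
By the cell problem this equals $F^{k\ell}(\tilde\bfv)=-\iY(\bfe_k\tens\bfe_\ell)\dip\bfC(\bfy)\dip\bfepsy(\bfv)(-\bfy)\dy$. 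Changing variables once more and using the evenness of $\bfC$, this becomes $F^{k\ell}(\bfv)$. So $\bfw$ satisfies the variational problem for every test function, and uniqueness concludes the argument.

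The one delicate point is the oddness of the right-hand side. The loading $\Divy(\bfC\dip(\bfe_k\tens\bfe_\ell))$ is odd in the distributional sense precisely because $\bfC$ is even. Working at the level of the weak form avoids any regularity issues with $\bfC\in L^\infty$. The assumption on $\rho$ is not needed for $\bfchi^1$.
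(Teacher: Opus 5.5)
Your proof is correct and follows essentially the same route as the paper: a change of variable $\bfy\mapsto-\bfy$ in the variational formulation of the first cell problem, combined with $\bfC(-\bfy)=\bfC(\bfy)$ and uniqueness of the zero-mean solution from Lemma~\ref{lem:fredholm}. The difference is only cosmetic. The paper shows that $\bfchi^{1k\ell}(-\bfy)$ solves the problem with right-hand side $-F^{k\ell}$, whereas you verify directly that $-\bfchi^{1k\ell}(-\bfy)$ solves the original one, and you also make explicit that the reflected field belongs to $\bHzm(\Ycal)$, which the paper leaves implicit.
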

\begin{proof}
Let $\bfchih^{1k\ell}(\bfy):=\bfchi^{1k\ell}(-\bfy)$. On making the change of variable $\bfy\mapsto-\bfy$ in the governing variational problem~\eqref{cell:generic},~\eqref{weak form chi1 index} form $\bfchi^{1k\ell}$ and setting $\bfvH(\bfy)=\bfv(-\bfy)$ (so that $\bfna\bfvH(\bfy)=-\bfna\bfv(-\bfy)$) for any test function $\bfv\shin\bHzm(\Ycal)$, we obtain
\begin{equation}
  A\lpar \bfchih^{1k\ell},\bfvH\rpar = -F^{k\ell}(\bfvH) \qquad \forall\bfvH\shin\bHzm(\Ycal)
\end{equation}
thanks to the constitutive symmetry assumption. Comparing the above variational problem to the original problem~\eqref{cell:generic},~\eqref{weak form chi1 index}, we see that $\bfchih^{1k\ell}=-\bfchi^{1k\ell}$ in $\Ycal$. 
\end{proof}

This property has important consequences on the SG models, already known from symmetry-based arguments \cite{auffray:15,auffray:19} and recovered here by the homogenization procedure. First, considering the reciprocity-based value~\eqref{zf:reciprocity 1} of $\brCi$ and making the change of variable $\bfy\mapsto-\bfy$ therein, we obtain $\brCi=\bfze$, so that $\brMsh=\bfze$. Similarly, considering the expression~\eqref{bsh:expr} leads to $\bfbsh = \bfze$ so that $\brKsh = \bfze$. This cancels all first-order terms in the SG models. With reference to Remark~\ref{ke:SGE:rem}, the positivity of the energy densities $\rme$ and $\rmk$ thus hinges on the less-stringent requirement that $\brA(\vartheta)$ and $\brJ(\vartheta)$ be positive definite, leading to lower values of the thresholds $\vartheta_e,\vartheta_k$.

The cancellation of these terms cause many couplings to disappear. In statics, extension–bending couplings are eliminated \cite{papanicolopulos2011chirality,poncelet:18}. It should be noted that these couplings are generally studied within the framework of Cosserat continuum mechanics and under the somewhat improper term “chiral” (which is sufficient but not necessary)\cite{lakes2001elastic}. In dynamics, these couplings are responsible for the circular polarization of elastic waves \cite{toupin:62,Por68,rosi:24}.

\subsection{Homogeneous mass density} \label{sec:special:homogeneous:rho}

We now address the case where $\rho$ is homogeneous in the whole cell $\Ycal$ or in $\Ycal\ssub$ for periodically-perforated media, irrespective of possibly-heterogeneous elasticity properties in $\Ycal$ or $\Ycal\ssub$. This case notably includes the common situation of periodically-perforated media made of a unique homogeneous material. Then, letting $\phi = |\Ycal_\mathrm{s}|/|\Ycal|$ denote the volume fraction of the solid phase, the effective mass density is $\rhoz = \phi \rho$, and the relative mass density $\beta$ reduces to the constant $\beta = \phi^{-1}$. Consequently, all cell solutions defined in this work being chosen to have zero mean, the three inertial tensors considerably simplify, as their expressions given by Proposition \ref{prop:recip} become
\begin{equation}
    \bfbsh = \bfze, \qquad  \Bii_{i\ell km}  = -\beta \lbra \chi^{1i\ell}_r \chi^{1km}_r \rbra_{\Ycal}, \qquad \brcii = \bfze. 
\end{equation}
and the inertial solution $\bfzet^{2}$ is no longer needed. In particular, $\bfbsh = \bfze$ implies that the inertial coupling tensor $\brKsh$ vanishes, and $\brcii = \bfze$ implies that the SG tensors are given by the simpler expressions
\begin{equation}
  \brA(\vartheta) = \ell^2 \lsqb \vartheta\brAth - \bfCii \rsqb, \qquad
  \brJ(\vartheta) = \ell^2 \rhoz  \lsqb  \vartheta\brI^2 - \bfBii \rsqb \label{eq:A:J:homog:rho},
\end{equation}
An important consequence is that the threshold $\vartheta_e$ is the same for statics and dynamics, so that the SGE tensor $\brA(\vartheta)$ is the same for statics and dynamics if $\vartheta_e > \vartheta_k$ (which is observed in practice in the numerical examples of Section \ref{sec:numerics}).
\begin{remark}
Since the literature has dealt essentially with centrosymmetric cells the vanishing of $\brKsh$ for homogeneous architectured media has not, to the best of the authors' knowledge, been noticed so far. It would be interesting to investigate in more depth the effect of this contribution in the case of, for instance, three-dimensional heterogeneous gyroid structures.
\end{remark}

\subsection{Homogeneous elasticity: inertial SG model} \label{sec:special:homogeneous:C} Consider now the less-usual case of a full medium ($\OO=\Rbb^d$) whose elastic tensor $\bfC$ is homogeneous but not the mass density $\rho$. A concrete setting might consist of a thin medium, modeled using 2D elasticity, featuring spatially-periodic added mass that leaves its elastic properties unperturbed. 

For statics, the homogenization procedure naturally stops at the leading-order with $\brCz=\bfC$, and in particular produces vanishing cell solutions: $\bfchi^1 = \bfchi^2 = \bfze$. For dynamics, however, the mass density heterogeneity leads to a special, simplified "inertial SG model". Indeed, the inertial tensors $\bfbsh$ and $\bfBii$, depending on $(\bfchi^1, \bfchi^2)$ as discussed in Section \ref{sec:compar}, also vanish. The only cell solution to compute is the inertial solution $\bfzet^2$, and the second-order perturbation of leading-order classical elasticity is then given by the sole tensor $\brcii$. The obtained "inertial SG model" features null coupling tensors ($\brMsh=\bfze$, $\brKsh=\bfze$), and the second-order tensors $(\brA,\brJ)$ are defined in terms of $(\brCz = \bfC,\rhoz,\brcii)$ only:
\begin{equation}
  \brA(\vartheta) = \ell^2 \lsqb \vartheta\brAth + \bfcii \rsqb, \qquad
  \brJ(\vartheta) = \ell^2 \rhoz \lsqb  \vartheta\brI^2 + \bfbii \rsqb \label{eq:A:J:homog:C},
\end{equation}
Recall from remark \ref{rem:c2:pos:def} that, in this case, $\brA(0) = \ell^2\bfcii$ is positive while $\brJ(0) = \ell^2\rhoz\bfbii$ is either (i) positive if $\brcii$ is isotropic or (ii) sign-indefinite. In case (i), any $\vartheta > 0$ ensures the well-posedness of the model while in case (ii) $\vartheta$ should satisfy $\vartheta > \vartheta_k$.
Moreover, since the second-order tensor $\brcii$ is the only term depending on the microstructure in the model, the tensor $\brA$ given above is only \emph{cosmetically} of order 6, but behaves, with respect to the group of isometric transformations, as a second order tensor. In fact, it is not able to distinguish as many anisotropy classes as a genuinely  sixth-order tensor. As an example, a microstructure with cubic symmetry will be effectively seen as isotropic.

\subsection{Fourth-order-in-time effective wave equation}
\label{sec:fourth:order:in:time}

A similar, somewhat simpler, version of the procedure followed in Section~\ref{sec:mean} allows to obtain a fourth-order-in-time effective PDE with tensor coefficients possessing the same symmetry and sign-definiteness properties as those of its second-order-in-time version. In fact, effecting the combination~(\ref{mean:eqs:init}a)+$\epsilon$(\ref{mean:eqs:init}b)+$\epsilon^2$(\ref{mean:eqs:init}c)-$\epsilon^2\vartheta$\eqref{aux23} of macroscopic balance equations, where the adjustable weight $\vartheta$ serves as before to control sign properties of tensor coefficients, yields the following PDE satisfied, up to a $\Ocal(\epsilon^3)$ perturbation, by  the $\Ocal(\epsilon^2)$ approximation~\eqref{Ueps:def} of the macroscopic displacement:
\begin{equation}
  \sfQZ_{\vartheta}\brU - \oo^2 \rhoz \sfR\lsqb 1, \ell\bfbsh, \ell^2(\vartheta\brI^2\shm\bfBii) \rsqb\brU - \oo^4\ell^2(\rhoz)^2 \brcii\sip\brU
 = (1-\ell^2\vartheta\Delta)\brF \label{SGE:freq:order4}
\end{equation}
up to a $\Ocal(\epsilon^3)$ residual, where $\sfQZ_{\vartheta}$ is the fourth-order elastostatic PDO given by~\eqref{zf:SGE:eff} and the second term uses the generic PDO notation~\eqref{PDO:generic}.
Then, applying the inverse Fourier transform to~\eqref{SGE:freq:order4} yields the transient effective equation
\begin{equation}
  \sfQZ_{\vartheta}\brU + \rhoz \sfR\lsqb 1, \ell\bfbsh, \ell^2(\vartheta\brI^2\shm\bfBii) \rsqb\brU'' - \ell^2(\rhoz)^2 \brcii\sip\brU'''' = (1-\ell^2\vartheta\Delta)\brF, \label{SGE:time:order4}
\end{equation}
whose right-hand side features second-order space derivatives of the original body force density $\brF$. The fourth-order in time effective PDEs~\eqref{SGE:freq:order4} and~\eqref{SGE:time:order4} clearly reduce to the elastostatic equation~\eqref{zf:SGE:eff} in the zero-frequency, i.e. time-independent, cases.


It is unclear whether evolution problems based on the effective PDE~\eqref{SGE:time:order4} enjoy well-posedness guarantees of the kind given in Proposition~\ref{prop:evol} for the second-order-in-time PDE. We did not succeed in transposing to~\eqref{SGE:time:order4} the proof method of the latter, and other authors \cite{Allaire2022,allaire:24} even derive a systematic method to guarantee a second-order-in-time effective equation. Available theory on fourth-order-in-time PDEs, which is scarce relatively to that covering PDEs of first- and second-order in time, does not address PDEs of the form~\eqref{SGE:time:order4}. However, \cite[Thm.~10.2]{goldstein} hints at potential difficulties, as it asserts that fourth-order-in-time problems of the form $\sfQ\brU + \brU''''=\brF$ are well-posed if and only if $\sfQ:\Hcal\to\Hcal$ is bounded (where the space $\Hcal$ defines the target regularity of $\brU$ in space), which rules out $\sfQ$ being a PDO in the spatial coordinates in that case. Further investigation is needed to settle (positively or negatively) the well-posedness of evolution problems based on~\eqref{SGE:time:order4}. Similarly, dispersion analysis based on injecting trial plane waves in~\eqref{SGE:freq:order4} leads to a quadratic eigenvalue problem~\cite{tisseur:qep} instead of the usual (generalized Christoffel) eigenvalue problem obtained using the second-order PDE, see Section~\ref{sec:christoffel}.

In physical terms, the tensor $(\rhoz)^2 \brcii$ in~\eqref{SGE:time:order4} can be interpreted as the tensor of a quadratic form acting on accelerations in a Lagrangian or Hamiltonian associated with the PDE. It can be viewed as a term penalizing variations of acceleration. This leads to a field theory whose Lagrangian is of second order in time, and therefore to Euler–Lagrange equations of fourth order in time. In the general case, such a model, whose archetype is the Pais–Uhlenbeck oscillator \cite{pavvsivc2016pais}, leads to unstable behavior, known as Ostrogradsky instability~\cite{ganz2021reconsidering}. In certain situations, however, notably when the tensor $\brcii$ is degenerate, this instability can be avoided.


We nevertheless think that this fourth-order-in-time alternative to the SG model, which we might call the "time-SG" model, is worthy of attention. We first note that if the model is taken "as is", the second-order tensor $\brcii$ appears as the sole factor of the fourth-order time derivative, which somehow makes its role clearer than when it contributes to the dynamic SG tensors $\brA,\brJ$. In addition, one can build a \emph{family} of time-SG models using similar manipulations as in Section \ref{sec:mean:zf} (involving the second-order time derivative of the leading-order macroscopic balance equation rather than its Laplacian). Members of this family could be interpreted as \emph{stress-gradient} models that feature fourth-order time derivatives \cite{forest:17}.

Then, in the (much) simpler 1D case, models that include a fourth-order time derivative have been discussed for a long time \cite{pichugin:08}. Keeping this term  was shown to (i) lead to a reformulation of the effective equation into a well-posed hyperbolic system inspired by a stress-gradient model \cite{forest:17,corn:lomb:23} and (ii) improve the accuracy of the model when predicting dispersion relations \cite{pichugin:08,corn:guz:20,corn:lomb:23}. We do not expect a similar result to readily hold for arbitrary materials and propagation direction in elasticity, because the low-order tensor $\brcii$ cannot describe complex anisotropy as the elasticity tensors $\bfCz$ and $\brA$ do, but take this 1D example as a hint that the influence of this time derivative on the dispersive properties of the model should be explored.

Finally, in the particular case of homogeneous elasticity (but heterogeneous mass density) discussed in Section \ref{sec:special:homogeneous:C}, the fourth-order time derivative is the only higher-order term in the effective model. Indeed the time-SG model \eqref{SGE:time:order4} reduces to:
\begin{equation}
  -\Div(\bfCz:\bfeps[\brU]) + \rhoz \brU'' + \ell^2(\rhoz)^2 \brcii\sip\brU'''' = \brF, \label{SGE:time:order4:homog:rho}
\end{equation}
\ie a single-term correction of the classical, leading-order, elastodynamic wave equation. In this later case, the time-SG model is therefore second-order in space, and arguably simpler to discuss and implement than the "inertial SG model". If it is proven to be well-posed and suitable for implementation,  classical $H^1$-conforming finite elements may suffice for numerical simulations, and there is no additional boundary conditions to derive compared to classical elasticity (but additional initial conditions to impose). This simplified case could therefore be used a first step into the study of time-SG models. 


\subsection{Plane wave solutions and generalized Christoffel equation} 
\label{sec:christoffel}


Consider a time-harmonic plane wave of the form
\begin{equation}
  \brU(\brX,t) = \brV\exp\lpar \rmi(\upkappa\bfn\sip\brX-\oo t) \rpar \label{planewave}
\end{equation}
characterized by its real wavenumber $\upkappa$, real unit propagation direction $\bfn$ and complex polarization $\brV\shin\Cbb^d$. To satisfy the homogeneous effective PDE~\eqref{SGE:eff:freq}, the above plane wave must verify the generalized Christoffel equation~\cite{rosi:24}
\begin{equation}
  \sfQth\gsup(\upkappa,\bfn)\sip\brV - \mu\,\bfrhv\esup(\upkappa,\bfn)\sip\brV = \brze, \qquad \mu=\rhoSG v^2 = \rhoSG(\oo/\upkappa)^2 \label{planewave:eig}
\end{equation}
where $v\sheq\oo/\upkappa$ is the phase velocity and, setting $\delta:=\kappa \eps$, the generalized acoustic tensor $\sfQth\gsup(\upkappa,\bfn)$ and directional apparent mass tensor $\bfrhv\esup(\upkappa,\bfn)$ are given by
\begin{equation}
  \sfQth\gsup(\upkappa,\bfn)
 = \sfQ^0(\bfn) + \rmi\delta\sfQ^1(\bfn) + \delta^2\sfQ^2_{\vartheta}(\bfn), \qquad
  \bfrhv\esup(\upkappa,\bfn)
 = \sfR^0(\bfn) + \rmi\delta\sfR^1(\bfn) + \delta^2\sfR^2_{\vartheta}(\bfn) \label{planewave:expand}
\end{equation}
with the various matrices given (in component form and suppressing for brevity the dependence on $\bfn$) by
\begin{equation}
\begin{aligned}
  \sfQ^0_{ik} &= \CZ_{ijk\ell} n_j n_{\ell}, & \quad
  \sfQ^1_{ik} &= \Ci_{ijk\ell m} n_j n_{\ell} n_m, & \quad
  \sfQ^2_{\vartheta ik} &= \vartheta\sfQ^0_{ik} + \sfQ^0_{ia}\cii_{ab}\sfQ^0_{bk} - \sfC^2_{ik}, \\
  \sfR^0_{ik} &= \delta_{ik}, &
  \sfR^1_{ik} &= \bsh_{ik\ell} n_{\ell}, &
  \sfR^2_{\vartheta ik} &= \vartheta\sfR^0_{ik} + \sfQ^0_{ia}\cii_{ak} + \cii_{ib}\sfQ^0_{bk} - \sfB^2_{ik},
\end{aligned} \label{planewave:matrices}
\end{equation}
with $\sfC^2_{ik}\shdeq\Cii_{ijnk\ell m} n_j n_{\ell} n_m n_n$ and $\sfB^2_{ik}\shdeq\Bii_{i\ell km} n_{\ell} n_m$, noting that $\sfQ^0,\,\sfQ^2_{\vartheta},\,\sfR^0,\,\sfR^2_{\vartheta}$ are real symmetric and $\rmi\sfQ^1$, $\rmi\sfR^1$ are Hermitian ($\sfQ^1,\,\sfR^1$ being real skew-symmetric). 
For example, given $\bfn$ and $\upkappa$, a plane wave~\eqref{planewave} can only propagate at angular frequencies $\oo\sheq\oo(\bfn,\upkappa)$ such that $v^2\sheq\oo^2/\upkappa^2$ is an eigenvalue of equation~\eqref{planewave:eig} and with polarizations $\brV$ that are corresponding eigenvectors. Proposition~\ref{prop:coercivity}(a) with $\bfxi\sheq\upkappa\bfn$ implies that $v^2>0$ for any $\vartheta\shg\vartheta_0$.
Using matrices~\eqref{planewave:expand} with $\delta\sheq0$ (i.e. $\eps\sheq0$) therein, \eqref{planewave:eig} is the standard Christoffel equation for the periodic medium homogenized at leading order only.

The following proposition gives the main properties of the $\Ocal(\delta^2)$ expansion of phase velocities $v\sheq v(\delta)$, through that of eigenvalues $\mu(\delta)\sheq\rhoSG v^2(\delta)$ solving the generalized Christoffel equation~\eqref{planewave:eig} with~\eqref{planewave:expand}, for the homogenized SG model:

\begin{prop}\label{prop:disp:expansion}
Let $\mu_0\shg0$ be an eigenvalue of the standard Christoffel eigenvalue problem $\sfQ^0\brV - \mu\sfR^0\sip\brV = \brze$.
\begin{compactenum}[(a)]
\item If $\mu_0\shg0$ has unit multiplicity, we have $\mu(\delta)=\mu_0+\delta^2\mu_2+o(\delta^2)$ with $\mu_2\shin\Rbb$ for any $\Ycal$ (possibly non-centrosymmetric).
\item If $\mu_0\shg0$ has multiplicity 2 and $\Ycal$ is centrosymmetric, we still have $\mu(\delta)=\mu_0\shp\delta^2\mu_2\shp o(\delta^2)$ with $\mu_2\shin\Rbb$.
\item If $\mu_0\shg0$ has multiplicity 2 and $\Ycal$ is not centrosymmetric, we have $\mu(\delta)=\mu_0+\delta\mu_1+\delta^2\mu_2+o(\delta^2)$ with $\mu_1,\mu_2\shin\Rbb$.
\end{compactenum}
\end{prop}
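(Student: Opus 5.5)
This is a matrix perturbation problem for the Hermitian pencil $(\sfQth\gsup,\bfrhv\esup)$, analytic in $\delta$. By Proposition~\ref{prop:coercivity}(c), or directly for small $\delta$ since $\sfR^0=\brI$, the pencil matrix $\bfrhv\esup$ is Hermitian positive definite near $\delta=0$. We reduce to a standard Hermitian eigenproblem via $\brM(\delta):=(\bfrhv\esup)^{-1/2}\sfQth\gsup(\bfrhv\esup)^{-1/2}$. This matrix is Hermitian and analytic in the real variable $\delta$, with Taylor coefficients
\begin{equation}
\brM(\delta)=\brM_0+\delta\brM_1+\delta^2\brM_2+O(\delta^3),\qquad \brM_0=\sfQ^0,\qquad \brM_1=\rmi\lpar\sfQ^1-\tdemi(\sfQ^0\sfR^1+\sfR^1\sfQ^0)\rpar .
\end{equation}
By Rellich's theorem, its eigenvalues form real-analytic branches $\mu(\delta)$ emanating from each eigenvalue $\mu_0$ of $\sfQ^0$. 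Hence expansions $\mu_0+\delta\mu_1+\delta^2\mu_2+o(\delta^2)$ with real coefficients always exist. What must be shown is when $\mu_1=0$.

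The first-order coefficients are the eigenvalues of the compression $\brP\brM_1\brP$ to the eigenspace $\brP$ of $\mu_0$. For an eigenvector $\brV$ of the real symmetric matrix $\sfQ^0$, we may take $\brV$ real. Then
\begin{equation}
\brV^*\brM_1\brV=\rmi\,\brV\Tsup(\sfQ^1-\mu_0\sfR^1)\brV=0,
\end{equation}
because $\sfQ^1$ and $\sfR^1$ are real skew-symmetric. This uses $\sfQ^1_{ik}=-\sfQ^1_{ki}$, which follows from the major skew-symmetry~\eqref{K:skew} of $\brCi$, and $\sfR^1_{ik}=-\sfR^1_{ki}$, from~\eqref{bsh:sym}. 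This settles (a): with unit multiplicity, $\mu_1=\brV^*\brM_1\brV=0$, and $\mu_2\in\Rbb$ follows from the standard second-order formula, or simply from the reality of analytic branches. For (b), centrosymmetry gives $\brCi=\bfze$ and $\bfbsh=\bfze$ (Section~\ref{sec:special:centrosym}). Thus $\sfQ^1=\sfR^1=\bfze$, the pencil is an even function of $\delta$ up to the orders considered, $\brM_1=0$, and every branch has $\mu_1=0$.

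For (c), take a real orthonormal basis $\brV_1,\brV_2$ of the two-dimensional eigenspace. The compressed matrix is $\rmi\,\brW$ with $\brW$ a real $2\times2$ skew-symmetric matrix, $W_{12}=\brV_1\Tsup(\sfQ^1-\mu_0\sfR^1)\brV_2$. Its eigenvalues are $\mu_1=\pm W_{12}\in\Rbb$, which are generally nonzero; this produces the circularly polarised splitting. The eigenvectors are $\brV_1\pm\rmi\brV_2$. Reality of $\mu_2$ again follows from Rellich analyticity.

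The main subtlety is in (c) when $W_{12}=0$ despite non-centrosymmetry, since the degeneracy then persists to first order. The plan is not to compute $\mu_2$ explicitly there but to invoke Rellich's theorem, which still guarantees real analytic branches. This requires checking that the reduction to $\brM(\delta)$ is legitimate, that is, that $\bfrhv\esup$ is positive definite and analytic. For small $\delta$ this follows from $\sfR^0=\brI$ regardless of $\vartheta$, so (c) holds as stated.
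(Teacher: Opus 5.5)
Your proof is correct, but it takes a different route from the paper.

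\textbf{The paper's route.} The paper works directly with the generalized pencil. It inserts the ansatz $\brV(\delta)=\brV_0+\delta\brV_1+\delta^2\brV_2+o(\delta^2)$ and $\mu(\delta)=\mu_0+\delta\mu_1+\delta^2\mu_2+o(\delta^2)$ into the Christoffel equation, in Rayleigh--Schr\"odinger style. It collects the $O(1)$, $O(\delta)$ and $O(\delta^2)$ equalities and contracts them with $\brV_0$ and $\brV_1$.
\begin{itemize}
\item For a simple eigenvalue this gives $\mu_1=\rmi\,\brV_0\cdot(\sfQ^1-\mu_0\sfR^1)\cdot\brV_0=0$ by skew-symmetry, which is exactly your computation. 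It is followed by an explicit real formula for $\mu_2$.
\item For a double eigenvalue it solves the same $2\times 2$ problem as you, giving $\mu_1=\pm p$ with $p=\brV_{0a}\cdot(\sfQ^1-\mu_0\sfR^1)\cdot\brV_{0b}$. It then writes $\mu_2$ as twice a real part plus Hermitian forms, so $\mu_2$ is real.
\end{itemize}

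\textbf{Your route.} You instead symmetrize the pencil with the inverse square root of the apparent-mass matrix, which is legitimate near $\delta=0$ since $\sfR^0$ is the identity. You then invoke Rellich--Kato analytic perturbation theory, so only the first-order compression onto the $\mu_0$-eigenspace has to be computed.

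\textbf{What the paper's route buys.} The explicit formulas give information beyond the statement. Notably, $\mu_2$ does not depend on $\vartheta$: the $\vartheta$-terms enter $\sfQ^2_\vartheta-\mu_0\sfR^2_\vartheta$ only as $\vartheta(\sfQ^0-\mu_0\sfR^0)$, which vanishes on the eigenspace. The paper also shows that the polarization acquires an $O(\delta)$ correction for non-centrosymmetric cells.

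\textbf{What your route buys.} The paper takes the existence of the expansion for granted. In the double case it also tacitly needs $p\neq 0$ for the $2\times2$ problem to select $\brV_0$; the centrosymmetric case (b) is likewise left implicit. Your argument rigorously secures existence and reality of all coefficients, with an $O(\delta^3)$ remainder, in every case. This includes the degenerate situation $p=0$ for a non-centrosymmetric cell that you rightly flag, and it would extend unchanged to higher multiplicities. The price is that you get no closed form for $\mu_2$.
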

\begin{remark}\label{mu1=0}
Proposition~\ref{prop:disp:expansion} indicates that the $\Ocal(\delta)$ term of $v(\delta)$ vanishes not only for centrosymmetric cells (for which the effective PDE contains no $\Ocal(\delta)$ term) but also for all non-centrosymmetric cells except those producing a standard Christoffel equation having a double eigenvalue (whereas the polarization $\brV$ has a $\Ocal(\delta)$ term for any non-centrosymmetric cell, see e.g.~(\ref{planewave:expand:series}b)). Determining whether the remainder $o(\delta^2)$ of the $\Ocal(\delta^2)$ expansion of $v(\delta)$ is $\Ocal(\delta^3)$ or $\Ocal(\delta^4)$ requires pushing the homogenization process to one additional order in $\epsilon$.
\end{remark}

\begin{remark}
Setting $\vartheta\shg\vartheta_0$ with $\vartheta_0$ taken as the suitable threshold for the effective PDE (see Remark~\ref{vartheta:eval}) also ensures that the Christoffel eigenvalue equation~\eqref{planewave:eig} with~\eqref{planewave:expand} has real positive eigenvalues for any $\delta\shgeq0$ (see Prop.~\ref{prop:coercivity}(b)), whereas~\eqref{planewave:eig} with $\vartheta=0$ is guaranteed to enjoy this property only if $\delta$ is small enough. While the threshold $\vartheta_0$ was found to not depend on $\ell$, a value $\vartheta(\delta)\shleq\vartheta_0$ adjusted according to $\delta$ might be defined instead for the Christoffel equation~\eqref{planewave:eig} (in particular $\vartheta(\delta)=0$ for small enough $\delta$). This difference reflects the fact that $\ell\not=0$ induces singular perturbations on the effective PDE (since higher-order derivatives then appear) but only regular perturbations on the Christoffel equation.
\end{remark}
\begin{remark}[Generalized Christoffel equation from the fourth-order-in-time effective equation] Small-$\delta$ expansions can also be conducted on the generalized Christoffel equation arising from plugging plane waves into the homogeneous fourth-order effective equation~\eqref{SGE:freq:order4}, producing the quadratic eigenvalue problem
\begin{equation}
  \lsqb \sfQ^0 + \rmi\delta\sfQ^1 + \delta^2(\vartheta\sfQ^0-\brC^2) \rsqb\sip\brV
  - \mu \lsqb \sfR^0 + \rmi\delta\sfR^1 + \delta^2(\vartheta\sfR^0-\brB^2) \rsqb \sip \brV - \mu^2\lsqb \delta^2\brcii \rsqb \sip \brV = \brze.\label{dispersion:expand:4th}
\end{equation}
The arising $\Ocal(1)$ and $\Ocal(\delta)$ equalities are the same, and thus so are their outcomes. Then, for $\mu_0$ a single eigenvalue, left-contracting the $\Ocal(\delta)$ equality by $\brV_0$ and using previous-order results as appropriate, we have
\begin{equation}
  \mu_2 =
  \rmi \brV_0\sip\lpar \sfQ^1 - \mu_0\sfR^1 \rpar\sip\brV_1 - \brV_0\sip\lpar \brC^2 - \mu_0\brB^2 \rpar\sip\brV_0 - (\mu_0)^2\brV_0\sip\brcii\sip\brV_0,
\end{equation}
so that the eigenvalue correction $\mu_2$ is also the same as before. We leave the case where $\mu_0$ is a double eigenvalue, for which the same corrections $\mu_{2\pm}$ as before are again obtained, to the reader.
\end{remark}

\section{Numerical illustrations}
\label{sec:numerics}

In this section, the proposed models are implemented and their outcomes are compared to (i) reference Floquet-Bloch
dispersion curves and (ii) wave simulation in architectured materials. All examples assume 2D plane-strain linearly elastic conditions. The numerical set-up is presented first, and then three microstructures are explored. For convenience, the following acronyms will be used in the forthcoming discussion:
\begin{itemize}
    \item LOE for the classical leading-order model characterized by effective elasticity $\brCz$ and mass density $\rhoz$;
    \item \SGvt for the well-posed SG model obtained using~\eqref{eq:A:J:general} with an appropriate choice of $\vartheta$;
    \item SG(0) for the (potentially ill-posed) SG model obtained using~\eqref{eq:A:J:general} with $\vartheta=0$,    
    that we will use to illustrate the importance of ensuring well-posedness;
    \item FB for Floquet-Bloch reference dispersion computations.
\end{itemize}

\subsection{Numerical set-up}

We chose three periodicity cell configurations, depicted in Figure \ref{fig:cells}:
\begin{itemize}
 \item a square lattice ($\rmD_4$ symmetry);
 \item a hexagonal lattice (honeycomb, $\rmD_6$ symmetry);
 \item a more intricate lattice based on a hexagonal cell, to obtain a  non-centrosymmetric trichriral microstructure with $\mathrm{Z}_3$ symmetry.
\end{itemize}
These configurations will be referred to as $\rmD_4$, $\rmD_6$ and $\mathrm{Z}_3$ in the sequel.
All three periodic media are perforated (see Fig.~\ref{fig:cells} and Sec.~\ref{sec:special:perforated}), with their material fraction made, to simplify computations, of one or two isotropic materials whose material properties are given in Table \ref{tab:materials}. We refer to \cite{auffray:15} for a comprehensive description of the symmetries of 2D cells and their associated effective tensors.

\begin{figure}[t]
\centering
\includegraphics[width=0.25\textwidth]{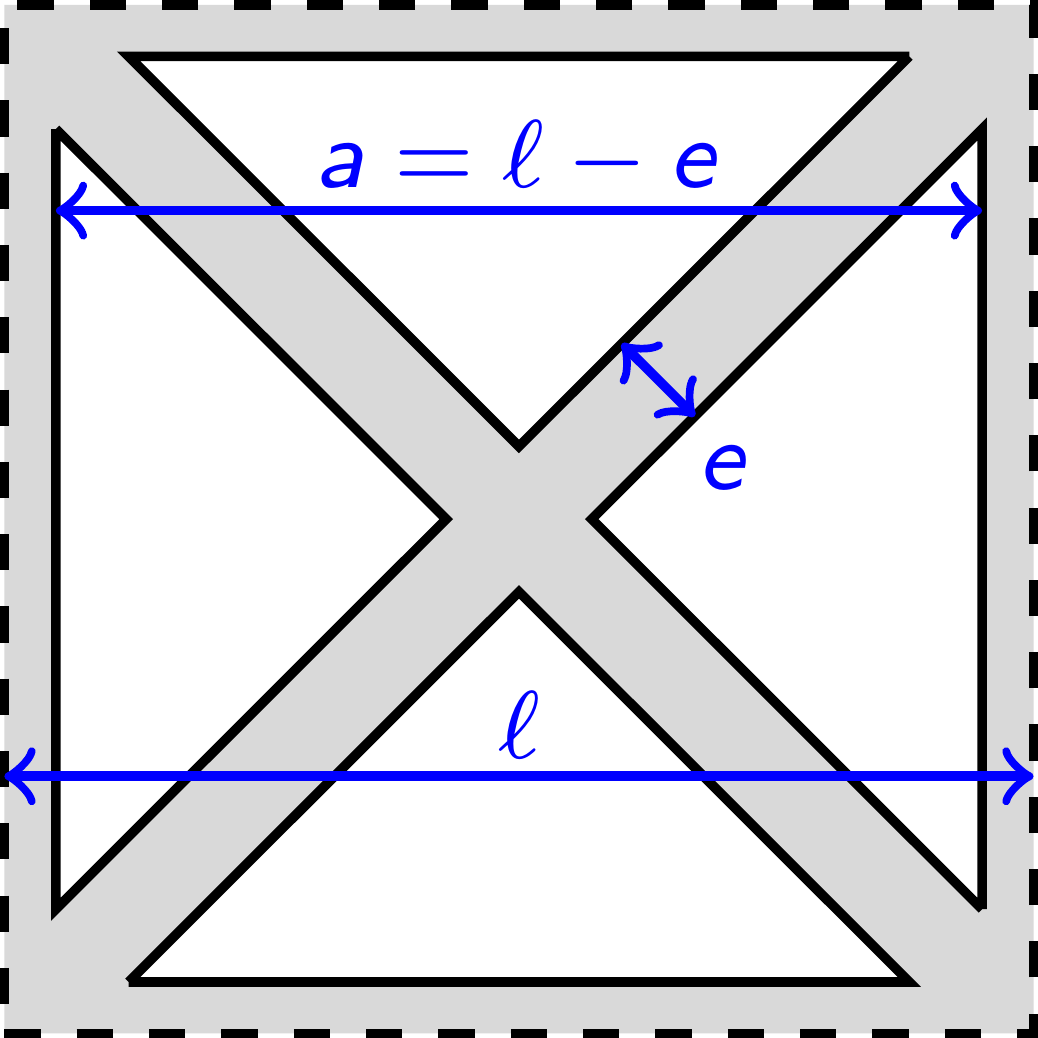}\hspace{0.05\textwidth}
\includegraphics[width=0.25\textwidth]{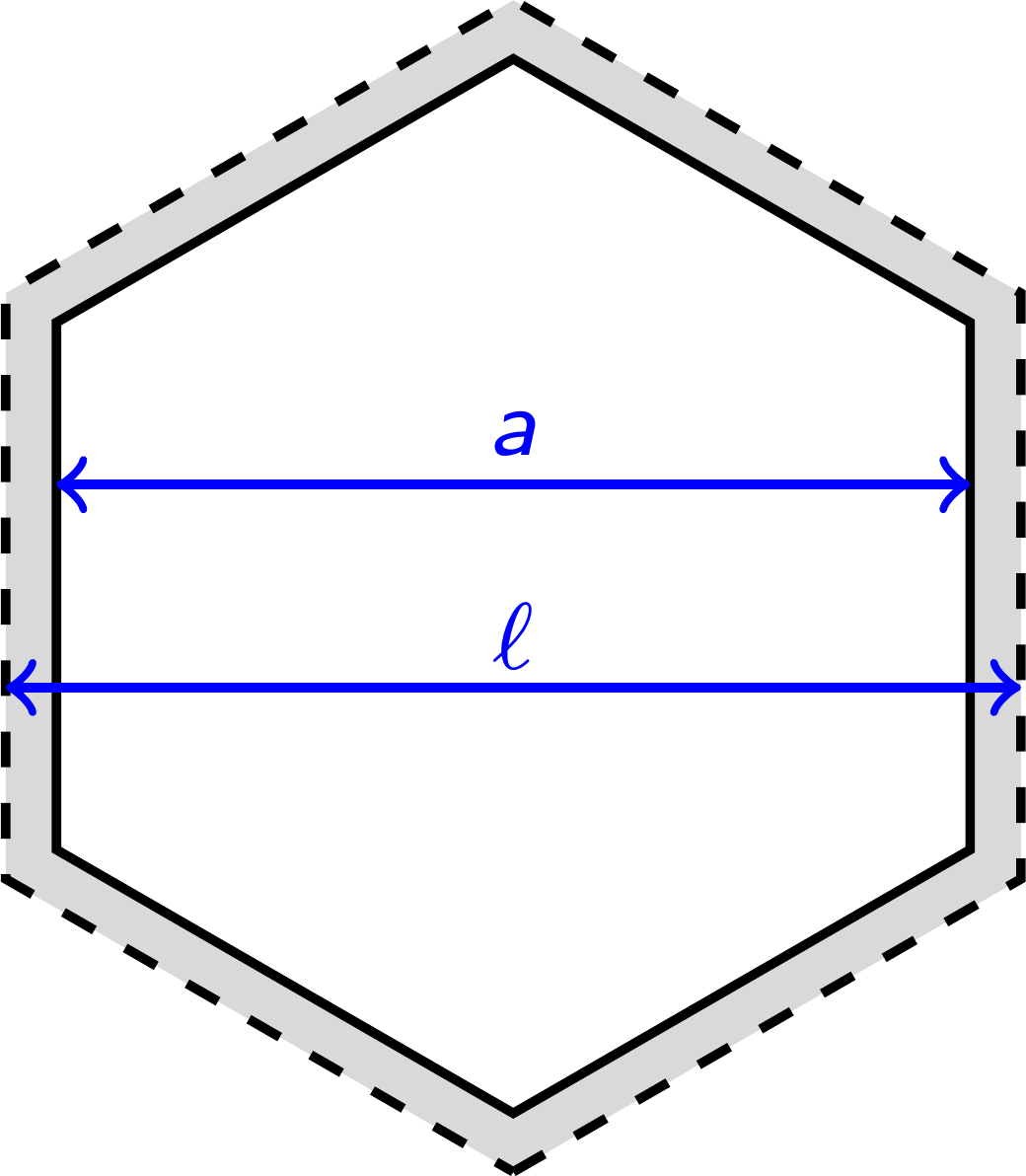}\hspace{0.05\textwidth}
\includegraphics[width=0.25\textwidth]{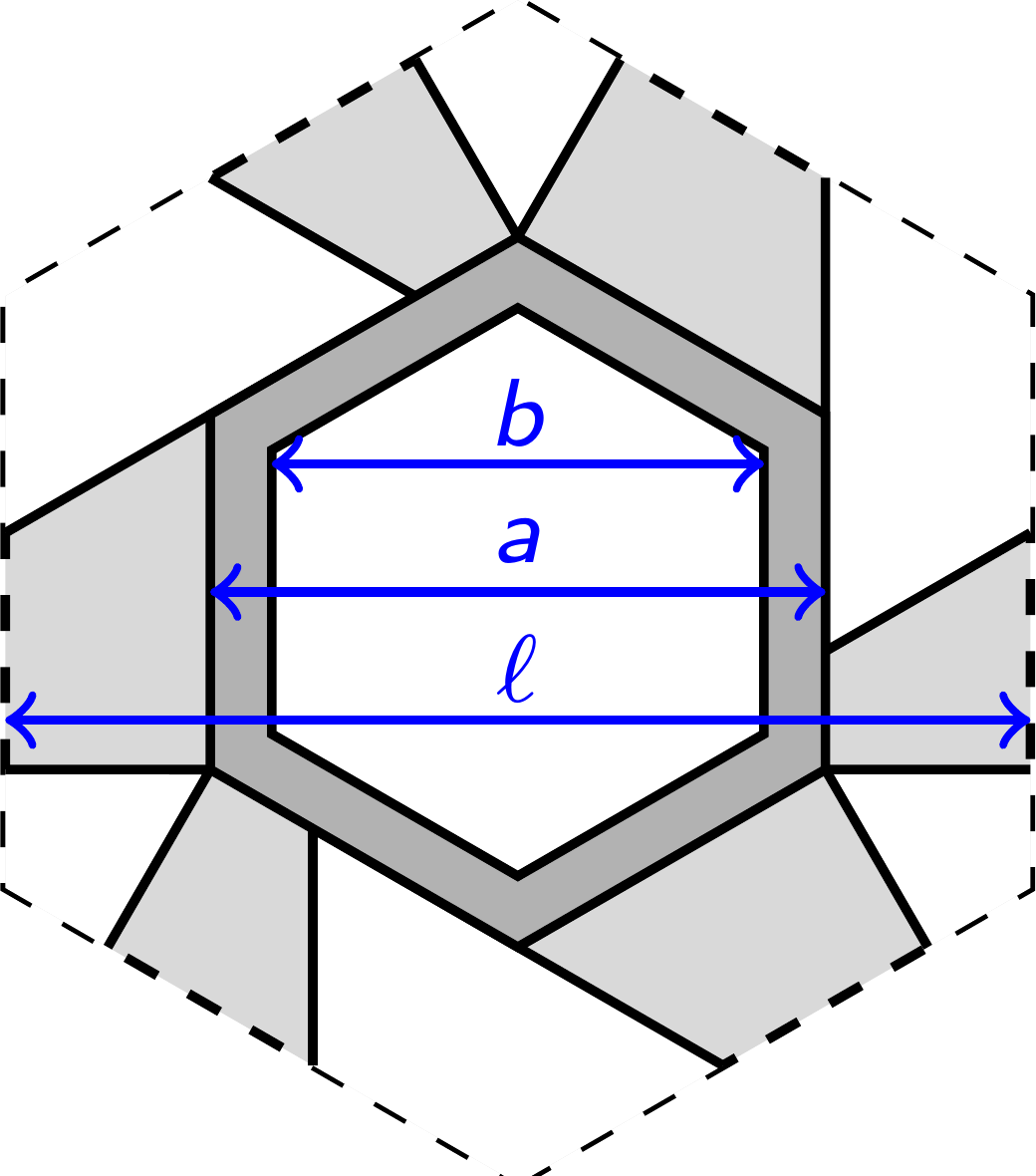}
\caption{$\rmD_4$, $\rmD_{6}$ and $\mathrm{Z}_{3}$ unit cells. The dimensions are given in Table \ref{tab:cells}. }
\label{fig:cells}
\end{figure}

\begin{table}[t]
\begin{tabular}{c|cccccccccc}
 Cell Type & $a/\ell$ & $e/\ell$ & $b/\ell$  & Vol. fraction $\phi$ & $\thZ_e$ & $\vartheta_e$ & $\vartheta_k$ & Chosen $\vartheta$ & DOFs & CPU time \\ \hline
 $\rmD_4$ & 0.9 & 0.1 & $-$  & 0.49 & 0.152 & 0.152 & 0 & 0.16 & $8.96\times10^6$ & 9 h\\
 $\rmD_6$ & 0.9 & $-$ & $-$ & 0.19 & 0.325 & 0.325 & 0 & 0.35  & $6.22\times10^6$ & 7 h\\ 
 $\mathrm{Z}_{3}$ & 0.6 & $-$ & 0.48 & 0.45 & 0.0476 & 0.0472  & 0.003 & 0.05 &  $1.79\times10^7$ & 45 h\\
\end{tabular}
\caption{Geometrical parameters, stabilization parameters, and computational characteristics of the benchmark microstructure simulations (see Figure~\ref{fig:cells} for the geometrical notations). By comparison, the SGE simulations use $2.27\times10^5$ degrees of freedom and require approximately $5$~min.}
\label{tab:cells}
\end{table}

\begin{figure}[t]
\centering
\includegraphics[width=0.25\textwidth]{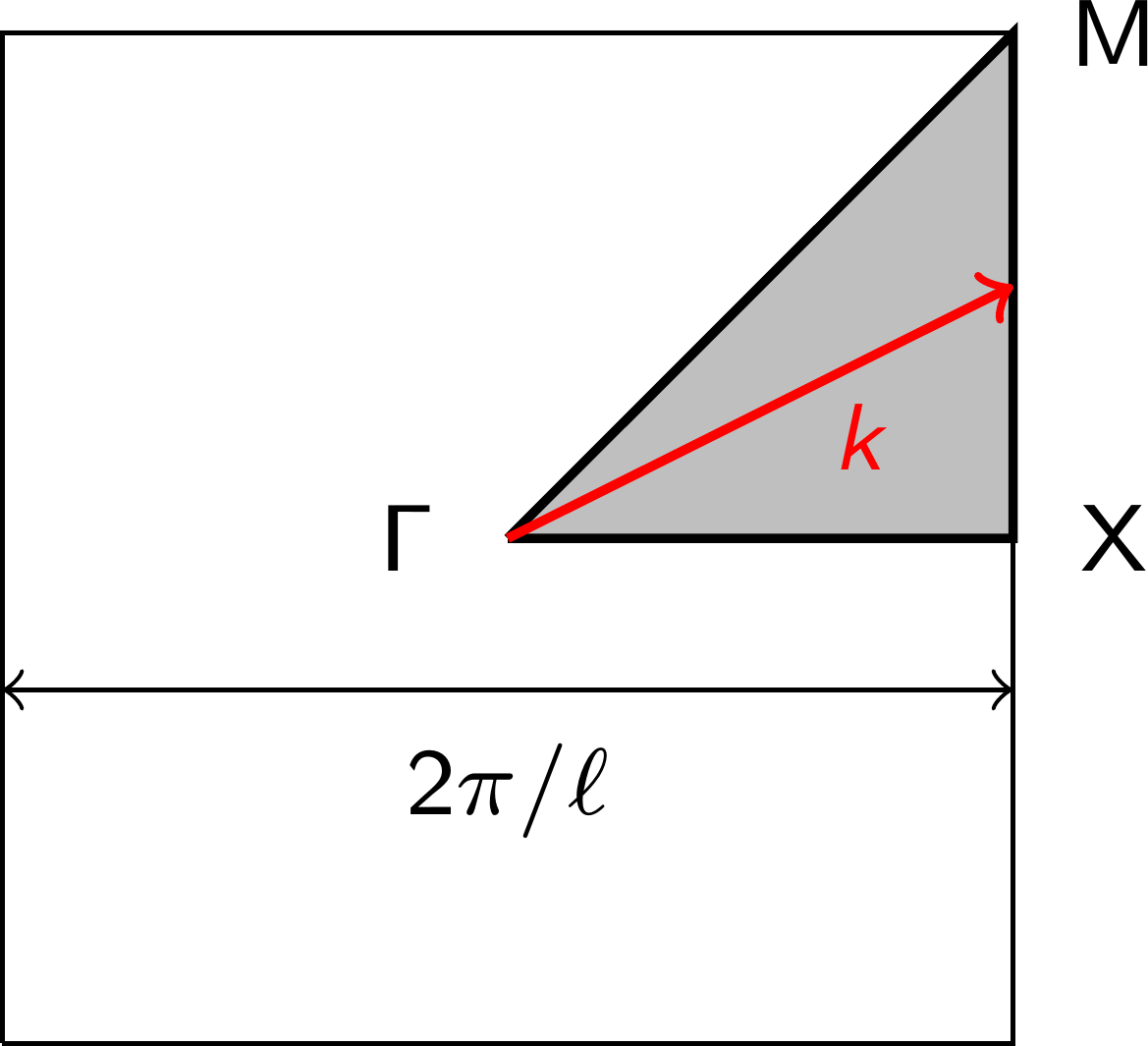}\hspace{0.05\textwidth}
\includegraphics[width=0.25\textwidth]{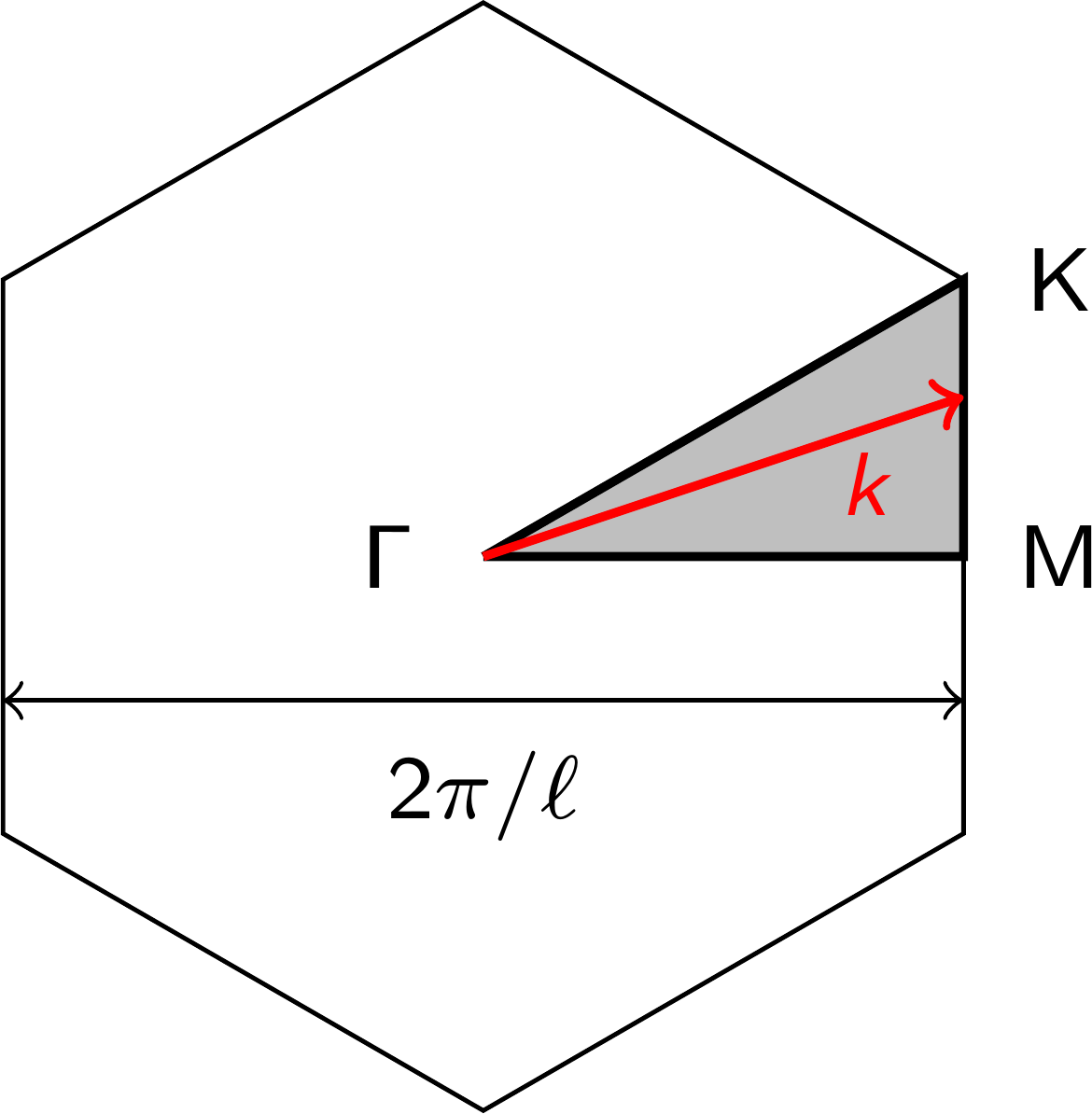}
\caption{Reciprocal unit cells for square and hexagonal cells, with irreducible Brillouin zone (gray triangle).}
\label{fig:cells:reciprocal}
\end{figure}

\subsubsection{Computations on the unit cell.} For each choice of periodicity cells, the steps outlined in Section \ref{sec:special:general} are performed as follows:
\begin{enumerate}
\item Cell geometry and meshes are generated using the free GMSH mesher~\cite{gmsh:09}.
\item Computations on the periodicity cell using the FreeFEM finite element platform~\cite{hecht:freefem}, with standard triangular elements and piecewise-linear $P_1$ interpolation.
\begin{enumerate} 
 \item Cell problems are solved. 
\item Homogenized tensors are computed. 
\item For comparison purposes, reference dispersion results (described below) are generated by Floquet-Bloch (FB) analysis~\cite{LaudeBook2015}.
\end{enumerate}
\item Definition of strain-gradient tensors and effective dispersion relations, using Python:
 \begin{enumerate}
\item Homogenized tensors yielded by steps (1b) and (1c) are imported.
\item The "strain" and "kinetic" eigenvalue problems~\eqref{e:eigenvalue:problem} and~\eqref{k:eigenvalue:problem} are solved to determine the thresholds $\vartheta_e$ and $\vartheta_k$. The weight parameter $\vartheta \geq \max(\vartheta_e, \vartheta_k)$ is then chosen; its value used for each testing configuration is given in Table \ref{tab:cells}. The "static" threshold $\thZ_e$ is also given for completeness (it differs from $\vartheta_e$ for the third case only).
\item Strain-gradient tensors are computed using~\eqref{corr:tens} with the chosen values of $\vartheta$ and cell size $\ell$.
\item Dispersion relations obtained with these models are computed by solving the generalized Christoffel equation \eqref{planewave:eig}, adding successively the first- and second-order terms to the leading-order Christoffel equation. 
\end{enumerate}
\end{enumerate}

Qualitative and quantitative comparisons between FB reference values and LOE and SG dispersion relations are then performed. In all cases, the cell size is kept constant (we chose $\ell = 1$ mm), while the wavevector $\upkappa\bfn$ evolves inside the irreducible Brillouin zone, see Figure \ref{fig:cells:reciprocal}. With reference to Remark \ref{rem:Lspec}, the macroscopic scale $L = \lambda = \upkappa/2\pi$ varies with the wavenumber $\upkappa$, and we therefore define the variable asymptotic parameter $\epsilon$ as
\begin{equation}
    \epsilon = \frac{\ell}{\lambda} = \frac{\upkappa \ell}{2 \pi} = \frac{\delta}{2\pi},
\end{equation}
see~\eqref{scale:sep}. Then, $\epsilon \shin [0,\tdemi]$ for waves propagating along the horizontal direction for all cells, see again Figure \ref{fig:cells:reciprocal}.

\begin{table}[t]
\begin{tabular}{l|ccc}
Material & Young's modulus $E$ (GPa) & Poisson's ratio $\nu$ & Mass density $\rho$ (kg.m$^{-3}$) \\ \hline
1 (light grey) & 200 & 0.3 & 7\,850 \\
2 (dark grey, $\rmZ_3$ cell only) & 50 & 0.35 & 31\,400 \\
\end{tabular}
\caption{Material properties (with reference to Figure \ref{fig:cells} for colors).}
\label{tab:materials}
\end{table}

\subsubsection{Wave propagation simulations}

To investigate wave propagation characteristics, transient dynamic analyses are performed on both the microstructured continuum and the homogenized LOE and SG approximating models. A shear pulse is introduced at the center of a sufficiently large finite domain (100mm$\times$100mm), ensuring that wave reflections from the boundaries do not affect the results during the simulation time. To simulate wave propagation in the actual microstructured medium and its homogenized counterparts, two distinct computational models are implemented in COMSOL Multiphysics$^\text{\textcopyright}$:
\begin{enumerate}[\hspace*{0.5em} (1)]
\item A benchmark code that solves the transient initial value problem~\eqref{ch5}, on the meshes shown in Figure~\ref{fig:mesh}. This code uses the plane strain assumption and triangular second-order serendipity elements. 
\begin{figure}[b]
\centering
\includegraphics[width=0.32\textwidth]{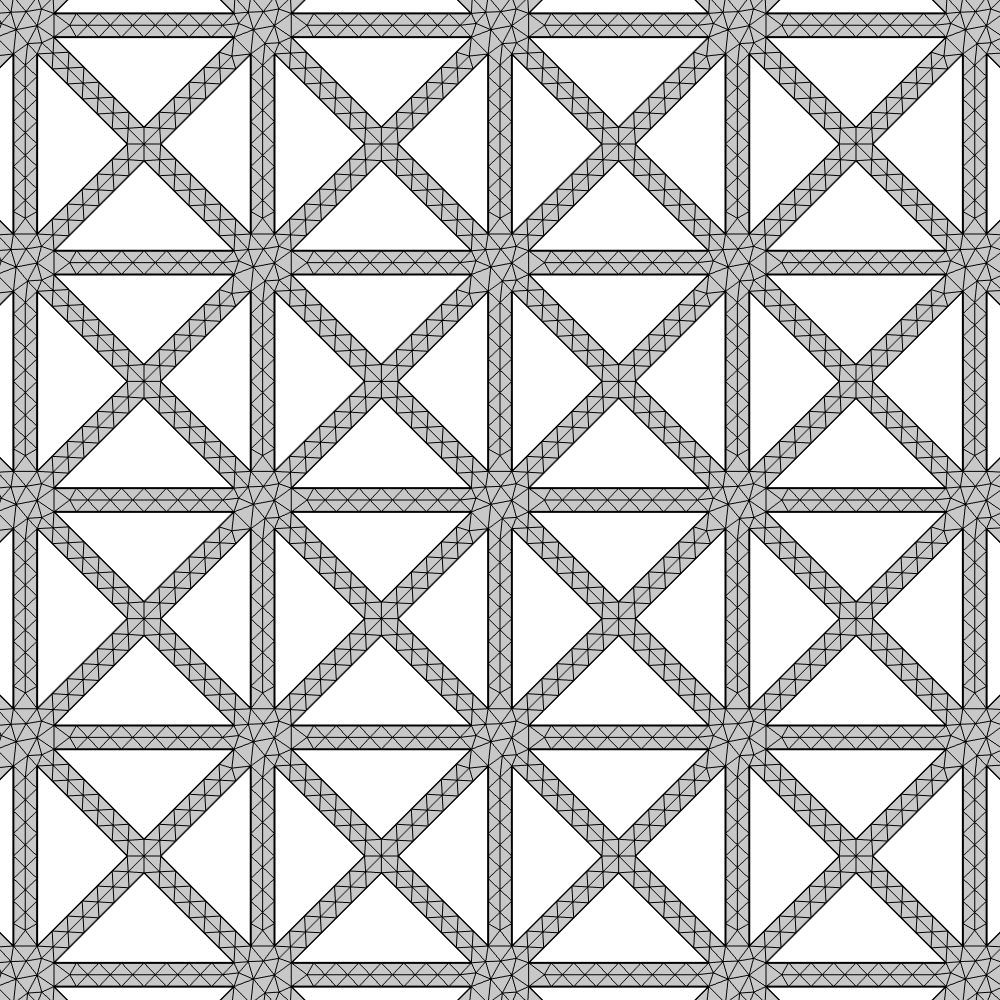} \
\includegraphics[width=0.32\textwidth]{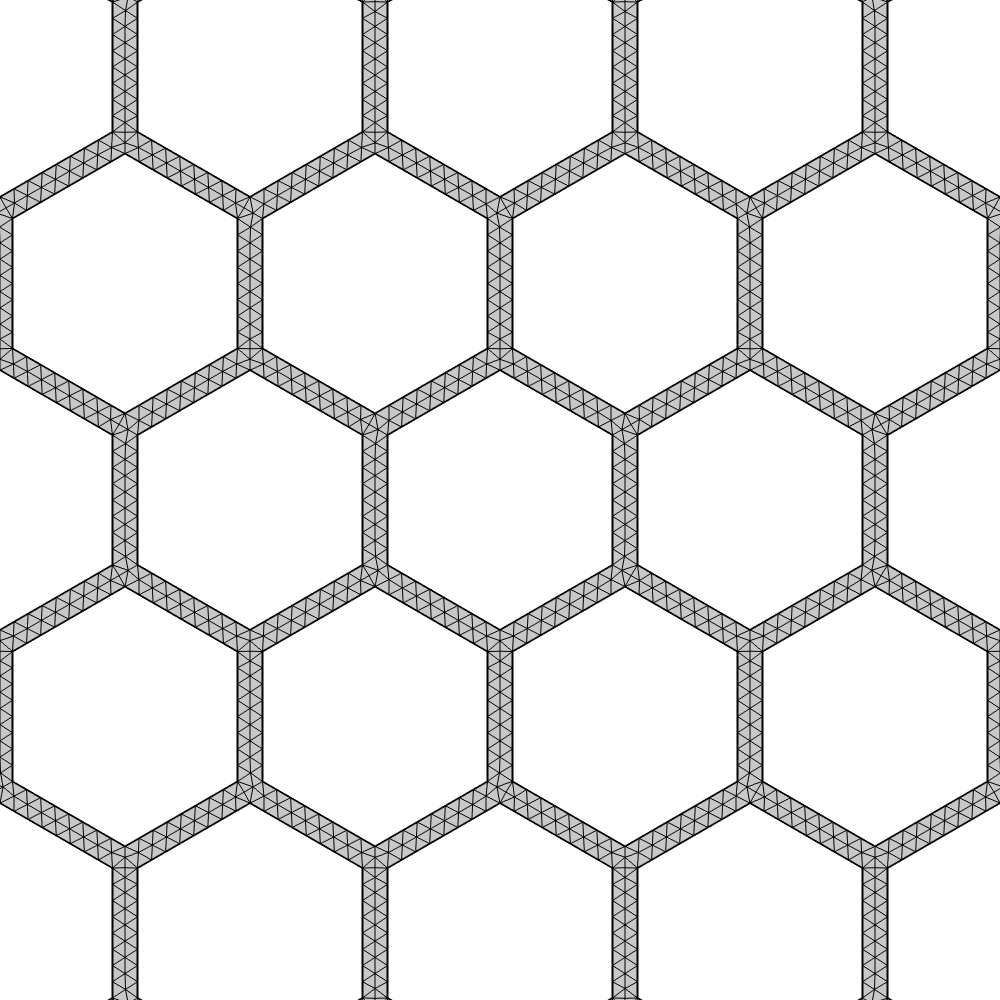} \
\includegraphics[width=0.32\textwidth]{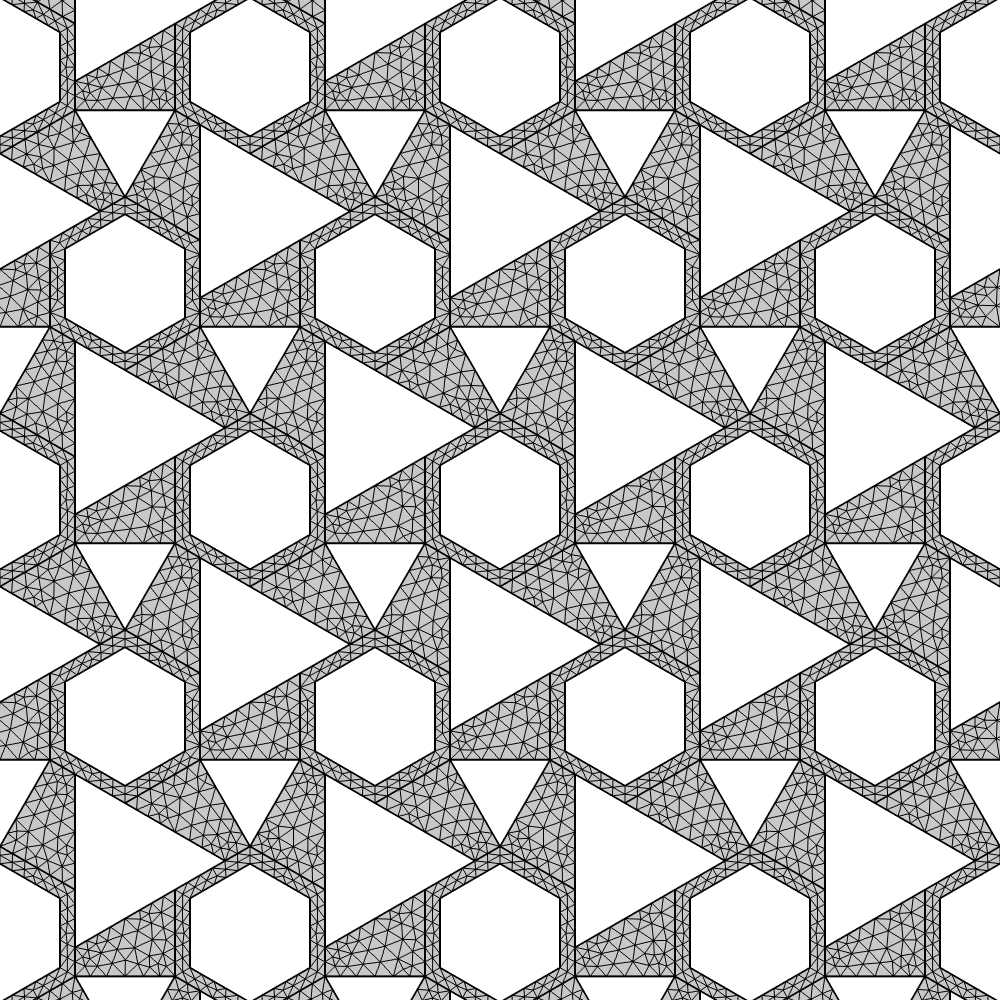}
\caption{From left to right: a zoom on the $\rmD_4$, $\rmD_{6}$ and $\mathrm{Z}_{3}$ meshes used for FEM computations. }
\label{fig:mesh}
\end{figure}

The following body force density, corresponding to a source for a shear wave, is considered:
\begin{equation}
  \brf^S(\brX,t)
 = f_0\exp\Lpar -4(f_c t-\tilde t_0)^2+\tdemi k_c^2(X_1^2+X_2^2) \Rpar\sin\left(2\pi f_c t\right)\left(X_2 \mathbf{e}_1-X_1 \mathbf{e}_2 \right),\label{eq:source_comp}
\end{equation}
where $f_c$ is the central frequency of the pulse, $\tilde{t}_0$ denotes the dimensionless temporal shift (expressed in units of the period) used to place the pulse in the center of the time window and $f_0$ is the amplitude of the force. The wavenumber $k_c$ is calculated so that the wave's wavenumber aligns with one-third of the Brillouin zone, i.e. $k_c=\frac{\pi}{3\ell}$, so that the "central" asymptotic parameter is $\epsilon_c=\frac16$. This choice provides a representative test in the dispersive regime while remaining sufficiently far from the Brillouin-zone boundary. The computational domain boundary is treated as traction-free, but this is not relevant as the simulation duration is set so that the waves do not reach the boundary.
\item A code that solves the transient initial value problem ~\eqref{IVP} based on the SGE model. As the weak formulation~\eqref{SGE:weak} involves second-order spatial derivatives of displacements, fifth-order Argyris triangular elements, which are $H^2$-conforming~\cite[Thm.~2.2.13]{cia:80}, are employed. The expression for the body force density is modified as follows: 
\begin{equation}
  \brF^S(\brx,t)
 = \phi f_0\exp\Lpar -4(f_c t-\tilde t_0)^2+\tdemi k_c^2(X_1^2+X_2^2) \Rpar\sin\left(2\pi f_c t\right)\left(-x_2 \mathbf{e}_1+x_1 \mathbf{e}_2 \right),\label{eq:source_cont}
\end{equation}
where $\phi$ denotes the volume fraction of material within the periodicity cell (see Table~\ref{tab:cells}). This factor accounts for the presence of void regions in the unit cell of the microstructured medium. Finally, the boundary of the computational domain is assumed to be traction free. In the case of SG, this condition is particularly relevant because the solution consists of a propagating component with finite phase velocity and a decaying component that propagates with infinite velocity.
\end{enumerate}

The computational cost of the benchmark simulations depends on the complexity of the considered microstructure, with between $6.22\times10^6$ and $1.79\times10^7$ degrees of freedom (see Table~\ref{tab:cells}), resulting in computation times ranging from approximately $7$~h to $45$~h. By contrast, the homogenized SG model involves only $2.27\times10^5$ degrees of freedom and requires approximately $5$~min of computation time for all three periodicity cells. All computations were performed in COMSOL Multiphysics$^\text{\textcopyright}$ using the direct solver MUMPS on an Apple M1 Ultra workstation (16 CPU cores). In all cases, the time step was selected according to a CFL condition, ensuring a consistent comparison of the reported computation times.

In what follows, we present (i) dispersion curves to assess the asymptotic accuracy of the models and (ii) snapshots of computed transient responses to assess the ability of the proposed effective SGE model to reproduce the main features of wave propagation predicted by the benchmark simulations, for each periodicity cell of Figure~\ref{fig:cells}. In the transient computations, the displacement magnitude is plotted.\enlargethispage*{1ex}


\subsection{$\rmD_4$ periodicity cell (square with reinforced diagonals)}
\label{sec:D4X}

We first consider the $\rmD_4$ cell represented in Figure \ref{fig:cells}. It is centrosymmetric (so that the coupling tensors $\brKsh,\brMsh$ vanish as discussed in Section \ref{sec:special:centrosym}) and the mass density is homogeneous (so that the SG tensors simplify as discussed in Section \ref{sec:special:homogeneous:rho}). Ultimately, the SG model is defined by the effective properties $(\rhoz,\brCz)$ and SG tensors $(\brA(\vartheta),\brI(\vartheta))$ given by \eqref{eq:A:J:homog:rho}.
The threshold values $\vartheta_e$ and $\vartheta_k$ are given in Table \ref{tab:cells}: we chose a corrective weight $\vartheta=0.16$.

\begin{figure}[b]
 \centering
 \includegraphics[width=\textwidth]{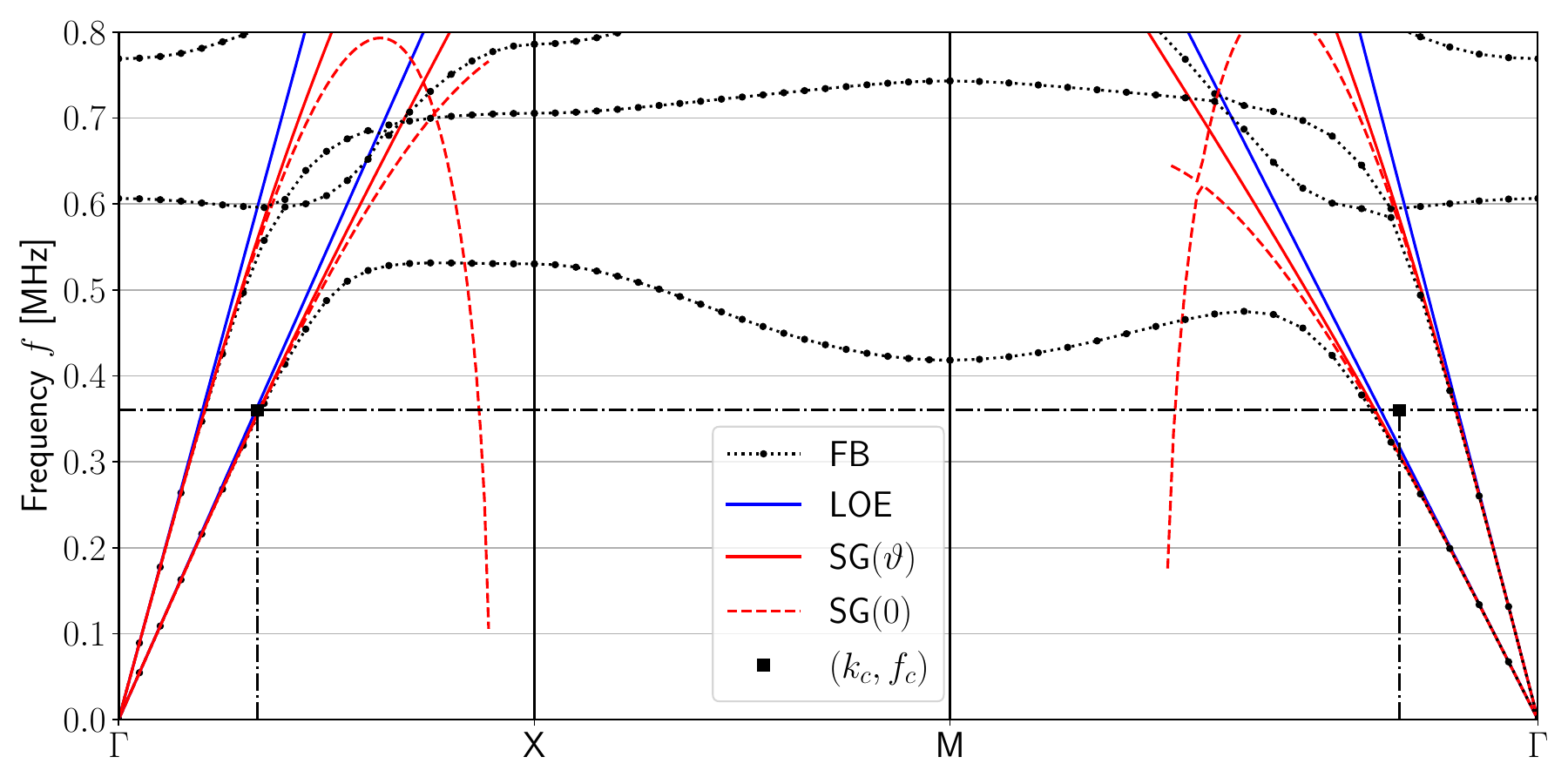}
 \caption{$\rmD_4$ unit cell: dispersion diagram and dispersion branches of the LOE and SG models.}
 \label{fig:D4X:GXM}
\end{figure}

In Figure
\ref{fig:D4X:GXM}, the dispersion diagram is plotted for a wavevector following the edges of the irreducible Brillouin zone, see Figure \ref{fig:cells:reciprocal}: we plot the reference FB frequencies and the approximations given by the LOE and SG models. As expected, the LOE provides a non-dispersive approximation, valid only for low wavenumbers, while the SG models provide a dispersive approximation valid for higher wavenumbers. The non-modified SG(0) arising from homogenization provides a more accurate approximation, but is ill-posed, as seen here by the non-definiteness of the dispersion branches as the wavenumber increases (the eigenvalue $\omega^2$ of the Christoffel equation becomes negative, and $\omega$ cease to be real, hence the incomplete plot). By contrast, the well-posed \SGvt model provides valid dispersion branches in the whole Brillouin zone, but loses some accuracy.  For future reference, we also display the position of the point $(k_c,f_c)$ in the dispersion diagram, $k_c$ and $f_c$ being the "central" wavenumber and frequency that enter the definition of the source terms \eqref{eq:source_comp} and \eqref{eq:source_cont} used for upcoming transient propagation simulations.

Figure \ref{fig:D4X:disp:errors} presents the errors on the dispersion relation made using these models. This error is computed up to the middle of the first Brillouin zone, \ie $\upkappa \ell \leq \pi/2$, so that the studied asymptotic range is $\epsilon \in [0,0.25]$. It is averaged on ten directions of propagation $\theta = 0,5,\dots,45^\circ$. The LOE approximates the exact relation up to first order (an expected result since there is no odd-order term in the homogenized model), while the SG models provide a third-order approximation. The slight accuracy decrease when using the well-posed \SGvt model is again noticeable.

\begin{figure}[t]
 \centering
 \includegraphics[width=\textwidth]{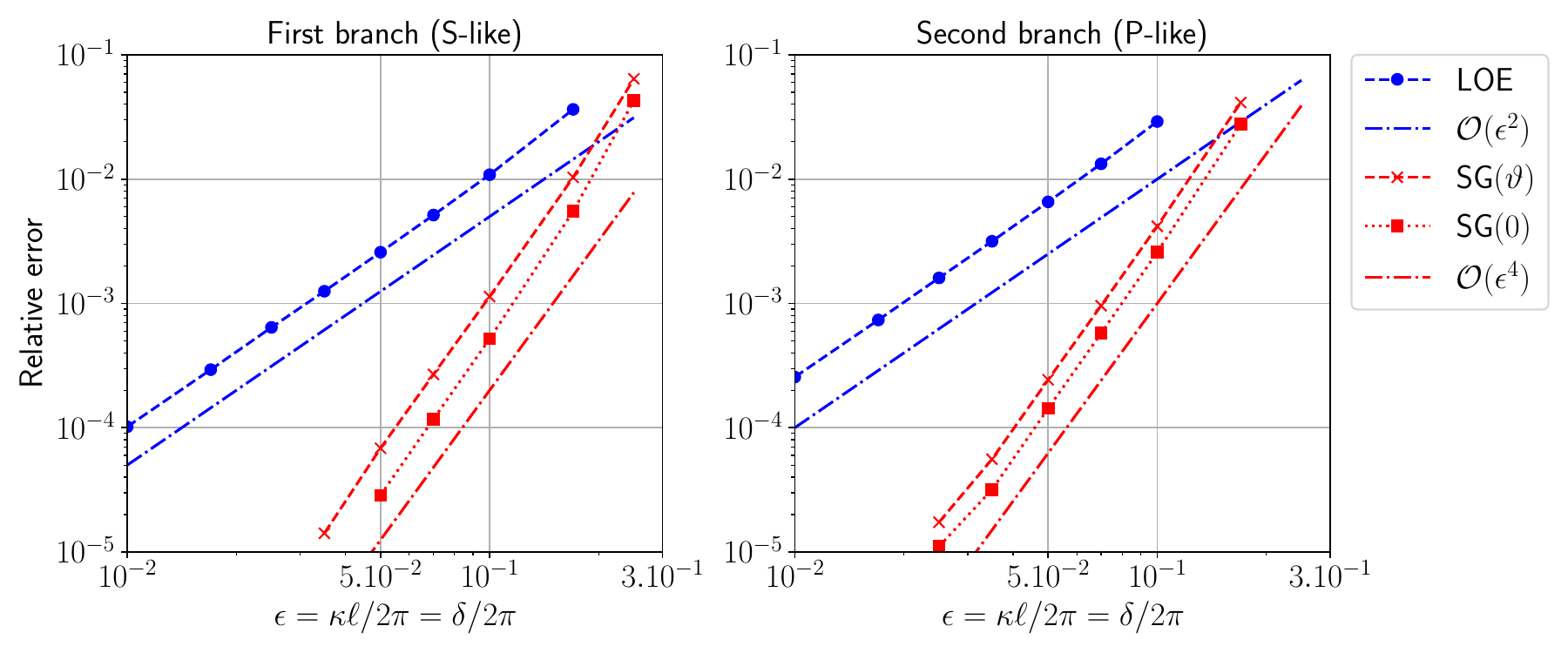}
 \caption{$\rmD_4$ unit cell: errors on the dispersion relations for all models, averaged on
the propagation direction $\theta$ (here errors for branches at $\theta = 0,5,\dots,45^\circ$ were computed), and up to $\upkappa=\pi/2\ell$, \ie the middle of the Brillouin zone in the horizontal direction.}
 \label{fig:D4X:disp:errors}
 \end{figure}

 \begin{figure}[t]
 \centering
 \includegraphics[width=0.3\textwidth]{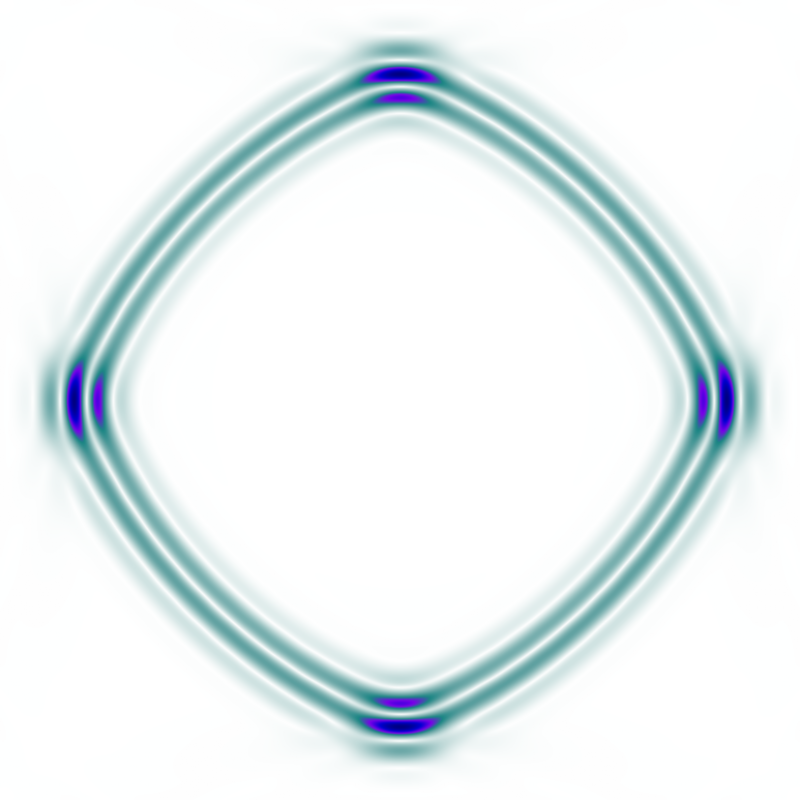} \
 \includegraphics[width=0.3\textwidth]{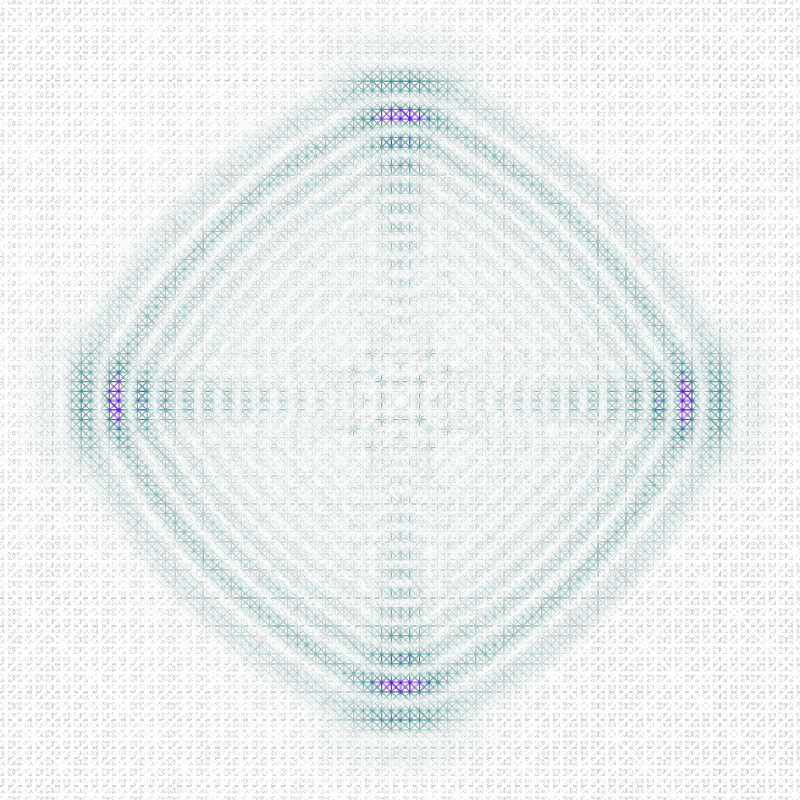} \
 \includegraphics[width=0.3\textwidth]{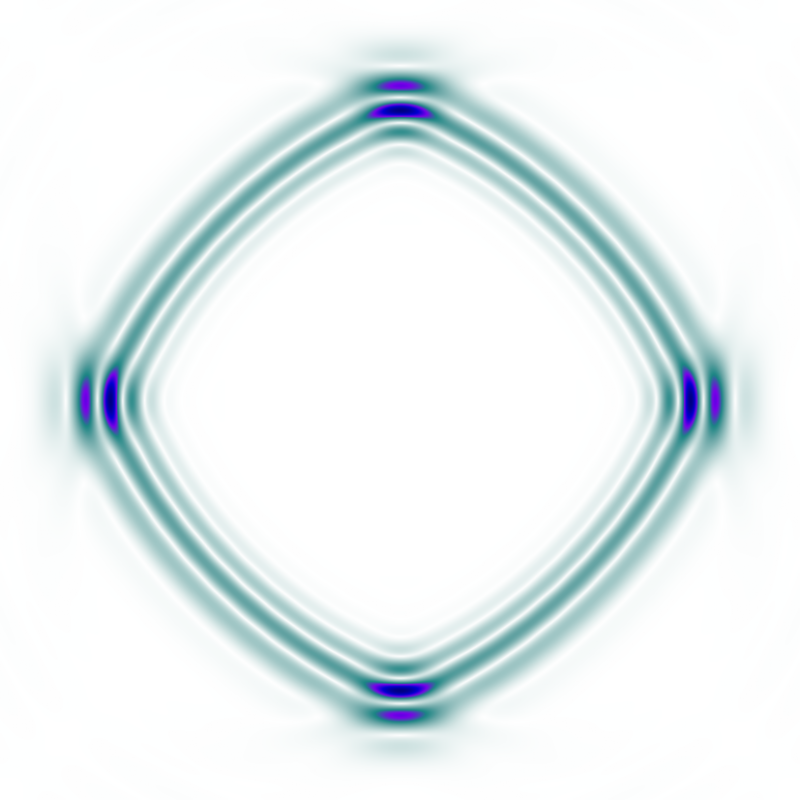}

  \includegraphics[width=0.4\textwidth]{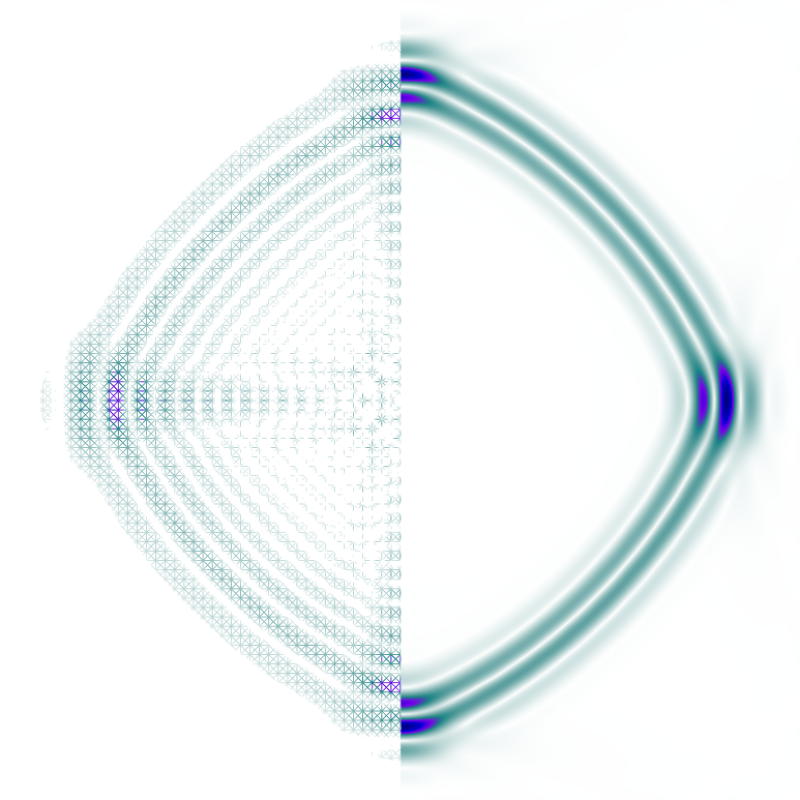} \qquad
  \includegraphics[width=0.4\textwidth]{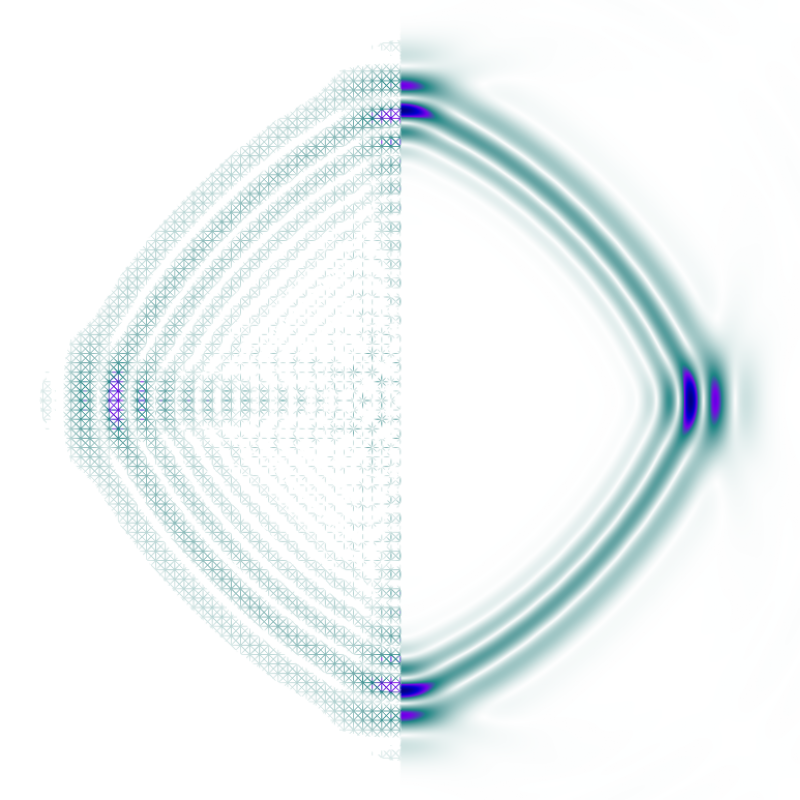}
\caption{$\rmD_4$ unit cell: shear waves generated using a central frequency $0.36\,\mathrm{MHz}$.  Wavefronts (displacement magnitude) at $t =0.20\,\mu\mathrm{s}$, in the leading-order homogenized Cauchy medium (top left), the microstructure (top middle) and the \SGvt medium (top right). Comparisons between the microstructured and homogenized solutions: LOE (bottom left) and \SGvt (bottom right).} 
\label{fig:D4X:propagation}
\end{figure}


Figure~\ref{fig:D4X:propagation} first illustrates the response of the three models to the body force defined in Equation~\eqref{eq:source_comp}. The benchmark solution, corresponding to the microstructured continuum, exhibits both anisotropic and strongly dispersive wave propagation. As expected, the anisotropy reflects the symmetry of the architecture, characterized by two orthogonal mirror planes. In addition, backscattering-like effects are observed, which is consistent with the presence of a nearly flat shear-wave branch at slightly higher frequencies. As predicted by the dispersion diagrams of Figure~\ref{fig:D4X:GXM}, this feature is captured by neither the leading-order equivalent (LOE) model nor the strain-gradient (SG) model. Nevertheless, the higher-order corrections incorporated in the SG model provide a more accurate prediction of the position and shape of the main wavefront. In contrast, the wavefront predicted by the LOE model propagates noticeably faster than that of the microstructured medium, consistently with the overestimation of the shear-wave velocity observed on the dispersion diagrams.


\subsection{$\rmD_6$ periodicity cell (hexagon)}
\label{sec:D6}

This second example concerns a hexagonal periodicity cell, also centrosymmetric and with homogeneous mass density, so that the comments made in Section \ref{sec:D4X} about the various tensors also apply here. This time a larger value $\vartheta = 0.35$ is chosen for the model to be well-posed.

Again we plot the FB dispersion diagram, and dispersion branches for the homogenized models, see Figure \ref{fig:D6:GMK}. The same observations as for the $\rmD_4$ cell hold: the \SGz model provides a good low-wavenumber agreement but fails to describe the entire Brillouin-zone for the $\Gamma$-M branch (propagation at $30^\circ$). The well-posed \SGvt model presents a less dispersive behavior.\enlargethispage*{5ex}

Figure~\ref{fig:D6:disp:errors} presents the relative error on dispersion for both branches, averaged on the propagation direction $\theta \in [0,\pi/6]$. Again second-order and fourth-order errors are observed for the LOE and SG models, respectively. 

\begin{figure}[b]
 \centering
 \includegraphics[width=\textwidth]{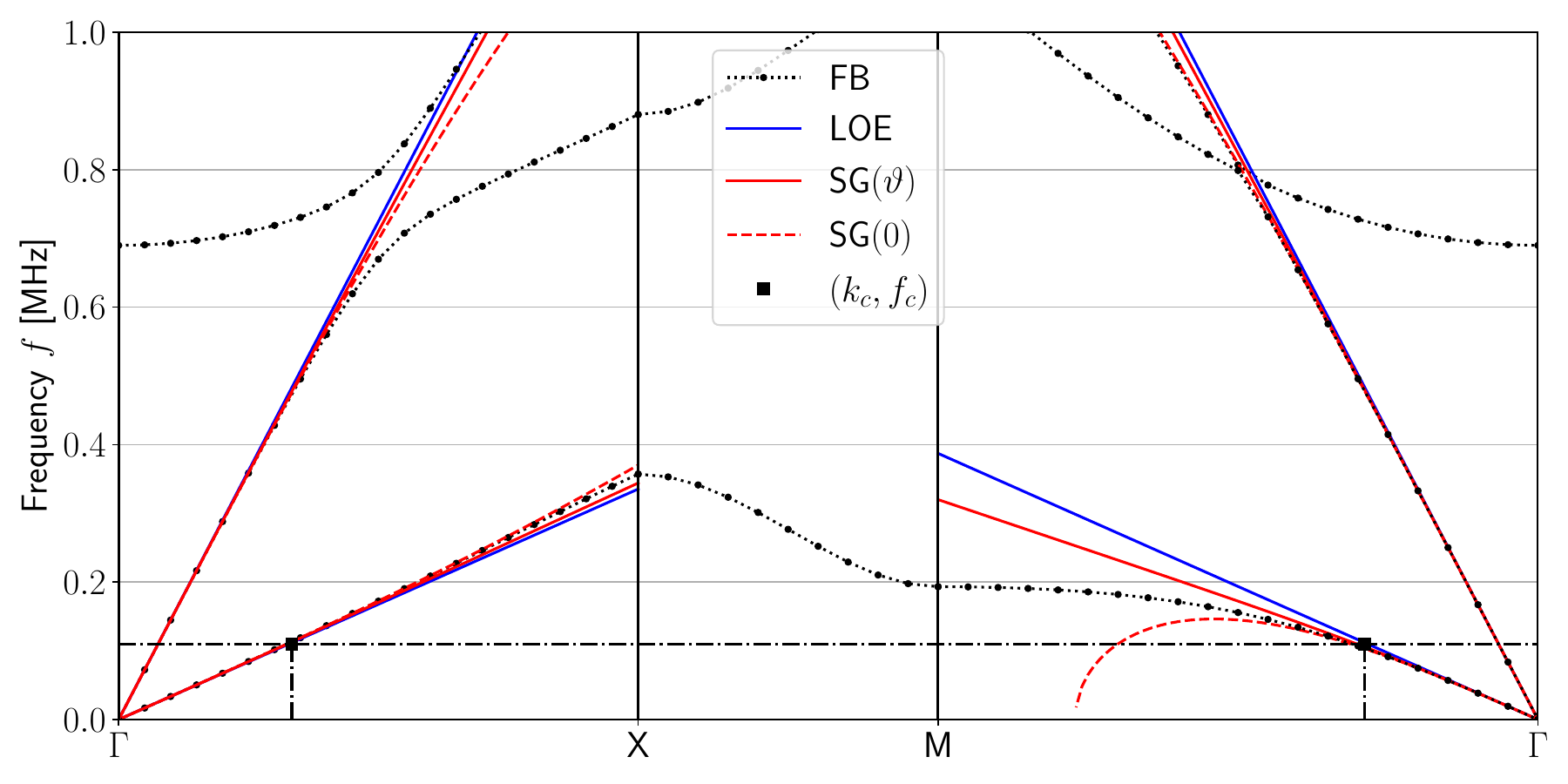}
 \caption{$\rmD_6$ unit cell: dispersion diagram and dispersion branches of the LOE and SG models.}
 \label{fig:D6:GMK}
\end{figure}

\begin{figure}[b]
 \centering
 \includegraphics[width=\textwidth]{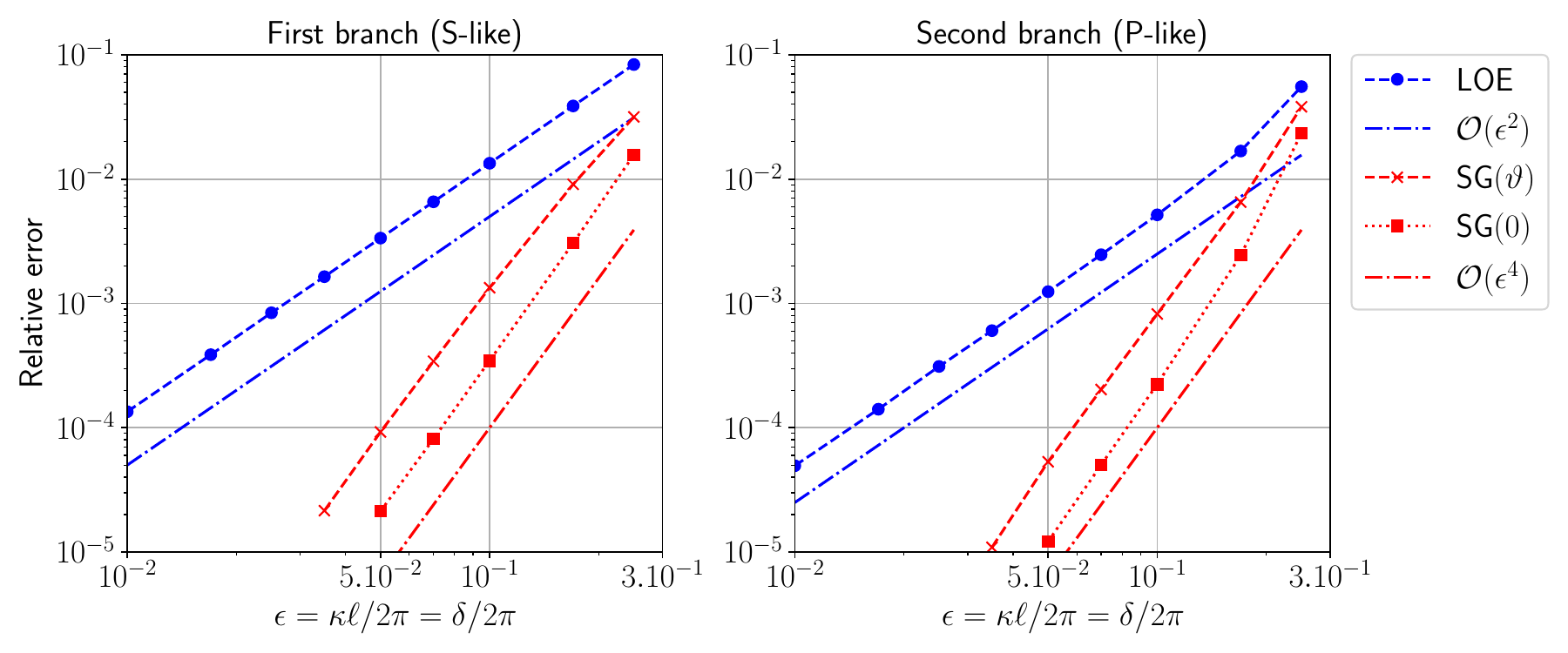}
 \caption{$\rmD_6$ unit cell: errors on the dispersion relations for all models, averaged on
the propagation direction $\theta$ (here errors for branches at $\theta = 0,5,\dots,30\circ$ were computed), and up to $\upkappa=\pi/2\ell$, \ie the middle of the Brillouin zone in the horizontal direction.}
 \label{fig:D6:disp:errors}
\end{figure}

Figure~\ref{fig:D6:propagation} shows the responses of the three models to the body force defined in Equation~\eqref{eq:source_comp}. The LOE model is isotropic as expected, and therefore does not capture the pronounced anisotropic behavior of the benchmark solution, arising from the hexagonal symmetry of the underlying microstructure. By contrast, the SG model successfully reproduces this anisotropic wave propagation, in agreement with the anisotropy predicted by the corresponding dispersion branches, as highlighted by the provided side-by-side comparison. These observations are consistent with the findings reported in \cite{rosi:19}, where the SG model was obtained using an identification method rather than homogenization.\enlargethispage*{1ex}

\begin{figure}[t]
 \centering
 \includegraphics[width=0.4\textwidth]{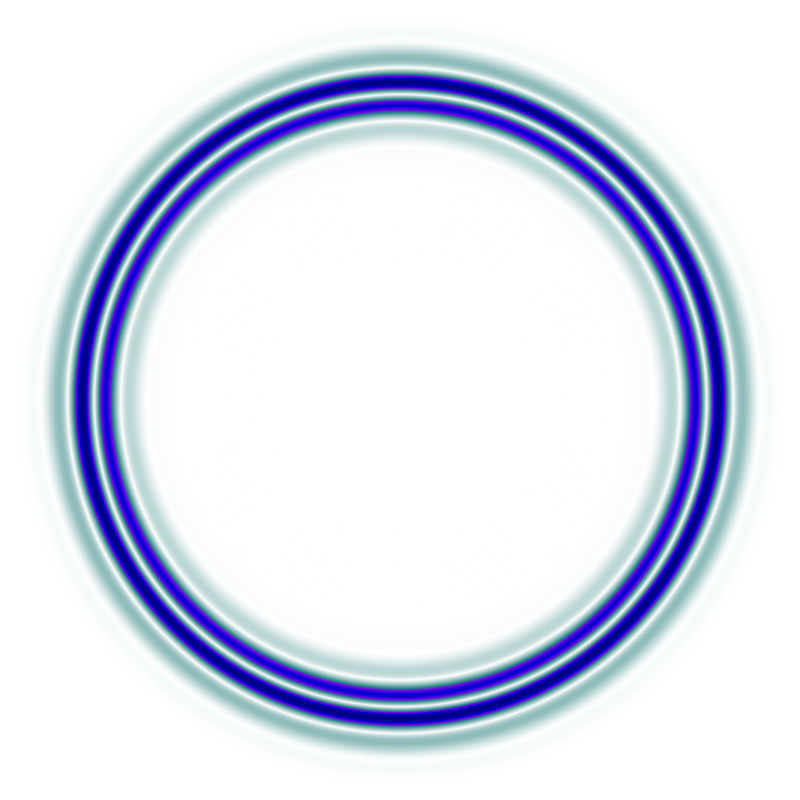}
  \includegraphics[width=0.4\textwidth]{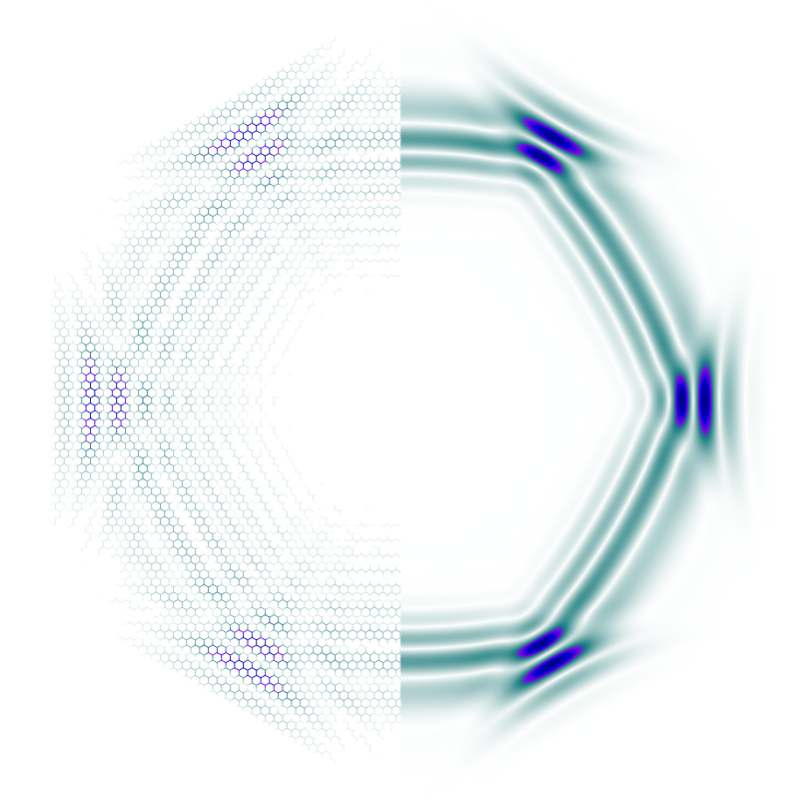}
 \caption{$\rmD_6$ unit cell: shear waves generated using a central frequency $0.11\,\mathrm{MHz}$.  Wavefronts (displacement intensity) at $t =0.67\,\mu\mathrm{s}$ in the leading-order homogenized Cauchy medium (left), and in the microstructure and the \SGvt medium, plotted side-by-side for comparison (right).} 
\label{fig:D6:propagation}
\end{figure}


\subsection{$\mathrm{Z}_{3}$ periodicity cell with two materials}

Our final example concerns a $\mathrm{Z}_{3}$ periodicity cell made of two materials, which is neither centrosymmetric nor homogeneous: it is an example of the general case, in which odd-order tensors $\brKsh$ and
$\brMsh$ as well as the inertial tensor $\brcii$ \emph{a priori} enter the SG models. However, in this case, $\brK^\sharp$ vanishes as a consequence of the non-trivial rotational invariance of the material. Indeed, it can be shown that $\brK^\sharp$ behaves as a first-order tensor with respect to orthogonal transformations. It follows that any invariance with respect to a non-trivial rotation implies the vanishing of $\brK^\sharp$. As a result, the only symmetry class in $\Rbb^2$
 for which $\brK^\sharp$ has a non-zero contribution reduces to complete anisotropy\footnote{The situation is obviously completely different in $\Rbb^3$.}.

\begin{figure}[b]
 \centering
 \includegraphics[width=\textwidth]{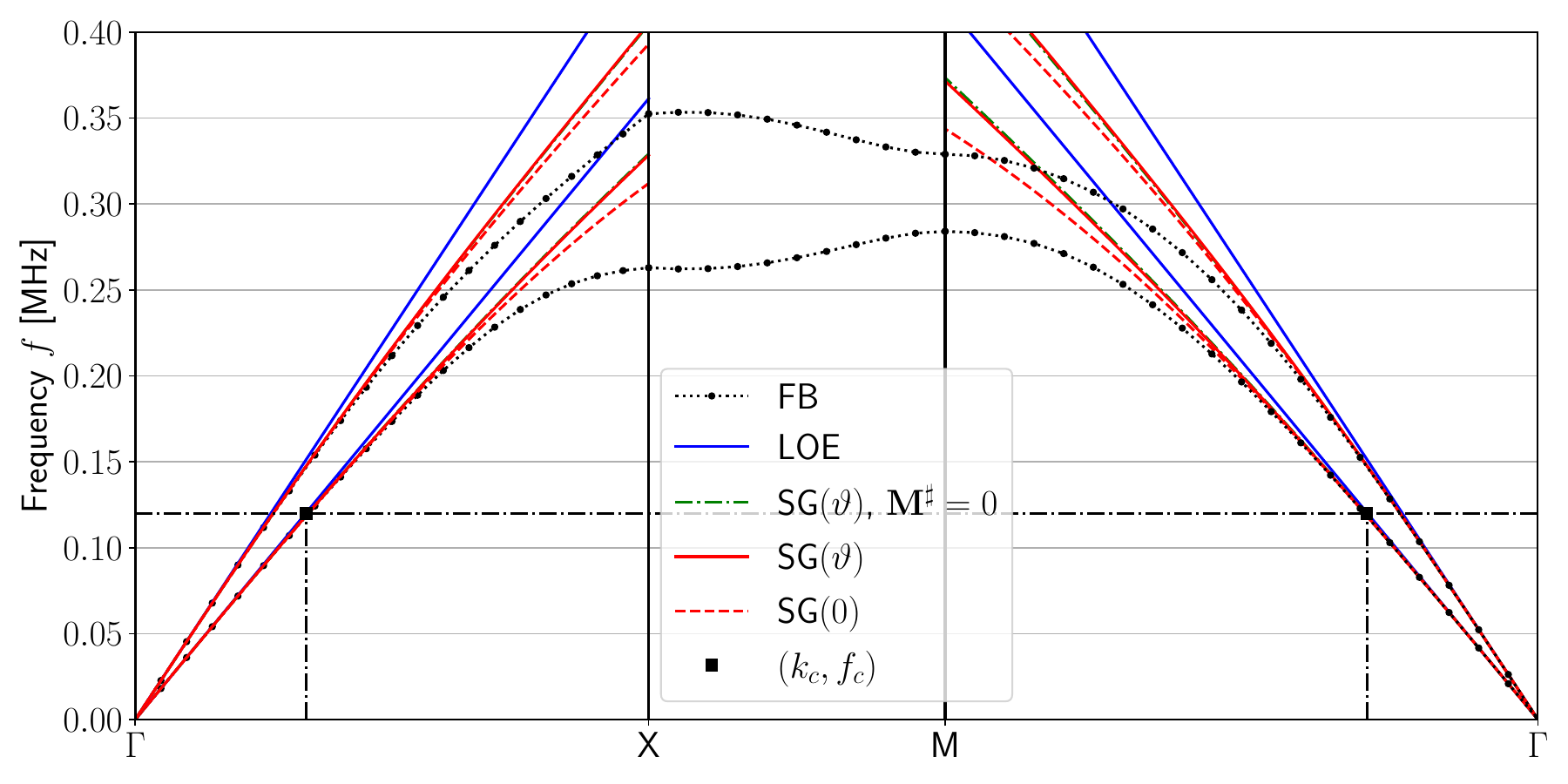}
 \caption{$\rmZ_3$ unit cell: dispersion diagram and dispersion branches of the LOE and SG models.}
 \label{fig:Z32:GMK}
\end{figure}

The dispersion diagram is represented in Figure \ref{fig:Z32:GMK}, while the corresponding errors are represented in Figure \ref{fig:Z32:disp:errors}. The correction amplitude is weaker than in the previous cases ($\vartheta = 0.05$), and the branches of the \SGvt model are consequently closer to those of the \SGz model. Contrarily to the previous examples, both SG models provide dispersion curves spanning the entire Brillouin zone: \SGz does not appear to fail on this representation. 

Errors are plotted in Figure \ref{fig:Z32:disp:errors}, again averaged for $\theta \in [0,\pi/6]$. First, as predicted by Proposition \ref{prop:disp:expansion}, there is no first-order contribution in the dispersion expansion (because the P- and S-wave velocity are distinct), and the LOE error remains of second-order even if there are first-order terms in the SG model. Both SG errors are of fourth-order as usual.\enlargethispage*{5ex}

To stress the importance of the odd-order tensor $\brMsh$, we also plotted the dispersion branches and errors for an incomplete SG model, without this tensor. In this case, the thresholds are slightly modified: one obtains $\vartheta_e = 0.0468$ (the chosen weight $\vartheta = 0.05$ still guaranties a well-posed model). The difference is not really noticeable on the dispersion diagram, but is more apparent on the error plot. On the the S-like branch, the error stays locked to second-order as for LOE, even if adding the second-order SG tensors $(\brJ,\brA)$ to the LOE provides a one-order-of-magnitude improvement. In contrast, the P-like branch seems unaffected. 

Although the odd-order tensor $\brMsh$ significantly improves the asymptotic accuracy of the dispersion relation, its influence on the transient wavefield is less apparent for the present excitation. This suggests that its contribution primarily manifests through higher-order corrections to wave dispersion rather than through readily observable changes in the overall wavefront morphology. A more detailed analysis of the influence of $\brMsh$ (and $\brKsh$, when nonzero) on the Christoffel equation is left for future work.
 \enlargethispage*{1ex}


\begin{figure}[t]
 \centering
 \includegraphics[width=\textwidth]{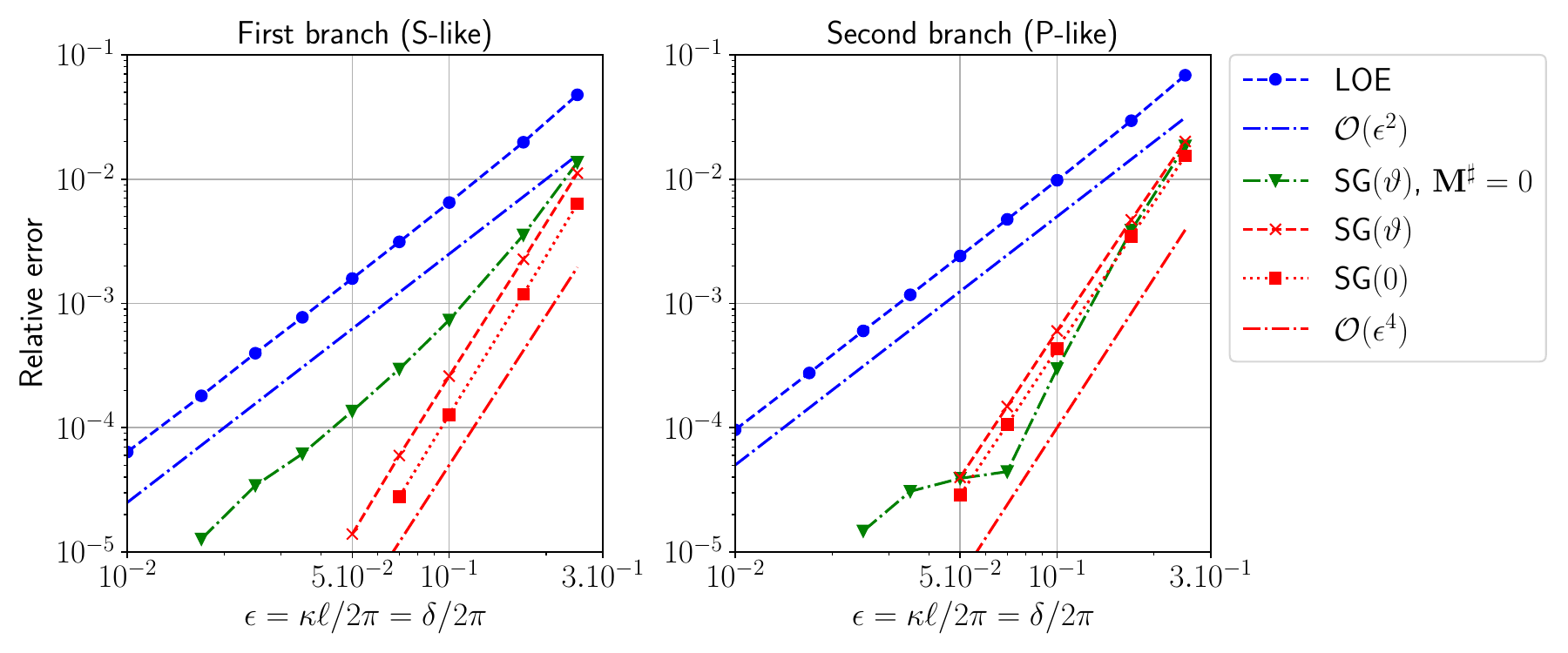}
 \caption{$\rmZ_3$ unit cell: errors on the dispersion relations for all models, averaged on the propagation direction $\theta$ (here errors for branches at $\theta = 0,5,\dots,30^\circ$ were computed), and up to $\upkappa=\pi/2\ell$, \ie the middle of the Brillouin zone in the horizontal direction.}
 \label{fig:Z32:disp:errors}
\end{figure}

\begin{figure}[t]
 \centering
 \includegraphics[width=0.3\textwidth]{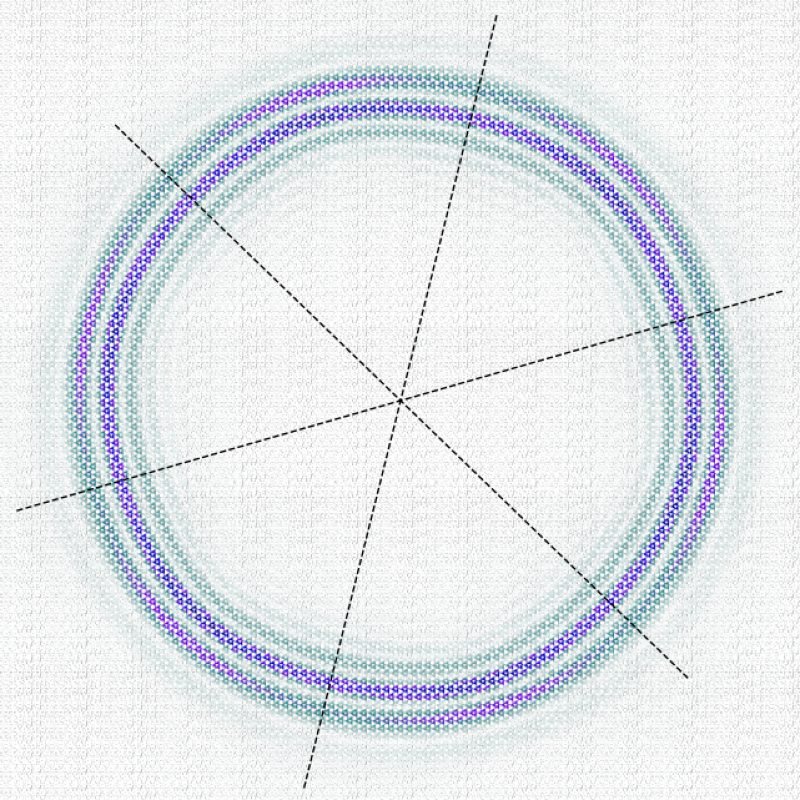}
 \includegraphics[width=0.3\textwidth]{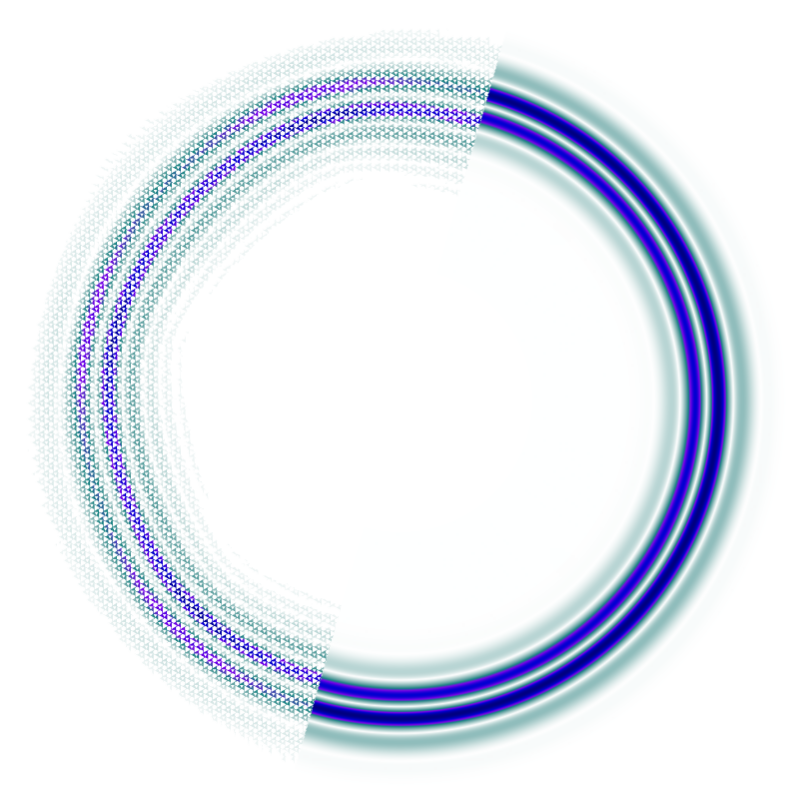}
 \includegraphics[width=0.3\textwidth]{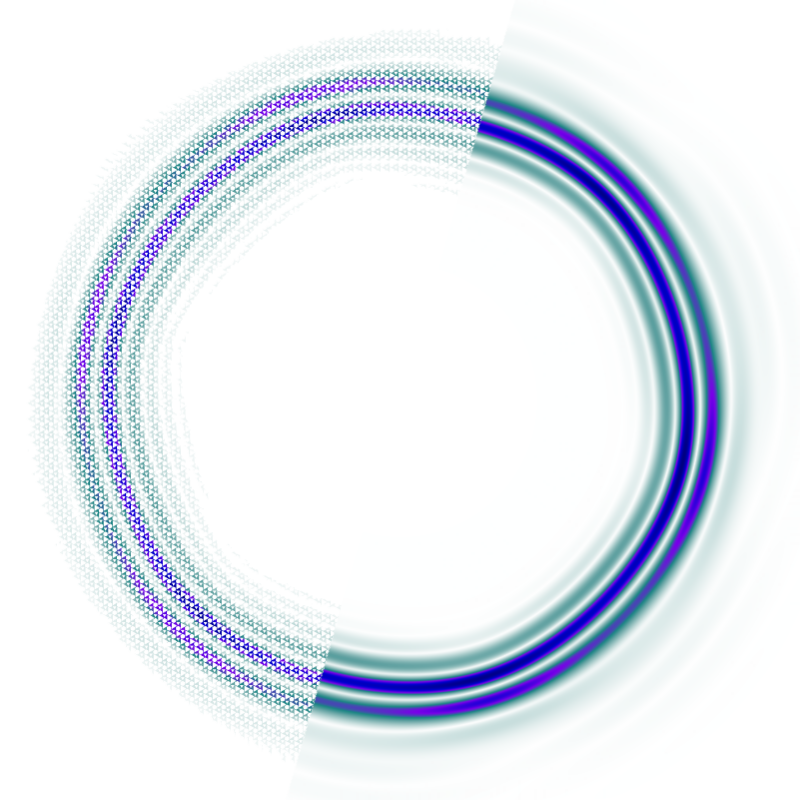}
  \caption{$\rmZ_3$ unit cell: shear waves generated using a central frequency $0.12\,\mathrm{MHz}$. Wavefronts (displacement magnitude) at $t =0.62\,\mu\mathrm{s}$ in the microstructured medium (left) and comparisons with the leading-order homogenized Cauchy medium (middle), and in the \SGvt medium (right).} 
\label{fig:Z3:propagation}
\end{figure}


As for the other choices of cell, Figure~\ref{fig:Z3:propagation} finally illustrate the transient wave propagation. Considering first the benchmark solution, the propagation is clearly anisotropic, with six "valleys" of lower displacement magnitude being identifiable and highlighted by dashed lines. As the cell does not have any mirror symmetry lines, these "propagation symmetry lines" are not directly correlated with the chosen geometry (in particular they do not align with the symmetry axis of the hexagon as in the previous case). On the displayed comparison, one can observe the expected isotropic behavior of the LOE model and the anisotropy of the \SGvt model, which reproduces that of the benchmark solution. In addition, the amplitudes of the first wavefronts are more accurately predicted by the \SGvt model.

Overall, the transient simulations confirm the conclusions drawn from the dispersion analysis: compared with the leading-order homogenized model, the proposed well-posed SGE model provides a significantly improved description of wavefront propagation while remaining computationally much less expensive than direct simulations of the microstructured media.

\section{Proofs}
\label{sec:proofs}

\subsection{Proof of Proposition~\ref{prop:recip:zf}}
\label{sec:proof:recip:zf}

We prove the expressions given for all three effective elastostatic tensors, even though the first two results are not new; in addition to completeness, this allows to emphasize the benefit and insight provided by systematic use of reciprocity identities satisfied by various pairs of cell solutions.

\proofstep{Proof of identity~\eqref{zf:reciprocity 0}} With $\bfP^{ij}$ as defined in Proposition~\ref{prop:recip:zf}, we find by simple calculations and using component notation that
\begin{equation}
  C_{ijk\ell} = P^{ij}_{p,q}C_{pqrs}P^{k\ell}_{r,s}, \qquad
  C_{ijrs}\chi^{1k\ell}_{r,s} = P^{ij}_{p,q}C_{pqrs}\chi^{1k\ell}_{r,s},
\end{equation}
so that, recalling the definition~\eqref{F:generic} of $A(\dotp,\dotp)$, the defining formula~\eqref{C0:comp} for $\brCz$ can be recast as
\begin{equation}
  |\Ycal|\CZ_{ijk\ell} = A\lpar \bfP^{ij},\,\bfP^{k\ell}\shp\bfchi^{1k\ell} \rpar.
\end{equation}
Moreover, using $\bfchi^{1k\ell}$ as test function in the variational formulation~\eqref{cell:generic:weak}, \eqref{weak form chi1 index} of $\bfchi^{1ij}$, we obtain
\begin{equation}
  A\lpar \bfchi^{1ij},\,\bfchi^{1k\ell} \rpar + A\lpar \bfP^{ij},\,\bfchi^{1k\ell} \rpar = 0,
\end{equation}
and summing the last two equalities yield the desired alternative expression~\eqref{zf:reciprocity 0} of $\brCz$.

\proofstep{Proof of identity~\eqref{zf:reciprocity 1}} Invoking the reciprocity identity~\eqref{cell:generic:recip} with $\bfw_1=\bfchi^{1ij}$ and $\bfw_2=\bfchi^{2k\ell m}$, we have
\begin{equation}
  F^{ij}\lpar \bfchi^{2k\ell m} \rpar - F^{k\ell m}\lpar \bfchi^{1ij} \rpar = 0,
\end{equation}
where the linear functionals $F^{ij}$ and $F^{k\ell m}$ are respectively defined by~\eqref{weak form chi1 index} and~\eqref{zf:weak form chi2 index}. The above equality thus yields
\begin{align}
 0 &= \iY C_{rsij} \chi^{2k\ell m}_{r,s} \dy + \iY \Scal_{mp}^{0k\ell} \chi^{1ij}_{p} \dy
  - \iY \chi^{1k\ell}_s C_{smpq} \chi^{1ij}_{p,q} \dy \\
  &= |\Ycal|\,\Ci_{ijk\ell m}
  + \iY \Scal_{mp}^{0k\ell} \chi^{1ij}_{p}\dy - \iY \Scal_{mp}^{0ij} \chi^{1k\ell}_{p}\dy. \label{aux07b}
\end{align}
the second equality provides the sought expression~\eqref{zf:reciprocity 1}; it results from using the expression~\eqref{zf:C1:comp} of the static effective tensor $\brCi$ in the first integral, then using definition~\eqref{S0:def} of $\bSz^{(ij)}$ and rearranging terms.

\proofstep{Proof of identity~\eqref{zf:reciprocity 2}} We now invoke the reciprocity identity~\eqref{cell:generic:recip} with $\bfw_1=\bfchi^{3k\ell mn}$ and $\bfw_2=\bfchi^{1ij}$, which yields
\begin{equation}
  F^{k\ell mn}\lpar \bfchi^{1ij} \rpar = F^{ij}\lpar \bfchi^{3k\ell mn} \rpar,
\end{equation}
where the linear functionals $F^{ij}$ and $F^{k\ell mn}$ are respectively given by~\eqref{weak form chi1 index} and~\eqref{zf:weak form chi3 index}. Another identity results from setting $\bfv=\bfchi^{2k\ell m}$ in the weak formulation~\eqref{zf:weak form chi2 index} satisfied by $\bfchi^{2ijn}$, to obtain
\begin{equation}
  F^{ijn}(\bfchi^{2k\ell m}) = A(\bfchi^{2ijn},\bfchi^{2k\ell m}).
\end{equation}
Adding the two previous equalities, we then have
\begin{equation}
  \underbrace{F^{ijn}(\bfchi^{2k\ell m})}_{(A)}
  + \underbrace{F^{k\ell mn}\lpar \bfchi^{1ij} \rpar}_{(C)}
  = \underbrace{A(\bfchi^{2ijn},\bfchi^{2k\ell m})}_{(B)}
   + \underbrace{F^{ij}\lpar \bfchi^{3k\ell mn} \rpar}_{(D)}, \label{zf:aux08}
\end{equation}
whose explicit form will provide the claimed expression. To this aim, we note that~\eqref{F:generic} and~\eqref{zf:weak form chi2 index} provide
\begin{align}
  (A)
 &= \iY \Lcb \lpar \Scal_{nr}^{0ij} - \CZ_{ijnr} \rpar \chi^{2k\ell m}_r - \chi^{1ij}_p C_{pnrs}  \chi^{2k\ell m}_{r,s} \Rcb \dy, \\
  (B)
 &= \iY \chi^{2ijn}_{p,q}  C_{pqab} \chi^{2k\ell m}_{a,b} \dy.
\end{align}
Then, expressing $F^{k\ell mn}(\bfchi^{1ij})$ using the right-hand side of~\eqref{zf:weak form chi3 index} produces
\begin{equation}
  (C)
 = -\iY \chi^{2k\ell m}_s C_{snpq}  \chi^{1ij}_{p,q}\dy
    + \iY \Scal_{pn}^{1k\ell m} \chi^{1ij}_p \dy
 = \iY \chi^{2k\ell m}_s \lpar C_{snij} - \Scal_{sn}^{0ij} \rpar \dy
    + \iY \Scal_{pn}^{1k\ell m} \chi^{1ij}_p\dy, \label{zf:varf chi31}
\end{equation}
having used the expression~\eqref{S0:def} of $\bSz$ in the first integral to obtain the second equality, while the expression~\eqref{zf:C2:comp} of $\brCii$ finally provides
\begin{equation}
  (D)
 = \iY C_{ijpn} \chi_{p}^{2 k\ell m} \dy - |\Ycal|\Cii_{ijnk\ell m}. \label{zf:aux09}
\end{equation}
We then substitute the above expressions for (A), (B), (C), (D) into~\eqref{zf:aux08} to obtain the equality
\begin{align}
\MoveEqLeft[3]
  \iY \Lpar \Scal_{na}^{0ij} \chi^{2k\ell m}_a - \chi^{1ij}_p \Scal_{pn}^{1k\ell m} \Rpar \dy
    + \iY \chi^{1ij}_p C_{pnam} \chi^{1k\ell}_a \dy
    + \iY \chi^{2k\ell m}_s \lpar C_{snij} - \Scal_{sn}^{0ij} \rpar \dy
	+ \iY \Scal_{pn}^{1k\ell m} \chi^{1ij}_p\dy \\
 &= \iY \chi^{2ijn}_{p,q}  C_{pqab} \chi^{2k\ell m}_{a,b} \dy
    + \iY C_{ijpn} \chi_{p}^{2k\ell m} \dy - |\Ycal|\Cii_{ijnk\ell m}.
\end{align}
Canceling identical integrals and rearranging remaining terms finally yields the sought expression~\eqref{zf:reciprocity 2} of $\brCii$.\enlargethispage*{1ex}

\subsection{Proof of Proposition~\ref{zf:coercivity}}
\label{sec:proof:coercivity:zf:fourier}

\proofstep{(a) Positivity of the potential energy density.}
We start by using $\brAZii(\vartheta) = \vartheta\brAth - \brCii$ (with $\brAth$ as in~\eqref{zf:A:def}), in the potential energy density given by~\eqref{eZ:def}, to obtain
\begin{equation}
  2\eZ_{\vartheta}(\bfeps,\bfeta)
 = \labs \brCz\dip\bfeps + \tdemi\eps\brCi\therefore\bfeta \rabs^2_{\brCz}
 + \eps^2\bfetaB\therefore\lpar \vartheta\brAth - \brCii - \tquart\brCi{}\Tsup\dip[\brCz]^{-1}\dip\brCi \rpar\therefore\bfeta
\end{equation}
(with the weighted norm $|\Cdot|_{\brCz}$ defined by $|\bfsig|^2_{\brCz}:=\bfsig\Hsup[\brCz]^{-1}\bfsig$). Then, let $\thZ_e:=\muZ\Max$, where $\muZ\Max$ is the largest eigenvalue of the symmetric generalized eigenvalue problem
\begin{equation}
  \lpar \brCii + \tquart\brCi{}\Tsup\dip[\brCz]^{-1}\dip\brCi \rpar\therefore\bfeta - \mu\brAth\therefore\bfeta = \bfze, \qquad
  \bfeta\in S^{2}(\Cbb^d)\tens\Cbb^d \label{e:eigenvalue:problem:zf}
\end{equation}
(which can be recast as a symmetric generalized eigenvalue problem on $d^2(d\shp1)/2$-vectors). We therefore have
$\bfetaB\therefore\lpar \thZ_e\brAth - \brCii - \tquart\brCi{}\Tsup\dip[\brCz]^{-1}\dip\brCi \rpar\therefore\bfeta\geq0$ for any $\bfeta$, and consequently
\begin{equation}
  2\eZ_{\vartheta}(\bfeps,\bfeta) \geq \labs \brCz\dip\bfeps + \tdemi\eps\brCi\therefore\bfeta \rabs^2_{\brCz} + (\vartheta\shm\thZ_e) \eps^2\bfetaB\therefore\brAth\therefore\bfeta 
\end{equation}
for any $(\bfeps,\bfeta)\not=\bfze$. For any $\vartheta>\thZ_e$, the quadratic form $(\bfeps,\bfeta)\mapsto \eZ_{\vartheta}(\bfeps,\bfeta)$ is therefore positive definite on the finite-dimensional space $S^{2}(\Cbb^d)\times \lpar S^{2}(\Cbb^d)\tens\Cbb^d\rpar$, which implies that there exists $C=C(\vartheta)>0$ such that
\begin{equation}
  \eZ_{\vartheta}(\bfeps,\bfeta) \geq C(\vartheta)\lpar |\bfeps|^2 + |\bfeta|^2 \rpar \label{eZ:elliptic}
\end{equation}
The threshold $\thZ_e$ clearly does not depend on $\eps$. The condition $\vartheta>\thZ_e$ is also necessary, since if $\vartheta\shleq\thZ_e$ we have $\eZ_{\vartheta}(\bfeps,\bfeta)\shleq0$ for $\bfeta$ an eigentensor for the eigenvalue $\mu=\muZ\Max$ and $\bfeps=-\tdemi[\brCz]^{-1}\dip(\brCi\therefore\bfeta)$.

\proofstep{(b) Positivity and $\brH^2(\Rbb^d)$-coercivity of $\rmQZth$.} 
Let $\bru\in\brH^2(\Rbb^d)$. Recalling definitions \eqref{PQ:CGE:bil:def} and \eqref{ke:SGE:def}, we have
 \begin{equation}
   \rmQZth(\bru,\bru) = \iRd 2\eZ_{\vartheta}(\bfeps[\bru],\bfeta[\bru]) \dx > 0
 \end{equation}
 and the integral is finite. Letting now $\bruH$ be the Fourier transform of $\bru$, we have
\begin{equation}
  \rmQZth(\bru,\bru)
 = \iRd 2\eZ_{\vartheta}(\widehat{\bfeps[\bru]},\widehat{\bfeta[\bru]}) \dxi
 \geq 2C(\vartheta) \iRd \lpar |\widehat{\bfeps[\bru]}|^2 + |\widehat{\bfeta[\bru]}|^2 \rpar \dxi \label{aux39}
\end{equation}
by Parseval's theorem and the ellipticity property~\eqref{eZ:elliptic}. Since $\widehat{\bfeps[\bru]}\sheq\bruH\stens(\rmi\bfxi)$ and $\widehat{\bfeta[\bru]}\sheq(\bruH\stens\rmi\bfxi)\tens(\rmi\bfxi)$, we find
\begin{equation}
\begin{gathered}
  |\widehat{\bfeps[\bru]}|^2 = |\bruH\stens\rmi\bfxi|^2 = \tdemi\lpar |\bruH|^2|\bfxi|^2 + |\bruh\sip\bfxi|^2 \rpar
  \geq \tdemi |\bruH|^2|\bfxi|^2 \\
  |\widehat{\bfeta[\bru]}|^2 = |(\bruH\stens\rmi\bfxi)\tens\bfxi|^2
  = \tdemi\lpar |\bruH|^2|\bfxi|^2 + |\bruh\sip\bfxi|^2 \rpar|\bfxi|^2 \geq  \tdemi |\bruH|^2|\bfxi|^4.
\end{gathered}
\label{aux41}
\end{equation}
We then use the above inequalities in~\eqref{aux39}, and invoke again Parseval's theorem, to obtain
\begin{align}
  \rmQZth(\bru,\bru)
 &\geq C(\vartheta) \iRd \lpar |\bruH|^2|\bfxi|^2 + |\bruH|^2|\bfxi|^4 \rpar \dxi \suite[-1ex]\quad
 = C(\vartheta) \lpar \|\bfna\bru\|^2_{\brL^2(\Rbb^d)} + \|\bfna^2\bru\|^2_{\brL^2(\Rbb^d)} \rpar
 = C(\vartheta) \lpar \|\bru\|^2_{\brH^2(\Rbb^d)} - \|\bru\|^2_{\brL^2(\Rbb^d)} \rpar,
\end{align}
which establishes the $\brH^2(\Rbb^d)$-coercivity of the bilinear form $\rmQZth$.

\subsection{Proof of Proposition~\ref{prop:recip}}
\label{sec:proof:recip}

\proofstep{Proof of identity~\eqref{bsh:expr}} Writing the reciprocity identity~\eqref{cell:generic:recip} for $\bfw_1=\bfchi^{1i\ell}$ and $\bfw_2=\bfzet^{2k}$, then recalling the right-hand sides~\eqref{weak form chi1 index} and~\eqref{weak form chi2 index} of the relevant variational cell problems and the definition~\eqref{phi1:def} of $\bfphc^1$, we have
\begin{equation}
  0 = - F^{i\ell}(\bfzet^{2k}) +  \FHat^k(\bfchi^{1i\ell})
  = \iY  C_{i\ell rs} \zeta^{2k}_{r,s} \dy + \iY (\beta\shm1) \chi_k^{1i\ell} \dy
  = \iY \lsqb \phi^1_{ik\ell} + \beta\chi^{1i\ell}_k - \beta\chi^{1k\ell}_i \rsqb \dy \label{aux22}
\end{equation}
which, in view of the definition~\eqref{bfbsh:def} of $\bfbsh$, establishes the claimed expression~\eqref{bsh:expr}.

\proofstep{Proof of identity~\eqref{bii:expr}} Still following the same general approach, we exploit the equality
\begin{equation}
  0 = -\FHat^{km}(\bfchi^{1i\ell}) + F^{i\ell}(\bfzet^{3km}) - F^{i\ell m}(\bfzet^{2k}) + \FHat^k(\bfchi^{2i\ell m}) \label{aux20}
\end{equation}
obtained by summing the reciprocity identities~\eqref{cell:generic:recip} written for $(\bfw_1,\bfw_2)=\lpar\bfchi^{1i\ell},\bfzet^{3km}\rpar$ and $(\bfw_1,\bfw_2)=\lpar\bfzet^{2k},\bfchi^{2i\ell m}\rpar$. This yields
\begin{align}
  0
 &= \iY (\phi^1_{rkm} - \bsh_{rkm} ) \chi^{1i\ell}_r \dy
  - \iY \zeta^{2k}_a C_{amrs} \chi^{1i\ell}_{r,s} \dy
  + \iY C_{i\ell ab} \zeta^{3km}_{a,b} \dy
  + \iY C_{i\ell mr} \zeta^{2k}_r \dy \suite
  + \iY \lpar C_{mrab}\chi^{1i\ell}_{a,b} - \CZ_{i\ell mr} \rpar \zeta^{2k}_r \dy
  - \iY \chi^{1i\ell}_q C_{qmrs} \zeta^{2k}_{r,s} \dy
    + \iY (1\shm\beta) \chi^{2i\ell m}_k \dy \\
 &= \iY \Lcb \phi^2_{ik\ell m} - \CZ_{i\ell mr} \zeta^{2k}_r - \beta\chi^{2mk\ell}_i + (1\shm\beta) \chi^{2i\ell m}_k
   - \bsh_{rkm} \chi^{1i\ell}_r + \beta \chi^{1i\ell}_r \chi^{1km}_r \Rcb \dy, \label{aux21}
\end{align}
with the second equality in~\eqref{aux21} obtained using~\eqref{phi2:def} to express $C_{i\ell ab} \zeta^{3km}_{a,b}+C_{i\ell mr} \zeta^{2k}_r$ in terms of $\phi^2_{ik\ell m}$, replacing $\phi^1_{rkm}$ with its expression~\eqref{phi1:def}, and canceling identical integrals. Then, since in addition $\zeta^{2k}_r$, $\chi^{2i\ell m}_k$ and $\chi^{1i\ell}_r$ have a zero mean, \eqref{aux21} provides
\begin{equation}
  \iY \phi^2_{ik\ell m} \dy = \iY \beta \lsqb \chi^{2mk\ell}_i + \chi^{2i\ell m}_k - \chi^{1i\ell}_r \chi^{1km}_r \rsqb \dy
\end{equation}
from which  the claimed expression~\eqref{bii:expr} of $\bfBii$ readily follows.

\proofstep{Proof of identity~\eqref{c2:recipr}} This equality directly results from using $\bfv=\bfzet^{2k}$ in the variational equation~\eqref{weak form chi2 index} satisfied by $\bfzet^{2i}$ and recalling the definition~\eqref{bfBii:def} of the tensor $\brcii$. Then, for any $\brV\shin\Cbb^d$, we have $\overline{\brV}{}\Tsup\sip\brcii\brV=A\lpar \bfzet^{2i}\overline{\rmV}_i,\, \bfzet^{2k}\rmV_k \rpar>0$ as the strain energy of the nonvanishing periodic cell function created by the zero-mean forcing $\bfg=(\beta\shm1)\brV$ with $\beta\neq0$, proving the positive definiteness of $\brcii$ unless the mass density is homogeneous.

\subsection{Proof of Proposition~\ref{prop:coercivity}}\mbox{}
\label{sec:proof:coercivity:fourier}

\proofstep{(a) Positivity of the strain energy density.} The proof given in Sec.~\ref{sec:proof:coercivity:zf:fourier} for $\eZ_{\vartheta}$ works for any tensor $\brCii\in S^2\lpar S^2(\Rbb^d)\tens\Rbb^d \rpar$, and hence applies to this case by replacing $\brCii$ with $-\brAii(0) = \brCii - \bfcii$, see~\eqref{A2:def}. Under this replacement, the eigenvalue problem~\eqref{e:eigenvalue:problem:zf} becomes
\begin{equation}
  \lpar \brCii - \bfcii + \tquart\brCi{}\Tsup\dip[\brCz]^{-1}\dip\brCi \rpar\therefore\bfeta - \mu\brAth\therefore\bfeta = \bfze, \qquad
  \bfeta\in S^{2}(\Cbb^d)\tens\Cbb^d \label{e:eigenvalue:problem}
\end{equation}
and setting $\vartheta_e=\mu\Max$, with $\mu\Max$ the largest eigenvalue of problem~\eqref{e:eigenvalue:problem}, yields the sought threshold $\vartheta_e$ that ensures
\begin{equation}
  e_{\vartheta}(\bfeps,\bfeta) \geq C(\vartheta)\lpar |\bfeps|^2 + |\bfeta|^2 \rpar > 0 \quad (\bfeps,\bfeta)\not=(\bfze,\bfze), \ \vartheta>\vartheta_e. \label{e:elliptic}
\end{equation}
Moreover, inequality~\eqref{eZ:e:compar} yields $e_{\vartheta} \geq \eZ_{\vartheta}$, so that the verification of the elastostatic inequality~\eqref{eZ:elliptic} implies that of the above inequality~\eqref{e:elliptic} for any $\vartheta\shgeq\thZ_e$. We must therefore have $\thZ_e \geq \vartheta_e$.

\proofstep{(b) Positivity of the kinetic energy density.} We reason as in item (a) of Section~\ref{sec:proof:coercivity:zf:fourier}. Writing $\brJii(\vartheta)= \vartheta\brI^2 - \brB^2 + \brb^2$ (with reference to~\eqref{A2:def}) in the kinetic energy density given by~\eqref{ek:def}, we obtain
\begin{align}
  2k_{\vartheta}(\bru,\bfga)
 &:= \rhoz \lcb |\bru|^2 + \eps\bru\sip\bfbsh\dip\bfga + \eps^2 \bfga\dip (\vartheta\brI^2 - \brB^2 + \brb^2 )\dip\bfga \rcb \\
 &= \rhoz \lcb \labs \bru + \tdemi\eps\bfbsh\dip\bfga \rabs^2 + \eps^2 \bfga\dip\lpar \vartheta\brI^2 - \brB^2 + \brb^2 - \tquart\bfbsh{}\Tsup\sip\bfbsh \rpar\dip\bfga \rcb.
\end{align}
Then, let $\vartheta_k:=\mu\Max$, where $\mu\Max$ is the largest eigenvalue of the symmetric eigenvalue problem
\begin{equation}
  \lsqb \brB^2 - \brb^2 + \tquart\bfbsh{}\Tsup\sip\bfbsh \rsqb\dip\bfga - \mu\bfga = \brze, \qquad
  \bfga\in\Rbb^{d\times d} \label{k:eigenvalue:problem}
\end{equation}
(which can be rewritten as a symmetric eigenvalue problem on $d^2$-vectors); we thus have
$\bfga\dip\lpar \vartheta_k\brI^2 - \brB^2 + \brb^2 - \tquart\bfbsh{}\Tsup\sip\bfbsh \rpar\dip\bfga\geq0$ for any $\bfga$, and consequently
\begin{equation}
  2k_{\vartheta}(\bru,\bfga)
 \geq \rhoz \lcb \labs \bru + \tdemi\eps\bfbsh\dip\bfga \rabs^2 + \eps^2(\vartheta\shm\vartheta_k)|\bfga|^2 \rcb 
\end{equation}
for any $(\bru,\bfga)\not=\bfze$. For any $\vartheta>\vartheta_k$, the quadratic form $(\bru,\bfga)\mapsto k_{\vartheta}(\bru,\bfga)$ is therefore positive definite on the finite-dimensional space $\Cbb^d\times(\Cbb^d\tens\Cbb^d)$, which implies that there exists $C=C(\vartheta)>0$ such that
\begin{equation}
  k_{\vartheta}(\bru,\bfga) \geq C(\vartheta)\lpar |\bru|^2 + |\bfga|^2 \rpar \label{k:elliptic}
\end{equation}
The threshold $\vartheta_k$ again does not depend on $\eps$, and as before the condition $\vartheta>\vartheta_k$ is also necessary. Definitions~\eqref{PQ:CGE:bil:def} with the replacements~\eqref{corr:tens} then imply that both bilinear forms $\rmQth$ and $\rmRth$ are positive.

\proofstep{(c) Positive definiteness of $\sfQth(\rmi\bfxi)$ and $\sfRth(\rmi\bfxi)$} Let $\brz\shin\Cbb^3$ and $\bfxi\shin\Rbb^3$. We have
\begin{align}
  \brzB\sip\sfQth(\rmi\bfxi)\brz
 &= -\brzB\sip\brCz\therefore(\rmi\bfxi\tens\brz\tens\rmi\bfxi)
 - \eps\brzB\sip\brCi\qip\lsqb\rmi\bfxi\tens\brz\tens(\rmi\bfxi)^{\otimes 2}\rsqb
 + \eps^2 \brzB\sip\brAZii(\vartheta)\dotp\lsqb (\rmi\bfxi)^{\otimes 2}\tens\brz\tens(\rmi\bfxi)^{\otimes 2} \rsqb \\
 &= (\overline{\brz\tens\rmi\bfxi})\dip\brCz\dip(\brz\tens\rmi\bfxi)
 + \tdemi\eps(\overline{\brz\tens\rmi\bfxi})\dip\brCi\therefore(\brz\tens\rmi\bfxi\tens\rmi\bfxi)
 + \tdemi\eps(\brz\tens\rmi\bfxi)\dip\brCi\therefore(\overline{\brz\tens\rmi\bfxi\tens\rmi\bfxi}) \suite
 + \eps^2 (\overline{\brz\tens\rmi\bfxi\tens\rmi\bfxi})\therefore\brAZii(\vartheta)\therefore(\brz\tens\rmi\bfxi\tens\rmi\bfxi) \\
 &= \overline{\brzB\sip\sfQth(\rmi\bfxi)\brz},
\label{aux28} \intertext{and}
\brzB\sip\sfRth(\rmi\bfxi)\brz
 &= \rhoz \lcb |\brz|^2 + \eps\brzB\sip\bfbsh\dip(\brz\tens\rmi\bfxi) - \eps^2 \brzB\sip\brJii(\vartheta)\therefore\lsqb\brz\tens(\rmi\bfxi)^{\otimes 2}\rsqb \rcb \\
 &= \rhoz \lcb |\brz|^2 + \tdemi\eps\brzB\sip\bfbsh\dip(\brz\tens\rmi\bfxi)
 + \tdemi\eps\brz\sip\bfbsh\dip(\overline{\brz\tens\rmi\bfxi})
 + \eps^2 (\overline{\brz\tens\rmi\bfxi})\dip\brJii(\vartheta)\dip(\brz\tens\rmi\bfxi) \rcb \\
 &= \overline{\brzB\sip\sfRth(\rmi\bfxi)\brz}, 
\label{aux43}
\end{align}
which shows that the matrices $\sfQth(\rmi\bfxi)\in\Cbb^{d\times d}$ and $\sfRth(\rmi\bfxi)\in\Cbb^{d\times d}$ are Hermitian. The second equality in~\eqref{aux28} (resp.~\eqref{aux43}) uses 
the skew-symmetry property~\eqref{K:skew} of $\brCi$ (resp. that~\eqref{bsh:sym} of $\bfbsh$).
Moreover, $\sfQth(\rmi\bfxi)$ and $\sfRth(\rmi\bfxi)$ are both positive definite since, by comparing~\eqref{aux28} to~\eqref{eZ:def} and~\eqref{aux43} to~\eqref{ek:def}, we find
\begin{equation}
  \brzB\sip\sfQth(\rmi\bfxi)\brz = 2\eZ_{\vartheta}\lpar \brz\tens\rmi\bfxi,\brz\tens(\rmi\bfxi)^{\otimes 2} \rpar > 0, \qquad
  \brzB\sip\sfRth(\rmi\bfxi)\brz = 2k_{\vartheta}(\brz,\brz\tens\rmi\bfxi) > 0.
\end{equation}

\proofstep{(d) 
$\brH^2(\Rbb^d)$-coercivity of $\rmQth$.} The proof is the same as that of Section~\ref{sec:proof:coercivity:zf:fourier} with $\brAZii(\vartheta)$ replaced by $\brAii(\vartheta)$.


\proofstep{(e) $\brH^1(\Rbb^d)$-ellipticity of $\rmRth$.} Let $\bru\in\brH^1(\Rbb^d)$.
Using again~\eqref{PQ:CGE:bil:def} with~\eqref{corr:tens}, expressing the result with the Fourier transform $\bruH$ of $\bru$ (invoking Parseval's theorem) and recalling inequality~\eqref{k:elliptic}, we have
\begin{equation}
  \rmRth(\bru,\bru)
 = \iRd 2k_{\vartheta}(\bruH,\bruH\tens\rmi\bfxi) \dxi
 \geq 2C(\vartheta) \iRd \lpar |\bruH|^2(1+|\bfxi|^2) \rpar \dxi = 2C(\vartheta)\|\bru\|^2_{\brH^1(\Rbb^d)} \geq 0,
\end{equation}
which establishes the $\brH^1(\Rbb^d)$-ellipticity of the bilinear form $\rmRth$.

\proofstep{(f) Threshold $\vartheta_0$.} Setting $\vartheta_0:=\max(\vartheta_e,\vartheta_k)$, all the properties claimed in Proposition~\ref{prop:coercivity} hold for any $\vartheta>\vartheta_0$.

\subsection{Proof of Proposition~\ref{prop:evol} (well-posedness of transient effective wave equation)}
\label{sec:transient_equation:proof}


In this section, we establish the well-posedness results claimed in Proposition~\ref{prop:evol} for the evolution problem~\eqref{IVP} in $\Rbb^d$.
%
We first consider problem~\eqref{IVP} for the homogeneous effective PDE, i.e. with $\brF\sheq\bfze$.
The spaces $\Hcal$ and $\Dcal(\Acal)$ are respectively equipped with the scalar products
\begin{equation}
\begin{aligned}
  \lpar \bsfU_1,\bsfU_2 \rpar_{\Hcal}
 &:= \rmQth(\brU_1,\brU_2) + \rmRth(\brU_1,\brU_2) + \rmRth(\brV_1,\brV_2), \\
  \lpar \bsfU_1,\bsfU_2 \rpar_{\Dcal(\Acal)}
 &:= \lpar \bsfU_1,\bsfU_2 \rpar_{\Hcal} + \lpar \Acal\bsfU_1,\Acal\bsfU_2 \rpar_{\Hcal}.
\end{aligned}
\label{H:scalarp}
\end{equation}
By~\eqref{AB:cont} and Proposition~\ref{prop:coercivity}, the resulting norm $\|\dotp\|_{\Hcal}$ is equivalent to the standard $H^2\shtimes H^1$ norm.

\subsubsection*{Application of the Hille-Yosida theorem.} The solvability of the initial-value problem~\eqref{IVP:order1} can be decided for any initial datum $\bsfU_0$ having appropriate regularity (to be specified later) by checking that it satisfies the conditions of the Hille-Yosida theorem~\cite{goldstein,brezis:11}. In the present context, we need to verify that there exists $\lambda\in\Rbb^d$ such that $\Acal_{\lambda}:=\Acal+\lambda\Ical: D(\Acal)\to\Hcal$ is \emph{maximal monotone}, i.e. satisfies
\begin{equation}
\begin{aligned}
  \lpar \Acal_{\lambda}\bsfU,\bsfU \rpar_{\Hcal} &\geq 0 \text{ \ for any }\bsfU\in D(\Acal) &\qquad& \text{($\Acal_{\lambda}$ monotone),} \\
  \text{for any }\bsfF\in\Hcal, &\text{ there exists }\bsfU\in D(\Acal) \text{ such that } (\Acal_{\lambda}+\Ical)\bsfU = \bsfF && \text{($\Acal_{\lambda}+\Ical$ surjective).}
\end{aligned} \label{maxmon}
\end{equation}
We now show that conditions~\eqref{maxmon} are indeed satisfied:

\proofstep{1. Monotonicity.} Let $\lambda>0$ and $\bsfU\in D(\Acal)$. Recalling definitions~\eqref{IVP:order1} and~\eqref{H:scalarp}, we have
\begin{align}
  \lpar \Acal\bsfU,\bsfU \rpar_{\Hcal}
 &= -\rmQth(\brV,\brU) -\rmRth(\brV,\brU) + \rmRth(\sfRth^{-1}\sfQth\brU,\brV) \suite
  \stackrel{(a)}{=} -\rmQth(\brV,\brU) -\rmRth(\brV,\brU) + (\sfQth\brU,\brV)
  \stackrel{(b)}{=} -\rmRth(\brV,\brU), \label{aux11a}
\end{align}
where step (a) uses the definition~\eqref{Binv:def} of $\sfRth^{-1}$ and step (b) the Green identity~(\ref{green:AB}a), and then
\begin{align}
  \lpar \Acal\bsfU,\bsfU \rpar_{\Hcal} + \lambda\lpar \bsfU,\bsfU \rpar_{\Hcal}
  &= -\rmRth(\brV,\brU)
 + \lambda \rmQth(\brU,\brU) + \lambda \rmRth(\brU,\brU) + \lambda \rmRth(\brV,\brV) \\
  &= \lambda \rmQth(\brU,\brU)
  + \Lpar \lambda-\frac{1}{2} \Rpar \big[ \rmRth(\brU,\brU) + \rmRth(\brV,\brV) \big]
  + \frac{1}{2} \rmRth(\brU-\brV,\brU-\brV). \label{aux11}
\end{align}
Consequently, for any $\lambda>1/2$, the operator $\Acal_{\lambda}:=\Acal+\lambda\Ical$ satisfies the monotonicity requirement in~\eqref{maxmon}.

\proofstep{2. Surjectivity.} We now consider the solvability for $\bsfU\in D(\Acal)$ of
\begin{equation}
  \lpar \mu\Ical + \Acal \rpar\bsfU = \bsfF, \quad\text{i.e.}\quad\bigg\{\
  \begin{aligned}
    \text{(a) \ } & \mu\brU - \brV &&= \textbf{f} \\ \text{(b) \ } & \mu\sfRth\brV + \sfQth\brU &&= \sfRth\brg \label{maxmon:eqs}
  \end{aligned}
\end{equation}
for given $\bsfF=(\textbf{f},\brg)^{\text{T}}\in\Hcal$ and for some $\mu\in\Rbb^d,\,\mu\not=0$. Using (a) to eliminate $\brV$ in (b), we are left to determine the solvability of
\begin{equation}
  \mu^2\sfRth\brU + \sfQth\brU = \sfRth\lpar \brg + \mu\textbf{f} \rpar.
\end{equation}
The above problem has the variational formulation
\[
  \mu^2 \rmRth(\brU,\brW) + \rmQth(\brU,\brW) = \rmRth\lpar \brg + \mu\textbf{f},\brW \rpar \qquad
  \text{for all } \brW\in \brH^2(\Rbb^d).
\]
As the bilinear form $(\brU,\brW)\mapsto \mu^2 \rmRth(\brU,\brW) + \rmQth(\brU,\brW)$ is, by Proposition~\ref{prop:coercivity}, $\brH^2(\Rbb^d)$-elliptic, the above variational problem has a unique solution $\brU\in \brH^2(\Rbb^d)$ by the Lax-Milgram lemma.
Equation (a) in~\eqref{maxmon:eqs} then gives $\brV\in \brH^2(\Rbb^d)$. Finally, equation (b) in~\eqref{maxmon:eqs} yields $\sfQth\brU = \sfRth(\brg-\mu\brV)\in \brH^{-1}(\Rbb^d)$, so that we have $(\brU,\brV)\in D(\Acal)$. Consequently, for any $\mu\not=0$, the equation $(\Acal+\mu\Ical)\bsfU=\bsfF$ is solvable for $\bsfU\in D(\Acal)$ for any given $\bsfF\in\Hcal$.

\proofstep{3. Proof conclusion.} Choosing $\mu=\lambda+1$, items 1 and 2 above show that $\Acal_{\lambda}:=\Acal+\lambda\Ical:D(\Acal)\to\Hcal$ is maximal monotone. The Hille-Yosida theorem hence applies to the initial-value problem~\eqref{IVP:order1} where $\brF\sheq\bfze$.

\subsubsection*{Forced response evolution problem}

Consider now the generic forced-response evolution problem
\begin{equation}
  \bsfU'(t) + \Acal \bsfU(t) = \bsfF , \qquad \bsfU(0) = \bfze \label{FRP:order1}
\end{equation}
with initial rest.
Then, the maximal monotone character of $\Acal$ allows to show, via an adaptation of the variation-of-constant method, that problem~\eqref{FRP:order1} has a unique solution $\bsfU\in C^1([0,T];\Hcal) \cap C^0([0,T];D(\Acal))$ if either $\bsfF\in C^1([0,T];\Hcal)$ or $\bsfF\in C^0([0,T];D(\Acal))$. Applied to the right-hand side of the evolution problem~\eqref{IVP}, i.e. to $\bsfF$ given by~\eqref{IVP:order1}, the latter condition is met for any $\brF$ satisfying the requirement stated in the proposition, by virtue of the variational definition~\eqref{Binv:def} of $\sfRth^{-1}$.


\subsubsection*{Energy identities}



Let $\bsfU(t) \in C^1([0,T];\Hcal)$ solve the evolution problem~\eqref{IVP} with data $\bsfU_0,\bsfF$ satisfying the assumptions of Proposition~\ref{prop:evol}.
Using the differential equation~\eqref{FRP:order1} with~\eqref{aux11a} at $\tau\in[0,T]$, we have
\begin{equation}
  \lpar \bsfU'(\tau),\bsfU(\tau) \rpar_{\Hcal} + \lpar \Acal \bsfU(t) ,\, \bsfU(t) \rpar_{\Hcal}
 = \lpar \bsfU'(\tau),\bsfU(\tau) \rpar_{\Hcal} - \rmRth\lpar \brU'(\tau),\brU(\tau) \rpar
 = \lpar \bsfF(\tau),\, \bsfU(\tau) \rpar_{\Hcal}. \label{aux10}
\end{equation}

For problem~\eqref{IVP} with $\brF\sheq\bfze$, integrating~\eqref{aux10} over $\tau\in[0,t]$ for some $t\in[0,T]$ and recalling the initial-rest conditions yields the claimed energy conservation for this case, since we obtain
\begin{equation}
  E(t) = E(0), \qquad  E(t):= \frac{1}{2} \lpar \|\bsfU\|_{\Hcal}^2(t) - \rmRth(\brU(\tau),\brU(t)) \rpar
 = \tdemi\rmQth(\brU(t),\brU(t)) + \tdemi\rmRth(\brU'(t),\brU'(t)). \label{eq:energy}
\end{equation}

For the non-homogeneous effective PDE, applying Young's inequality to the second term in the right-hand side of~\eqref{aux10} provides the inequality
\[
  2\lpar \bsfU'(\tau),\bsfU(\tau) \rpar_{\Hcal}
 \leq \| \bsfF(\tau) \|^2_{\Hcal} + \| \bsfU(\tau) \|^2_{\Hcal} + \rmRth\lpar \brU(\tau),\brU(\tau) \rpar
 + \rmRth\lpar \brV(\tau),\brV(\tau) \rpar
 \leq \| \bsfF(\tau) \|^2_{\Hcal} + 2\| \bsfU(\tau) \|^2_{\Hcal}.
\]
Then, integrating over $\tau\in[0,t]$ for some $t\in[0,T]$ and recalling the initial-rest condition yields the inequality
\begin{equation}
  \|\bsfU\|_{\Hcal}^2(t)
 \leq 2 \int_{0}^{t} \| \bsfU(\tau) \|^2_{\Hcal} \dtau + \|\bsfU_0\|_{\Hcal}^2 + \| \bsfF \|^2_{L^2([0,T];\Hcal)},
\end{equation}
from which we obtain the claimed estimate
\begin{equation}
  \|\bsfU\|_{\Hcal}^2(t) \leq C(T) \lpar \|\bsfU_0\|_{\Hcal}^2 + \| \bsfF \|^2_{L^2([0,T];\Hcal)} \rpar \quad 
  \text{with}\quad C(T)=\exp(2T), \qquad t\in[0,T],
\label{energy:forced}
\end{equation}
by invoking Gronwall's lemma~\cite[Chap. 18]{dautray:lions:5}, recalled for convenience:
\begin{lemma}[Gronwall]\label{gronwall}

Let $\Phi\in L^{\infty}([0,T])$ verify $\Phi(t)\geq0$ a.e. in $[0,T]$, and assume that $\Phi(t) \leq C_1\int_0^t \Phi(\tau) \dtau + C_2$ holds a.e. in $[0,T]$ for some constants $C_1,C_2\geq0$. Then: $\Phi(t) \leq C_2\exp(C_1 t), \ t\shin[0,T]$.
\end{lemma}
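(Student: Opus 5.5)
The plan is the classical integrating-factor argument applied to the right-hand side of the hypothesis rather than to $\Phi$ itself, since $\Phi$ is only in $L^{\infty}([0,T])$ and need not be differentiable. I would introduce the auxiliary function
\[
  \Psi(t) := C_1\int_0^t \Phi(\tau)\dtau + C_2, \qquad t\in[0,T].
\]
Because $\Phi\in L^{\infty}([0,T])\subset L^1([0,T])$, $\Psi$ is absolutely continuous on $[0,T]$. By Lebesgue's differentiation theorem, $\Psi'(t)=C_1\Phi(t)$ for a.e. $t$. By assumption $\Phi\leq\Psi$ a.e., and since $C_1\geq0$ this gives the differential inequality $\Psi'\leq C_1\Psi$ a.e. in $[0,T]$.

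The key step is then to multiply by the integrating factor $\exp(-C_1 t)$. I would set $G(t):=\exp(-C_1t)\Psi(t)$, which is again absolutely continuous as the product of an absolutely continuous function with a smooth one. It satisfies
\[
  G'(t)=\exp(-C_1t)\lpar \Psi'(t)-C_1\Psi(t)\rpar\leq0 \quad\text{a.e.}
\]
Absolute continuity lets me write $G(t)=G(0)+\int_0^tG'(\tau)\dtau\leq G(0)=C_2$. Hence $\Psi(t)\leq C_2\exp(C_1t)$ for every $t\in[0,T]$, and combining with $\Phi\leq\Psi$ a.e. yields the claimed bound $\Phi(t)\leq C_2\exp(C_1t)$ a.e. The case $C_1=0$ is immediate, since then $\Phi\leq C_2$ directly.

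The only delicate point, and the one I would check most carefully, is the regularity bookkeeping. A merely a.e. nonpositive derivative does not imply monotonicity for general functions (Cantor-type counterexamples), so it is essential that $G$ be absolutely continuous; this is guaranteed by $\Psi$ being an indefinite integral of an $L^1$ function. The conclusion holds for a.e. $t$ in general. In the application above, $\Phi(t)=\|\bsfU\|^2_{\Hcal}(t)$ is continuous because $\bsfU\in C^1([0,T];\Hcal)$, so the bound extends to every $t\in[0,T]$ by continuity. With $C_1=2$ and $C_2=\|\bsfU_0\|^2_{\Hcal}+\|\bsfF\|^2_{L^2([0,T];\Hcal)}$, bounding $\exp(2t)$ by $\exp(2T)$ then gives~\eqref{energy:forced} with $C(T)=\exp(2T)$.
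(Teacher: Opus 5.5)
Your proof is correct. The paper does not prove this lemma. It only recalls it, with a reference to \cite[Chap.~18]{dautray:lions:5}, as the tool for passing to \eqref{energy:forced}. So your integrating-factor argument supplies a self-contained justification rather than competing with a proof in the text.

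Applying the integrating factor to the absolutely continuous majorant $\Psi$ instead of to $\Phi$ itself is the right way to handle a merely integrable $\Phi$. It also shows slightly more than is stated:
\begin{itemize}
\item only $\Phi\in L^1([0,T])$ is needed, not $\Phi\in L^{\infty}([0,T])$;
\item the hypothesis $\Phi\geq0$ is never used, since for $C_1\geq0$ the inequality $C_1\Phi\leq C_1\Psi$ follows from $\Phi\leq\Psi$ whatever the sign of $\Phi$.
\end{itemize}

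Your observation about where the bound holds is also a useful precision on the statement's ``$t\in[0,T]$''. For an $L^{\infty}$ class the conclusion holds only a.e. In the application it holds for every $t$, because $t\mapsto\|\bsfU(t)\|^2_{\Hcal}$ is continuous for $\bsfU\in C^1([0,T];\Hcal)$. The intermediate bound $\Psi(t)\leq C_2\exp(C_1t)$ on the majorant holds for every $t$ in any case.
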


\subsection{Proof of Proposition~\ref{prop:disp:expansion}}

Seeking a $\Ocal(\delta^2)$ expansion of the eigenvalues $\mu=\mu(\delta)$ solving problem~\eqref{planewave:eig} with~\eqref{planewave:expand}, we insert the expansions
\begin{equation}
  \brV(\delta) = \brV_0 + \delta\brV_1 + \delta^2\brV_2 + o(\delta^2), \qquad
  \mu(\delta) = \mu_0 + \delta\mu_1 + \delta^2\mu_2 + o(\delta^2)
\end{equation}
in~\eqref{planewave:eig}; this produces the following $\Ocal(\delta^0)$,  $\Ocal(\delta^1)$,  $\Ocal(\delta^2)$ equalities:
\begin{equation}
\begin{aligned}
  \text{(a) \ }\brze
 &= \lpar \sfQ^0 \shm \mu_0\sfR^0 \rpar\sip\brV_0, \\[-0.5ex]
  \text{(b) \ }\brze
 &=  \lpar \sfQ^0 \shm \mu_0\sfR^0 \rpar\sip\brV_1 + \rmi \lpar \sfQ^1 \shm \mu_0\sfR^1 \rpar\sip\brV_0 - \mu_1\sfR^0\sip\brV_0,  \\[-0.5ex]
  \text{(c) \ }\brze
 &=  \lpar \sfQ^0 \shm \mu_0\sfR^0 \rpar\sip\brV_2 + \rmi \lpar \sfQ^1 - \mu_0\sfR^1 \rpar\sip\brV_1
  + \lpar \sfQ^2_{\vartheta} \shm \mu_0\sfR^2_{\vartheta} \rpar\sip\brV_0 - \mu_1 \lpar \sfR^0\sip\brV_1 + \rmi\sfR^1\sip\brV_0 \rpar \shm \mu_2\sfR^0\sip\brV_0
  \label{planewave:expand:series}
\end{aligned}
\end{equation}
Equality (a) yields the eigenpairs $(\brV_0,\mu_0)$ solving the standard Christoffel equation, the eigenvalue $\mu_0\shg0$ being either single or double.

\subsubsection*{Single eigenvalue $\mu_0$} In this case, $\brV_0$ can be assumed to be real and to verify $\brV_0\sip\sfR^0\sip\brV_0=1$. Then, left-contracting (b) by $\brV_0$ and recalling the symmetric eigenvalue equation (a) and the skew-symmetry of $\sfQ^1,\,\sfR^1$, we obtain
\begin{equation}
  \mu_1 = \rmi \brV_0 \sip \lpar \sfQ^1 - \mu_0\sfR^1 \rpar\sip\brV_0 = 0,
\end{equation}
Finally, on writing the identity $\brV_0\dotp\text{(c)}+\brV_1\dotp\text{(b)}$ and recalling $\mu_1=0$ and (again) the skew-symmetry of $\sfQ^1,\,\sfR^1$, 
The next eigenvalue correction $\mu_2$ is found to be real and given by
\begin{equation}
  \mu_2 = \brV_0\sip\lpar \sfQ^2_{\vartheta} - \mu_0\sfR^2_{\vartheta} \rpar\sip\brV_0
  + \brV_1\sip\lpar \sfQ^0 - \mu_0\sfR^0 \rpar\sip\brV_1;
\end{equation}
it is completely determined once equality (b) is solved for $\brV_1$ constrained by $\brV_1\sip\sfR^0\sip\brV_0=0$. Now, recalling the expressions~\eqref{planewave:matrices} of $\sfQ^2_{\vartheta}$ and $\sfR^2_{\vartheta}$, we find that
\begin{equation}
  \brV_0\sip\lpar \sfQ^2_{\vartheta} - \mu_0\sfR^2_{\vartheta} \rpar\sip\brV_0
 = - (\mu_0)^2\brV_0\sip\brCii\sip\brV_0 - \brV_0\sip\lpar \brC^2 - \mu_0\brB^2 \rpar\sip\brV_0,
\end{equation}
making $\mu_2$ independent on $\vartheta$. As expected, the weighting parameter $\vartheta$ affects only the $o(\delta^2)$ remainder of $\mu(\delta)$.

\subsubsection*{Double eigenvalue $\mu_0$} The eigenspace of $\mu_0$ is in this case generated by a pair of unit real orthonormal vectors $\brV_{0a},\brV_{0b}$. We may seek $\brV_0$ as $\brV_0=\alpha\brV_{0a}\shp\beta\brV_{0b}$ in (b), whereupon left-contracting that equation by $\brV\Tsup_{0a,b}$ shows that $(\alpha,\beta)$ and $\mu_1$ obey the 2-dimensional eigenvalue problem
\begin{equation}
  \left\{ \begin{aligned} \rmi p\beta-\mu_1\alpha &= 0 \\[-0.5ex] -\rmi p\alpha-\mu_1\beta &= 0 \end{aligned} \right., \qquad 
  p:=\brV\Tsup_{0a} \sip\lpar \sfQ^1 - \mu_0\sfR^1 \rpar\sip\brV_{0b}\in\Rbb,
\end{equation}
whose eigenpairs are $(\alpha,\beta)_{\pm}\sheq(\rmi,\pm1)/\sqrt{2},\,\mu_{1\pm}\sheq \pm p$.
Each pair $(\alpha,\beta)_{\pm}$ defines the leading-order term $\brV_{0\pm}$ of the eigenvector paths $\brV_{\pm}(\delta)$ intersecting at $\delta\sheq0$. Proceeding as in the single-multiplicity case, we left-contract (c) by $\overline{\brV}{}^0$ (with $\brV_0=\brV_{0\pm}$) and (b) by $\overline{\brV}{}^1=\overline{\brV}{}^1_{\pm}$, to obtain
\begin{align}
  \mu_2 &= \rmi \overline{\brV}{}^0\sip\lpar \sfQ^1 - \mu_0\sfR^1 \rpar\sip\brV_1
  - \mu_1\overline{\brV}{}^0\sip \lpar \sfR^0\sip\brV_1 + \rmi\sfR^1\sip\brV_0 \rpar
  + \overline{\brV}{}^0\sip\lpar \sfQ^2_{\vartheta} - \mu_0\sfR^2_{\vartheta} \rpar\sip\brV_0, \\
  0 &= \overline{\brV}{}^1\sip\lpar \sfQ^0 - \mu_0\sfR^0 \rpar\sip\brV_1
  + \rmi \overline{\brV}{}^1\sip\lpar \sfQ^1 - \mu_0\sfR^1 \rpar\sip\brV_0
   - \mu_1 \overline{\brV}{}^1\sip\sfR^0\sip\brV_0.
\end{align}
Summing the latter then yields
\begin{equation}
  \mu_2 = 2\Re\Lcb \overline{\brV}{}^0\sip\lpar \rmi \sfQ^1 - \mu_0\rmi \sfR^1 - \mu_1\sfR^0 \rpar\sip\brV_1 \Rcb
  + \overline{\brV}{}^0\sip\lpar \sfQ^2_{\vartheta} - \mu_0\sfR^2_{\vartheta} - \mu_1\rmi\sfR^1 \rpar\sip\brV_0
  + \overline{\brV}{}^1\sip\lpar \sfQ^0 - \mu_0\sfR^0 \rpar\sip\brV_1
\end{equation}
which shows that, as before, the eigenvalue corrections $\mu_{2\pm}$ are real and independent on $\vartheta$.

\section{Conclusion and outlook}
\label{sec:conclusion_perspectives}

Concluding, in this work the ``classical'' two-scale homogenization framework (that uses the field equations rather than
variational approaches) is revisited. Thanks to reciprocity identities, we show that one can build a
strain-gradient-like model, with appropriate symmetries for the constitutive tensors, by solving only (i) two
families of ``static''  cell problems depending on the elasticity of the cell and (ii) an additional family of ``inertial'' cell problems depending on both elasticity and mass density used in the inertial part of the effective PDE. Sufficient conditions for this SGE model to be well-posed are then established, involving the positivity of generalized strain and kinetic energies. An original ``Boussinesq trick'' is proposed to satisfy these conditions while keeping a second-order asymptotic
accuracy.

Preliminary numerical investigations confirm (i) the ill-posedness of some of the non-corrected homogenized models,
and therefore the need for a correction, (ii) the second-order asymptotic accuracy of both non-corrected and corrected models on dispersion curves (although the correction implies a loss of accuracy), and finally (iii) the ability to the SGE model to reproduce important propagation features such as 6-fold
anisotropic propagation. 

An appealing perspective of this work is to compare the proposed ``trick'' to existing alternatives, and to enrich our methodology to build other families of well-posed models that embed as much dispersive behavior as possible. Among the many directions to be explored, let us quote (i) the energy-based, Cholesky-inspired proposition of \cite{thbaut:25} to obtain a model associated with a positive energy in elastostatics, (ii) the spectral decomposition of the sixth-order tensor $\brA$ to keep only the eigenspaces associated
with positive eigenvalues in \cite{durand:22}, still for elastostatics, (iii) the second-order-in-time "filtered" models of \cite{allaire:24} and (iv) the use of a fourth-order in time wave equation, as the one derived in Section \ref{sec:fourth:order:in:time}: if this solution is proved to produce well-posed hyperbolic systems as in 1D \cite{corn:lomb:23}, it could be further explored by deriving a family of such models thanks to alternative "tricks".

Another perspective is to pursue the numerical exploration and exploitation of these models. A quantitative study on the convergence of our asymptotic model in terms of predicted fields (and not only dispersion relations), completing the present examples, will require a well-controlled numerical set-up, careful handling of initial conditions or source terms, and a way to add oscillating correctors to the strain-gradient solution for pointwise comparison with the microstructured fields. As another numerically demanding perspective, we also plan to investigate the effective behavior of 3D cells in order to continue the exploration of wave propagation in non-centrosymmetric materials \cite{Rosi2020, rosi:24b}, and strengthen our understanding on the influence of the odd-order tensors established in the course of this study. 

To go beyond known microstructures such as gyroids in 3D, \emph{topological optimization} algorithms could be used to optimize dispersive properties described by the homogenized tensors, even without correction: we
would not aim at using the model but only use it as a descriptor of the media, as in \eg \cite{allaire:18,corn:bell:20}. To explore the multiple configurations required by these algorithms, efficient
FFT-based solvers are available to compute cell solutions \cite{tran:12,corn:bell:20} without recourse to finite elements.

Finally, a challenging but necessary development for homogenized strain-gradient models to be used in finite domains is to derive adapted \emph{boundary and interface conditions}. These conditions should (i) account for well-known boundary or interface layers \cite{dumontet:86,fergoug:22a}, (ii) include \emph{additional conditions} to complete the fourth-order-in-space strain-gradient equation, compared to a microstructured elastic problem featuring standard conditions on the displacement or normal stress, and (iii) ensure that the asymptotic order of approximation achieved by the volumic model (second-order for SGE) is preserved. So far this problem of finding such adapted, asymptotically-preserving conditions has been addressed in 1D in statics \cite{thbaut:26} and for waves \cite{corn:lomb:23} (which uses a modified equation deleting the strain-gradient term), but to our best knowledge remains open in higher dimensions, although results exist for scalar (e.g. acoustic) waves \cite[Chap. 4]{FlissHDR2019}.


\bibliographystyle{amsplain}
\bibliography{marcbibs,files/otherbibs}

\end{document}